\newcommand{\ep}{\varepsilon}
\newtheorem{theorem}{Theorem}[section]
\newtheorem{Def}[theorem]{Definition}
\newtheorem{notation}[theorem]{Notation}
\newtheorem{thm}[theorem]{Theorem}
\newtheorem{prop}[theorem]{Proposition}
\newtheorem{cor}[theorem]{Corollary}
\newtheorem{lemma}[theorem]{Lemma}
\newtheorem{hyp}[theorem]{Hypothesis}
\theoremstyle{remark}
\newtheorem{remark}[theorem]{Remark}
\newtheorem{example}[theorem]{Example}
\numberwithin{equation}{section}
\newcommand{\id}{\text{Id}}
\def\RR{\mathbb{R}}
\def\NN{\mathbb{N}}
\def\mE{\mathbb{E}}
\def\ZZ{\mathbb{Z}}
\newcommand{\cc}{{\mathcal C}}
\newcommand{\cd}{{\mathcal D}}
\newcommand{\ch}{{\mathcal H}}
\newcommand{\ci}{{\mathcal I}}
\newcommand{\cj}{{\mathcal J}}
\newcommand{\cl}{{\mathcal L}}
\newcommand{\cp}{{\mathcal P}}
\newcommand{\cs}{{\mathcal S}}
\newcommand{\cv}{{\mathcal V}}
\newcommand{\cx}{{\mathcal X}}
\def\si{\sigma}
\def\al{{\alpha}}
\def\be{{\beta}}
\def\ga{{\gamma}}
\newcommand{\lp}{\left(}
\newcommand{\rp}{\right)}
\def \eref#1{\hbox{(\ref{#1})}}
\def\lll{\llbracket}
\def\rr{\rrbracket}
\def \eref#1{\hbox{(\ref{#1})}}
\def\l@subsection{\@tocline{2}{0pt}{2.5pc}{5pc}{}}
\begin{document}
\title[Discrete rough path]
{Discrete rough paths and limit theorems}
\date{}   

\author[Y. Liu \and S. Tindel]{Yanghui Liu \and Samy Tindel} 

\keywords{Discrete rough paths,   Discrete rough integrals, Weighted random sums, Limit theorems, Breuer-Major theorem.}
 
 \address{Yanghui Liu, Samy Tindel: Department of Mathematics,
Purdue University,
150 N. University Street,
W. Lafayette, IN 47907,
USA.}
\email{liu2048@purdue.edu, stindel@purdue.edu}

\thanks{S. Tindel is supported by the NSF grant  DMS-1613163}

 \begin{abstract}
 In this article, we consider   limit theorems for some weighted type random sums (or discrete rough integrals). We   introduce a general transfer principle from limit theorems  for unweighted sums to limit theorems for weighted sums  via rough path techniques. As a  by-product, we provide  a natural explanation  of the various new asymptotic behaviors in contrast with the classical unweighted random sum case. We apply our principle to derive some weighted type Breuer-Major theorems, which generalize  previous results to random sums that do not have to be in a finite sum of chaos.  In this context, a  Breuer-Major type criterion in notion of Hermite rank is obtained. We also  consider some applications to   realized power variations and to     It\^o's formulas in law. In the end, we study the asymptotic behavior of   weighted quadratic variations for  some   multi-dimensional Gaussian processes.  
\end{abstract}

\maketitle 
\vspace{-.6cm}
{
\hypersetup{linkcolor=black}
 \tableofcontents 
}

\section{Introduction} \label{section1}
 Let $\cd_{n}: 0=t_{0}<t_{1}<\cdots<t_{n}=1$ be  a      partition on $[0,1]$.  Take a ``1-increment'' process  $h^{n}_{st}$ defined for $s,t\in \cd_{n} $ such that $s\leq t $ and  a ``weight'' process $y_{t}$ defined for $t\in \cup_{n\in\NN}\cd_{n}$. We consider a ``discrete integral'' as a Riemann sum of the form: 
 \begin{eqnarray}\label{eq:def-j-y-hn}
\cj_{s}^{t}(y; h^{n}):= \sum_{s\leq t_{k}<t} y_{t_{k}} h^{n}_{t_{k}t_{k+1}}. 
\end{eqnarray}
  Recall that a classical limit theorem for such a process is a statement of the type: 
 \begin{eqnarray}\label{eq:ex-clt}
\frac{1}{a_{n}}\cj(1; h^{n})=\frac{h^{n}}{a_{n}} \xrightarrow{ \ \ \ } \omega ,
\quad \text{as } n\to \infty.
\end{eqnarray}
  Here  $a_{n}$ is an increasing sequence such that $\lim_{n\to \infty}a_{n}=\infty$,  $\omega $ is a non-zero  continuous process and the limit is   usually understood as a finite dimensional distribution limit. A typical example of \eqref{eq:ex-clt} is the convergence of a renormalized random walk to Brownian motion (Donsker's theorem, see \cite{JS}), but a wide range of more complex situations can occur.
 Indeed, it is well-known that the rate of growth of $a_{n}$ and  the nature of the limit process $\omega$ are determined by both the marginal tails of $h^{n}$ and its dependence structure; see e.g. \cite{DR, DM, R, T} . 
The limit process $\omega$ is necessarily      self-similar; see \cite{L}. 

In this paper we are interested in the following   related problem: 

\noindent
{\bf Problem 1.} Given that $h^{n}$ converges to some ``1-increment'' process, say, the increment of a Wiener process, what is the asymptotic behavior of the discrete integral $\cj(y; h^{n})$ for a general weight $y$, and when would (or would not) the asymptotic behavior of $\cj(y; h^{n})$ be similar to that of $h^{n}$? 

This   problem has   drawn a lot of attention in recent articles  due to its  essential role in   topics such as normal approximations (e.g. \cite{Nourdin, NN, NNT}), time-discretization based numerical approximations (e.g. \cite{GN, HLN1, LT}),  parameter estimations (e.g. \cite{BCP, CNW, LL, Norvaisa}),  and  the so-called \emph{It\^o's formula in law}  (e.g. \cite{BNN, BS, GNRV, GRV,  HN, HN2, HN3, NR, NRS}).
Let us, however, point out  several limitations in the existing results: 
(1) Each process $h^{n}$ is usually a functional  of a   Gaussian process $x$ with stationary increments, living in a fixed   finite sum of chaos; 
(2)~The underlying Gaussian process $x$ is   one-dimensional; 
(3)  Only the special case $y_{t}=f(x_{t})$, $t\geq 0$ is  considered for the weight function. 
(4) To the best of our knowledge,   there is no theoretical   explanation for  the  various ``unexpected'' asymptotic behaviors of the discrete integral observed in e.g. \cite{BS, GRV, NR, N} so far. 
(5)  Satisfactory   general criteria
of convergence for sequences of discrete weighted integrals are still rare.
This is in sharp contrast with the simple Breuer-Major type conditions  in the unweighted case.

The aim of the current paper is
thus to give an account on limit theorems for discrete integrals thanks to   rough paths techniques combined with Gaussian analysis. 
In our setting, we will consider a general $1$-increment process $h^{n}$ and a general weight process $(y, y', \dots, y^{(\ell-1)})$ with $y_{0}=0$ 
which is controlled by the increments of some rough path $x=(1, x^{1},\dots, x^{\ell-1})$. Here $\ell$ is some constant in $\NN$.  
Notice that we will define the notion of controlled process later in the paper,  see Definition \ref{def2.2} below, but we can observe that this class of paths includes functions of the form $y=f(x)$ or solutions of differential equations driven by $x$. 
Let us   label the following   hypothesis:
\begin{hyp}\label{hyp1.1}
Take $i=0,1,\dots, \ell-1$. 
For any partition $ 0\leq s_{0} <s_{1}<\cdots<s_{m }\leq 1 $   of $[0,1]$ such that $m\ll n$ and 
$ |s_{j+1} - s_{j}| \leq \frac{1}{m}$, 
 we have 
\begin{eqnarray} \label{e1.3i}
\lim_{m\to \infty} \limsup_{n\to \infty} \Big| \sum_{j=0}^{m-1}     \cj_{s_{j}}^{s_{j+1}} ( x^{i}; h^{n} ) \Big| &=&0,
\end{eqnarray}
  where $\cj(x^{i}; h^{n})$ is defined by \eqref{eq:def-j-y-hn} and the limit is understood as a limit in probability. 

\end{hyp}

  We will be able to prove the following limit theorems   (see Theorem \ref{thm5.9} for a more precise statement):

 \begin{theorem} \label{thm1.1}
Consider an underlying rough path $x=(1, x^{1}, x^{2},\dots, x^{ \ell-1 })$, where $\ell$ is some constant in $\NN$ (depending on the regularity of $x$).  Let   $(y, y', \dots, y^{(\ell-1)})$  be a process on $[0,1]$ whose increments are \emph{controlled} by $x$, and assume that  $h^{n}$ satisfies some proper regularity hypothesis.
 Suppose that the following assumptions are fulfilled:
 
\noindent (i) We have the convergence    $ (x ,    h^{n}        )
  \xrightarrow{f.d.d.} ( x ,  W    )$ as $n\to \infty$. Here and in the following 
  $W$ is a standard Brownian motion independent of $x$, and f.d.d. stands for the  finite dimensional distribution limit.

\noindent (ii) Hypothesis \ref{hyp1.1} holds true  for $i=1,\dots, \ell-1$. 

Then we have the following convergence in distribution for the process $y$:
\begin{eqnarray} \label{e1.3}
(x, \cj  (y ; h^{n}) ) &\xrightarrow{f.d.d.}& (x, v), 
\end{eqnarray}
where the integral $v_{st}=\int_{s}^{t} y_{u}   dW_{u}$ has to be understood as a conditional Wiener integral.
 \end{theorem}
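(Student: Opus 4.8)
The plan is to reduce the convergence of the weighted sum $\cj(y;h^n)$ to the convergence of the unweighted sum $h^n$ by exploiting the controlled-path structure of $y$, in the spirit of the Taylor-type expansions familiar from rough path theory. First I would invoke the definition of a controlled process: since $(y,y',\dots,y^{(\ell-1)})$ is controlled by $x$, the increment $y_{t_k}$ expanded around a coarse scale $s_j$ can be written as $y_{t_k} = y_{s_j} + \sum_{i=1}^{\ell-1} y^{(i)}_{s_j}\, x^i_{s_j t_k} + r_{s_j t_k}$, where the remainder $r$ enjoys a higher-order Hölder bound in $|t_k - s_j|$. Plugging this into $\cj_{s_j}^{s_{j+1}}(y;h^n) = \sum_{s_j\le t_k<s_{j+1}} y_{t_k} h^n_{t_k t_{k+1}}$ and summing over the coarse blocks $[s_j,s_{j+1}]$ of a partition with $m\ll n$ and mesh $\le 1/m$, one obtains a decomposition
\begin{eqnarray*}
\cj_0^1(y;h^n) &=& \sum_{j=0}^{m-1} y_{s_j}\, \cj_{s_j}^{s_{j+1}}(1;h^n) + \sum_{i=1}^{\ell-1}\sum_{j=0}^{m-1} y^{(i)}_{s_j}\, \cj_{s_j}^{s_{j+1}}(x^i;h^n) + R^{m,n},
\end{eqnarray*}
where $R^{m,n}$ collects the contributions of the remainder terms $r$.

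The three pieces are then handled separately. The middle sum is exactly where Hypothesis \ref{hyp1.1} enters: for each fixed $i\in\{1,\dots,\ell-1\}$, assumption (ii) together with a uniform bound on $y^{(i)}$ forces $\sum_j y^{(i)}_{s_j}\cj_{s_j}^{s_{j+1}}(x^i;h^n)$ to vanish in probability as first $n\to\infty$ and then $m\to\infty$ — one needs to combine \eqref{e1.3i} with an Abel summation / uniform continuity argument so that the weight $y^{(i)}_{s_j}$ can be absorbed. The remainder term $R^{m,n}$ should be controlled by the higher-order Hölder estimate on $r$ against the regularity hypothesis imposed on $h^n$: the product of a term of size $|t_k-s_j|^{\theta}$ with $\theta$ large and the a priori bounds on $h^n$ yields something of order $m^{-\theta'}$ uniformly in $n$, hence negligible in the iterated limit. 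The principal term $\sum_j y_{s_j}\,\cj_{s_j}^{s_{j+1}}(1;h^n)$ is treated by the joint convergence in assumption (i): since $(x,h^n)\xrightarrow{f.d.d.}(x,W)$ with $W$ a Brownian motion independent of $x$, and $y_{s_j}$ is a continuous functional of $x$, one can pass to the limit in $n$ to get $\sum_j y_{s_j} W_{s_j s_{j+1}}$, which is a Riemann–Stieltjes-type approximation of the conditional Wiener integral $\int_0^1 y_u\, dW_u$; letting $m\to\infty$ and using continuity of $y$ plus the independence of $W$ and $x$ identifies the limit as $v$.

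The delicate point is the interchange of the two limits $n\to\infty$ and $m\to\infty$, since the decomposition above is genuinely a two-parameter object: one must show the error terms are small \emph{uniformly} enough that the ``diagonal'' argument goes through, i.e. that $\limsup_{n}$ of the remainders tends to $0$ as $m\to\infty$, and simultaneously that the principal term converges for each $m$ before $m$ is sent to infinity. This is precisely the role of Hypothesis \ref{hyp1.1}, which is formulated exactly as an iterated limit, and of the ``$m\ll n$'' regime; making this rigorous will require a careful $\varepsilon/3$-type splitting together with tightness or a moment bound ensuring the principal Riemann sums in $W$ converge. I expect this bookkeeping — rather than any single estimate — to be the main obstacle, and it is likely where the ``proper regularity hypothesis'' on $h^n$ alluded to in the statement (and made precise in Theorem \ref{thm5.9}) does its work.
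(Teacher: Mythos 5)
Your decomposition is the same big block/small block expansion the paper uses (proof of Theorem \ref{thm4.1i} combined with Theorem \ref{thm5.9}), and your treatment of the principal term and of the double limit $n\to\infty$, $m\to\infty$ matches the paper's. But the mechanism you propose for the middle terms has a genuine gap. Hypothesis \ref{hyp1.1} controls only the \emph{unweighted} sums $\sum_j \cj_{s_j}^{s_{j+1}}(x^i;h^n)$, and an Abel summation cannot transfer this to $\sum_j y^{(i)}_{s_j}\cj_{s_j}^{s_{j+1}}(x^i;h^n)$: the summation-by-parts bound involves the total variation of $y^{(i)}$ over the coarse partition, which for a merely $\nu$-H\"older weight grows like $m^{1-\nu}$, and it also requires a \emph{uniform in $j$} control of the partial sums, which the hypothesis does not give. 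The paper's route is different and this is the heart of the proof: one first shows by induction on the level $i$, using the Chen relation \eref{e2.3} to expand $x^i_{\ep(s)t_k}$ over the coarse blocks and using \eref{e1.3i} precisely to kill the top-level term $A_\tau$ in that expansion, that $\cj_s^t(x^i;h^n)$ converges in law to the conditional Wiener integral $\omega^i_{st}=\int_s^t x^i_{su}\,dW_u$ (relation \eref{e5.3i}). Only then does one show that the weighted Riemann sum $\sum_j y^{(i)}_{u_j}\omega^i_{u_ju_{j+1}}$ vanishes, via the It\^o isometry and the extra H\"older gain $|x^i_{u_ju}|\lesssim |u-u_j|^{i\nu}$ with $i\geq 1$, which yields an $L_2$ bound of order $\sum_j m^{-2\nu-1}\to 0$ as in \eref{e3.31}. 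Without this identification of the block limits, the weighted sums cannot be dispatched.

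A secondary gap is the remainder $R^{m,n}$. A term-by-term bound $\sum_{t_k\in D_j}|r^{(0)}_{s_jt_k}|\,|h^n_{t_kt_{k+1}}|$ does \emph{not} give something uniform in $n$: each block contributes roughly $m^{-\ell\nu}\cdot(n/m)\cdot n^{-1/2}$, which blows up with $n$. The correct argument (Proposition \ref{prop3.6}) applies the discrete sewing Lemma \ref{lem2.4} to the increment $R_{st}=\cj_s^t(r^{(0)};h^n)$, using the algebraic identity $\delta R_{sut}=\sum_i r^{(i)}_{su}\cj_u^t(x^i;h^n)$ together with the assumed bounds $|\cj_s^t(x^i;h^n)|_{L_2}\leq K(t-s)^{\al+\nu i}$, which encode the cancellations inside each block; this is exactly where the ``proper regularity hypothesis'' on $h^n$ enters, and it cannot be replaced by pointwise bounds on $h^n_{t_kt_{k+1}}$.
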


As alluded to above, Theorem \ref{thm1.1} can be seen as a general principle which allows to transfer limit theorems \eref{e1.3i} taken on monomials of the rough path to the corresponding limit theorems involving controlled processes as weights. Therefore, potential applications of this result are numerous (see the aforementioned parameter estimation problem, It\^o's formula in law, or numerical schemes for rough differential equations), and will be detailed throughout the paper. 

As has already been observed in   \cite{GRV, Nourdin, NNT}, the asymptotic behavior of \eref{eq:def-j-y-hn} can be completely different from \eref{e1.3}.  One of the first occurrences of this kind of result is provided by \cite{Nourdin}, where for a one-dimensional fractional Brownian motion $x$ with Hurst parameter $\nu\in (0, \frac14)$
the following limit theorem is obtained: consider the increment $h^{n}_{st} = \sum_{s\leq t_{k}<t}[( n^{\nu} \delta x_{t_{k}t_{k+1}} )^{2}-1]$, where $\delta x_{t_{k}t_{k+1}} = x_{t_{k+1}} - x_{t_{k}}$ and $(t_{k})_{k=0,\dots,n}$ stands for the  uniform partition of $[0,1]$. Let $f$ be a continuous function with proper regularity. Then, as  $n\to \infty$, we have:
\begin{eqnarray}\label{e1.4}
n^{2\nu -1}\cj_{0}^{1} ( f(x); h^{n} ) \xrightarrow{L_{2}} \frac14 \int_{0}^{1} f'' (x_{s}) ds. 
\end{eqnarray}
   Our   approach   allows  to generalize Theorem \ref{thm1.1} to
   handle limits such as \eref{e1.4}, weighted by controlled processes. In addition, our results   provide  an    explanation  of the appearance of $f''$ in the right-hand side of \eref{e1.4},  based on the structural understanding of the discrete integral from  the rough path theory. Indeed, our next theorem shows that  the limit  $\frac14 \int_{0}^{1}f''(x_{s})ds$  is the result of a ``speed match'' between different levels $(1, x^{1},  \dots, x^{\ell-1} )$ of the rough path $x$ and the fact that $f(x)$, $f'(x)$, \dots, $f^{(\ell-1)}(x)$ are the corresponding weight processes. Specifically, we shall get the following limit theorem (see Theorem \ref{cor6.3} for a multidimensional version).

\begin{thm} \label{thm1.2} 
Let   $y$,  $x$ and   $h^{n}$ be processes defined as in Theorem \ref{thm1.1}, and recall that $\cj( y ; h^{n})$ is the increment  defined by~\eref{eq:def-j-y-hn}. Suppose that $x$ and $h^{n}$ verify the following assumptions:

\noindent
(i) There is   some $\tau \in \{1,\dots, \ell-1\}$ such that $   \cj_{s}^{t} (x^{\tau}; h^{n}) \rightarrow     (t-s) \varrho $ in probability  for all $s,t\in \cd_{n}$ such that $s<t$, where $\varrho  $ is some constant, and  
$
  \cj_{s}^{t}(x^{i};  h^{n}) 
   \to  0 
$
 in probability for all~$i<\tau $. 
 
\noindent (ii) Hypothesis \ref{hyp1.1} holds true for $i=\tau+1,\dots, \ell-1$.

Then   the following convergence holds true in probability:
 \begin{eqnarray*} 
\lim_{n\to \infty}   \cj (y ; h^{n}) =    \Big( \int  y^{(\tau)}_{t}d {t}\Big)  \varrho\,.
\end{eqnarray*}
 \end{thm}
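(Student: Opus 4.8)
\noindent\emph{Proof strategy.} The plan is to split $\cj_{0}^{1}(y;h^{n})$ along a coarse partition, expand the weight on each block by means of the controlled structure, and then analyse each level $x^{i}$ of the rough path according to the ``speed'' dictated by the hypotheses: the levels $i<\tau$ disappear, the level $\tau$ produces the limit as a Riemann sum, and the levels $i>\tau$ are annihilated by Hypothesis~\ref{hyp1.1}.

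First I would fix the uniform partition $\pi_{m}=\{s_{j}=j/m:\ 0\le j\le m\}$ and consider $n$ with $m\ll n$. Grouping the sum defining $\cj_{0}^{1}(y;h^{n})$ in \eqref{eq:def-j-y-hn} according to the block $[s_{j},s_{j+1})$ containing $t_{k}$ gives $\cj_{0}^{1}(y;h^{n})=\sum_{j=0}^{m-1}\cj_{s_{j}}^{s_{j+1}}(y;h^{n})$. Since $(y,y',\dots,y^{(\ell-1)})$ is controlled by $x$ (Definition~\ref{def2.2}), on the block $[s_{j},s_{j+1}]$ we may expand $y_{t_{k}}=\sum_{i=0}^{\ell-1}y^{(i)}_{s_{j}}\,x^{i}_{s_{j}t_{k}}+r^{y}_{s_{j}t_{k}}$, where $y^{(0)}=y$, $x^{0}\equiv 1$, $x^{i}_{s_{j}t_{k}}$ is the level-$i$ increment of $x$ over $[s_{j},t_{k}]$, and the remainder satisfies $|r^{y}_{s_{j}t_{k}}|\lesssim|t_{k}-s_{j}|^{\ell\gamma}\le m^{-\ell\gamma}$, with $\gamma$ the H\"older exponent of $x$ and $\ell$ chosen so that $\ell\gamma>1$. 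Substituting this and rearranging the double sum produces the decomposition
\begin{equation*}
\cj_{0}^{1}(y;h^{n})=\sum_{i=0}^{\ell-1}\sum_{j=0}^{m-1}y^{(i)}_{s_{j}}\,\cj_{s_{j}}^{s_{j+1}}(x^{i};h^{n})+R^{m}_{n},
\end{equation*}
where $R^{m}_{n}:=\sum_{j=0}^{m-1}\sum_{s_{j}\le t_{k}<s_{j+1}}r^{y}_{s_{j}t_{k}}h^{n}_{t_{k}t_{k+1}}$ and where, in agreement with Hypothesis~\ref{hyp1.1}, $\cj_{s_{j}}^{s_{j+1}}(x^{i};h^{n})=\sum_{s_{j}\le t_{k}<s_{j+1}}x^{i}_{s_{j}t_{k}}h^{n}_{t_{k}t_{k+1}}$.

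Next I would handle the levels $i\le\tau$, keeping $m$ fixed and letting $n\to\infty$. For each $i<\tau$ (including $i=0$) assumption~(i) gives $\cj_{s_{j}}^{s_{j+1}}(x^{i};h^{n})\to 0$ in probability; since there are only finitely many blocks and the weights $y^{(i)}_{s_{j}}$ do not depend on $n$, the entire $i<\tau$ part of the sum tends to $0$ in probability. For $i=\tau$, assumption~(i) gives $\cj_{s_{j}}^{s_{j+1}}(x^{\tau};h^{n})\to(s_{j+1}-s_{j})\varrho$ in probability, hence $\sum_{j}y^{(\tau)}_{s_{j}}\cj_{s_{j}}^{s_{j+1}}(x^{\tau};h^{n})\to\varrho\sum_{j}y^{(\tau)}_{s_{j}}(s_{j+1}-s_{j})$ in probability; and because $y^{(\tau)}$ is continuous (being controlled by $x$), this Riemann sum converges to $\varrho\int_{0}^{1}y^{(\tau)}_{t}\,dt$ as $m\to\infty$. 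This is the level that produces the limit in the statement.

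It remains to show that $R^{m}_{n}$ and the high levels $\tau<i\le\ell-1$ are negligible in the iterated limit $\lim_{m\to\infty}\limsup_{n\to\infty}$, and this is the crux of the argument. For $R^{m}_{n}$ I would combine the regularity hypothesis imposed on $h^{n}$ in Theorem~\ref{thm1.1} --- which supplies, uniformly in $n$ and in probability, a discrete-variation bound on $h^{n}$ --- with the estimate $|r^{y}_{s_{j}t_{k}}|\lesssim m^{-\ell\gamma}$; this is exactly a discrete Young/rough-integration estimate, and since $\ell\gamma>1$ it yields $\limsup_{n}|R^{m}_{n}|\to 0$ as $m\to\infty$. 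The genuinely delicate point is the high-level sum $\sum_{j}y^{(i)}_{s_{j}}\cj_{s_{j}}^{s_{j+1}}(x^{i};h^{n})$ for $i>\tau$: Hypothesis~\ref{hyp1.1} controls only the \emph{unweighted} sum $\sum_{j}\cj_{s_{j}}^{s_{j+1}}(x^{i};h^{n})$, so to insert the weight one performs a summation by parts, which reduces the matter to a control of the partial sums $\Sigma^{(m)}_{j}:=\sum_{k<j}\cj_{s_{k}}^{s_{k+1}}(x^{i};h^{n})$ uniformly in $j$ and of $\sum_{j}|y^{(i)}_{s_{j+1}}-y^{(i)}_{s_{j}}|\,|\Sigma^{(m)}_{j}|$; since each $\Sigma^{(m)}_{j}$ is itself a sum of the same type over a sub-interval of $[0,1]$, Hypothesis~\ref{hyp1.1} still applies to it --- its statement does not require the partition to exhaust $[0,1]$ --- and feeding in the modulus of continuity of $y^{(i)}$ makes this contribution vanish in the iterated limit as well. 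Finally I would assemble the pieces in the right order: given $\varepsilon>0$, first choose $m$ large so that the level $\tau$ is $\varepsilon$-close to $\varrho\int_{0}^{1}y^{(\tau)}_{t}\,dt$ while the levels $i>\tau$ and $R^{m}_{n}$ are smaller than $\varepsilon$ uniformly in large $n$, and then let $n\to\infty$ to annihilate the levels $i<\tau$; this gives $\cj_{0}^{1}(y;h^{n})\to\varrho\int_{0}^{1}y^{(\tau)}_{t}\,dt$ in probability, and running the identical argument over an arbitrary sub-interval $[s,t]$ gives $\cj_{s}^{t}(y;h^{n})\to\varrho\int_{s}^{t}y^{(\tau)}_{u}\,du$, which is the assertion.
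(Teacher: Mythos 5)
Your overall architecture (coarse partition, controlled-path expansion on each block, remainder via a discrete sewing estimate, levels $i<\tau$ killed by assumption (i), level $\tau$ producing the Riemann sum) matches the paper's proof of Theorem \ref{cor6.3}, which runs through the general Theorem \ref{thm4.1}. The gap is exactly at the point you flag as delicate: the weighted high-level sums $\sum_{j} y^{(i)}_{s_j}\,\cj_{s_j}^{s_{j+1}}(x^{i};h^{n})$ for $i>\tau$. Your Abel-summation argument does not close. After summation by parts you must control $\sum_{j=1}^{m-1} |\delta y^{(i)}_{s_{j-1}s_j}|\,|\Sigma^{(m)}_j|$, and the total variation of $y^{(i)}$ over the partition $\pi_m$ is not bounded: since $y^{(i)}$ is only $\nu$-H\"older (as a component of a controlled path), $\sum_j |\delta y^{(i)}_{s_{j-1}s_j}|\sim m\cdot m^{-\nu}=m^{1-\nu}\to\infty$. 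Hypothesis \ref{hyp1.1} is purely qualitative — it says each partial sum $\Sigma^{(m)}_j$ vanishes in the iterated limit, with no rate in $m$ — so it cannot offset a diverging factor $m^{1-\nu}$, and taking $\max_j|\Sigma^{(m)}_j|$ out of the sum leaves you needing $\max_j|\Sigma^{(m)}_j|=o(m^{\nu-1})$ uniformly, which is not available.

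The paper supplies the missing quantitative input differently. It first proves, by induction on $i\geq\tau$ (Step 2 of the proof of Theorem \ref{cor6.3}), that $\cj_{s}^{t}(x^{i};h^{n})\to\omega^{i}_{st}=\big(\int_{s}^{t}x^{i-\tau}_{su}\,du\big)\otimes\varrho$ in probability; in that induction Hypothesis \ref{hyp1.1} is used only for the top ``diagonal'' term $\tilde A_{\tau'}=\sum_j\cj_{s_j}^{s_{j+1}}(x^{\tau'};h^{n})$, which is genuinely unweighted, while the cross terms are controlled by assumption (i) and the H\"older smallness of $x^{l-\tau}_{s_j u}$ on each block. Only then is the weight inserted: the limit increments satisfy $|\omega^{i}_{u_ju_{j+1}}|\lesssim m^{-1-(i-\tau)\nu}$, so $\sum_j y^{(i)}_{u_j}\omega^{i}_{u_ju_{j+1}}=O(m^{-(i-\tau)\nu})\to 0$ (the analogue of \eref{e3.31}), which is what verifies condition \eref{e5.4} for $i>\tau$. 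In short, the extra power of $m$ comes from identifying the actual limit of each block integral, not from the unweighted Hypothesis \ref{hyp1.1} alone; you would need to replace your summation-by-parts step with this two-stage argument (or some other quantitative bound on the block integrals) for the proof to go through.
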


A more   complicated situation of asymptotic behavior  is observed in \cite{BS, NN, NR}. 
This usually corresponds to a transition in terms of roughness for the underlying rough path $x$. 
 For example, in the critical cases   when $\nu=\frac14$ in \eref{e1.4},   and for the same $f$ and $h^{n}$ as in \eref{e1.4}, one obtains the convergence:
\begin{eqnarray}\label{eq:intro-double-limit}
n^{-1/2}\cj_{0}^{1} ( f(x); h^{n} ) \xrightarrow{ \ d \ } \si \int_{0}^{1} f(x_{s}) dW_{s} + \frac14 \int_{0}^{1} f'' (x_{s}) ds,
\end{eqnarray}
where $\si$ is some constant and recall that $W$ is a standard Brownian motion independent of~$x$. 
An explanation of the above asymptotic behavior according to the technique of rough path is that the two levels $1$ and $x^{2}$
give contributions of the same order in the limit theorem. This is then reflected into the fact that the components $f(x)$ and $f''(x)$ (respectively, $0$th and $2$nd derivatives of $f(x)$ as a controlled process) give contributions of the same order. 
  Our generalization of \eqref{eq:intro-double-limit} is thus the following ``double'' limit theorem.

  \begin{theorem}\label{thm1.3}
  Let   $y$, $x$ and $h^{n}$  be processes defined as in Theorem \ref{thm1.1}. 
  Furthermore, we assume that $x$ and $h^{n}$ fulfill the following conditions:
  
 \noindent (i) 
We have the convergence    $ (x ,    h^{n}        )
  \xrightarrow{f.d.d.} ( x ,  W    )$ as $n\to \infty$.

\noindent (ii)
There exists a constant   $\varrho  $ and some $  \tau : 0<\tau<\ell$ such that 
 for any partition $ 0\leq s_{0} <s_{1}<\cdots<s_{m }\leq 1 $   on $[0,1]$ satisfying $m\ll n$, 
$  |s_{i+1} - s_{i}| \leq \frac{1}{m}$, and $s_{0}= s$, $s_{m}= t$, 
 we have the convergence in probability:
\begin{eqnarray}\label{e1.7}
\lim_{m\to \infty} \lim_{n\to \infty}   \sum_{j=0}^{m-1}     \cj_{s_{j}}^{s_{j+1}} ( x^{\tau}; h^{n} )  =   (t-s)\varrho.
\end{eqnarray}
 
\noindent
(iii)
  Hypothesis \ref{hyp1.1} holds true for $i\in \{ 1,\dots, \ell-1 \}\setminus \{\tau\}$.

 Then  we obtain the following limit  for the increment $\cj (y; h^{n})$:
 \begin{eqnarray*}
  (x, \cj  (y ; h^{n}) ) \xrightarrow{f.d.d.} \lp x,  \int  y_{t }    dW_{t} +  \Big(\int    y^{(\tau)}_{t}d {t}\Big)   \varrho \rp.
\end{eqnarray*}
  \end{theorem}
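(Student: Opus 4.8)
The plan is to rerun the coarse-partition expansion that underlies Theorems~\ref{thm1.1} and~\ref{thm1.2}, while keeping track of which level of the rough path $x$ survives in the limit: level $0$ will produce the It\^o term $\int y\,dW$ through assumption~(i), level $\tau$ will produce the drift $\bigl(\int y^{(\tau)}\,dt\bigr)\varrho$ through assumption~(ii), and every other level will be annihilated by assumption~(iii). Since $(s,t)\mapsto\cj_s^t(y;h^n)$ and $v_{st}$ are both additive in $(s,t)$ and assumptions~(i)--(iii) are stable under restriction to subintervals, it suffices to identify the joint limit of $\bigl(x,\cj_0^1(y;h^n)\bigr)$, the f.d.d.\ statement then following by repeating the computation on each subinterval of a finite-dimensional vector. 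So fix $m\ll n$ and an auxiliary partition $0=s_0<\cdots<s_m=1$ of mesh $\le 1/m$ with $s_j\in\cd_n$, and decompose $\cj_0^1(y;h^n)=\sum_{j=0}^{m-1}\cj_{s_j}^{s_{j+1}}(y;h^n)$. Because $(y,y',\dots,y^{(\ell-1)})$ is controlled by $x$ (Definition~\ref{def2.2}), for $t_k\in[s_j,s_{j+1})$ one has $y_{t_k}=\sum_{i=0}^{\ell-1}y^{(i)}_{s_j}x^{i}_{s_jt_k}+R_{s_jt_k}$ (with $x^{0}\equiv 1$, $y^{(0)}=y$, and $|R_{st}|\le C|t-s|^{\theta}$, $\theta$ large enough relative to the regularity of $h^n$). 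Substituting and using $\cj_{s_j}^{s_{j+1}}(1;h^n)=h^n_{s_js_{j+1}}$ produces $\cj_0^1(y;h^n)=A_n^m+B_n^m+C_n^m+D_n^m$ with $A_n^m=\sum_{j}y_{s_j}h^n_{s_js_{j+1}}$ (level $0$), $B_n^m=\sum_{i\in\{1,\dots,\ell-1\}\setminus\{\tau\}}\sum_{j}y^{(i)}_{s_j}\cj_{s_j}^{s_{j+1}}(x^{i};h^n)$ (the other levels), $C_n^m=\sum_{j}y^{(\tau)}_{s_j}\cj_{s_j}^{s_{j+1}}(x^{\tau};h^n)$ (level $\tau$), and $D_n^m=\sum_{j}\sum_{s_j\le t_k<s_{j+1}}R_{s_jt_k}h^n_{t_kt_{k+1}}$ (remainder).

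Next I would treat the four terms. For $D_n^m$ the discrete sewing estimate from the proof of Theorem~\ref{thm1.1}, together with the regularity hypothesis on $h^n$, gives $\lim_{m\to\infty}\limsup_{n\to\infty}\|D_n^m\|_{L^2}=0$ (nothing about the asymptotics of $h^n$ is used here beyond regularity). For $A_n^m$ assumption~(i) yields $\bigl(x,(h^n_{s_js_{j+1}})_j\bigr)\xrightarrow{d}\bigl(x,(W_{s_js_{j+1}})_j\bigr)$, and since $(y,y',\dots,y^{(\ell-1)})$ is a functional of $x$ (as in the corresponding step of Theorem~\ref{thm1.1}) this upgrades to $A_n^m\xrightarrow{d}\sum_j y_{s_j}W_{s_js_{j+1}}$ jointly with $x$; letting $m\to\infty$, these left-point Riemann sums converge in $L^2$ to the conditional Wiener integral $\int_0^1 y_u\,dW_u$ (recall $W\perp x$ and $y$ has continuous paths). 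For $B_n^m$ and $C_n^m$ a plain Abel summation fails, since $y^{(i)}$ is merely H\"older; instead I would insert a third, coarser partition $0=u_0<\cdots<u_M=1$ ($M\ll m$, each $u_l$ one of the $s_j$) and split $y^{(i)}_{s_j}=y^{(i)}_{u_l}+(y^{(i)}_{s_j}-y^{(i)}_{u_l})$ on each block $[u_l,u_{l+1})$. The $y^{(i)}_{u_l}$-contribution is governed by $\sum_{j:\,u_l\le s_j<u_{l+1}}\cj_{s_j}^{s_{j+1}}(x^{i};h^n)$, which under the iterated limit $n\to\infty$ then $m\to\infty$ tends to $0$ for $i\ne\tau$ by~(iii) and to $(u_{l+1}-u_l)\varrho$ for $i=\tau$ by~(ii); the oscillation contribution $\sum_j(y^{(i)}_{s_j}-y^{(i)}_{u_l})\cj_{s_j}^{s_{j+1}}(x^{i};h^n)$ has $L^2$-norm $O(M^{-\alpha})$ uniformly in $m,n$, by the H\"older continuity of the controlled path $y^{(i)}$ together with standard moment estimates for $\cj_{s_j}^{s_{j+1}}(x^{i};h^n)$ coming from the regularity of $h^n$. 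Since the auxiliary scale $M$ does not enter the final nested limit, letting $M\to\infty$ last gives $\lim_{m\to\infty}\lim_{n\to\infty}B_n^m=0$ and $\lim_{m\to\infty}\lim_{n\to\infty}C_n^m=\bigl(\int_0^1 y^{(\tau)}_t\,dt\bigr)\varrho$, both in probability, hence jointly with $x$.

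To conclude, write $\cj_0^1(y;h^n)=(A_n^m+C_n^m)+(B_n^m+D_n^m)$. For each fixed $m$ the previous paragraph gives $(x,A_n^m+C_n^m)\xrightarrow{d}(x,\zeta_m)$ as $n\to\infty$ (combining convergence in probability of $C_n^m$ with convergence in law of $A_n^m$), with $\zeta_m\xrightarrow{d}\int_0^1 y_u\,dW_u+\bigl(\int_0^1 y^{(\tau)}_t\,dt\bigr)\varrho$ jointly with $x$ as $m\to\infty$, while $\lim_{m\to\infty}\limsup_{n\to\infty}\mathbb{P}\bigl(|B_n^m+D_n^m|>\varepsilon\bigr)=0$ for every $\varepsilon>0$. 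A standard approximation lemma for convergence in law, in the form that preserves the joint law with $x$, then yields $(x,\cj_0^1(y;h^n))\xrightarrow{d}(x,v_{01})$; applying the same argument on each subinterval gives the claimed f.d.d.\ convergence of the increment process.

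The step I expect to be the real obstacle is the treatment of $C_n^m$: assumption~(ii) is only a \emph{double} limit, weaker than the termwise convergence $\cj_s^t(x^{\tau};h^n)\to(t-s)\varrho$ available in Theorem~\ref{thm1.2}, so one cannot simply read off a Riemann sum and must instead run the three-scale limit $n\to\infty$, $m\to\infty$, $M\to\infty$ and check that the weight-oscillation errors are uniformly negligible --- which is exactly where the moment bounds on $h^n$ and the fact that $y^{(\tau)}$ is itself a controlled path (hence H\"older) are needed. The other genuinely technical point, the remainder bound for $D_n^m$, is the same discrete sewing argument as in Theorem~\ref{thm1.1}; everything else amounts to bookkeeping of the nested limits while tracking the joint law with $x$.
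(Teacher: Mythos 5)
There is a genuine gap, and it sits exactly where you predicted: the oscillation estimate for $B_n^m$ and $C_n^m$. You claim that $\sum_{j:\,u_l\le s_j<u_{l+1}}(y^{(i)}_{s_j}-y^{(i)}_{u_l})\,\cj_{s_j}^{s_{j+1}}(x^{i};h^n)$ has $L^2$-norm $O(M^{-\alpha})$ uniformly in $m,n$, using H\"older continuity of $y^{(i)}$ and the moment bound \eref{e3.8i}. But those two ingredients only give, per term, $|y^{(i)}_{s_j}-y^{(i)}_{u_l}|\lesssim M^{-\nu}$ and $|\cj_{s_j}^{s_{j+1}}(x^{i};h^n)|_{L_2}\lesssim m^{-(\alpha+i\nu)}$, so the triangle inequality over the $m$ blocks yields $M^{-\nu}\,m^{1-\alpha-i\nu}$. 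Since $\ell$ is chosen as the \emph{smallest} integer with $\alpha+\nu\ell>1$, one has $\alpha+i\nu<1$ for every $i\le\ell-1$, and this bound diverges as $m\to\infty$ with $M$ fixed — i.e.\ it blows up precisely at the stage of your nested limit where you need it to be small. No decorrelation of the blocks $\cj_{s_j}^{s_{j+1}}(x^{i};h^n)$ across $j$ is available from the hypotheses, and the discrete sewing lemma does not apply to this increment (its $\delta$ does not have H\"older exponent $>1$), so the uniform $O(M^{-\alpha})$ claim cannot be extracted from the stated assumptions. The three-scale device is an attempt to bypass the hard step, and this is where it breaks.

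The paper (proof of Theorem \ref{thm7.1}, via Theorem \ref{thm4.1}) avoids this by freezing the weight only at the scale-$m$ points: the frozen point $\bar u_j=\ep(u_j)$ differs from $u_j$ by at most $1/n$, so the analogous oscillation term is $O(n^{-\nu}m^{1-\alpha-i\nu})$ and vanishes as $n\to\infty$ for each fixed $m$. The price is that one must then identify, for fixed $m$, the $n\to\infty$ limit $\omega^i$ of \emph{each individual block} $\cj_{u_j}^{u_{j+1}}(x^{i};h^n)$ at \emph{every} level $i$ — not just the limit of the sum over blocks that assumptions (ii)--(iii) provide. This is done by induction on $i$ using Chen's relation \eref{e2.3}: the increment $\cj_s^t(x^{i};h^n)$ is decomposed into pieces $\tilde A_l=\sum_j x^{i-l}_{ss_j}\otimes\cj_{s_j}^{s_{j+1}}(x^{l};h^n)$, the diagonal piece $l=i$ is killed (or identified as $(t-s)\varrho$) by (ii)/(iii), the piece $l=0$ converges via assumption (i) to $\int_s^t x^{i}_{su}\,dW_u$, and the intermediate pieces vanish by an It\^o-isometry estimate. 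Only after establishing $\omega^i_{st}=\int_s^t x^{i}_{su}\,dW_u+(\int_s^t x^{i-\tau}_{su}\,du)\,\varrho$ for $i\ge\tau$ (and without the drift for $i<\tau$) can one pass $m\to\infty$ in the weighted sums $\sum_j y^{(i)}_{u_j}\omega^i_{u_ju_{j+1}}$, using It\^o isometry for the martingale part and Riemann-sum convergence for the drift. This level-by-level induction is the core of the argument and is entirely absent from your proposal; your remaining steps ($A_n^m$, $D_n^m$, the final Slutsky-type assembly, and the reduction to $[0,1]$) are consistent with the paper's.
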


As mentioned previously, Theorems \ref{thm1.1}, \ref{thm1.2} and \ref{thm1.3} are abstract transfer principles from monomials of a rough path to a controlled process for limit theorems of the form \eqref{eq:ex-clt}.
 For sake of illustration, let us mention an important application of this transfer principle  we will encounter in the article, namely a weighted type Breuer-Major theorem. 
 
 Recall that the Hermite polynomial of order $q$ is defined as $H_{q}(t)= (-1)^{q} e^{\frac{t^{2}}{2}} \frac{d^{q}}{dt^{q}}  e^{-\frac{t^{2}}{2}}$, and we denote by $\ga$ the standard normal distribution. We consider the following Breuer-Major type criterion:
 \begin{hyp}\label{hyp2}
 Take $\ell\in \NN$. Let $f\in L_{2}(\ga)$ be a function such that we have the expansion $f = \sum_{q=d}^{\infty} a_{q}H_{q} $ for a given $d\geq 1$ and $a_{d}\neq 0$. We suppose that the coefficients $a_{q}$ satisfy:
\begin{eqnarray}\label{e1.1}
\sum_{q=d}^{\infty}  a_{q}^{2}q! q^{2(\ell-1)}<\infty.
\end{eqnarray} 

 \end{hyp}
Following  is our weighted type Breuer-Major theorem: 
  \begin{thm}\label{thm:BM-with-weight}
Let $x$ be a one-dimensional fBm with Hurst parameter $\nu < \frac12$. Suppose that Hypothesis \ref{hyp2} holds true for $f \in L_{2}(\ga)$ with  some $\ell \in \NN$ and $d \geq 1$.  
 Let $(y,y',\dots, y^{\ell-1})$ be a   process controlled by $x$.   We define 
 a sequence $\{h^{n}; n\geq 1\}$ of
   increments by  $h^{n}_{st} := n^{-1/2} \sum_{s\leq t_{k}<t} f(n^{\nu} \delta x_{t_{k}t_{k+1}})$ for $s,t$ in the partition  $\cd_{n}$, such that $s<t$. 
 
\noindent (i) When $d>\frac{1}{2\nu }$, and $\ell$ is the smallest integer such that $  \ell >\frac{1}{2\nu}$, we have the   convergence:  
\begin{eqnarray*}
(x, \cj(y; h^{n}))  
 \xrightarrow{f.d.d.}    \lp x, \si_{\nu, d} \int  y_{t} dW_{t} \rp \,,\quad\quad  \text{as \ }   n\to \infty,
\end{eqnarray*}
where $\si_{\nu, d}  $ is a   constant which can 
be computed explicitly
 and where we recall that $\cj(y; h^{n})$ is defined by \eqref{eq:def-j-y-hn}.

 \noindent (ii) When $d=\frac{1}{2\nu}$ and     $\ell=d+1$,   the following convergence holds true:
\begin{eqnarray*} 
(x, \cj(y; h^{n}) )
 \xrightarrow{f.d.d.}  \Big(x,  \si_{\nu, d}    \int  y_{t} dW_{t} +\Big(-\frac12\Big)^{d}a_{d} \int y^{(d)}_{u} du \Big), \quad\quad  \text{as \ }   n\to \infty.
\end{eqnarray*}

\noindent (iii) When $d<\frac{1}{2\nu}$ and     $\ell=d+1$,  we have the    convergence in probability:
\begin{eqnarray*}
 n^{-( \frac12-\nu d)} \cj_{s}^{t}(y; h^{n}) 
 \xrightarrow{ \   \ }   \Big(-\frac12\Big)^{d} a_{d}\int_{s}^{t}y^{(d)}_{u} du, \quad\quad  \text{as \ }   n\to \infty.
\end{eqnarray*}

  \end{thm}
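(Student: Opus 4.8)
The plan is to deduce the three regimes directly from the transfer principles already proved: Theorem~\ref{thm1.1} for case (i), Theorem~\ref{thm1.3} for case (ii), and Theorem~\ref{thm1.2} --- applied to the renormalized increments $\tilde h^{n}:=n^{-(1/2-\nu d)}h^{n}$ --- for case (iii). The whole task therefore reduces to verifying, for the Breuer--Major increment $h^{n}_{st}=n^{-1/2}\sum_{s\le t_{k}<t}f(n^{\nu}\delta x_{t_{k}t_{k+1}})$, the hypotheses required by those theorems: (a) the regularity hypothesis on $h^{n}$ appearing in Theorem~\ref{thm1.1} (see Theorem~\ref{thm5.9}); (b) the joint convergence $(x,h^{n})\xrightarrow{f.d.d.}(x,\si_{\nu, d}W)$, needed in (i)--(ii); (c) Hypothesis~\ref{hyp1.1} for the appropriate range of levels $i$; and (d), in (ii)--(iii), the precise ``resonant'' behavior of $\cj_{s}^{t}(x^{\tau};h^{n})$ at $\tau=d$. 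Throughout I will use the Hermite expansion $f=\sum_{q\ge d}a_{q}H_{q}$, the product formula for multiple integrals, Gaussian hypercontractivity, and the elementary fact that for $\nu<\frac12$ the correlation $\rho(k)=\frac12(|k+1|^{2\nu}-2|k|^{2\nu}+|k-1|^{2\nu})$ of fBm increments satisfies $\sum_{k\in\ZZ}|\rho(k)|^{q}<\infty$ for every $q\ge1$.

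First I would settle (a) and (b). Expanding $h^{n}$ into Wiener chaos and estimating by hypercontractivity together with \eqref{e1.1} --- whose weight $q^{2(\ell-1)}$ is exactly what is needed to control the $\ell-1$ levels of the controlled-process expansion --- gives the Brownian-type bounds $\mE|h^{n}_{st}|^{2p}\lesssim_{p}|t-s|^{p}$ and the remaining estimates of Theorem~\ref{thm1.1}; the same bounds for $\tilde h^{n}$ follow by scaling. For (b), since $\nu<\frac12$ forces $\frac1{2\nu}>1$ we have $d\ge2$ in cases (i)--(ii), so every chaotic component of $h^{n}$ sits in a chaos of order $\ge2$ while $x$ sits in the first chaos; the cross-covariance with $x$ is then automatically zero, and the joint $f.d.d.$ convergence follows from the multivariate fourth-moment theorem once one checks (1) convergence of the covariances of $h^{n}$ --- the classical Breuer--Major computation, producing $\si_{\nu, d}^{2}=\sum_{q\ge d}a_{q}^{2}\,q!\,\sum_{k\in\ZZ}\rho(k)^{q}$ --- and (2) the vanishing of the fourth cumulants of each chaotic component, again a standard consequence of $\sum_{k}|\rho(k)|^{q}<\infty$.

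The heart of the argument is (d): the analysis of $\cj_{s}^{t}(x^{d};h^{n})=\sum_{s\le t_{k}<t}x^{d}_{st_{k}}h^{n}_{t_{k}t_{k+1}}$, where in dimension one $x^{d}_{ab}=\frac1{d!}(\delta x_{ab})^{d}$. I will decompose $\delta x_{st_{k}}=c_{k}\,\delta x_{t_{k}t_{k+1}}+\eta_{k}$ with $\eta_{k}$ orthogonal to $\delta x_{t_{k}t_{k+1}}$ and $c_{k}=\mE[\delta x_{st_{k}}\delta x_{t_{k}t_{k+1}}]/\mE[(\delta x_{t_{k}t_{k+1}})^{2}]$; away from the $O(1)$ indices $t_{k}$ closest to $s$ (whose total contribution is negligible) the macroscopic drift $(t_{k+1}-s)^{2\nu}-(t_{k}-s)^{2\nu}=O(1/n)$ is dominated by $-\frac12 n^{-2\nu}$ because $\nu<\frac12$, so $c_{k}\to-\frac12$. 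Substituting and pairing the pure power $(-\frac12)^{d}\frac1{d!}(\delta x_{t_{k}t_{k+1}})^{d}$ against the $q=d$ term $n^{-1/2}a_{d}H_{d}(n^{\nu}\delta x_{t_{k}t_{k+1}})$ of $h^{n}_{t_{k}t_{k+1}}$, and using $\mE[\xi^{d}H_{d}(\xi)]=d!$ for a standard Gaussian $\xi$, every summand contributes $\approx(-\frac12)^{d}a_{d}\,n^{-\nu d-1/2}$, so $\cj_{s}^{t}(x^{d};h^{n})\approx(t-s)(-\frac12)^{d}a_{d}\,n^{1/2-\nu d}$. Everything else --- the $\eta_{k}$-factors, the higher Hermite orders $q>d$, and $\cj_{s}^{t}(x^{i};h^{n})$ for $i\ne d$ --- is controlled by second-moment (Gaussian diagram) estimates: for $i<d$ the orthogonality $\mE[\xi^{p}H_{q}(\xi)]=0$ whenever $q\ge d>i\ge p$ kills the mean of each summand, leaving a centered term with $\mE|\cj_{s_{j}}^{s_{j+1}}(x^{i};h^{n})|^{2}\lesssim|s_{j+1}-s_{j}|^{1+2i\nu}$, which summed over a partition of mesh $1/m$ is $O(m^{-2i\nu})\to0$, while its $n\to\infty$ limit is the conditional Wiener integral $\int_{s}^{t}x^{i}_{su}\,dW_{u}$. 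Putting the pieces together: in case (i), $d>\frac1{2\nu}$ and $\ell$ minimal with $\ell>\frac1{2\nu}$ force $\ell\le d$, hence $i<d$ for every $i\in\{1,\dots,\ell-1\}$ and Hypothesis~\ref{hyp1.1} holds for those $i$; Theorem~\ref{thm1.1} (applied to $h^{n}/\si_{\nu, d}$) then yields $\si_{\nu, d}\int y_{t}\,dW_{t}$. In case (ii), $\frac1{2\nu}=d$ is an integer and $\ell=d+1$, so $1/2-\nu d=0$, the resonant term tends to $(t-s)(-\frac12)^{d}a_{d}$, and Hypothesis~\ref{hyp1.1} holds on $\{1,\dots,\ell-1\}\setminus\{d\}=\{1,\dots,d-1\}$; Theorem~\ref{thm1.3} (again up to the $\si_{\nu, d}$-rescaling, which leaves the constant $(-\frac12)^{d}a_{d}$ unchanged) gives the mixed limit. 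In case (iii), $d<\frac1{2\nu}$, $\ell=d+1$; applying Theorem~\ref{thm1.2} to $\tilde h^{n}$, for which $\cj_{s}^{t}(x^{d};\tilde h^{n})\to(t-s)(-\frac12)^{d}a_{d}$, $\cj_{s}^{t}(x^{i};\tilde h^{n})=n^{-(1/2-\nu d)}O_{L_{2}}(1)\to0$ for $i<d$, and the Hypothesis~\ref{hyp1.1} requirement is vacuous since $\ell-1=d=\tau$, delivers $n^{-(1/2-\nu d)}\cj_{s}^{t}(y;h^{n})\to(-\frac12)^{d}a_{d}\int_{s}^{t}y^{(d)}_{u}\,du$.

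I expect step (d) to be the main obstacle. Because $x^{d}_{st_{k}}$ is a macroscopic, non-shrinking weight, $\cj_{s}^{t}(x^{d};h^{n})$ consists of $\asymp n$ terms of individual size $\asymp n^{-1/2}$, and one must show that only the $q=d$ pure-power part accumulates coherently --- through its conditional-mean component, of net size $n^{1/2-\nu d}$ --- whereas the many mixed terms (weighted sums of products of increments at two different scales times Hermite polynomials) either cancel or stay $O_{L_{2}}(1)$; moreover these bounds must be \emph{uniform in the mesh of the coarse partition}, so as to survive the iterated limit $\lim_{m}\lim_{n}$ hidden in Hypothesis~\ref{hyp1.1} and in hypothesis (ii) of Theorem~\ref{thm1.3}. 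A secondary technical point is the clean passage from the marginal Breuer--Major convergence to the \emph{joint} convergence with $W$ independent of $x$, namely applying the multivariate fourth-moment criterion to the vector formed by the finitely many increments of $x$ and of $h^{n}$.
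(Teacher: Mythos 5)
Your overall architecture coincides with the paper's: cases (i), (ii), (iii) are proved there as Theorems \ref{thm8.5} and \ref{thm4.14} by feeding the Breuer--Major increments into the transfer principles (Theorems \ref{thm5.9}, \ref{thm7.1} and \ref{cor6.3} respectively), after splitting $f=a_dH_d+\tilde f$ and verifying \eref{e3.8i} and the block conditions level by level through the product formula for multiple integrals. Your computation of the resonant constant via the projection $\delta x_{st_k}=c_k\,\delta x_{t_kt_{k+1}}+\eta_k$ with $c_k\to-\tfrac12$ is a clean repackaging of what the paper does in Lemma \ref{lem4.9} by integration by parts (there the same $-\tfrac12$ arises as $\lim_k\sum_{i=1}^k\rho(i)$ via a Ces\`aro argument), and your identification of where \eref{e1.1} enters is correct.

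Two points need repair. First, hypercontractivity is the wrong tool for step (a): for an $f$ satisfying only \eref{e1.1} the chaos coefficients decay polynomially, and the hypercontractive constants $(2p-1)^{q/2}$ are not summable over $q$, so a bound of the form $\mE|h^n_{st}|^{2p}\lesssim_p|t-s|^p$ is not available in general. Fortunately it is also not needed: condition \eref{e3.8i} is an $L_2$ bound, and the paper obtains it (Lemma \ref{lem8.3} (iii)--(iv)) purely from the product formula plus the vanishing of $D^{q+q'-2r}(x^i_{\cdot}x^i_{\cdot})$ for $q+q'-2r>2i$, which forces $r\ge\tfrac12(q+q')-i$ and makes the combinatorial sum controllable by $\sum_q a_q^2q!\,q^{2(\ell-1)}$. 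You should drop the higher-moment claim and work in $L_2$ throughout. Second, your passage from $\mE|\cj_{s_j}^{s_{j+1}}(x^i;h^n)|^2\lesssim|s_{j+1}-s_j|^{1+2i\nu}$ to ``summed over a partition of mesh $1/m$ is $O(m^{-2i\nu})$'' silently assumes near-orthogonality of the blocks: the triangle inequality in $L_2$ would only give $O(m^{(1-2i\nu)/2})$, which diverges. Establishing \eref{e6.1} genuinely requires estimating the full double sum over $j,j'$, including the cross terms with $|q-q'|\le 2i$, uniformly in $m$ and summably in $q,q'$; this is the content of the quantity $\vartheta(q,q',i)$ in \eref{e4.3} and the bound \eref{e6.3}, and it is where the bulk of the technical work of the paper's proof lives. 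Your closing paragraph correctly flags this as the main obstacle, but the sketch as written treats it as routine.
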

 
 The proof of Theorem \ref{thm:BM-with-weight} is based on our general Theorem \ref{thm1.1}--\ref{thm1.3}.  
 Let us observe that Theorem~\ref{thm:BM-with-weight} improves on the references on weighted Breuer-Major theorems quoted above in the following ways:
 
 \noindent
 (i) The function $f$ is not assumed to be in a finite sum of chaos. In fact a convenient sufficient condition for \eref{e1.1} to be fulfilled is that the function
  $f$ is an element of  $C^{2\ell-2}$. 
  
 \noindent
  (ii) Multidimensional versions of Theorem  \ref{thm:BM-with-weight} (based on \cite{A}) are easily conceived,  where $f(n^{\nu} \delta x_{t_{k}t_{k+1}})$ in the definition of $h^{n}$ is replaced by $f(n^{\nu} \delta x^{1}_{t_{k}t_{k+1}},\ldots,n^{\nu} \delta x^{d}_{t_{k}t_{k+1}})$, for a $d$-dimensional Gaussian process $(x^{1},\ldots,x^{d})$.   

 \noindent
(iii) The weight $y$ in Theorem \ref{thm:BM-with-weight} is obviously a controlled process instead of a mere function of $x$. It is worth noting again that the class of controlled processes includes solutions of differential systems driven by $x$.

 \noindent
(iv) As mentioned above,    the single and double limiting phenomenons   
in Theorem \ref{thm:BM-with-weight} can be explained 
 in terms of speed match on different levels of the rough path above $x$.

 \noindent
(v)  The solution to Problem 1 above is expressed easily in terms of the Hurst parameter $\nu$ of $x$ and the \emph{Hermite rank} $d$ of $f$.

 \noindent
Throughout the paper we will give an account on other applications of our general Theorems~\ref{thm1.1}--\ref{thm1.3}, such as realized power variations, convergence of trapezoidal Riemann sums and quadratic variations of multidimensional Gaussian processes. As the reader might see, the improvements (i)-(v) mentioned above will be a constant of our rough paths method.
  
Let us briefly explain the general methodology we have followed for our proofs, separating the general principle from the applications.

\noindent
(a)   The proof of Theorems~\ref{thm1.1}--\ref{thm1.3} is mostly based on rough path type expansions for the weight process $y$ and a more classical coarse graining argument (also called big block/small block in the literature). By handling the remainder terms thanks to  rough paths techniques,  the convergence of $\cj(y; h^{n})$ is reduced to those of $\cj(y; \zeta^{1})$, $\cj(y'; \zeta^{2})$, \dots, $\cj(y^{(\ell-1)}; \zeta^{\ell})$, 
where each $\zeta^{i}$ is a discrete process of the form $\zeta^{i}_{j} = \cj_{s_{j}}^{s_{j+1}} (x^{i}; h^{n})$. The convergence of these quantities are   further reduced to those of $\cj(1; \zeta^{1})$, $\cj(1; \zeta^{2})$, \dots, $\cj(1; \zeta^{\ell})$, such as those in Hypothesis \ref{hyp1.1} and relation \eref{e1.7}. The random processes $\cj(1; \zeta^{1})$, $\cj(1; \zeta^{2})$, \dots, $\cj(1; \zeta^{\ell})$ will be the elementary bricks for our limiting procedures.

\noindent
(b) Our applications, such as the 
weighted type Breuer-Major Theorem \ref{thm:BM-with-weight},  heavily rely on the criteria developed in Theorem \ref{thm1.1}--\ref{thm1.3}. This ingredient is combined with some Malliavin calculus techniques in order to handle the building bricks $\cj(1; \zeta^{i})$. More specifically, in case of the weighted  Breuer-Major theorem \ref{thm:BM-with-weight}, we shall invoke integration by parts on the Wiener space. This step is similar to what is done in \cite{NN}. However, due to our rough path reduction of the problem, we only have to consider integration by parts to compute moments of the elementary bricks $x^{i}_{t_{k}} H_{q}( n^{\nu } \delta x_{t_{k}t_{k+1}})$ (as opposed to $g(x_{t_{k}}) f( n^{\nu } \delta x_{t_{k}t_{k+1}} )$ for a general nonlinear function $g$). This reduction to computations in finite chaos is one of the crucial steps which allow to derive the Breuer-Major type criteria~\eref{e1.1} for a general function $f$. 

The paper is organized as follows. In Section \ref{section2} we introduce the  concept of discrete   rough paths and discrete rough integrals and recall some basic results of the rough paths theory. In Section  \ref{sectionLT1}, we prove our general limit theorems including Theorem \ref{thm1.1}, Theorem \ref{thm1.2} and Theorem \ref{thm1.3}. In Section \ref{sectionBM}, we apply them to the one-dimensional fractional Brownian motion, which allows us to derive a weighted type Breuer-Major theorem. We also consider   applications of the weighted type Breuer-Major theorem to parameter estimation and  It\^o's formula in law.  In Section \ref{sectionGuansian}, we consider the limit theorem of a weighted quadratic variation in the multi-dimensional Gaussian setting.

 \noindent
{\bf Notations:} 
For simplicity, we consider uniform partitions, that is, we  denote $t_{k} = \frac{k}{n}$ for each $k, n \in \NN$.   Take $s,t\in [0,1]$.   We denote by $\cs_{k}(s,t)$ the simplex $\{ (t_{1},\dots, t_{k}) \in  [0,1]^{k} ;\, s\leq t_{1}\leq \cdots\leq t_{k} <t \}$, and for simplicity we will write $\cs_{k} $ for $ \cs_{k}(0,1)$. In contrast, whenever we deal with a discrete interval $[s,t)\cap \cd_{n}$, we set $\cs_{k}'(s,t)=
\{ (t_{1},\dots, t_{k}) \in \cd_{n}^{k} ;\, s\leq t_{1}< \cdots< t_{k}<t  \}$, and similarly, when $s=0$ and $t=1$ we simply write  $\cs_{k}'  $.

Throughout the paper we work on a  probability space $(\Omega, \mathscr{F}, P)$. If $X$ is a random variable, we denote by $| X |_{L_{p}}  $ the $L_{p}$-norm of $X$.
The letter $K$ stands for a constant which can change from line to line. The letter $G$ denotes a generic a.s. finite random variable. We denote by $\lfloor a \rfloor $   the integer part of   $a$.

\section{Discrete rough paths} \label{section2}
In this section, we introduce the concept of discrete rough paths and discrete rough integrals, and recall some basic results of the rough paths theory. Then we derive our main estimates on   discrete rough integrals. 

\subsection{Definition and algebraic properties}
This subsection is devoted to introduce the main rough paths notations which will be used in the sequel.  

Let $\cv$ be a finite dimensional vector space.
We denote by $\cc_{k}(\cv)$ the set of functions $g : \cs_{k} \to \cv$ such that $g_{t_{1}\cdots t_{k}} = 0$ whenever $t_{i} = t_{i+1}$ for $i\leq k -1$. Such a function will be called a $(k-1)$-\emph{increment}. We define the operator $\delta$ as follows:
\begin{eqnarray*}
\delta : \cc_{k}(\cv) \to \cc_{k+1}(\cv), \quad\quad (\delta g)_{t_{1}\cdots t_{k+1}} = \sum_{ i=1}^{k+1} (-1)^{i} g_{t_{1}\cdots \hat{t}_{i} \cdots t_{k+1}}\,,
\end{eqnarray*}
where $\hat{t}_{i}$ means that this particular argument is omitted. 
For example, for $f\in \cc_{1}(\cv)$ and $g\in \cc_{2}(\cv)$ we have 
\begin{eqnarray}\label{e2.1}
\delta f_{st} = f_{t}-f_{s}
\quad
\text{ and }
\quad
\delta g_{sut} = g_{st}-g_{su}-g_{ut}.
\end{eqnarray} 
 A fundamental property of $\delta$, which is easily verified, is  that $\delta \delta =0$, where $\delta \delta $ is considered as an operator from $\cc_{k}(\cv) $ to $\cc_{k+2} (\cv)$.

Let us now introduce the notion of rough path which will be used throughout the paper.
\begin{Def}\label{def:rough-path}
Consider $\nu \in (0,1)$, $\ell \in \NN $ such that $\ell\leq \lfloor \frac{1}{\nu} \rfloor$ and $p>1$. 
Let 
  $x= (  x^{1},\dots, x^{\ell})$ be a continuous path on   $\cs_{2}$ and with values  in $   \oplus_{k=1}^{\ell} (\RR^{d})^{\otimes k }  $. For $p>0$   set 
\begin{equation}\label{eq:def-holder-seminorms}
    |x^{k} |_{[s,t], \,p\,, \nu }:=\sup_{(u,v)\in\cs_{2}([s,t])}\frac{ |  x^{k}_{uv} |_{L_{p}}^{1/k} }{|v-u|^{\nu  }}  ,
\end{equation}
and   define a $\nu$-H\"older semi-norm as follows:
\begin{equation}\label{eq:def-norm-rp}
|x|_{p\,,\nu} := |x^{1}|_{p\,,\nu}+ \cdots+ |x^{\ell}|_{p\,,\nu}\,. 
\end{equation}

We call $x$ a \emph{$(L_{p}, \nu, \ell)$-rough path} (or simply a rough path) if  the following properties holds true:

\noindent (1) the semi-norms $|x^{k}|_{[s,t], p , \nu}$ in \eqref{eq:def-holder-seminorms} are finite. In this case  we say that $x^{k}$, $k=1,\dots, \ell$   are respectively in $C^{  \nu} (\cs_{2} , (\RR^{m})^{\otimes k})$.
For convenience, we denote   $ |x^{k} |_{p\,, \nu }:=  |x^{k} |_{[0,1],\, p\,, \nu } $.

\noindent (2) For all $k\in \{ 2,\dots, \ell \}$,  $x^{k}$ satisfies the identity 
\begin{eqnarray}\label{e2.3}
\delta x^{k}_{sut} &=& \sum_{j=1}^{k-1} x^{k-j}_{su}\otimes x^{ j}_{ut}.
\end{eqnarray}
 
\end{Def}

\begin{remark}
Our definition of rough path differs slightly from the usual one in several aspects:

\noindent (i) We don't impose $\ell = \lfloor \frac{1}{\nu} \rfloor$, so that the order of our rough path might be lower than in the standard theory. In the sequel we will introduce another parameter $\al\in (0,1)$ such that $\nu \ell+\al>1$. 

\noindent (ii) We consider a rough path with values in $L_{p}$, and measure its regularity by looking at increments of the form $|x^{k}_{st}|_{L_{p}}$ for $(s,t) \in \cs_{2}$.  
\end{remark}

 In this paper, we are mostly concerned with discrete sums. Recall that we are considering discrete simplexes related to partitions of $[0,1]$, which are denoted by $\cs_{2}'$. We now introduce a general notion of discrete controlled process.

   \begin{Def}\label{def2.2}
  Fix $\al>0$ and  let $\ell$ be the smallest integer such that $  \nu \ell +\al>1 $.   Let $\cv $ be some finite dimensional vector space.
    Let $y,y',y'',\dots, y^{(\ell-1)}$  be continuous processes on $[ 0,1] $ such that $y_{0}=y^{(0)}_{0} = 0$. For convenience, we will also write: $y^{(0)} = y$, $y^{(1)} = y'$, $y^{(2)}= y''$,\dots. 
   Suppose that 
     $y  $ takes values in $\cv$, and $y^{(k)}  $ takes values in $\cl ( (\RR^{d})^{\otimes k},  \cv )$ for all   $k=1,\dots,\ell-1$. For $(s,t) \in \cs_{2} $ and  $k=0,1,\dots, \ell-2$ we denote 
   \begin{eqnarray}\label{e2.4}
 r^{(k)}_{st} &=& \delta y^{(k)}_{st} - y^{(k+1)}_{s} x^{1}_{st}-\cdots  -y^{(\ell-1)}_{s} x^{\ell-k-1}_{st},
\end{eqnarray}
 and $r^{(\ell-1)}_{st} = \delta y^{(\ell-1)}_{st} $.
We call $(y^{(0)},\dots, y^{(\ell-1)})$      \emph{a discrete $\cv$-valued rough path in $L_{p}$ controlled by $(x,\al)$ } if  $|  r^{(k)}_{s t} |_{L_{p}} \leq K (t-s)^{(\ell-k)\nu }$ for all $k=0,1,\dots, \ell-1$. The discrete path $(y^{(0)},\dots, y^{(\ell-1)})$ is \emph{controlled by $(x,\al)$ almost surely}   if $|r^{(k)}_{st}|\leq G_{y} (t-s)^{(\ell-k)\nu}  $, $k=0,\dots, \ell-1$ for some finite random variable $G_{y}$. 
\end{Def}
  
  \begin{remark}
  In some of our computations below we will rephrase \eref{e2.4} for $k=0$ as the following identity for $(s,t)\in \cs_{2}$:
  \begin{eqnarray}\label{e2.6j}
y_{t} &=& \sum_{i=0}^{\ell-1} y^{(i)}_{s} x^{i}_{st} +r^{(0)}_{st},
\end{eqnarray}
where we take $x^{0}\equiv 1$ by convention. 
  \end{remark}
  
  We first label a simple algebraic property relating the remainders $r^{(k)}$. 
  
  \begin{lemma}\label{lem2.3}
  Let $y=(y^{(0)},\dots, y^{(\ell-1)})$ be a discrete rough path  in $L_{p}$ controlled by $(x, \al)$ for all $p >1$. Then the following identity holds true for all $(s,u,t) \in \cs_{3}'$:
  \begin{eqnarray}\label{e3.5}
  \delta r^{(0)}_{sut}
  &=&
\sum_{i=1}^{\ell-1} r^{(i)}_{su} x^{i}_{ut} .
\end{eqnarray}
In particular, we have the following estimate for $p>1$:
  \begin{eqnarray}\label{e2.6i}
|\delta r^{(0)}_{sut}|_{L_{p}}&\leq& K (t-s)^{\nu \ell}.
\end{eqnarray}
\end{lemma}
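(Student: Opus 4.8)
The plan is to first prove the algebraic identity \eqref{e3.5} and then read off the estimate \eqref{e2.6i}. It is convenient to first rewrite \eqref{e2.4} in the uniform shape
\begin{equation*}
r^{(i)}_{st} = \delta y^{(i)}_{st} - \sum_{j=i+1}^{\ell-1} y^{(j)}_{s}\, x^{j-i}_{st}, \qquad i = 0,1,\dots,\ell-1,
\end{equation*}
where the sum is empty for $i=\ell-1$ (so this recovers $r^{(\ell-1)}_{st}=\delta y^{(\ell-1)}_{st}$), and where $y^{(j)}_s x^{j-i}_{st}$ denotes the partial contraction of $x^{j-i}_{st}$ into the first $j-i$ slots of $y^{(j)}_s$.

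\textbf{Step 2 (applying $\delta$).} Set $g^{(j)}_{st} := y^{(j)}_s x^j_{st}\in\cc_2$, so that $r^{(0)} = \delta y - \sum_{j=1}^{\ell-1} g^{(j)}$. Since $\delta\delta=0$ we have $\delta(\delta y)_{sut}=0$, hence $\delta r^{(0)}_{sut} = -\sum_{j=1}^{\ell-1}\delta g^{(j)}_{sut}$. By \eqref{e2.1}, $\delta g^{(j)}_{sut} = y^{(j)}_s x^j_{st} - y^{(j)}_s x^j_{su} - y^{(j)}_u x^j_{ut}$; inserting the Chen relation \eqref{e2.3} as $x^j_{st} = x^j_{su} + x^j_{ut} + \sum_{i=1}^{j-1} x^{j-i}_{su}\otimes x^i_{ut}$ and simplifying gives
\begin{equation*}
\delta g^{(j)}_{sut} = -\,\delta y^{(j)}_{su}\, x^j_{ut} + \sum_{i=1}^{j-1} y^{(j)}_s\bigl(x^{j-i}_{su}\otimes x^i_{ut}\bigr).
\end{equation*}
Summing over $j$ and changing sign, $\delta r^{(0)}_{sut}$ equals $\sum_{j=1}^{\ell-1}\delta y^{(j)}_{su}\, x^j_{ut}$ minus the double sum $\sum_{j=2}^{\ell-1}\sum_{i=1}^{j-1} y^{(j)}_s\bigl(x^{j-i}_{su}\otimes x^i_{ut}\bigr)$.

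\textbf{Step 3 (matching — the only delicate point).} On the other side, expanding each $r^{(i)}_{su}$ via Step 1, the quantity $\sum_{i=1}^{\ell-1} r^{(i)}_{su}\, x^i_{ut}$ equals $\sum_{i=1}^{\ell-1}\delta y^{(i)}_{su}\, x^i_{ut}$ minus $\sum_{i=1}^{\ell-1}\sum_{j=i+1}^{\ell-1} y^{(j)}_s\bigl(x^{j-i}_{su}\otimes x^i_{ut}\bigr)$, where one uses that a partial contraction against $x^{j-i}_{su}$ followed by contraction against $x^i_{ut}$ is the full contraction against $x^{j-i}_{su}\otimes x^i_{ut}$. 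Swapping the order of summation, $\{(i,j):1\le i,\ i+1\le j\le\ell-1\} = \{(i,j):2\le j\le\ell-1,\ 1\le i\le j-1\}$, so this agrees term by term with the expression from Step 2, which yields \eqref{e3.5}. I expect this index/tensor bookkeeping — checking that the summation ranges coincide and that each factor sits in the right tensor slot (no swapped $su$/$ut$, no off-by-one) — to be the only place where care is genuinely needed; everything else is routine.

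\textbf{Step 4 (the estimate).} Taking $L_p$-norms in \eqref{e3.5} and using the triangle inequality, it suffices to bound each $|r^{(i)}_{su}\, x^i_{ut}|_{L_p}$. By Hölder's inequality, $|r^{(i)}_{su}\, x^i_{ut}|_{L_p}\le |r^{(i)}_{su}|_{L_{p_1}}\,|x^i_{ut}|_{L_{p_2}}$ with $1/p_1+1/p_2=1/p$. Since $y$ is controlled by $(x,\al)$ in $L_{p_1}$ (valid for all $p_1>1$ by hypothesis), Definition \ref{def2.2} gives $|r^{(i)}_{su}|_{L_{p_1}}\le K(u-s)^{(\ell-i)\nu}$, while \eqref{eq:def-holder-seminorms} gives $|x^i_{ut}|_{L_{p_2}}\le |x^i|_{p_2,\nu}^{\,i}\,(t-u)^{i\nu}$. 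Using $u-s\le t-s$, $t-u\le t-s$ and the nonnegativity of the exponents $(\ell-i)\nu$ and $i\nu$, each term is at most $K(t-s)^{(\ell-i)\nu}(t-s)^{i\nu}=K(t-s)^{\nu\ell}$; summing the $\ell-1$ terms yields \eqref{e2.6i}. I do not anticipate any real obstacle here beyond choosing $p_1,p_2$ large enough, which costs nothing.
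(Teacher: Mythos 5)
Your proof is correct and follows essentially the same route as the paper's: both rest on the Chen relation \eref{e2.3} together with the index swap $\sum_{i=1}^{\ell-1}\sum_{j=i+1}^{\ell-1}=\sum_{j=2}^{\ell-1}\sum_{i=1}^{j-1}$, the only difference being that you expand both sides of \eref{e3.5} and match term by term while the paper substitutes $\delta y^{(i)}_{su}=r^{(i)}_{su}+\sum_{j>i}y^{(j)}_{s}x^{j-i}_{su}$ into its intermediate formula and observes the cancellation directly. Your Step 4 is also the paper's argument, just with the implicit H\"older step made explicit.
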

\begin{proof}
By the definition of $r^{(0)}$ in \eref{e2.4}
and the expression \eref{e2.1} of $\delta g$ for $g\in \cc_{2}(\cv)$, some elementary computations yield:
\begin{eqnarray}\label{e2.6ii}
\delta r^{(0)}_{sut} =
 - \sum_{i=1}^{\ell-1}y^{(i)}_{s}  x^{i}_{st} 
+\sum_{i=1}^{\ell-1}y^{(i)}_{u}  x^{i}_{ut} 
+\sum_{i=1}^{\ell-1}y^{(i)}_{s}  x^{i}_{su} 
= 
\sum_{i=1}^{\ell-1} \delta y^{(i)}_{su} x^{i}_{ut} - \sum_{i=2}^{\ell-1} y^{(i)}_{s} \delta x^{i}_{sut}
,
\end{eqnarray}
where we have used the fact that $\delta x^{1}_{sut} =0$. 
 Therefore,   invoking   \eref{e2.4} and \eref{e2.3} again  we obtain
\begin{eqnarray*}
\delta r^{(0)}_{sut} &=& 
\sum_{i=1}^{\ell-1} r^{(i)}_{su} x^{i}_{ut} 
+
\sum_{i=1}^{\ell-2} \sum_{j=i+1}^{\ell-1} y^{(j)}_{s} x^{j-i}_{su}\otimes x^{i}_{ut} 
- 
\sum_{i=2}^{\ell-1} y^{(i)}_{s} \sum_{j=1}^{i-1} x^{i-j}_{su}\otimes x^{ j}_{ut}
\\
&=& \sum_{i=1}^{\ell-1} r^{(i)}_{su} x^{i}_{ut} .
\end{eqnarray*}
This concludes the identity \eref{e3.5}. The inequality \eref{e2.6i} follows by taking $L_{p}$-norm on both sides of \eref{e3.5} and taking into account the assumption that $|x^{i}_{st}|_{L_{p}}\leq K(t-s)^{\nu i}$ and  $|  r^{(i)}_{s t} |_{L_{p}} \leq K (t-s)^{(\ell-i)\nu }$.
  \end{proof}

An essential technical tool used in the sequel is the discrete sewing lemma. It is recalled below, the reader being referred to \cite{LT} for a proof. Let us begin with the definition of discrete $1$-increments.

\begin{Def}\label{def:discrete-increments}
Let $\pi$ be a partition on $[0,1]$. 
We denote by  $\mathcal{C}_{2} ( \pi   ,\mathcal{X})$  the collection of increments    $R$ defined  on $\cs_{2}'$ with values in a Banach space $(\mathcal{X}, |\cdot|)$ such that $R_{t_{k} t_{k+1}} = 0$ for $k=0,1,\dots, n-1$.   Similarly to the continuous case (relations \eref{e2.1} and \eqref{eq:def-holder-seminorms}), we define the operator $\delta$ and some H\"older semi-norms on $ \mathcal{C}_{2} ( \pi   ,\mathcal{X}) $   as follows:
\begin{equation*} 
\delta R_{sut} = R_{st}-R_{su}-R_{ut},
\quad\text{and}\quad
|R|_{\mu} = \sup_{ (u, v) \in \cs_{2} '}  \frac{|R_{uv}|}{|u-v|^{\mu}}\quad \text{for } \mu>0\,.
\end{equation*}
For $R\in \mathcal{C}_{2} ( \pi   ,\mathcal{X})$ and $\mu>0$, we also  set
\begin{equation}\label{e1}
|\delta R|_{\mu} = \sup_{(s,u,t) \in  \cs_{3}'} 
\frac{|\delta R_{sut}|}{|t-s|^{\mu}}\, .
\end{equation}
The space of functions $R\in \mathcal{C}_{2} ( \pi   ,\mathcal{X})$ such that $|\delta R|_{\mu}<\infty$ is denoted by $\mathcal{C}_{2}^{\mu} ( \pi   ,\mathcal{X})$.
\end{Def}

   The sewing lemma for elements of $\mathcal{C}_{2}^{\mu} (\pi, \cx)$ can be stated as follows. 
 
 \begin{lemma}\label{lem2.4}

 For a Banach space $\cx$, an exponent $\mu>1$ and $R \in \mathcal{C}_{2}^{\mu} ( \pi   ,\mathcal{X})$ as in Definition~\ref{def:discrete-increments}, the following relation holds true:
 \begin{equation*}
|R|_{\mu} \leq K_{\mu} |\delta R|_{\mu}\,,
\quad\text{where}\quad
K_{\mu} = 2^{\mu} \, \sum_{l=1}^{\infty} l^{-\mu}.
\end{equation*}
\end{lemma}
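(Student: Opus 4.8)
The plan is to prove the discrete sewing lemma (Lemma \ref{lem2.4}) by a dyadic-type coarsening argument, mirroring the classical continuous sewing lemma of Gubinelli but adapted to the fact that on a finite partition $\pi$ the finest scale is already fixed. The key structural observation is that any pair $(u,v)\in\cs_2'$ spans a sub-collection of consecutive partition points $u=t_i<t_{i+1}<\cdots<t_{i+N}=v$, and since $R_{t_k t_{k+1}}=0$ for each elementary interval, we may telescope $R_{uv}$ back to the elementary scale through repeated use of the Chen-type relation $R_{st}=R_{su}+R_{ut}+\delta R_{sut}$. So the first step is to fix such a pair $(u,v)$ with $N$ elementary subintervals and set up an iterative procedure that, starting from the trivial decomposition into $N$ elementary pieces (on which $R$ vanishes), successively removes partition points one at a time.

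\textbf{Main construction.} At each stage, given a current subdivision $u=s_0<s_1<\cdots<s_M=v$ of $[u,v]$ by partition points, I would identify a point $s_j$ to delete; removing $s_j$ replaces the two increments $R_{s_{j-1}s_j}+R_{s_j s_{j+1}}$ by the single increment $R_{s_{j-1}s_{j+1}}$, at the cost of the error term $\delta R_{s_{j-1} s_j s_{j+1}}$, which is bounded by $|\delta R|_\mu\,|s_{j+1}-s_{j-1}|^\mu$. The standard trick is to always delete a point $s_j$ whose surrounding interval $[s_{j-1},s_{j+1}]$ is \emph{small}: by a pigeonhole argument, among $M$ intervals summing to length $|v-u|$ there is always an index $j$ with $|s_{j+1}-s_{j-1}|\le \frac{2}{M}|v-u|$ (this is exactly where the normalization of the uniform partition $t_k=k/n$ is convenient, though it is not essential). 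Accumulating the errors as $M$ runs from $N$ down to $1$, and using $|v-u|=\sum$ of the relevant lengths, the total error is bounded by
\begin{equation*}
\sum_{M=2}^{N} |\delta R|_\mu \Big(\frac{2}{M}\Big)^\mu |v-u|^\mu
\;\le\; 2^\mu |\delta R|_\mu\, |v-u|^\mu \sum_{l=1}^\infty l^{-\mu},
\end{equation*}
which converges precisely because $\mu>1$, giving $|R_{uv}|\le K_\mu |\delta R|_\mu |v-u|^\mu$ with $K_\mu=2^\mu\sum_{l=1}^\infty l^{-\mu}$. Taking the supremum over $(u,v)\in\cs_2'$ yields $|R|_\mu\le K_\mu|\delta R|_\mu$.

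\textbf{Anticipated obstacle.} The delicate point is the bookkeeping of the accumulated errors: one must make sure that when summing $\delta R$ contributions over the successive deletions, the interval lengths $|s_{j+1}-s_{j-1}|^\mu$ that appear at stage $M$ really do sum (over the one deletion performed at that stage) to something controlled by $(|v-u|/M)^\mu$, and that across all stages $M=N,N-1,\dots,2$ this produces the clean series $\sum_l l^{-\mu}$ rather than a double sum. The resolution is to note that at each stage only \emph{one} $\delta R$ term is incurred and to pick the deleted point greedily so that its interval has length at most $\frac{2}{M}|v-u|$; since $\mu>1$ the geometric-type decay $M^{-\mu}$ is summable, so one does not even need to be careful about how the remaining lengths redistribute. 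A secondary subtlety is handling the base case $N=1$ (where $R_{uv}=0$ trivially and the bound holds vacuously) and the fact that $R$ need not vanish on non-elementary intervals — but this is exactly what the lemma quantifies. Everything else is routine, and for the full details the reader can be referred to \cite{LT} as indicated in the statement.
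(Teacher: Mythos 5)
Your proof is correct and is exactly the standard point-deletion (Young/Gubinelli) argument; the paper itself omits the proof and defers to \cite{LT}, where precisely this argument is carried out, so you are following essentially the same route. One trivial reindexing remark: with $M$ subintervals there are $M-1$ interior points whose neighborhoods sum to at most $2|v-u|$, so the pigeonhole gives a point with $|s_{j+1}-s_{j-1}|\leq \frac{2}{M-1}|v-u|$ rather than $\frac{2}{M}|v-u|$, and summing $2^{\mu}\sum_{M=2}^{N}(M-1)^{-\mu}$ then yields exactly the constant $K_{\mu}=2^{\mu}\sum_{l=1}^{\infty}l^{-\mu}$.
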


\subsection{Discrete rough integrals}
In this subsection, we   derive     upper-bound estimates for some   ``discrete'' integrals defined as Riemann type sums. 
 Namely, let $f$ and $g$ be functions     on   $\cs_{2}'$.
 For a generic partition $\cd_{n}=\{ 0=t_{0}<\cdots<t_{n}=1 \}$ of $[0,1]$, we set
 \begin{eqnarray}\label{e2.10}
\ep(t)=t_{k} \quad\quad \text{ for } \quad t\in (t_{k-1}, t_{k}].
\end{eqnarray}
  We define  the    discrete  integral of $f$ with respect to $g$ as:
  \begin{eqnarray}\label{e2.8}
\cj_{s}^{t} (f;g) 
&:=&
\sum_{s\leq t_{k}  <t }   
  f_{\ep(s) t_{k}} \otimes g_{t_{k}t_{k+1}} , \quad\quad  (s,t) \in \cs_{2}  .
\end{eqnarray}
 Similarly, if  $f  $ is a path on the grid $0=t_{0}<\cdots<t_{n}=1$, then we define the  discrete  integral of $f$ with respect to $g$ as:
\begin{eqnarray}\label{e2.12}
\cj_{s}^{t} (f;g) 
&:=&
\sum_{s\leq t_{k}  <t } 
 \delta f_{\ep(s) t_{k}} \otimes g_{t_{k}t_{k+1}}  , \quad\quad  (s,t) \in \cs_{2}.
\end{eqnarray}
 
 \begin{remark}\label{remark2.8}
 Notice that in \eref{e2.10}, $\ep (t)$ is the upper endpoint of the partition when $t\in (t_{k-1}, t_{k}]$. As a result, the first term of the Riemann sum  \eref{e2.12} is always vanishing. In addition, we also have $\cj_{t_{k}}^{t_{k+1}} (f; g) =0$ for all $(t_{k} , t_{k+1}) \in \cs_{2}'$.  
 \end{remark}
 The next proposition gives a basic estimate for discrete integrals. 
  In the following,  $\cv$ and $\cv'$ stand  for some finite dimensional vector spaces. 
 \begin{prop} \label{prop3.6}
Let $y= (y^{(0)},\dots, y^{(\ell-1)})$  be a discrete rough path on $[0,1]$, controlled by $(x,\al)$ in $L_{2}$, and let $h$ be a  $1$-increment defined on $\cs_{2}'$ with values in $\cv'$.   Suppose that $h$ satisfies
\begin{eqnarray}\label{e3.8i}
| \cj_{s}^{t} (x^{i}; h) |_{L_{2}} \leq K (t-s)^{\al +\nu i},  
\end{eqnarray}
for $i=0,1,\dots,\ell-1$ and $(s,t)\in\cs_{2}'$, where we recall that $\ell$ is an integer such that $\al+\nu\ell>1$.  
Then we have  the estimate
\begin{eqnarray}\label{eq3.36}
| \cj_{s}^{t} ( {r}^{(0)}; h) |_{L_{1}} &\leq & K   (t-s)^{ \nu \ell+\al},
\end{eqnarray}
which is valid for $(s,t)\in\cs_{2}' $.
\end{prop}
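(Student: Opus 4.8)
The plan is to regard the increment $R_{st} := \cj_s^t(r^{(0)}; h)$ as taking values in the Banach space $L_1(\Omega)$ and to apply the discrete sewing lemma (Lemma~\ref{lem2.4}) to it; accordingly, the real task is to produce a bound on $\delta R$ carrying a H\"older exponent strictly larger than $1$. Note first that by Remark~\ref{remark2.8} one has $\cj_{t_k}^{t_{k+1}}(r^{(0)};h) = 0$, so $R \in \mathcal{C}_2(\cd_n, L_1(\Omega))$ and the sewing lemma is applicable in principle.

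Next I would compute $\delta R_{sut}$ for $(s,u,t)\in\cs_3'$. Since $s,u,t$ lie on the partition we have $\ep(s)=s$, and unwinding the definition \eref{e2.8} gives $\delta R_{sut} = \sum_{u\le t_k<t}\big(r^{(0)}_{st_k} - r^{(0)}_{ut_k}\big)h_{t_k t_{k+1}}$. Writing $r^{(0)}_{st_k} - r^{(0)}_{ut_k} = r^{(0)}_{su} + \delta r^{(0)}_{sut_k}$ and then invoking the algebraic identity \eref{e3.5} of Lemma~\ref{lem2.3} --- which is an identity of increments and needs no integrability --- to replace $\delta r^{(0)}_{sut_k}$ by $\sum_{i=1}^{\ell-1} r^{(i)}_{su}\,x^i_{ut_k}$, and recognizing the inner sums as discrete integrals $\cj_u^t(x^i;h)$ (with $x^0\equiv 1$, so that $\sum_{u\le t_k<t}h_{t_k t_{k+1}} = \cj_u^t(x^0;h)$), one arrives at the compact identity
\begin{equation*}
\delta R_{sut} \;=\; \sum_{i=0}^{\ell-1} r^{(i)}_{su}\,\cj_u^t(x^i;h)\,.
\end{equation*}

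Then I would estimate the right-hand side. Cauchy--Schwarz on $\Omega$ gives $\big|r^{(i)}_{su}\cj_u^t(x^i;h)\big|_{L_1} \le |r^{(i)}_{su}|_{L_2}\,|\cj_u^t(x^i;h)|_{L_2}$, the controlled-path bound of Definition~\ref{def2.2} gives $|r^{(i)}_{su}|_{L_2}\le K(u-s)^{(\ell-i)\nu}$, and hypothesis \eref{e3.8i} (in its $i=0,\dots,\ell-1$ form) gives $|\cj_u^t(x^i;h)|_{L_2}\le K(t-u)^{\al+\nu i}$. Since $u-s,\,t-u\le t-s$ and all exponents are nonnegative, each product is bounded by $K(t-s)^{(\ell-i)\nu + \al + \nu i} = K(t-s)^{\nu\ell+\al}$ --- the exponent being the same for every level $i$, which is precisely the ``speed match'' alluded to in the introduction --- so that $|\delta R_{sut}|_{L_1}\le K(t-s)^{\nu\ell+\al}$, i.e. $|\delta R|_{\nu\ell+\al}<\infty$ in the $L_1$-valued sense.

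Finally, since $\mu:=\nu\ell+\al>1$ by the standing assumption on $\ell$, Lemma~\ref{lem2.4} yields $|R|_\mu \le K_\mu|\delta R|_\mu$, that is $|\cj_s^t(r^{(0)};h)|_{L_1}\le K(t-s)^{\nu\ell+\al}$ for all $(s,t)\in\cs_2'$, which is \eref{eq3.36}. The only delicate points are bookkeeping: treating the constant level $i=0$ on the same footing as the higher ones through the convention $x^0\equiv 1$ together with the $i=0$ instance of \eref{e3.8i}, and making sure that Lemma~\ref{lem2.3} is used only algebraically so that control in $L_2$ suffices. Once the identity for $\delta R$ is in hand there is no genuine analytic obstacle.
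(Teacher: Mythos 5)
Your proposal is correct and follows essentially the same route as the paper: you identify $R_{t_kt_{k+1}}=0$, derive the identity $\delta R_{sut}=\sum_{i=0}^{\ell-1}r^{(i)}_{su}\,\cj_u^t(x^i;h)$ via Lemma~\ref{lem2.3}, bound each term by Cauchy--Schwarz using Definition~\ref{def2.2} and \eref{e3.8i}, and conclude with the discrete sewing Lemma~\ref{lem2.4}. The only difference is that you spell out the Cauchy--Schwarz step that the paper leaves implicit, which is a welcome clarification rather than a deviation.
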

\begin{proof} 
In order to bound the increment $R_{st}:=\cj_{s}^{t} (r^{(0)}; h)$, we first note that $R_{t_{k}t_{k+1}} = 0$, due to the fact that $r^{(0)}_{t_{k}t_{k}}=0$. Let us now calculate $\delta R$: for $(s,u,t)\in\cs_{3}'$, it is readily checked that
\begin{eqnarray*}
\delta R_{sut} &=&   \cj_{s}^{t} ( {r}^{(0)}; h) -\cj_{s}^{u} (r^{(0)}; h)-\cj_{u}^{t} (r^{(0)}; h)  
\nonumber 
\\
&=& \sum_{u\leq t_{k}<t} ( r^{(0)}_{st_{k}} - r^{(0)}_{ut_{k}} ) h_{t_{k}t_{k+1}}
 .                                 
\end{eqnarray*}
Writing $r^{(0)}_{st_{k}} -r^{(0)}_{ut_{k}} = \delta r^{(0)}_{sut_{k}} +r^{(0)}_{su} $ and invoking relation \eref{e3.5}, we thus obtain 
\begin{eqnarray}\label{e3.6i}
\delta R_{sut}&=&  \sum_{i=0}^{\ell-1} r^{(i)}_{su} \cj_{u}^{t}(   x^{i}  ; h ).
\end{eqnarray}
 Now take the $L_{1}$-norm on both sides of \eref{e3.6i}, take into account condition  \eref{e3.8i} and the hypothesis $\nu \ell+\al>1$, and then apply Lemma \ref{lem2.4}. This easily yields  the desired estimate~\eref{eq3.36}. 
\end{proof}
  
 \section{Limit theorems}\label{sectionLT1}
 In this section, we first prove a general limit theorem for   discrete integrals.  Then we will handle two more specific situations which arise often in applications. 

\subsection{General limit theorem}

Recall that the discrete integral  $\cj_{s}^{t} (y; h)$ is defined in \eref{e2.8}. In this subsection, we prove a   general limit theorem for  $\cj_{s}^{t} (y; h)$. 

\begin{thm} \label{thm4.1i}
Let $\cv$ and $\cv'$ be two finite-dimensional vector spaces. 
Let $  (y^{(0)},\dots, y^{(\ell-1)})$  be a discrete $\cv$-valued rough path on $[0,1]$ controlled by $(x,\al)$  in $L_{2}$ or almost surely  (see Definition \ref{def2.2}), and 
$h^{n}$ be a  $1$-increment  which satisfies \eref{e3.8i} uniformly in $n$. 
Consider the family $\cj_{s}^{t} (x^{i}; h^{n}) $ defined by \eref{e2.8}, and suppose that 
\begin{eqnarray}\label{e4.1}
   \Big(x,\, \cj (x^{i};  h^{n}) \,,     \,\, i\in  \ci  \Big)
  \xrightarrow{ f.d.d. } ( x, \, {\omega}^{i}  \,,   \,\, i\in \ci  ),
\end{eqnarray} 
as $n\to \infty$, where $( {\omega}^{i},\, i\in \ci)$ is  a $1$-increment independent of $x$,
  $\ci:=\{ 0,1,\dots, \ell-1\}$,  and
   $\xrightarrow{f.d.d.}$ stands for convergence of finite dimensional distributions.  Suppose further that, if $\cj (y^{(i)}; \omega^{i})$ is given by \eref{e2.8}, we have   
\begin{eqnarray}\label{e3.2}
\Big(x, \cj (y^{(i)}, {\omega}^{i}) \,, \,\,  i\in  \ci \Big)  \xrightarrow{f.d.d.}   (x, \,v^{i}  \,, \,\,  i\in  \ci  ),
\end{eqnarray}
where $v^{i}$, $i\in \ci$ are $\cv\otimes \cv'$-valued $1$-increment. 
 Then the following convergence holds true as $  n\to \infty$:
 \begin{eqnarray}\label{e4.4}
  (x, \cj  (y ; h^{n})  )  & \xrightarrow{f.d.d.} & (x, v^{0}+ v^{1}+\cdots+v^{\ell-1}  ).
\end{eqnarray}
\end{thm}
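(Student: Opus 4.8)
The plan is to combine the rough-path expansion of the weight $y$ (via its remainder $r^{(0)}$) with the assumed limit theorems for the ``bricks'' $\cj(x^{i};h^{n})$, and then push the result through a coarse-graining (big block/small block) scheme. The starting point is identity \eref{e2.6j}: writing $y_{t_{k}} = \sum_{i=0}^{\ell-1} y^{(i)}_{s} x^{i}_{s t_{k}} + r^{(0)}_{s t_{k}}$ and plugging this into \eref{e2.8}, we get, for a fixed subinterval $[s_{j},s_{j+1}]$ of an auxiliary partition $\{s_j\}$ of $[0,1]$,
\begin{eqnarray*}
\cj_{s_{j}}^{s_{j+1}}(y;h^{n})
= \sum_{i=0}^{\ell-1} y^{(i)}_{s_{j}}\, \cj_{s_{j}}^{s_{j+1}}(x^{i};h^{n}) + \cj_{s_{j}}^{s_{j+1}}(r^{(0)};h^{n}).
\end{eqnarray*}
Summing over $j=0,\dots,m-1$ (the $\{s_j\}$ being nested in $\cd_n$), the first step is to control the remainder term: by Proposition \ref{prop3.6}, $|\cj_{s_{j}}^{s_{j+1}}(r^{(0)};h^{n})|_{L_1} \leq K (s_{j+1}-s_{j})^{\nu\ell+\al}$ uniformly in $n$, and since $\nu\ell+\al>1$ and $|s_{j+1}-s_j|\leq 1/m$, the sum $\sum_j \cj_{s_{j}}^{s_{j+1}}(r^{(0)};h^{n})$ is $O(m^{-(\nu\ell+\al-1)})$ in $L_1$, hence vanishes as $m\to\infty$, uniformly in $n$. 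In the almost-sure controlled case one argues the same way with the random constant $G_y$. So the remainder is a genuinely negligible ``small block'' error.

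The second step is to identify the limit of the main term $\sum_{j=0}^{m-1}\sum_{i=0}^{\ell-1} y^{(i)}_{s_{j}}\,\cj_{s_{j}}^{s_{j+1}}(x^{i};h^{n})$ as $n\to\infty$ with $m$ fixed. Here I would use assumption \eref{e4.1}: jointly in $n$, the vector $(x, (\cj_{s_{j}}^{s_{j+1}}(x^{i};h^{n}))_{i,j})$ converges f.d.d.\ to $(x,(\omega^{i}_{s_j s_{j+1}})_{i,j})$ with $(\omega^i)$ independent of $x$. Since $y^{(i)}_{s_j}$ is a continuous (indeed controlled) functional of $x$ — more precisely a measurable function of the path $x$ on $[0,s_j]$ — the products $y^{(i)}_{s_j}\cj_{s_j}^{s_{j+1}}(x^i;h^n)$ converge jointly (with $x$) to $y^{(i)}_{s_j}\,\omega^i_{s_j s_{j+1}}$; one has to be a little careful invoking the continuous mapping theorem against $L_2$-type weights, but boundedness in $L_2$ of the $\cj(x^i;h^n)$ terms plus the independence of the limit $\omega^i$ from $x$ makes the product converge in distribution jointly with $x$. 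Hence
\begin{eqnarray*}
\Big(x,\ \sum_{j=0}^{m-1}\sum_{i=0}^{\ell-1} y^{(i)}_{s_{j}}\,\cj_{s_{j}}^{s_{j+1}}(x^{i};h^{n})\Big) \xrightarrow{f.d.d.} \Big(x,\ \sum_{i=0}^{\ell-1}\sum_{j=0}^{m-1} y^{(i)}_{s_{j}}\,\omega^{i}_{s_{j}s_{j+1}}\Big), \quad n\to\infty.
\end{eqnarray*}
But $\sum_{j=0}^{m-1} y^{(i)}_{s_j}\,\omega^i_{s_j s_{j+1}}$ is exactly the discrete integral $\cj(y^{(i)};\omega^i)$ along the coarse partition $\{s_j\}$, so letting $m\to\infty$ and using \eref{e3.2} gives that this converges (with $x$) to $v^0+v^1+\cdots+v^{\ell-1}$.

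The third step is to assemble these into a single iterated-limit-to-genuine-limit argument: we have shown, writing $A_{m,n}:=\sum_j \cj_{s_j}^{s_{j+1}}(y;h^n)=\cj(y;h^n)$ (telescoping — the coarse sum of discrete integrals over $[s_j,s_{j+1}]$ equals the full discrete integral over $[0,1]$, up to the harmless $\ep$-based boundary conventions of Remark \ref{remark2.8}), that $\lim_{m}\lim_{n} A_{m,n}=v^0+\cdots+v^{\ell-1}$ jointly with $x$, and that $\sup_n |A_{m,n}-\cj(y;h^n)|_{L_1}\to 0$ as $m\to\infty$. A standard $3\varepsilon$ argument for f.d.d.\ convergence (split into the small-block error, controlled uniformly in $n$ by Step 1, plus the main term whose $n$-limit is handled in Step 2, plus the $m$-limit handled by \eref{e3.2}) then yields \eref{e4.4}. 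I expect the main obstacle to be Step 2: rigorously passing to the joint limit of a product where one factor $y^{(i)}_{s_j}$ depends on $x$ and the other $\cj_{s_j}^{s_{j+1}}(x^i;h^n)$ converges to something \emph{independent} of $x$ — this requires care with the mode of convergence (we only have convergence in distribution, not in probability, of $h^n$), and one typically handles it by testing against bounded continuous functionals and using the $L_2$-uniform bound \eref{e3.8i} to get uniform integrability, together with an approximation of $y^{(i)}_{s_j}$ by bounded continuous functionals of finitely many increments of $x$.
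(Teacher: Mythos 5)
Your proposal is correct and follows essentially the same route as the paper's proof: the same controlled-path expansion of $y$ on a coarse partition, the same use of Proposition \ref{prop3.6} to make the remainder $O(m^{1-\nu\ell-\al})$ in $L_{1}$ uniformly in $n$, and the same iterated limit ($n\to\infty$ for fixed $m$ via \eref{e4.1}, then $m\to\infty$ via \eref{e3.2}). The only cosmetic difference is that the paper takes $u_{j}=j/m$ not necessarily in $\cd_{n}$ and absorbs the discrepancy between $y^{(i)}_{\ep(u_{j})}$ and $y^{(i)}_{u_{j}}$ into an extra term $\varphi_{2}^{i}$ (shown to vanish by continuity of $y^{(i)}$ together with a truncation on $G_{y}$), whereas you assume the coarse partition is nested in $\cd_{n}$, which amounts to the same continuity argument.
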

\begin{remark}
If we particularize our limit theorem to the level $i=0$ of $\cj (x^{i}; h^{n})$, we just get that $h^{n} \xrightarrow{f.d.d.} \omega^{0}$ as part of the standing assumption. In return, we obtain that $v^{0} = \int_{0}^{1} y_{s} d \omega^{0}_{s}$ in relation \eref{e4.4}. 
\end{remark}
\begin{remark}
As the reader might have observed, Theorem \ref{thm4.1i} gives a general transfer principle from limit theorems for unweighted sums to limit theorems for weighted sums, within a rough paths framework. 
\end{remark}
\begin{remark}
Condition \eref{e4.1} is more demanding than condition \eref{e3.2} in Theorem \ref{thm4.1i}. Indeed, condition \eref{e3.2} is usually reduced to the convergence of a Riemann sum to an It\^o or Riemann type integral. 
\end{remark}
 \begin{proof}
 For sake of conciseness we will only show the convergence of $(x_{st}, \cj_{0}^{1}(y;h^{n}))$ for   some $(s,t)\in \cs_{2}$. The convergence of the finite dimensional distributions of $(x, \cj(y;h^{n}))$ can be shown in a similar way. 
  The proof is divided into several steps.
  
\noindent{\it Step 1: A decomposition of $\cj_{0}^{1} (y ; h^{n}) $.} 
 Take two uniform partitions  on $[0,1]$: $t_{k} =  {k}/{n}$   for $k, n \in \NN$ and $u_{j}= {j}/{m}$  for $j,m\in \NN$, and   $m\ll n$. Set:
\begin{eqnarray}\label{eq9.7}
D_{j}= \{t_{k}: u_{j+1}> t_{k}\geq u_{j}   \}   \quad \text{and} \quad \bar{u}_{j} = \ep(u_{j}), 
\end{eqnarray}
  where the function $\ep$ has been introduced in \eref{e2.10}.  
  By definition \eref{e2.12} we have 
  \begin{eqnarray*}
\cj_{0}^{1} (y; h^{n}) = \sum_{k=0}^{n-1} \delta y_{0t_{k}} \otimes h^{n}_{t_{k}t_{k+1}} = \sum_{k=0}^{n-1} y_{t_{k}} \otimes h^{n}_{t_{k}t_{k+1}}, 
\end{eqnarray*}
where the second identity is due to the fact that we have assumed $y_{0}=0$ in Definition \ref{def2.2}. Next we decompose the Riemann sum thanks to the sets $D_{j}$. We get
\begin{eqnarray*}
\cj_{0}^{1} (y; h^{n}) = \sum_{j=0}^{m-1} \sum_{t_{k}\in D_{j}} y_{t_{k}} h^{n}_{t_{k}t_{k+1}}. 
\end{eqnarray*}
Now we invoke relation \eref{e2.6j} with $s=\bar{u}_{j}$ and $t=t_{k}$ whenever $t_{k} \in D_{j}$. This yields: 
 \begin{eqnarray}\label{e3.5iii}
\cj_{0}^{1} (y ; h^{n})   &=&   \varphi^{0} +\cdots+\varphi^{\ell-1}+R^{\varphi} ,
\end{eqnarray}
where 
\begin{eqnarray}\label{e3.5ii}
 && \varphi^{i}  =   \sum_{j=0}^{m-1} \sum_{t_{k}\in D_{j}  } y^{(i)}_{\bar{u}_{j} }    x^{i}_{\bar{u}_{j} t_{k}}    \otimes h^{n}_{t_{k}t_{k+1}} =  \sum_{j=0}^{m-1}y^{(i)}_{\bar{u}_{j} }   \cj_{u_{j}}^{u_{j+1}} (x^{i}; h^{n}),
 \nonumber
 \\
  &&  R^{\varphi} =  \sum_{j=0}^{m-1} 
    \sum_{t_{k}\in D_{j}  }  
   r^{(0)}_{\bar{u}_{j} t_{k}} \otimes h^{n}_{t_{k}t_{k+1}} = \sum_{j=0}^{m-1}  \cj_{u_{j}}^{u_{j+1}} ( r^{(0)}; h^{n} ) \,,
\end{eqnarray}
and where we have set $x^{0}_{st} =1$ by convention.  
 Let us further decompose $\varphi^{i}$ as follows:
 \begin{eqnarray*}
\varphi^{i} &=&  \sum_{j=0}^{m-1} y^{(i)}_{ {u}_{j} } \cj_{u_{j}}^{u_{j+1}} ( x^{i}; h^{n} )
+
  \sum_{j=0}^{m-1} ( y^{(i)}_{ \bar{u}_{j} }  - y^{(i)}_{ {u}_{j} } ) \cj_{u_{j}}^{u_{j+1}} ( x^{i}; h^{n} )
 \\
 &:=&  {\varphi}_{1}^{i}+ {\varphi}_{2}^{i}.
\end{eqnarray*}
 We now study the convergence of $\varphi_{1}^{i}$ and $\varphi_{2}^{i}$ separately. 

\noindent{\it Step 2: Convergences of $  {\varphi}_{2}^{i}$.} 
In this step we show that 
for $i=1,\dots, \ell-1$, the random variable  
$   {\varphi}_{2}^{i} $ converges to zero   in probability as $n \to \infty$. To this aim, it suffices to consider the case when:
\begin{eqnarray}\label{e4.6i}
 (y^{(0)},\dots, y^{(\ell-1)}) \text{ is controlled by } (x,\al) \text{ in } L_{p}\,, \text{ for an arbitrary } p>1. 
\end{eqnarray} 
Indeed, for   $\ep>0$, we can find a constant $K$ such that $P(G_{y}>K)\leq \ep$ (see Definition \ref{def2.2} for the definition of $G_{y}$). Define $ (\bar{y}^{(0)}, \dots, \bar{y}^{(\ell-1)}) $ such that $\bar{y}^{i}= y^{i}$ on $\{G_{y}\leq K\}$ and $ \bar{y}^{i}\equiv 0 $ on $\{G_{y}>K\}$.  Then  $ (\bar{y}^{(0)}, \dots, \bar{y}^{(\ell-1)}) $ satisfies the condition \eref{e4.6i}, and we can write
\begin{eqnarray*}
P(|  {\varphi}_{2}^{i} |>\ep) &=&  P(|  {\varphi}_{2}^{i} |>\ep , G_y\leq K)  + P(|  {\varphi}_{2}^{i} |>\ep, G_y>K)
\\
&\leq &   P(| \tilde{\varphi}_{2}^{i} |>\ep  )  +\ep,
\end{eqnarray*}
where $\tilde{\varphi}_{2}^{i} = \varphi_{2}^{i}$ when $G_y\leq K$ and $ \tilde{\varphi}_{2}^{i} =  0 $ when $G_y> K$. 
So if we can show that $\tilde{\varphi}_{2}^{i}\to 0$ in probability, then the same convergence   holds for ${\varphi}_{2}^{i}$.

Assume now that \eref{e4.6i} is true. In this case   we have
\begin{eqnarray}\label{e3.5i}
|\varphi^{i}_{2}| &\leq& \sum_{j=0}^{m-1} | y^{(i)}_{ \bar{u}_{j} }  - y^{(i)}_{ {u}_{j} } | \cdot | \cj_{u_{j}}^{u_{j+1}} ( x^{i}; h^{n} )|.
\end{eqnarray}
Taking the $L_{1}$-norm on both sides of the inequality \eref{e3.5i}, invoking 
the fact that $h^{n}$ satisfies relation \eref{e3.8i} uniformly in $n$, and using the continuity of $y^{(i)}$ given by \eref{e4.6i},   we easily obtain the following convergence in probability:
\begin{eqnarray*}
\lim_{n\to \infty}  |\varphi^{i}_{2}|  &\to & 0.
\end{eqnarray*}

\noindent{\it Step 3: Convergences of $ {\varphi}_{1}^{i}$.} 
The convergences of $ \sum_{i=0}^{\ell-1} {\varphi}_{1}^{i}$ follows immediately from the assumptions of the theorem. Indeed, fixing $m$ and sending $n$ to $\infty$, our assumption \eref{e4.1} directly yields: 
\begin{eqnarray}\label{e3.6ii}
\Big(x_{st}, \sum_{i=0}^{\ell-1} \varphi_{1}^{i} \Big)\xrightarrow{ \ d \ } \Big(x_{st}, \sum_{i=0}^{\ell-1}  y^{i}_{ {u}_{j}}  {\omega}^{i}_{u_{j}u_{j+1}} \Big) . 
\end{eqnarray}
 We now send   $m\rightarrow \infty$ in \eref{e3.6ii}  and recall  the convergence \eref{e3.2}. This yields    the weak convergence:
\begin{eqnarray*}
 \Big(x_{st},  \,\sum_{i= 0}^{\ell-1}  {\varphi}_{1}^{i} \Big)
 & \xrightarrow{ \ d \ } & \Big(x_{st},\,  \sum_{i=0}^{\ell-1}v^{i} \Big) ,
\end{eqnarray*}
as $n\to \infty$ and $m\to \infty$.

\noindent{\it Step 4: Convergences of the remainder term $R^{\varphi}$.}  
Going back to equation \eref{e3.5iii} and summarizing our computations,   our claim \eref{e4.4} is now  reduced  to show that we have
\begin{eqnarray}\label{e9.4iii}
\lim_{m\rightarrow \infty} \limsup_{n\rightarrow \infty}  
    |
  {R}^{\varphi}    
  |   
  =0,
\end{eqnarray}
in probability. Moreover, as  in Step 2  it suffices to show the convergence \eref{e9.4iii} under   condition~\eref{e4.6i}. 
Eventually,   applying Proposition \ref{prop3.6} to \eref{e3.5ii} we obtain:
\begin{eqnarray}\label{e5.9}
| {R}^{\varphi} |_{L_{1}} \leq K \sum_{j=0}^{m-1} m^{-\nu\ell -\al} \leq Km^{1-\nu\ell -\al}.
\end{eqnarray}
The convergence \eref{e9.4iii} then follows   from   \eref{e5.9} and the fact that ${\nu\ell+\al>1}$.
 \end{proof}

 Let us state a more practical version of Theorem \ref{thm4.1i}, for which we take advantage of certain cancellations.

\begin{theorem}\label{thm4.1}
Let $y= (y^{(0)},\dots, y^{(\ell-1)})$ be a discrete rough path controlled by $(x, \al)$  in $L_{2}$ or almost surely, and assume that $h^{n}$ satisfies the inequality \eref{e3.8i} uniformly in $n$. Suppose   that  the following weak convergence holds true as $n\to \infty$:
\begin{eqnarray}\label{eq9.6}
   \Big(x,\, \cj (x^{i};  h^{n}) \,,     \,\, i\in  \ci' \Big)
  \xrightarrow{ f.d.d. } ( x, \, {\omega}^{i}  \,,   \,\, i\in \ci' ),
\end{eqnarray} 
  where $\cj (x^{i}; h^{n})$ is defined by \eref{e2.8}. 
Assume   that  for $i \in \ci''$ we have
\begin{eqnarray}\label{e5.4}
\lim_{m\rightarrow \infty} \limsup_{n\rightarrow \infty}  
   \Big| \sum_{j=0}^{m -1}  y^{(i)}_{u_{j}} \cj_{u_{j}}^{u_{j+1}} ( x^{i}; h^{n} ) \Big|  
  =0
\end{eqnarray}
in probability.
Here \,$\ci'$, $\ci''$    are disjoint subsets of \,$\ci:=\{ 0,1,\dots, \ell-1\}$ such that  \,$\ci'\cup \ci''  = \ci$.  Suppose further that
\begin{eqnarray}\label{e3.13}
\Big(x, \cj (y^{(i)}, {\omega}^{i}) \,, \,\,  i\in  \ci' \Big)  \xrightarrow{f.d.d.}  \Big(x,v^{i}  \,, \,\,  i\in  \ci' \Big),
\end{eqnarray}
where $v^{i}$, $i\in \ci'$ are random variables. 
 Then the following convergence holds true as $  n\to \infty$:
 \begin{eqnarray*} 
 \Big(x,   \cj (y ; h^{n}) \Big)    & \xrightarrow{f.d.d.} & \Big(x, \sum_{i\in \ci' }v^{i} \Big)  .
\end{eqnarray*}
 \end{theorem}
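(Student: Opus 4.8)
The plan is to reduce Theorem \ref{thm4.1} to the already-established Theorem \ref{thm4.1i} by exhibiting a genuine limiting $1$-increment $\omega^{i}$ for \emph{every} $i\in\ci$, not just $i\in\ci'$. For the indices $i\in\ci''$, where \eref{e5.4} holds, the natural choice is $\omega^{i}\equiv 0$; the point is then that the corresponding contribution $\cj(y^{(i)};\omega^{i})$ is identically zero, so $v^{i}=0$ for $i\in\ci''$ and the sum over $\ci$ collapses to the sum over $\ci'$, matching the announced conclusion. The main structural observation is that the proof of Theorem \ref{thm4.1i} does not actually use the joint convergence \eref{e4.1} in full force beyond what is needed in Step~3; the term $\varphi_{1}^{i}$ for $i\in\ci''$ is precisely $\sum_{j=0}^{m-1}y^{(i)}_{u_{j}}\cj_{u_{j}}^{u_{j+1}}(x^{i};h^{n})$, which by hypothesis \eref{e5.4} tends to zero after the iterated limit. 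So I would essentially re-run the four-step argument of Theorem \ref{thm4.1i}, but splitting the index set.

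Concretely: first I would record the same decomposition \eref{e3.5iii}--\eref{e3.5ii} of $\cj_{0}^{1}(y;h^{n})$ into $\sum_{i\in\ci}\varphi^{i}+R^{\varphi}$ and then $\varphi^{i}=\varphi_{1}^{i}+\varphi_{2}^{i}$. Step~2 (the vanishing of $\varphi_{2}^{i}$ for $i\ge 1$) and Step~4 (the vanishing of $R^{\varphi}$ via Proposition \ref{prop3.6} and the estimate \eref{e5.9}) go through verbatim, since they only use the controlled-path structure and the uniform bound \eref{e3.8i} on $h^{n}$, which are assumed here exactly as in Theorem \ref{thm4.1i}. The modification is entirely in Step~3: I would split $\sum_{i\in\ci}\varphi_{1}^{i}=\sum_{i\in\ci'}\varphi_{1}^{i}+\sum_{i\in\ci''}\varphi_{1}^{i}$. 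For the $\ci'$ part I invoke \eref{eq9.6} (fixing $m$, sending $n\to\infty$) followed by \eref{e3.13} (sending $m\to\infty$), exactly as in the original Step~3, obtaining convergence to $\sum_{i\in\ci'}v^{i}$. For the $\ci''$ part, hypothesis \eref{e5.4} says precisely that $\sum_{i\in\ci''}\varphi_{1}^{i}\to 0$ in probability under the iterated $\lim_{m}\limsup_{n}$, so it contributes nothing.

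Finally I would assemble the pieces: combining Steps~2, 3, 4 in the joint limit $n\to\infty$ then $m\to\infty$ gives $(x_{st},\cj_{0}^{1}(y;h^{n}))\xrightarrow{d}(x_{st},\sum_{i\in\ci'}v^{i})$, and the finite-dimensional version follows by the same reasoning applied to an arbitrary tuple $(x,\cj_{s_{1}}^{t_{1}}(y;h^{n}),\dots,\cj_{s_{r}}^{t_{r}}(y;h^{n}))$, just as indicated at the start of the proof of Theorem \ref{thm4.1i}. A small point of care, as in Step~2 and Step~4 of that proof, is the reduction to the case where $(y^{(0)},\dots,y^{(\ell-1)})$ is controlled in $L_{p}$ for all $p>1$ via the truncation on $\{G_{y}\le K\}$; this localization is needed so that the $L_{1}$ estimates in Steps~2 and~4 make sense in the almost-sure case, and it is harmless because it only affects an event of probability $\le\ep$. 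The only genuinely delicate point — and the place I would be most careful — is making sure the order of limits is respected: \eref{e5.4} and \eref{e9.4iii} are both statements about $\lim_{m}\limsup_{n}$, and one must check these are compatible with the fixed-$m$-then-$n\to\infty$ use of \eref{eq9.6} and the subsequent $m\to\infty$ use of \eref{e3.13}, which is exactly the bookkeeping already carried out in Theorem \ref{thm4.1i}.
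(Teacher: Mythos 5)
Your proposal is correct and follows essentially the same route as the paper: the paper explicitly omits the proof of Theorem \ref{thm4.1}, stating only that it is "similar to Theorem \ref{thm4.1i}", and your write-up is precisely the intended adaptation — keep the decomposition \eref{e3.5iii}--\eref{e3.5ii} and Steps 2 and 4 unchanged, and in Step 3 split $\sum_{i\in\ci}\varphi_{1}^{i}$ so that the $\ci''$ part is killed by \eref{e5.4} (noting that $\varphi_{1}^{i}$ is exactly the sum appearing there) while the $\ci'$ part converges via \eref{eq9.6} and \eref{e3.13}. Your remarks on the truncation to the $L_{p}$-controlled case and on the compatibility of the iterated limits are exactly the bookkeeping already present in the proof of Theorem \ref{thm4.1i}.
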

\begin{proof} 
The proof is similar to Theorem \ref{thm4.1i} and is omitted. 
\end{proof}

Theorem \ref{thm4.1} allows us to distinguish two predominant cases in limit   Theorem \ref{thm4.1i}: (i) A usual asymptotic regime, for which only one level $v^{i}$ remains. (ii) A critical case, for which more than one level survive as $n$ goes to $\infty$. 

The following definition captures those different behaviors. 
 \begin{Def}\label{def3.5}
 We will call a limit theorem \emph{single} if $\ci'$ in Theorem \ref{thm4.1} has only one element. Similarly, a limit theorem is called  \emph{double} when $\ci'$ has   two elements.
 \end{Def}

\begin{remark}   If   the convergences in \eref{eq9.6} and \eref{e3.13} hold  true in probability, then in a similar way  one can show that   $\cj_{s}^{t} (y^{(0)}; h^{n})   $ converges to $ \sum_{i\in \ci'}v^{i}_{st}$ in probability.  
\end{remark}
\begin{remark}\label{remark4.12} 
In the case $\nu>\frac12$ and $\al\geq\frac12$, we have $\ell=1$ and $\ci = \{0\}$. Therefore,   conditions   \eref{e3.8i}, \eref{eq9.6}, \eref{e5.4}   are reduced to 
$|h^{n}|_{L_{2}} \leq K(t-s)^{\al}$ and
 $ (h^{n}, x) \xrightarrow{f.d.d.} (\omega,x)  $.  If $h^{n} \to \omega$ in $L_{p}$ for all $p\geq 1$, then $\cj_{0}^{1}(y; h^{n})$ also converges in $L_{q}$ for $q\geq 1$. This situation allows to recover the results in \cite{CNP} and \cite[Proposition 7.1]{HLN1}. A more specific statement will be given in Proposition~\ref{prop8.6} below. 
\end{remark}

 \subsection{Single  limit theorem I}\label{section3.2}

 An important case  in Theorem \ref{thm4.1i} is   when $h^{n}  $ converges  in distribution to a Brownian motion and  the discrete integral $\cj_{0}^{1} (y^{(0)}; h^{n})$ converges to the  Wiener integral $\int_{0}^{1} y_{t}\otimes dW_{t}$. In this subsection we investigate this type of limit theorems.

 \begin{theorem}\label{thm5.9}
 Let $x$ be a $(L_{p}, \nu, \ell)$-rough path for $p=4$, $\nu\in (0,1)$ and $\ell$ such that $\nu \ell +\frac12>1$. 
 Let   $y= (y^{(0)},\dots, y^{(\ell-1)})$  be a process on $[0,1]$ controlled by $(x,\frac12)$ in $L_{2}$ or  almost surely, and assume that  $h^{n}$ satisfies the inequality \eref{e3.8i} uniformly in $n$.
 Suppose that the following assumptions are fulfilled:
 
\noindent (i) We have the convergence    $ (x ,    h^{n}        )
  \xrightarrow{f.d.d.} ( x ,  W    )$ as $n\to \infty$, 
where $W$ is a standard Brownian motion independent of $x$.

\noindent (ii) For any partition $ 0\leq s_{0} <s_{1}<\cdots<s_{m }\leq 1 $   of $[0,1]$ such that $m\ll n$ and 
$ |s_{j+1} - s_{j}| \leq \frac{1}{m}$, 
 we have 
\begin{eqnarray}\label{e6.1}
\lim_{m\to \infty} \limsup_{n\to \infty} \Big| \sum_{j=0}^{m-1}     \cj_{s_{j}}^{s_{j+1}} ( x^{i}; h^{n} ) \Big| &=&0
\end{eqnarray}
in probability for $i=1,\dots, \ell-1$. 

Then we have the following convergence in distribution for the process $y$:
\begin{eqnarray}\label{e5.2}
(x, \cj  (y ; h^{n}) ) &\xrightarrow{f.d.d.}& (x, v), 
\end{eqnarray}
where the integral $v_{st}:=\int_{s}^{t} y_{u} \otimes dW_{u}$ has to be understood as a conditional Wiener integral.
 \end{theorem}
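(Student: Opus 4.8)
The plan is to derive Theorem \ref{thm5.9} as a direct instance of the general transfer principle of Theorem \ref{thm4.1}, applied with $\al=\frac12$, $\ci'=\{0\}$, $\ci''=\{1,\dots,\ell-1\}$ and $\omega^{0}=W$. Three hypotheses of Theorem \ref{thm4.1} then have to be checked. First, condition \eqref{eq9.6} restricted to $\ci'=\{0\}$ reads $(x,\cj(x^{0};h^{n}))\xrightarrow{f.d.d.}(x,\omega^{0})$; since $x^{0}\equiv 1$ by convention we have $\cj(x^{0};h^{n})=\cj(1;h^{n})=h^{n}$, so this is precisely assumption (i). Second, one needs \eqref{e3.13} for the single index $i=0$, namely the joint convergence of the Riemann sums $\cj(y;W)=\sum_{j}y_{u_{j}}\otimes\delta W_{u_{j}u_{j+1}}$ towards $v_{st}=\int_{s}^{t}y_{u}\otimes dW_{u}$; I would obtain this by conditioning on $x$. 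Given $x$, the weight $y$ is an a.s.\ continuous, $x$-measurable integrand (controlled processes are continuous), $W$ remains a Brownian motion independent of the conditioning, and the elementary $L_{2}(W)$ estimate for Riemann sums of a continuous integrand against Brownian motion gives convergence of $\cj(y;W)$ to the conditional Wiener integral; integrating out $x$ and using the independence $W\perp x$ yields the joint f.d.d.\ statement \eqref{e3.13}.

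The substantial point is the remaining hypothesis \eqref{e5.4}, i.e.\ to show that for every $i\in\{1,\dots,\ell-1\}$,
\[
\lim_{m\to\infty}\limsup_{n\to\infty}\Big|\sum_{j=0}^{m-1}y^{(i)}_{u_{j}}\,\cj_{u_{j}}^{u_{j+1}}(x^{i};h^{n})\Big|=0
\]
in probability, starting only from the \emph{unweighted} cancellation \eqref{e6.1} of assumption (ii). First I would reduce, exactly as in Step 2 of the proof of Theorem \ref{thm4.1i}, to the case where $(y^{(0)},\dots,y^{(\ell-1)})$ is controlled by $(x,\frac12)$ in every $L_{p}$, by localising on an event $\{G_{y}\le K\}$. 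Then I would run a nested big-block/small-block scheme: on top of the meso-grid $\{u_{j}\}$ of mesh $1/m$, introduce a macro-grid $\{w_{p}\}$ of mesh $1/M$ with $M\ll m\ll n$, and freeze $y^{(i)}$ on the macro-grid. The frozen main term $\sum_{p}y^{(i)}_{w_{p}}\big(\sum_{j:u_{j}\in[w_{p},w_{p+1})}\cj_{u_{j}}^{u_{j+1}}(x^{i};h^{n})\big)$ is a \emph{finite} sum of $M$ bounded weights times inner sums, each of which tends to $0$ after $\lim_{m}\limsup_{n}$ by \eqref{e6.1} applied on the subinterval $[w_{p},w_{p+1}]$, hence it vanishes after the further limit $M\to\infty$. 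The freezing error $\sum_{p}\sum_{j}\big(y^{(i)}_{u_{j}}-y^{(i)}_{w_{p}}\big)\cj_{u_{j}}^{u_{j+1}}(x^{i};h^{n})$ is handled using that $(y^{(i)},\dots,y^{(\ell-1)})$ is itself a controlled process, so by \eqref{e2.4} one has $y^{(i)}_{u_{j}}-y^{(i)}_{w_{p}}=\sum_{k=1}^{\ell-1-i}y^{(i+k)}_{w_{p}}x^{k}_{w_{p}u_{j}}+r^{(i)}_{w_{p}u_{j}}$: the monomial terms recombine, via the multiplicativity relation \eqref{e2.3}, into macro-grid quantities of the form $\cj(x^{i+k};h^{n})$ with $i+k\le\ell-1$, to which \eqref{e6.1} again applies, plus lower-order cross terms; the remainder term is controlled by $|r^{(i)}_{st}|\le K(t-s)^{(\ell-i)\nu}$ together with the a priori bound \eqref{e3.8i}, the parameter condition $\nu\ell+\frac12>1$ being exactly what renders these error contributions manageable. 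This recursion on the level $i$ terminates after at most $\ell-1-i$ steps.

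Once \eqref{e5.4} is in hand, Theorem \ref{thm4.1} delivers $(x,\cj(y;h^{n}))\xrightarrow{f.d.d.}(x,v^{0})$ with $v^{0}$ the limit identified in \eqref{e3.13}, i.e.\ $v^{0}_{st}=\int_{s}^{t}y_{u}\otimes dW_{u}$, which is the assertion \eqref{e5.2}. I expect the main obstacle to be precisely this passage from the unweighted cancellation \eqref{e6.1} to its weighted counterpart \eqref{e5.4}: the naive term-by-term bound $\sum_{j}|\cj_{u_{j}}^{u_{j+1}}(x^{i};h^{n})|_{L_{2}}\le Km^{1/2-\nu i}$ fails to be summable when $\nu<\frac12$ — which is exactly the range relevant to the Breuer--Major application — so one genuinely has to exploit the rough-path structure of the weight, either through the recursive lifting sketched above or through a summation-by-parts argument carried out inside each macro-block, rather than estimating the sum directly.
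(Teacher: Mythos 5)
Your reduction to Theorem \ref{thm4.1} with $\ci'=\{0\}$, $\ci''=\{1,\dots,\ell-1\}$ is exactly the paper's, and your verification of \eref{eq9.6} and of \eref{e3.13} (Riemann sums of the conditional Wiener integral) matches the paper's Step 2. The divergence --- and the gap --- is in your treatment of \eref{e5.4}. The paper does \emph{not} transfer the unweighted cancellation \eref{e6.1} to its weighted counterpart by a second coarse-graining. It first proves, by induction on the level $i$ via the Chen decomposition \eref{e5.3}, that $\cj_s^t(x^i;h^n)\xrightarrow{d}\omega^i_{st}=\int_s^t x^i_{su}\otimes dW_u$ for every $i\in\ci$ (relation \eref{e3.23}); there \eref{e6.1} is used only to kill the single diagonal term $A_i$, while the intermediate terms $A_l$, $0<l<i$, are killed \emph{after} letting $n\to\infty$ by the It\^o isometry of the limiting Wiener integral (estimate \eref{e5.6}). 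Condition \eref{e5.4} then comes for free: for fixed $m$ the weighted sum converges to $\sum_j y^{(i)}_{u_j}\omega^i_{u_ju_{j+1}}$, whose $L_2$-norm is $O(m^{-\nu})$ by the conditional It\^o isometry \eref{e3.31}, because the Brownian increments over disjoint blocks are orthogonal. That probabilistic orthogonality is precisely the cancellation you are trying to extract deterministically.

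Your nested big-block/small-block scheme, as sketched, does not close, for two concrete reasons. First, after freezing $y^{(i)}$ on the macro-grid and expanding $y^{(i)}_{u_j}-y^{(i)}_{w_p}$ via \eref{e2.4}, the recombined monomial contributions take the form $\sum_p y^{(i+k)}_{w_p}\,\cj_{w_p}^{w_{p+1}}(x^{i+k};h^n)$ plus further cross terms: they still carry the weights $y^{(i+k)}_{w_p}$, so the \emph{unweighted} hypothesis \eref{e6.1} does not apply to them; what is needed is \eref{e5.4} at level $i+k$ on the coarser grid, i.e.\ an instance of the statement being proved. One can organize this as a downward induction on $i$, but the base case $i=\ell-1$ then consists entirely of the freezing error. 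Second, that freezing error is not controlled by the ingredients you cite: the triangle inequality applied to $\sum_{j}r^{(i)}_{w_pu_j}\cj_{u_j}^{u_{j+1}}(x^i;h^n)$ yields a bound of order $M^{-(\ell-i)\nu}m^{\frac12-i\nu}$, which diverges in $m$ whenever $i\nu<\frac12$ (the regime of interest), and a sewing argument in the spirit of Proposition \ref{prop3.6} would require a priori bounds on $\sum_{u_j\in[s,t)}x^{k}_{su_j}\otimes\cj_{u_j}^{u_{j+1}}(x^i;h^n)$ for intermediate $k$, which do not follow from \eref{e3.8i} alone --- the same divergence reappears there. So the assertion that ``$\nu\ell+\frac12>1$ renders these error contributions manageable'' is not substantiated. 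The efficient fix is the paper's order of operations: let $n\to\infty$ first to replace $\cj(x^i;h^n)$ by $\omega^i$, and only then let $m\to\infty$ using the It\^o isometry.
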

 
\begin{remark}\label{remark5.10}
 Theorem \ref{thm5.9} can be generalized to   some other interesting situations. For example, 
suppose that $ (x,h^{n}) \xrightarrow{f.d.d.} (x, \omega) $, where $\omega$ is a continuous Gaussian  process independent of $x$. Let  $\ch$ be the Hilbert space corresponding to $\omega$ and assume that $ C^{\ga}\subset \ch$ for $\ga<\nu$. We assume that for any $f\in C^{\ga}$,    we have the following convergences for a generic partition $0\leq s_{0}<\cdots<s_{m}\leq 1$:
\begin{eqnarray}\label{e3.29}
\lim_{m\to \infty}
\sum_{j,j'=0}^{m}\langle \delta f_{s_{j},\cdot} \, ,   \delta f_{s_{j'},\cdot} \rangle_{\ch} = 0
\quad \text{and}\quad 
\lim_{m\to \infty}
\sum_{j=0}^{m-1} f_{t_{j}} \mathbf{1}_{[t_{j},t_{j+1})} = f
\end{eqnarray}
where the second limit is a limit in $\ch$.  
Then   following the lines of the proof of Theorem \ref{thm5.9} one can show that 
\begin{eqnarray*}
\cj_{0}^{1} (y; h^{n}) \xrightarrow{ \ d \ }  \int_{0}^{1} y \otimes d\omega . 
\end{eqnarray*}

\end{remark}
 \begin{proof}[Proof of Theorem \ref{thm5.9}] 
 Take $\ci' = \{0\}$ and $\ci'' = \{1,\dots, \ell\}$. 
The theorem will be proved by applying Theorem \ref{thm4.1} and verifying the convergences \eref{eq9.6}, \eref{e5.4} and \eref{e3.13}.  The proof is divided into several steps.
 
 \noindent{\it Step 1: } 
We will show by induction that   
\begin{eqnarray}\label{e5.3i}
  \cj_{s}^{t}  ( x^{i}; h^{n}  ) \xrightarrow{ \ d \ }  \omega^{i} \equiv \int_{s}^{t} x^{i}_{su} \otimes dW_{u}, \quad i=0,1,\dots, \ell-1.
\end{eqnarray}
 
Since $h^{n}\to W$ in $f.d.d.$ sense,    convergence \eref{e5.3i} holds true when $i=0$. Now assume that the convergence holds for   $i=0,1,\dots, \tau-1$ with $\tau<\ell$. 
 Take $m\ll n$ and $u_{j} = j/m$, and set $D_{j}= \{t_{k}: u_{j+1}> t_{k}\geq u_{j}   \} $ as in \eref{eq9.7}.
 Take $j_{1}$ such that $s\in D_{j_{1}}$ and $j_{2}$ such that $t\in D_{j_{2}}$. 
  Then a small variant of \eref{e2.3} shows that for all $t_{k}\in D_{j}$, 
  \begin{eqnarray*}
x^{\tau}_{\ep(s) t_{k}}
= \delta x^{\tau}_{\ep(s), \bar{u}_{j} \vee \ep(s), t_{k}} +x^{\tau}_{\ep(s), \bar{u}_{j}\vee \ep (s)} + x^{\tau}_{\bar{u}_{j} \vee \ep (s), t_{k}} 
 = \sum_{l=0}^{\tau} x^{\tau-l}_{\ep(s) , \bar{u}_{j} \vee \ep(s)  }\otimes x^{l}_{ \bar{u}_{j}   \vee \ep(s) , t_{k} },
\end{eqnarray*}
where recall that the function $\ep$ is defined in \eref{e2.10} and $\bar{u}_{j}$ is given by \eref{eq9.7}. 
Hence it is readily checked that: 
\begin{eqnarray}\label{e5.3}
\cj_{s}^{t} ( x^{\tau}; h^{n} ) = \sum_{l=0}^{\tau}  \sum_{j=j_{1}}^{j_{2}} 
 x^{\tau-l}_{\ep(s) , \bar{u}_{j}  \vee \ep(s)   }\otimes
 \cj_{u_{j}\vee s}^{u_{j+1} \wedge t } (x^{l}; h^{n}  )
  :=  \sum_{l=0}^{\tau} A_{l}.
  \end{eqnarray}
Let us change the name of our variables in order to match the notation of our theorem and use relation \eref{e6.1}. Namely, 
set $s_{0}=s$, $s_{1} = u_{j_{1}+1}$,\dots, $s_{j_{2}-j_{1}} = u_{j_{2}}$, $s_{j_{2}-j_{1}+1}=t$. Then 
it is readily checked that $A_{\tau} = \sum_{j=0}^{j_{2}-j_{1}} \cj_{s_{j}}^{s_{j+1}} (x^{\tau}; h^{n}) $. Thus
 invoking      assumption \eref{e6.1}, we directly have the following convergence in probability:
\begin{eqnarray}\label{e3.15i}
\lim_{m\to \infty} \limsup_{n\to \infty} |A_{\tau}|   &=&0.
\end{eqnarray}
In order to study the convergence of $A_{l} $ for $l<\tau$, we first check that we can replace $ x^{\tau-l}_{\ep(s) , \bar{u}_{j}  \vee \ep(s)   } $ by $ x^{\tau-l}_{s ,  {u}_{j}  \vee s   }$. Indeed, we have the identity:
\begin{eqnarray*}
x^{\tau-l}_{\ep(s), \bar{u}_{j} \vee \ep(s)} - x^{\tau-l}_{s, u_{j}\vee s} = \delta x^{\tau-l}_{s,  u_{j}\vee s, \bar{u}_{j}\vee \ep(s) } - \delta x^{\tau-l}_{s, \ep(s), \bar{u}_{j}\vee \ep(s)} + x^{\tau-l}_{u_{j}\vee s, \bar{u}_{j} \vee \ep(s) } - x^{\tau-l}_{s, \ep(s)}. 
\end{eqnarray*}
 Therefore, if we set:
\begin{eqnarray*}
\tilde{A}_{l}  :=  
\sum_{j=j_{1}}^{j_{2}} 
 x^{\tau-l}_{s ,  {u}_{j}  \vee s   }\otimes
 \cj_{u_{j}\vee s}^{u_{j+1} \wedge t } (x^{l}; h^{n}  )
= \sum_{j=0}^{j_{2}-j_{1}} 
 x^{\tau-l}_{s    s_{j }    }\otimes
 \cj_{s_{j}}^{s_{j+1}  } (x^{l}; h^{n}  ),
\end{eqnarray*}
then it is readily checked that: 
\begin{eqnarray}\label{e3.15}
\lim_{m\to \infty  } \limsup_{n\to \infty} | A_{l} - \tilde{A}_{l} |  =0 \quad \quad \text{in probability}.
\end{eqnarray}
Let us now check the convergence for $\tilde{A}_{l}$. 
 Sending $n \to\infty$ and applying the induction assumption
 \eref{e5.3i} 
   with $l<\tau$,  we get
\begin{eqnarray}\label{e5.4ii}
\tilde{A}_{l} &\xrightarrow{ \ d \ }&    \sum_{j=0}^{j_{2}-j_{1}} 
 x^{\tau-l}_{s   s_{j}     }\otimes 
 \int_{s_{j}}^{s_{j+1}  } x^{l}_{s_{j}    u } \otimes dW_{u}. 
\end{eqnarray}
We now separate the analysis of $\tilde{A}_{l}$ in two cases. 

(a) For $0<l<\tau$ the square of the $L_{2}$-norm of  the right-hand side of \eref{e5.4ii} can be bounded thanks to It\^o's isometry by:  
\begin{eqnarray}\label{e5.6}
K\mE 
\Big[
\sum_{j=0}^{j_{2}-j_{1}} 
 | x^{\tau-l}_{s   s_{j}     }|^{2} 
 \int_{s_{j} }^{s_{j+1}  } | x^{l}_{s_{j}     u } |^{2} d {u}
 \Big]
 ,
\end{eqnarray}
which by property \eref{eq:def-holder-seminorms} applied to $p=4$ and $l\geq 1$ is less than
\begin{eqnarray*}
K \sum_{j=0}^{m-1} (s_{j+1}-s_{j})^{2\nu+1}.
\end{eqnarray*}
Owing to the fact that $2\nu  +1>1$, it is now trivially seen that as   $m\to \infty$, the right-hand side of \eref{e5.4ii} converges to zero.  Thus we get: 
\begin{eqnarray}\label{e3.17}
\lim_{m\to \infty} \limsup_{n\to \infty} |\tilde{A}_{l}|=0
\end{eqnarray}
in probability. 
 In summary of \eref{e3.15} and \eref{e3.17}, we have the convergence 
 \begin{eqnarray}\label{e3.18i}
\lim_{m\to \infty} \limsup_{n\to \infty} |A_{l}|  &=&0, 
\end{eqnarray}
in probability for $0<l<\tau$. 

(b) When $l=0$, the convergence \eref{e5.4ii} implies that,  as $n\to \infty$ and $m\to\infty$  we obtain
\begin{eqnarray}\label{e3.18}
 \tilde{A}_{0}  \xrightarrow{ \ d \ }   \int_{s}^{t} x^{\tau}_{sr} \otimes dW_{r} . 
\end{eqnarray}
Taking into account \eref{e3.15}, the convergence \eref{e3.18} implies that 
\begin{eqnarray}\label{e3.22}
 {A}_{0}  \xrightarrow{ \ d \ }  \int_{s}^{t} x^{\tau}_{sr} \otimes dW_{r}  . 
\end{eqnarray}
Putting together (a), (b) and the case $l=\tau$, we can now propagate our induction hypothesis. Indeed, 
applying \eref{e3.15i}, \eref{e3.18i} and \eref{e3.22} to  \eref{e5.3}, we obtain
\begin{eqnarray*}
  \cj_{s}^{t} ( x^{\tau}; h^{n} ) \xrightarrow{ \ d \ }   \int_{s}^{t} x^{\tau}_{sr}\otimes  dW_{r}. 
\end{eqnarray*}
This completes the proof of \eref{e5.3i} for $i=0,\dots, \ell-1$. In a similar way, we can get a f.d.d. version of \eref{e5.3i}. Namely,  we can show that
\begin{eqnarray}\label{e3.23}
(x, \cj  ( x^{i}; h^{n} ) ) \xrightarrow{ f.d.d. } (x,  {\omega}^{i} )
\end{eqnarray}
for $ {\omega}^{i}_{st} =  \int_{s}^{t} x^{i}_{sr}\otimes   dW_{r}$, $i\in \ci$. Note that this shows that   condition \eref{eq9.6} in Theorem \ref{thm4.1} holds true. 

 \noindent{\it Step 2: }  In this step, we consider the convergence of  $\cj_{0}^{1} (y^{(i)};  {\omega}^{i})$, which will yield condition~\eref{e5.4} in Theorem \ref{thm4.1}. 
 
 We first show that the discrete integral $\cj_{0}^{1} (y^{(i)};  {\omega}^{i})$, $\ell>i>0$ converges to zero in probability. As in the proof of Theorem \ref{thm4.1i}, by a truncation argument,  it suffices to show the convergence when $(y^{(0)},\dots, y^{(\ell-1)})$ is controlled by $(x,\frac12)$ in $L_{p}$ for $p$ large enough. In this case, similarly to \eref{e5.6},   we have
\begin{eqnarray}\label{e3.31}
|\cj_{0}^{1} (y^{(i)};  {\omega}^{i})|_{L_{2}}^{2} &\leq& K
\mE \Big[
 \sum_{j=0}^{m-1} | y^{(i)}_{u_{j}} |^{2} \int_{u_{j}}^{u_{j+1}} |x^{i}_{u_{j}u }   |^{2} du  \Big]
 \nonumber
\\
&\leq& K\sum_{j=0}^{m-1} m^{-2\nu-1}.
\end{eqnarray}
Therefore, we have $\cj_{0}^{1} (y^{(i)};  {\omega}^{i}) \to 0$ in probability. Combining this convergence with \eref{e3.23} for $i=1,\dots, \ell-1$, we obtain the convergence   \eref{e5.4}.

On the other hand, for the quantity $\cj_{0}^{1} (y^{(i)}; \omega^{i})$ with $i=0$,  thanks to the convergences of Riemann sums related to Wiener integrals the following convergence holds in $L^{2}$: 
\begin{eqnarray*}
\cj_{0}^{1} (y ;  {\omega}^{0}) = \cj_{0}^{1} (y ; W) \to \int_{0}^{1} y_{t} \otimes dW_{t}.
\end{eqnarray*}
  So  the condtion \eref{e3.13} holds true with $v^{0} = \int_{0}^{1} y_{t} \otimes dW_{t}$. 

Summarizing our consideration, we can now apply Theorem \ref{thm4.1}  to $\cj_{0}^{1} (y; h^{n})$ and  we obtain the convergence \eref{e5.2}.  
 \end{proof}

 \subsection{Single limit theorem II}
 In Section \ref{section3.2} we have investigated possible limit theorems under the assumption $(x, h^{n})\to (x, W)$, which implies in particular $\cj_{s}^{t} (x^{0}; h^{n}) \to \delta W_{st}$. 
 In the current section we analyze situations for which the convergence of $\cj_{s}^{t}(x^{i}; h^{n})$ is assumed for a more general $i$. Our results are summarized in the following theorem. 
\begin{thm}  \label{cor6.3}
Let   $y= (y^{(0)},\dots, y^{(\ell-1)})$  be a $\cv$-valued rough path on $[0,1]$ controlled by $(x,\al)$  in $L_{2}$ or almost surely, and consider $h^{n}$ satisfying the inequality \eref{e3.8i}. Recall that the increment  $\cj( y ; h^{n}) = \{ \cj_{s}^{t}( y ; h^{n});\,\, (s,t) \in \cs_{2} \}$ is defined by \eref{e2.8}. Suppose that $x$ and $h^{n}$ verify the following assumptions:

\noindent
(i) There is   some $\tau \in \ci$ such that $   \cj_{s}^{t} (x^{\tau}; h^{n}) \rightarrow     (t-s) \varrho $ in probability  for all $(s,t) \in \cs_{2}$, where $\varrho \in (\RR^{d})^{\otimes \tau} \otimes \cv' $ is a constant matrix, and  
$
  \cj_{s}^{t}(x^{i};  h^{n}) 
   \to  0 
$
 in probability for all $i<\tau $ and  $(s,t) \in \cs_{2}$. 
 
\noindent (ii) For any partition $ 0\leq s_{0} <s_{1}<\cdots<s_{m }\leq 1 $   on $[0,1]$ such that $m\ll n$ and 
$   |s_{i+1} - s_{i}| \leq \frac{1}{m}$, 
 we have
\begin{eqnarray}\label{e5.4i}
\lim_{m\to \infty} \limsup_{n\to \infty} \Big| \sum_{j=0}^{m-1}   \cj_{s_{j}}^{s_{j+1}} ( x^{i}; h^{n} ) \Big|  &=&0,
\end{eqnarray}
in probability
for $i=\tau+1,\dots, \ell-1$. 

 Then   the following convergence holds true for $y$:
 \begin{eqnarray}\label{e5.8ii}
\lim_{n\to \infty}   \cj (y ; h^{n}) \xrightarrow{ \   \ }     \Big( \int  y^{(\tau)}_{t}d {t}\Big)\otimes \varrho
\end{eqnarray}
in probability.
 \end{thm}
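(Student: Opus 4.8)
The plan is to apply the abstract transfer principle of Theorem~\ref{thm4.1}, so that most of the work reduces to verifying its hypotheses \eqref{eq9.6}, \eqref{e5.4} and \eqref{e3.13} with the index splitting $\ci' = \{\tau\}$ and $\ci'' = \{0,1,\dots,\ell-1\}\setminus\{\tau\}$. The structure of the argument mirrors the proof of Theorem~\ref{thm5.9}, the main differences being that the ``surviving'' level is $x^\tau$ rather than $x^0$, that the limiting object is a Riemann (Lebesgue) integral $\int y^{(\tau)}_t\,dt$ weighted against the constant $\varrho$ rather than a conditional Wiener integral, and that the levels $i<\tau$ have to be shown to be negligible (they are not assumed to converge to a Brownian limit but directly to $0$).

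First I would run an induction on $i$ (as in Step~1 of the proof of Theorem~\ref{thm5.9}) to upgrade the pointwise assumption (i) to the statement that, for each $i\le\tau$, $\cj_s^t(x^i;h^n)$ converges in probability to $(t-s)\varrho$ if $i=\tau$ and to $0$ if $i<\tau$. The key algebraic input is the multiplicative decomposition \eqref{e2.3} of $x^i$, which allows one to write $\cj_s^t(x^i;h^n)$ as a sum over a coarse partition $u_j=j/m$ of terms $\sum_l x^{i-l}_{\ep(s),\bar u_j\vee\ep(s)}\otimes \cj_{u_j\vee s}^{u_{j+1}\wedge t}(x^l;h^n)$, exactly as in \eqref{e5.3}. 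For the top term $l=i$ we invoke hypothesis \eqref{e5.4i} directly once $i>\tau$; for $l<i$ and $i\le\tau$ we use the induction hypothesis together with the H\"older bound \eqref{eq:def-holder-seminorms} on $x^{i-l}$ and the fact that summing $(s_{j+1}-s_j)^{\nu+1}$-type terms over a partition of mesh $1/m$ tends to $0$ as $m\to\infty$. When $i=\tau$ exactly, the $l=0$ term produces $\sum_j x^\tau_{s\,s_j}\cdot\big(\cj_{s_j}^{s_{j+1}}(x^0;h^n)\big)$, which converges to $\sum_j x^\tau_{s\,s_j}\,\delta W$-free quantity $(s_{j+1}-s_j)\varrho$; this is a Riemann sum which tends to $\big(\int_s^t x^\tau_{sr}\,dr\big)\varrho$. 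Hence the correct limiting $1$-increments in \eqref{eq9.6} are $\omega^\tau_{st}=\big(\int_s^t x^\tau_{sr}\,dr\big)\otimes\varrho$ and $\omega^i\equiv 0$ for $i<\tau$, and these are $\sigma(x)$-measurable (not independent of $x$), which is fine since \eqref{eq9.6} only asks for joint convergence with $x$.

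Next I would verify \eqref{e5.4} and \eqref{e3.13}. For \eqref{e3.13}, with $\ci'=\{\tau\}$ we need the convergence of $\cj_0^1(y^{(\tau)};\omega^\tau)$; since $\omega^\tau$ is absolutely continuous with density $x^\tau_{0\cdot}\varrho$ and $y^{(\tau)}$ is continuous, this discrete integral is just a Riemann sum converging in probability to $\int_0^1 y^{(\tau)}_t x^\tau_{0t}\,dt\otimes\varrho$ — but here one must be slightly careful about which base point $x^\tau$ is evaluated at. A cleaner route, and the one I would actually follow, is to observe that the surviving level in \eqref{e3.5iii}--\eqref{e3.5ii} is $\varphi^\tau=\sum_j y^{(\tau)}_{\bar u_j}\cj_{u_j}^{u_{j+1}}(x^\tau;h^n)$ and to push the limit through directly: by Step~1 the inner factor converges to $\frac1m\varrho$ up to negligible corrections, so $\varphi^\tau\to \big(\int_0^1 y^{(\tau)}_t\,dt\big)\otimes\varrho$, giving $v^\tau = \big(\int y^{(\tau)}_t\,dt\big)\otimes\varrho$ as claimed. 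For \eqref{e5.4} with $i\in\ci''$: when $i>\tau$ it follows from \eqref{e5.4i} and the continuity/boundedness of $y^{(i)}$ by the same coarse-graining estimate used in Step~2 of Theorem~\ref{thm5.9}; when $i<\tau$ (including $i=0$) it follows from the $\cj_s^t(x^i;h^n)\to 0$ half of assumption (i) together with a uniform bound of the form $|\cj_s^t(x^i;h^n)|_{L_2}\le K(t-s)^{\al+\nu i}$ coming from \eqref{e3.8i}, so that $\sum_j y^{(i)}_{u_j}\cj_{u_j}^{u_{j+1}}(x^i;h^n)$ is bounded in $L_1$ by $K\,m\cdot m^{-\al-\nu i}\to 0$ once $\al+\nu i>1$; if $\al+\nu i\le 1$ one instead uses the pointwise-in-probability convergence to $0$ plus uniform integrability to conclude. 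The remainder term $R^\varphi$ in \eqref{e3.5iii} is controlled exactly as in \eqref{e5.9} via Proposition~\ref{prop3.6}, using $\nu\ell+\al>1$. Assembling these pieces in Theorem~\ref{thm4.1} yields \eqref{e5.8ii}, and since all the relevant convergences hold in probability, so does the conclusion.

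The main obstacle I anticipate is the bookkeeping in the case $i<\tau$ of the induction: unlike the Brownian situation of Theorem~\ref{thm5.9}, we are not told that $\cj(x^i;h^n)$ converges to a nondegenerate Gaussian limit, only that it vanishes, and the potentially dangerous point is whether the error committed by replacing $x^{\tau-l}_{\ep(s),\bar u_j\vee\ep(s)}$ with $x^{\tau-l}_{s,u_j\vee s}$ (the analogue of \eqref{e3.15}) still washes out when multiplied against the $i<\tau$ increments that only tend to $0$ rather than being $O((s_{j+1}-s_j)^{1/2})$ in $L_2$. Resolving this cleanly requires combining the a.s.\ (or $L_p$) H\"older control on $x$ with the uniform $L_2$-bound \eqref{e3.8i} on $\cj(x^i;h^n)$ rather than with the limiting law, and it is the one spot where one cannot simply transcribe the proof of Theorem~\ref{thm5.9}. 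Everything else is a routine adaptation of the coarse-graining plus discrete-sewing machinery already set up in Sections~\ref{section2} and \ref{sectionLT1}.
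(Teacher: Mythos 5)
Your overall architecture is the same as the paper's: apply Theorem \ref{thm4.1} with $\ci'=\{\tau\}$ and $\ci''=\ci\setminus\{\tau\}$, run an induction over the levels via the decomposition \eref{e5.3}, and identify the surviving contribution as the Riemann sum $\sum_j y^{(\tau)}_{u_j}(u_{j+1}-u_j)\varrho\to\big(\int_0^1 y^{(\tau)}_t\,dt\big)\otimes\varrho$. However, two points are genuinely wrong or missing. First, at level $i=\tau$ you attribute the $\varrho$-contribution to the $l=0$ term, claiming that $\cj_{s_j}^{s_{j+1}}(x^0;h^n)=h^n_{s_js_{j+1}}$ converges to $(s_{j+1}-s_j)\varrho$; assumption (i) says precisely the opposite --- this quantity tends to $0$ whenever $\tau>0$. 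The contribution comes from the $l=\tau$ term $\tilde{A}_{\tau}=\sum_j\cj_{s_j}^{s_{j+1}}(x^{\tau};h^n)$, so that $\omega^{\tau}_{st}=(t-s)\varrho$ (which is just assumption (i) restated, hence no induction is needed for any $i\le\tau$), not $\big(\int_s^t x^{\tau}_{sr}\,dr\big)\otimes\varrho$ as you write. The induction is needed only for $i>\tau$, where the limit is $\omega^i_{st}=\big(\int_s^t x^{i-\tau}_{su}\,du\big)\otimes\varrho$ as in \eref{e5.8i}; you never identify these limits. Your ``cleaner route'' for $\varphi^{\tau}$ happens to give the correct answer only because the correct $\omega^{\tau}$ is $(t-s)\varrho$.

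Second, and more seriously, your verification of the weighted condition \eref{e5.4} for $i\in\ci''$ with $i>\tau$ fails as stated. The unweighted hypothesis \eref{e5.4i} plus boundedness of $y^{(i)}$ cannot yield \eref{e5.4}: the weights destroy the cancellation in $\sum_j\cj_{s_j}^{s_{j+1}}(x^i;h^n)$, and the crude triangle-inequality bound $m\cdot\sup_j|\cj_{u_j}^{u_{j+1}}(x^i;h^n)|_{L_2}\le Km^{1-\al-\nu i}$ never tends to zero, since $\al+\nu i\le\al+\nu(\ell-1)\le 1$ by minimality of $\ell$. (For $i<\tau$ your argument is fine --- each summand tends to $0$ for fixed $m$ --- but for $i>\tau$ the summands converge to the nonzero limits $\omega^i_{u_ju_{j+1}}$.) The paper's route is the one you need: first prove by induction, using \eref{e5.4i} for the top term $l=i$ and assumption (i) for the lower levels, that $\cj_{u_j}^{u_{j+1}}(x^i;h^n)\to\big(\int_{u_j}^{u_{j+1}}x^{i-\tau}_{u_ju}\,du\big)\otimes\varrho$, which is of order $m^{-1-(i-\tau)\nu}$ in $L_2$; then the weighted sum over $j$ is $O(m^{-(i-\tau)\nu})$ and vanishes as $m\to\infty$, exactly as in \eref{e3.31}. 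With these two corrections your plan coincides with the paper's proof.
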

 
  \begin{proof} 
  As for Theorem \ref{thm5.9}, we will prove our claim thanks to Theorem \ref{thm4.1}, and we are reduced to check \eref{eq9.6}, \eref{e5.4}, and \eref{e3.13}. The difference with Theorem \ref{thm5.9} is that we now consider   $\ci'=\{\tau\}$ and $\ci'' = \ci\setminus \{\tau\}$. We divide the proof in several steps.  
  
  \noindent \textit{Step 1: Case $i<\tau$.} In this first situation it is immediate from our assumptions that \eref{e5.4} holds true for $i<\tau$. 
  
  \noindent \textit{Step 2: Case $i\geq \tau$.} Similarly to the proof of Theorem \ref{thm5.9} (Step 1), we prove by induction the following convergence in probability for all $\ell>i\geq\tau$:
    \begin{eqnarray}\label{e5.8i}
    \cj  ( x^{i}; h^{n} ) \xrightarrow{ \ \ } {\omega}^{i} \quad
    \text{where }  {\omega}^{i}_{st} =  \Big( \int_{s}^{t} x^{i-\tau}_{su} du \Big) \otimes \varrho.
\end{eqnarray}
  To this aim, notice that \eref{e5.8i} is true for $i=\tau$ by assumption.  Next assume that \eref{e5.8i} holds for $i=\tau,\dots, \tau' -1 $.  
  We decompose  the discrete interval $\lll s ,t\rr$ into the subintervals $D_{j} $ again (see \eref{eq9.7}), with $m\ll n$ and $t_{k} = \frac{k}{n}$, $u_{j} = \frac{j}{m}$.  Let $s_{0},\dots,s_{j_{2}-j_{1}+1}$ be as in Theorem~\ref{thm5.9}~(Step~1). 
  %For $i = \tau'$,  as in \eref{e5.3}, we have the decomposition
 %   \begin{eqnarray}\label{e3.32i}
%\cj_{s}^{t} ( x^{\tau'}; h^{n} ) = \sum_{l=0}^{\tau'}  \sum_{j=j_{1}}^{j_{2}} 
% x^{\tau'-l}_{\ep(s) , \bar{u}_{j} \vee \ep(s)    }\otimes
% \cj_{u_{j} \vee s}^{u_{j+1} \wedge t } (x^{l}; h^{n}  )
%   . 
%\end{eqnarray}
  Then an approximation procedure similar to \eref{e3.15} allows to replace each $  x^{\tau'-l}_{\ep(s)t_{k}} $ by an expression of the form:
  \begin{eqnarray*}
\sum_{l=0}^{\tau'} x^{\tau-l}_{ss_{j}} \otimes x^{l}_{s_{j}t_{k}}, 
\end{eqnarray*}
 in the sum defining $\cj_{s}^{t} ( x^{\tau'}; h^{n} ) $. 
  Therefore, we get an equivalent of \eref{e3.15} in our context:
  \begin{eqnarray}\label{e-approx.cj}
\lim_{m\to \infty} \limsup_{n\to\infty} | \cj_{s}^{t} ( x^{\tau'}; h^{n} )-  \sum_{l=0}^{\tau'} \tilde{A_{l}} | =0
\end{eqnarray}
with 
\begin{eqnarray*}
\tilde{A}_{l} &=& \sum_{j= 0}^{j_{2}-j_{1}} x^{\tau'-l}_{ss_{j} } \otimes \cj_{s_{j}}^{s_{j+1}} (x^{l}; h^{n}). 
\end{eqnarray*}

We now handle each $\tilde{A}_{l}$. 
For $l<\tau$, each $
 \cj_{s_{j} }^{s_{j+1}   } (x^{l}; h^{n}  )$ converges to $0$ 
in probability as $n\to \infty$ for all $j$, according to our assumption (i). Hence  
$
\tilde{A}_{l} \to 0 
$
in probability as $n\to \infty$ and $m\to \infty$. 
When $ {\tau}<l<\tau'$,
we proceed along the same lines as for  \eref{e5.4ii} and \eref{e5.6}. Namely, we invoke the fact that $ \lim_{n\to \infty} \cj_{s_{j}}^{s_{j+1}} (x^{l}; h^{n}) = \Big( \int_{s_{j}}^{s_{j+1}} x^{l-\tau}_{s_{j}u} du \Big) \otimes \varrho $ for each $j_{1}\leq j\leq j_{2}$ and then use the extra regularity given by $x^{l}_{s_{j}u}$ on each $[s_{j}, s_{j+1}]$. This yields the following limit in probability: 
 \begin{eqnarray}\label{e-Al}
  \lim_{m\to\infty}\limsup_{n\to \infty} 
| \tilde{A}_{l}| \to 0 .
\end{eqnarray}
Let now $l= {\tau}' $. 
Then 
\begin{eqnarray*}
\tilde{A}_{\tau'} = \sum_{j=0}^{j_{2}-j_{1}} \cj_{s_{j}}^{s_{j+1}} (x^{\tau'}; h^{n}),
\end{eqnarray*}
and it is immediate from  identity \eref{e5.4i} that \eref{e-Al}  holds true for $l=\tau'$.
In summary, we have proved that for all $l\in \{0,\dots, \tau'\}\setminus \{\tau\}$ we have:
\begin{eqnarray}\label{e3.32}
 \lim_{m\to \infty} \limsup_{n\to \infty} |\tilde{A}_{l}|  = 0
\end{eqnarray}
 in probability.

 In the case $l=\tau$, by sending $n\to \infty$, our assumption (i)   allows to write:
\begin{eqnarray}\label{e3.33}
 \tilde{A}_{\tau} \xrightarrow{ \   \ }  \sum_{j=0}^{j_{2} - j_{1 } } x^{ {\tau}' - \tau }_{ss_{j}} ( s_{j+1} - s_{j} )\otimes \varrho,
\end{eqnarray}
in probability,
and thus as $n\to \infty$ and $m\to \infty$, we obtain
\begin{eqnarray}\label{e3.38}
 \tilde{A}_{\tau } \xrightarrow{ \   \ } \Big(\int_{s}^{t} x^{ {\tau}' - \tau }_{s u } du \Big)\otimes \varrho
\end{eqnarray}
in probability.
 Combining \eref{e3.32} and \eref{e3.38} and taking into account   relation \eref{e-approx.cj}, we end up with:  
\begin{eqnarray*}
\cj_{s}^{t} ( x^{ {\tau}'}; h^{n} ) \xrightarrow{ \   \ } \Big(\int_{s}^{t} x^{ {\tau}' - \tau }_{s u } du \Big) \otimes \varrho
\end{eqnarray*}
in probability.
This completes our induction and the proof of \eref{e5.8i}. 
    
    \noindent \textit{Step 3: Convergence of $\cj_{0}^{1} (y^{(i)}; \omega^{i})$}  In a similar way as in the proof of Theorem \ref{thm5.9} (see relation \eref{e3.31}), we can show the convergence
    \begin{eqnarray*}
\cj_{0}^{1} (y^{(i)};  {\omega}^{i} ) \xrightarrow{ \   \ } 0, 
\end{eqnarray*}
in probability
for $i \neq \tau$, so the convergence \eref{e5.4} holds true. On the other hand, it is clear from classical integration theory that: 
  \begin{eqnarray*}
\cj_{0}^{1} (y^{(\tau)};  {\omega}^{\tau} ) \to \Big(\int_{0}^{1} y^{(\tau)}_{ u} du  \Big) \otimes \varrho,
\end{eqnarray*}
which implies the convergence \eref{e3.13}.  
 Summarizing, we have proved \eref{eq9.6}-\eref{e3.13} and   the convergence \eref{e5.8ii}   follows immediately from Theorem \ref{thm4.1}. 
  \end{proof}
  \begin{remark}\label{remark3.14} 
  As in Remark \ref{remark5.10}, one can generalize  Theorem \ref{cor6.3} to some other interesting cases. 
  For example, 
suppose that $ (x,\cj (x^{\tau}; h^{n})) \xrightarrow{f.d.d.} (x, \omega) $, where $\omega$ is a continuous Gaussian  process independent of $x$ and with values in $ (\RR^{d})^{\otimes \tau} \otimes \cv' $. As before, let  $\ch$ be the Hilbert space corresponding to $\omega$ and suppose that $ C^{\ga}\subset \ch$ for all $\ga<\nu$. Suppose that \eref{e3.29}  holds true for any $f\in C^{\ga}$. 
 Then    one can show in a similar way as in Theorem \ref{cor6.3} that 
\begin{eqnarray*}
\cj_{0}^{1} (y; h^{n}) \xrightarrow{ \ d \ }  \int_{0}^{1} y^{(\tau)} d\omega . 
\end{eqnarray*}

  \end{remark}
  
 \subsection{Double limit theorem} 
 In this subsection, we consider the double limit theorem case, which has been introduced in Definition \ref{def3.5}. 
 This usually corresponds to a transition in terms of roughness for the underlying noise $x$. 
  \begin{theorem}\label{thm7.1}
  Let   $y= (y^{(0)},\dots, y^{(\ell-1)})$  be a rough path on $[0,1]$ controlled by $(x,\al)$  in $L_{2}$ or almost surely, and suppose that $h^{n}$ satisfies the inequality \eref{e3.8i} uniformly in $n$. 
  Furthermore, we assume that $x$ and $h^{n}$ fulfill the following conditions:
  
 \noindent (i) 
We have the convergence    $ (x ,    h^{n}        )
  \xrightarrow{f.d.d.} ( x ,  W    )$ as $n\to \infty$, 
where $W$ is a standard Brownian motion independent of $x$.

\noindent (ii)
There exists a constant matrix $\varrho \in (\RR^{d})^{\otimes \tau} \otimes \cv'$ and some $  \tau : 0<\tau<\ell$ such that 
 for any partition $ 0\leq s_{0} <s_{1}<\cdots<s_{m }\leq 1 $   on $[0,1]$ such that $m\ll n$, 
$  |s_{i+1} - s_{i}| \leq \frac{1}{m}$, and $s_{0}= s$, $s_{m}= t$, 
 we have
\begin{eqnarray}\label{e6.1i}
\lim_{m\to \infty} \lim_{n\to \infty}   \sum_{j=0}^{m-1}     \cj_{s_{j}}^{s_{j+1}} ( x^{\tau}; h^{n} )  =   (t-s)\varrho,
\end{eqnarray}
where the limit has to be understood as a limit  in probability  for all $(s,t) \in \cs_{2}$ and where $\cj_{s_{j}}^{s_{j+1}} ( x^{i}; h^{n} ) $ is defined by \eref{e2.8}.

\noindent (iii)
  For any $i\in \{ 1,\dots, \ell-1 \}\setminus \{\tau\}$ and any partition $0\leq s_{0}<\cdots<s_{m}\leq 1$ in (ii) above, we have the following convergence in probability:
\begin{eqnarray}\label{e3.40}
\lim_{m\to \infty} \limsup_{n\to \infty} \Big| \sum_{j=0}^{m-1}    \cj_{s_{j}}^{s_{j+1}} ( x^{i}; h^{n} )  \Big|  &=&0.
\end{eqnarray}

 Then the following limit   holds true for $\cj (y; h^{n})$:
 \begin{eqnarray*}
  (x, \cj  (y ; h^{n}) ) \xrightarrow{f.d.d.} (x,  \int  y_{t } \otimes  dW_{t} +  \Big(\int  \delta y^{(\tau)}_{0t}d {t}\Big) \otimes \varrho).
\end{eqnarray*}
  \end{theorem}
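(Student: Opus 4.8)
The plan is to deduce Theorem \ref{thm7.1} from the general transfer principle Theorem \ref{thm4.1}, exactly as was done for Theorems \ref{thm5.9} and \ref{cor6.3}, but now with a two-element set $\ci'$. Concretely, I would take $\ci' = \{0, \tau\}$ and $\ci'' = \ci \setminus \{0,\tau\} = \{1,\dots,\ell-1\}\setminus\{\tau\}$, and then verify the three hypotheses \eref{eq9.6}, \eref{e5.4} and \eref{e3.13} of Theorem \ref{thm4.1}. The heart of the matter is again \emph{Step 1}: identifying the limits $\omega^i$ of $\cj(x^i;h^n)$ for every $i\in\ci$, which will split into three regimes according to whether $i<\tau$, $i=\tau$, or $i>\tau$. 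This is the place where the ``speed match'' between the levels $0$ and $\tau$ of the rough path is made rigorous, and where both an It\^o-type integral against $W$ and a Lebesgue integral against $dt$ appear simultaneously.

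For Step 1 I would argue by induction on $i$, mimicking Step 1 of the proofs of Theorems \ref{thm5.9} and \ref{cor6.3}. For the base cases, $\cj(x^0;h^n)=h^n \xrightarrow{f.d.d.} W$ by hypothesis (i), and $\cj(x^\tau;h^n)\to (t-s)\varrho$ in the coarse-grained sense \eref{e6.1i} by hypothesis (ii). For the inductive step, I use the multiplicativity relation \eref{e2.3} to expand $x^{i}_{\ep(s)t_k} = \sum_{l=0}^{i} x^{i-l}_{\ep(s),\bar u_j\vee\ep(s)}\otimes x^{l}_{\bar u_j\vee\ep(s),t_k}$ on each block $D_j$, giving a decomposition $\cj_s^t(x^i;h^n)=\sum_{l=0}^{i} A_l$ as in \eref{e5.3}, then replace each $A_l$ by its ``clean'' version $\tilde A_l = \sum_j x^{i-l}_{ss_j}\otimes\cj_{s_j}^{s_{j+1}}(x^l;h^n)$ via the same increment-splitting estimate as \eref{e3.15}--\eref{e-approx.cj}. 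The induction hypothesis then identifies the limit of each $\cj_{s_j}^{s_{j+1}}(x^l;h^n)$: for $l<\tau$ it is $0$, for $l=\tau$ it is $(s_{j+1}-s_j)\varrho$, and for $\tau<l<i$ it is $\big(\int_{s_j}^{s_{j+1}}x^{l-\tau}_{s_ju}du\big)\otimes\varrho$. The crucial book-keeping is the order in $m$ of each surviving term: using $|x^k|_{p,\nu}$ bounds with $p=4$ together with It\^o's isometry (for the level-$0$ contribution, which picks up an extra half power from $dW$) or the extra $(s_{j+1}-s_j)$ from the $dt$ integral, I expect every $\tilde A_l$ with $l\notin\{0,\tau\}$ to contribute a Riemann sum of terms of order $(s_{j+1}-s_j)^{1+\varepsilon}$ with $\varepsilon>0$, hence to vanish as $m\to\infty$; only $l=0$ and $l=\tau$ survive, producing $\omega^i_{st}=\int_s^t x^i_{su}\otimes dW_u$ when $i<\tau$ (only $l=0$ relevant), $\omega^i_{st}=\big(\int_s^t x^{i-\tau}_{su}du\big)\otimes\varrho$ when $i>\tau$ (only $l=\tau$ relevant), and a sum of the two for the general level, with the understanding that for $i\in\ci'$ both $\omega^0_{st}=\delta W_{st}$ and $\omega^\tau_{st}=(t-s)\varrho$ are kept.

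Step 2 is the verification of \eref{e5.4} for $i\in\ci''$, i.e.\ $\sum_j y^{(i)}_{u_j}\cj_{u_j}^{u_{j+1}}(x^i;h^n)\to 0$; this is precisely hypothesis (iii) of the theorem once one absorbs the bounded factor $y^{(i)}_{u_j}$ by a truncation argument exactly as in Step 2 of the proofs of Theorems \ref{thm4.1i} and \ref{thm5.9} (pass to $\bar y^{(i)}$ with $G_y\le K$, so that $y^{(i)}$ is bounded, and use Abel summation / the coarse-grained cancellation \eref{e3.40}). Step 3 is the verification of \eref{e3.13}: I must compute $\cj_0^1(y^{(i)};\omega^i)$ for $i\in\ci'=\{0,\tau\}$. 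For $i=0$, $\omega^0=W$ and $\cj_0^1(y;W)\to\int_0^1 y_t\otimes dW_t$ in $L^2$ by the standard Riemann-sum approximation of the It\^o/Wiener integral. For $i=\tau$, $\omega^\tau_{st}=(t-s)\varrho$, and $\cj_0^1(y^{(\tau)};\omega^\tau)=\sum_j \delta y^{(\tau)}_{0u_j}\,(u_{j+1}-u_j)\,\varrho\to\big(\int_0^1\delta y^{(\tau)}_{0t}\,dt\big)\otimes\varrho$ by ordinary Riemann integration, using continuity of $y^{(\tau)}$. Finally I also need to check that $\cj_0^1(y^{(i)};\omega^i)\to 0$ for $i\in\ci''$, which is the same $L^2$-estimate as \eref{e3.31}, driven by the $(s_{j+1}-s_j)^{2\nu+1}$ bound. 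Once \eref{eq9.6}, \eref{e5.4} and \eref{e3.13} are in hand, Theorem \ref{thm4.1} yields $(x,\cj(y;h^n))\xrightarrow{f.d.d.}(x,v^0+v^\tau)$ with $v^0=\int_0^1 y_t\otimes dW_t$ and $v^\tau=\big(\int_0^1\delta y^{(\tau)}_{0t}dt\big)\otimes\varrho$, which is the claimed limit.

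The main obstacle I anticipate is the careful accounting in Step 1 of \emph{which} cross-terms $\tilde A_l$ survive the double limit $\lim_{m}\lim_{n}$: unlike the single-limit theorems, here two levels of the rough path are simultaneously ``critical'', so one must be sure that the off-diagonal terms (those with $l\notin\{0,\tau\}$ in the expansion of $\cj(x^i;h^n)$, and correspondingly the terms $\cj_0^1(y^{(i)};\omega^i)$ with $i\notin\{0,\tau\}$) genuinely vanish and that the two surviving contributions do not interfere — in particular that the $dW$-integral is a \emph{conditional} Wiener integral, well-defined because $W$ is independent of $x$ by hypothesis (i). The regularity exponent bookkeeping ($p=4$, $\nu\ell+\tfrac12>1$, It\^o isometry producing the extra half-power) is routine but must be done consistently; beyond that, the argument is a direct, if slightly longer, replay of the proofs of Theorems \ref{thm5.9} and \ref{cor6.3}, so I would present it by referencing those proofs for the parts that are identical and spelling out only the new two-level cancellations in detail.
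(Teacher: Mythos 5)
Your proposal follows the paper's proof essentially verbatim: take $\ci'=\{0,\tau\}$ in Theorem \ref{thm4.1}, identify the block limits $\omega^{i}$ by induction on $i$ via the decomposition into the terms $\tilde A_{l}$, and verify \eref{e5.4} and \eref{e3.13} by the It\^o-isometry and Riemann-sum arguments already used for Theorems \ref{thm5.9} and \ref{cor6.3}. The only slip is in your identification of the intermediate limits: for $0\le l<\tau$ the blockwise limit of $\cj_{s_j}^{s_{j+1}}(x^{l};h^{n})$ is $\int_{s_j}^{s_{j+1}}x^{l}_{s_j u}\otimes dW_u$ rather than $0$, so for $i>\tau$ the limit $\omega^{i}_{st}$ contains \emph{both} $\int_s^t x^{i}_{su}\otimes dW_u$ and $\big(\int_s^t x^{i-\tau}_{su}\,du\big)\otimes\varrho$ as in \eref{e6.4} --- a harmless discrepancy, since each of the two contributions to $\cj_{0}^{1}(y^{(i)};\omega^{i})$ for $i\in\ci''$ vanishes separately by the very estimates you invoke.
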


  \begin{proof} 
  As in Theorem \ref{thm5.9} and Theorem \ref{cor6.3}, we apply Theorem \ref{thm4.1} and we are reduced to show relations \eref{eq9.6}, \eref{e5.4}, and \eref{e3.13}. In the current situation, we consider   $\ci'=\{0,\tau\}$ and $\ci'' = \ci\setminus \ci'$. We divide again the proof in several steps. 
 
 \noindent \textit{Step 1: Case $i<\tau$.}    As in the proof of Theorem \ref{thm5.9},   by induction we can show that for $i<\tau$ we have the convergence
  \begin{eqnarray}\label{e6.2i}
\cj_{s}^{t} ( x^{i}; h^{n} ) \xrightarrow{ \ d  \ } 
\omega^{i}_{st} \equiv\int_{s}^{t} x^{i}_{su}\otimes dW_{u}. 
\end{eqnarray}

 \noindent \textit{Step 2: Case $i=\tau$.}
An approximation argument similar to \eref{e3.15}  and \eref{e-approx.cj}   yields: 
   \begin{eqnarray} \label{e-approx.cj2}
\lim_{m\to \infty} \limsup_{n\to\infty} | \cj_{s}^{t} ( x^{i}; h^{n} )-  \sum_{l=0}^{i} \tilde{A}_{l,i} | =0
\end{eqnarray}
with 
\begin{eqnarray*}
\tilde{A}_{l,i} &=& \sum_{j= 0}^{j_{2}-j_{1}} x^{i-l}_{ss_{j} } \otimes \cj_{s_{j}}^{s_{j+1}} (x^{l}; h^{n}). 
\end{eqnarray*}

%\begin{eqnarray}\label{e6.3i}
%\cj_{s}^{t} ( x^{\tau}; h^{n} ) = \sum_{l=0}^{\tau}  \sum_{j=j_{1}}^{j_{2}} 
 %x^{\tau-l}_{\ep(s) , \bar{u}_{j} \vee \ep(s)    }\otimes
% \cj_{u_{j}\vee s}^{u_{j+1} \wedge t } (x^{l}; h^{n}  )
% :=  \sum_{l=0}^{\tau} A_{l}.
%\end{eqnarray}
In the same way as in \eref{e3.17}, and taking into account the convergence \eref{e6.2i},     for   $0<l<\tau$ we have  the convergence  
\begin{eqnarray*}
\lim_{m\to \infty} \lim_{n\to \infty} |\tilde{A}_{l, \tau}| =0
\end{eqnarray*}
in probability. On the other hand, in the same way as for relation  \eref{e3.18}, the following limit holds true  for $l=0$: 
\begin{eqnarray*}
\lim_{m\to \infty}\lim_{n\to \infty}
\tilde{A}_{0, \tau} \stackrel{ \ (d) \ }{=} \int_{s}^{t} x^{\tau}_{su}\otimes dW_{u}. 
\end{eqnarray*}
  In addition, owing to   assumption \eref{e6.1i} we have
\begin{eqnarray*}
\lim_{m\to \infty} \lim_{n\to \infty} \tilde{A}_{\tau, \tau} = 
 (t-s) \varrho,
\end{eqnarray*}
where the limit holds  in probability. In summary of the convergences of $\tilde{A}_{l, \tau}$, $l=0,\dots, \tau$ and taking into account \eref{e-approx.cj2}, we obtain
\begin{eqnarray*}
\cj_{s}^{t} ( x^{\tau}; h^{n} )  \xrightarrow{ \ d \ } \int_{s}^{t} x^{\tau}_{su} \otimes dW_{u}
+  (t-s)\varrho .
\end{eqnarray*}
Notice that we can add up limits in distribution here, since one of the limits is deterministic. 

\noindent \textit{Step 3: Case $i>\tau$.}
   In the following,  we show by induction   the convergence 
\begin{eqnarray}\label{e6.4}
\cj_{s}^{t} ( x^{i}; h^{n} )  \xrightarrow{ \ d \ } \omega^{i}_{st} \equiv\int_{s}^{t} x^{i}_{su} \otimes dW_{u}
+ \Big( \int_{s}^{t}x^{i-\tau}_{su} du\Big) \otimes \varrho ,
\end{eqnarray}
 for   $\ell>i\geq \tau$.  
 Indeed, we
 have shown that   convergence \eref{e6.4} holds when $i=\tau$. Now suppose that the convergence holds for $i=\tau,\dots,  {\tau}'-1$, and we wish to propagate the induction assumption.   
Thanks to the  induction assumption and in a similar way as in \eref{e3.17} we can show   that for $l \in \{1, \dots, \tau'-1\}\setminus \{\tau\}$  we have
\begin{eqnarray}\label{e3.45}
\lim_{m\to \infty} \lim_{n\to \infty} |\tilde{A}_{l, \tau'}| =0, 
\end{eqnarray}
where the limit is understood 
in probability. Moreover, invoking  assumption \eref{e3.40} we also have the following limit in probability:
\begin{eqnarray}\label{e3.46}
\lim_{m\to \infty} \limsup_{n\to \infty} |\tilde{A}_{\tau', \tau'}| =0. 
\end{eqnarray}
  On the other hand, we let the patient reader check that
\begin{eqnarray}\label{e3.47}
 \tilde{A}_{\tau, \tau'} - 
 \Big( \int_{s}^{t} x^{i-\tau}_{su}du \Big) \otimes \varrho \to 0
\end{eqnarray}
in probability, similarly to what has been done in \eref{e3.33} and \eref{e3.38}. 
Taking into account \eref{e3.45}, \eref{e3.46}, \eref{e3.47} and \eref{e-approx.cj2}, it is easily checked that \eref{e6.4} for $i=\tau'$ is reduced to the following convergence:
\begin{eqnarray}\label{e3.48}
\tilde{A}_{0, \tau'} +\Big( \int_{s}^{t} x^{\tau'-\tau}_{su}du \Big)\otimes \varrho \xrightarrow{ \ d \ } \int_{s}^{t} x^{\tau'}_{u}dW_{u} +  \Big( \int_{s}^{t} x^{\tau'-\tau}_{su}du \Big)\otimes \varrho,
\end{eqnarray}
 as $n\to \infty$ and then $m\to \infty$. 
 In order to prove \eref{e3.48}, we first fix $m$ and let $n$ go to $\infty$. Then, owing to the fact that $(x, h^{n}) \xrightarrow{f.d.d.} (x, W) $, we get that 
 \begin{eqnarray*}
\lim_{n\to \infty} \ \tilde{A}_{0, \tau'} + \Big( \int_{s}^{t} x^{\tau'-\tau}_{su}du \Big)\otimes \varrho
\stackrel{\ (d) \ }{=} \sum_{j=0}^{m-1} x^{\tau'}_{ss_{j}} \otimes \delta W_{s_{j}s_{j+1}} + \Big( \int_{s}^{t} x^{\tau'-\tau}_{su}du \Big)\otimes \varrho.
\end{eqnarray*}
Then, conditioning on $x$ and considering limits of Riemann sums for Wiener integrals, we end up with:
 \begin{eqnarray*}
\lim_{m\to \infty} 
\sum_{j=0}^{m-1} x^{\tau'}_{ss_{j}} \otimes \delta W_{s_{j}s_{j+1}} + \Big( \int_{s}^{t} x^{\tau'-\tau}_{su}du \Big)\otimes \varrho
 \stackrel{\ L_{2} \ }{=} 
 \int_{s}^{t} x^{\tau'}_{su}dW_{u} +  \Big( \int_{s}^{t} x^{\tau'-\tau}_{su}du \Big)\otimes \varrho,
\end{eqnarray*}
from which \eref{e3.48}, and thus \eref{e6.4}  for $i=\tau'$ are easily deduced. 
 Therefore, we can conclude by induction that  the convergence \eref{e6.4} holds for all $i=\tau,\dots, \ell-1$. 

\noindent \textit{Step 4: Proof of \eref{e5.4} and \eref{e3.13}.} 
Recall that $\omega^{i}$ is defined by relation  \eref{e6.2i} when $i<\tau$ and by \eref{e6.4} when $i\geq \tau$. 
For $i=1, \dots, \ell-1$, $i\neq 0$ and $i\neq \tau$, as in the proof of Theorem \ref{thm5.9} (see relation \eref{e3.31}) we can show that \eref{e5.4} holds. 
 On the other hand, it is easy to show
 by classical integration arguments
  that
\begin{eqnarray}\label{e3.49}
\cj_{0}^{1} ( y^{(\tau)};  {\omega}^{\tau}) \to  \Big(
\int_{0}^{1} y^{(\tau)}_{u}du \Big) \otimes \varrho
\end{eqnarray}
in probability and 
\begin{eqnarray}\label{e3.50}
\cj_{0}^{1} ( y ;  {\omega}^{0}) \xrightarrow{} \int_{0}^{1} y_{u} \otimes  dW_{u}
\end{eqnarray}
in probability, by convergence of Riemann sums for Wiener integrals. 
Putting together \eref{e3.49} and \eref{e3.50} and invoking the same arguments as in Step 3, we can conclude that \eref{e3.13} is satisfied. 

In conclusion, we have checked conditions \eref{eq9.6}-\eref{e3.13}, and our result follows directly from Theorem \ref{thm4.1}.  
  \end{proof}

  \section{Breuer-Major theorem} \label{sectionBM}
 In this section, we consider generalizations of Breuer-Major's theorem \cite{BM}. Notice that recent contributions (see e.g. \cite{Nourdin, NN, NNT, NR}) to this area involving weighted sums of stationary sequences  mostly consider  sequences of   functionals of   one-dimensional fractional Brownian motions (fBm). 
 This is why we also stick to the one-dimensional fBm case, though multi-dimensional studies for more general Gaussian processes do not seem out of reach in our framework. 
 Also observe that the aforementioned references focus on sequences in a fixed chaos or in a finite sum of chaos. In contrast, we will be able to handle general sequences in $L_{2}$ with respect to a Gaussian measure.  
 
 \subsection{Weighted Breuer-Major theorem I}
 In this subsection, we consider the weighted type Breuer-Major theorem in the context of our     single limit Theorem \ref{thm5.9}.  
 
 Let us first introduce some additional notation. 
  Let $d {\ga} (t) = (2\pi)^{-1/2} e^{-t^{2}/2} dt$ be the standard Gaussian measure on the real line, and let $f \in L_{2}(\ga)$ be such that $\int_{\RR} f(t) d\ga (t) =0$. It is well-known that the function $f$ can be expanded into a series of Hermite polynomials as follows:
  \begin{eqnarray*}
f(t) &=& \sum_{q=d}^{\infty} a_{q} H_{q}(t),
\end{eqnarray*}
where  $d\geq 1$ is some integer and $H_{q} (t) = (-1)^{q} e^{\frac{t^{2}}{2}} \frac{d^{q}}{dt^{q}}  e^{-\frac{t^{2}}{2}}    $ is the Hermite polynomial of order $q$. If $a_{d}\neq 0$, then $d$ is called the \emph{Hermite rank} of the function~$f$. Note that since $f \in L_{2}(\ga)$, we have $\sum_{q=d}^{\infty} |a_{q}|^{2} q! <\infty $. 

Our underlying process $X$ is a one-dimensional Gaussian sequence. For such a  process   the basic tools to measure dependence  are based on correlation functions. Throughout this subsection, we assume that the following hypothesis on correlations holds true. 
\begin{hyp}\label{hyp8.1i}
Let $X_{k}$, $k\in \ZZ$ be a centered stationary Gaussian sequence such     that   $X_{k}$ has unit variance. Denote $\rho (k) = \mE(X_{0}X_{k})$. We   suppose that $\sum_{k\in \ZZ} |\rho(k)|^{d}<\infty$ for some $d\geq 1$. 
\end{hyp}

For sake of conciseness we will not recall the basic notions of Gaussian analysis which will be used in this section. The interested reader is referred to \cite{NP} for further details.

We now recall a classical version of Breuer-Major's theorem.
\begin{theorem}\label{thm4.2}
Let $\{X_{k}, k\in \ZZ\}$ be a centered stationary Gaussian sequence satisfying Hypothesis \ref{hyp8.1i} for $d\geq 1$. Consider $f\in L_{2}(\ga)$ with rank $d$. For $n\geq 1$, let $0=t_{0}<\cdots<t_{n}=1$ be the uniform partition of $[0,1]$ defined in Section \ref{section1}.  we set $h^{n}_{st} = \sum_{s\leq t_{k}<t} f(X_{k}) $ for all $(s,t) \in \cs_{2}$. Then the following central limit theorem   holds true:
   \begin{eqnarray*}
   h^{n} /\sqrt{n}
\xrightarrow{f.d.d.}   \si W  \quad \text{as} \quad n\to \infty, 
\end{eqnarray*}
where the variance $\si^{2}  \in [0,\infty) $ is defined by:
\begin{eqnarray}\label{e-si}
\sigma^{2} = \sum_{q=d}^{\infty} q! {a}^{2}_{q} \sum_{k\in \ZZ}\rho(k)^{q}.
\end{eqnarray}
\end{theorem}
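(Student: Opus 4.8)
The approach is to combine the Wiener chaos decomposition of $f$ with a truncation argument and the multivariate fourth moment theorem. First, since $W$ has independent increments, the convergence $h^{n}/\sqrt n\xrightarrow{f.d.d.}\sigma W$ is equivalent to showing that, for every partition $0=r_{0}<r_{1}<\cdots<r_{p}=1$, the vector $n^{-1/2}(h^{n}_{r_{0}r_{1}},\dots,h^{n}_{r_{p-1}r_{p}})$ converges to a centered Gaussian vector with covariance $\sigma^{2}\diag(r_{j}-r_{j-1})$ (any $h^{n}_{st}$ being a sum of consecutive increments of this form). By the Cram\'er--Wold device, and since $\sum_{j}\lambda_{j}h^{n}_{r_{j-1}r_{j}}=\sum_{k=0}^{n-1}\phi(k/n)f(X_{k})$ with $\phi=\sum_{j}\lambda_{j}\mathbf 1_{[r_{j-1},r_{j})}$, it suffices to prove that for every step function $\phi$ on $[0,1]$,
\[
F_{n}:=\frac1{\sqrt n}\sum_{k=0}^{n-1}\phi(k/n)\,f(X_{k})\ \xrightarrow{\ d\ }\ \mathcal N\big(0,\ \sigma^{2}\int_{0}^{1}\phi(u)^{2}\,du\big),
\]
since $\int_{0}^{1}\phi^{2}=\sum_{j}\lambda_{j}^{2}(r_{j}-r_{j-1})$ reproduces precisely the Brownian increment covariance. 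Expanding $f=\sum_{q\ge d}a_{q}H_{q}$ and writing $F_{n}^{(q)}=n^{-1/2}\sum_{k}\phi(k/n)H_{q}(X_{k})$, the identity $\mE[H_{q}(X_{j})H_{q'}(X_{k})]=\delta_{qq'}q!\,\rho(j-k)^{q}$ gives $\mE[F_{n}^{(q)}F_{n}^{(q')}]=0$ for $q\ne q'$ and $\mE[(F_{n}^{(q)})^{2}]\le q!\,\|\phi\|_{\infty}^{2}\sum_{m\in\ZZ}|\rho(m)|^{d}$, using $|\rho(m)|\le1$. Since $f\in L_{2}(\ga)$ forces $\sum_{q}a_{q}^{2}q!<\infty$, this shows that $\sigma^{2}=\sum_{q\ge d}q!\,a_{q}^{2}\sum_{m}\rho(m)^{q}$ is finite and that $\sup_{n}\mE[(F_{n}-F_{n,N})^{2}]\to0$ as $N\to\infty$, where $F_{n,N}:=\sum_{q=d}^{N}a_{q}F_{n}^{(q)}$.

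\textbf{Finite-chaos central limit theorem.} Fix $N$. Representing $X_{k}=Z(e_{k})$ for an isonormal Gaussian process $Z$ over a Hilbert space $\ch$ with $\langle e_{j},e_{k}\rangle_{\ch}=\rho(j-k)$, one has $F_{n}^{(q)}=I_{q}(g_{n}^{(q)})$ with kernel $g_{n}^{(q)}=n^{-1/2}\sum_{k}\phi(k/n)e_{k}^{\otimes q}$, where $I_{q}$ denotes the $q$th multiple Wiener--It\^o integral. By the Peccati--Tudor theorem (see \cite{NP}), the vector $(F_{n}^{(d)},\dots,F_{n}^{(N)})$ converges to a centered Gaussian vector provided (a) its covariance matrix converges and (b) each component satisfies the fourth moment condition, equivalently $\|g_{n}^{(q)}\otimes_{r}g_{n}^{(q)}\|_{\ch^{\otimes(2q-2r)}}\to0$ for $r=1,\dots,q-1$. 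For (a): the off-diagonal entries vanish, and with $m=j-k$ the relation $n^{-1}\sum_{k}\phi(k/n)\phi((k+m)/n)\to\int_{0}^{1}\phi^{2}$ combined with dominated convergence over $m$ (dominated by $|\rho(\cdot)|^{q}\in\ell^{1}$) yields $\mE[(F_{n}^{(q)})^{2}]\to q!\,(\int_{0}^{1}\phi^{2})\sum_{m}\rho(m)^{q}$. For (b): expanding $\|g_{n}^{(q)}\otimes_{r}g_{n}^{(q)}\|^{2}$ as a fourfold lattice sum of products of $\rho$-factors and applying Young's convolution inequality to $\rho^{q},\rho^{r},\rho^{q-r}\in\ell^{1}$ (all controlled by $\rho\in\ell^{d}$ through $|\rho|\le1$) bounds it by $O(n^{-1})$ times a convergent series; this is the classical Breuer--Major contraction estimate (cf. \cite{BM}). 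Hence $F_{n,N}\xrightarrow{d}\mathcal N(0,\sigma_{N}^{2}\int_{0}^{1}\phi^{2})$ with $\sigma_{N}^{2}=\sum_{q=d}^{N}q!\,a_{q}^{2}\sum_{m}\rho(m)^{q}$.

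\textbf{Removing the truncation.} Since $\sigma_{N}^{2}\to\sigma^{2}$ and $\sup_{n}\mE[(F_{n}-F_{n,N})^{2}]\to0$ as $N\to\infty$, the standard approximation lemma for convergence in law (see \cite{NP}) gives $F_{n}\xrightarrow{d}\mathcal N(0,\sigma^{2}\int_{0}^{1}\phi^{2})$ for every step function $\phi$. Together with the reduction of the first paragraph, this is exactly the claimed f.d.d. convergence $h^{n}/\sqrt n\xrightarrow{f.d.d.}\sigma W$; the degenerate case $\sigma=0$ is covered as well, the limit being identically $0$.

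\textbf{Main obstacle.} The substantive step is the fourth-moment/contraction bound (b) for each Hermite component. This is the only place where Hypothesis \ref{hyp8.1i}, i.e.\ $\sum_{k}|\rho(k)|^{d}<\infty$, enters in an essential (rather than cosmetic) way; the remaining ingredients --- orthogonality of the Wiener chaoses, a Riemann-sum computation for the covariance, and a triangular-array truncation --- are routine.
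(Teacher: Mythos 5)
The paper does not actually prove Theorem \ref{thm4.2}: it is introduced with ``we now recall a classical version of Breuer--Major's theorem'' and used as a black box, with the original reference \cite{BM} (and the stable/multidimensional upgrades delegated to \cite{NP, A}). Your proposal therefore supplies a proof where the paper supplies a citation, and the route you choose --- reduce f.d.d.\ convergence to a Cram\'er--Wold statement for $n^{-1/2}\sum_k\phi(k/n)f(X_k)$ with $\phi$ a step function, expand $f$ in Hermite polynomials, prove the CLT chaos-by-chaos via the Peccati--Tudor fourth moment theorem, and remove the truncation using the uniform-in-$n$ $L_2$ bound $\sup_n\mE[(F_n-F_{n,N})^2]\le \|\phi\|_\infty^2\big(\sum_m|\rho(m)|^d\big)\sum_{q>N}q!\,a_q^2$ --- is exactly the standard modern proof (it is essentially Theorem 7.2.4 and Section 7.3 of \cite{NP}). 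The reduction, the covariance computation by dominated convergence over the lag $m$, the orthogonality across chaoses, and the triangular-array approximation argument are all correct as written, and the degenerate case $\sigma=0$ is handled properly.

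One justification in Step (b) is wrong as stated, though the conclusion it is meant to support is true. You claim that $\rho^q,\rho^r,\rho^{q-r}\in\ell^{1}(\ZZ)$ ``all controlled by $\rho\in\ell^{d}$ through $|\rho|\le 1$.'' For $r<d$ this fails: $|\rho|^r\ge|\rho|^d$, so $\ell^{d}$-summability gives no control on $\sum_m|\rho(m)|^r$ (already for $q=d=2$, $r=1$ one only has $\rho\in\ell^{2}$). Consequently Young's convolution inequality cannot be applied in the naive form you indicate. The correct handling of the fourfold lattice sum
\begin{equation*}
n^{-2}\sum_{i,j,k,l}|\rho(i-j)|^{r}|\rho(k-l)|^{r}|\rho(i-k)|^{q-r}|\rho(j-l)|^{q-r}
\end{equation*}
is more delicate: after reducing to a threefold sum one either splits the lags at a threshold $n^{1-\epsilon}$ or applies a generalized H\"older argument with exponents $q/r$ and $q/(q-r)$, using only $\rho\in\ell^{q}\supset\ell^{d}$; this is precisely the content of the classical contraction estimate in \cite{BM} and of Theorem 7.2.4 in \cite{NP}, which you do cite. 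Since you explicitly defer to that classical estimate, this is a local misstatement of the mechanism rather than a structural gap, but it occurs at the one step you yourself identify as the substantive one, so the parenthetical should be replaced by the correct H\"older/splitting argument or by an honest citation without the false $\ell^{1}$ claim.
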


   In this subsection we specialize Theorem \ref{thm4.2} to a situation where $X_{k} = n^{\nu} \delta x_{t_{k}t_{k+1}}$, where $x$ is a fBm with Hurst parameter $\nu$. In this context we are interested in the following questions:   (1) Do we have the convergence of the weighted sum 
   \begin{eqnarray}\label{e4.1i}
\frac{1}{\sqrt{n}} \sum_{k=0}^{n-1} y_{ {k}} f(X_{k}) \quad \text{as \ } n\to \infty,
\end{eqnarray}
for a general weight $y_{k}$?
 (2) Does the central limit theorem for \eref{e4.1i} still hold in general?
 We will give a complete answer to these two questions when the weight process $y $ is a controlled process as introduced in Definition \ref{def2.2}. 
 
 Before we start our discussions, let us recall some basic facts about fBm. 
 (i) If $x$ is a one-dimensional fBm with Hurst parameter $\nu$, then $x$ is almost surely $\ga$-H\"older continuous for all $\ga<\nu$.  
 (ii) For a fBm $x$, the covariance function $\rho$ alluded to in Hypothesis \ref{hyp8.1i} is defined by 
 \begin{eqnarray}\label{e-rho}
\rho (k) = \mE(\delta x_{01}\delta x_{k,k+1}).
\end{eqnarray}
Then, 
 whenever $\nu <\frac12$, we have $\sum_{k\in \ZZ}\rho (k) = 0$. 
 
  We also label the following notation for further use. 
   \begin{notation}\label{notation4.3}
     Let $x$ be a one-dimensional fBm with Hurst parameter $\nu$. We consider $x$ as a $(L_{p}, \nu, \ell)$ rough path according to Definition \ref{def:rough-path}, where $p$ is any real number in $[1,\infty)$ and $\ell$
     is the smallest integer 
      satisfying $\nu \ell +\frac12>1$. In addition, we will choose $x^{i}_{st} = \frac{1}{i!} (\delta x_{st})^{i}$ for all $(s,t) \in \cs_{2}$ and $i=1,\dots, \ell$ for $\ell \in \NN$. 
 \end{notation}
 
Let us recall the following identity of multiple Wiener integrals. The reader is referred to e.g. \cite{H, NP, N} for more details:
\begin{lemma}
Let  $f\in L^{2}([0,1]^{p})$ and $g \in L^{2}([0,1]^{q})$ be symmetric functions. Then we have the identity
\begin{eqnarray}\label{e6.2}
I_{p}(f)I_{q}(g) &=& \sum_{r=0}^{p\wedge q} r! {{p}\choose {r} } {{q}\choose {r} }  I_{p+q-2r} (f\otimes_{r}g ),
\end{eqnarray}
where $I_{p}(f)$ is the $p$th multiple Wiener integral of $f$, and $ {{p}\choose {r} }  = \frac{p!}{r!(p-r)!}$. 
\end{lemma}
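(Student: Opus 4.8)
Formula \eref{e6.2} is the classical product formula for multiple Wiener--It\^o integrals, and I recall the argument for completeness (see also \cite{H, NP, N}). Write $\mathcal{H} = L^{2}([0,1])$ for the Hilbert space underlying $W$, so that $I_{1}(h) = \int_{0}^{1} h_{u}\,dW_{u}$, and recall the usual convention $I_{p}(f) = I_{p}(\tilde f)$, with $\tilde f$ the symmetrization of $f$; hence we may assume $f,g$ symmetric. Since $I_{p}$ is linear, the left-hand side of \eref{e6.2} is bilinear in $(f,g)$, as is $f\otimes_{r}g$. Moreover both sides are continuous in $(f,g)$: the left-hand side into $L^{1}(\Omega)$ by Cauchy--Schwarz together with the isometry $|I_{p}(f)|_{L_{2}}\leq \sqrt{p!}\,|f|_{L^{2}([0,1]^{p})}$, and the right-hand side into $L^{2}(\Omega)$ because the contraction map $\otimes_{r}$ is continuous and bilinear. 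Since the finite linear combinations of rank-one tensors $e^{\otimes p}$, $e\in\mathcal{H}$, are dense in the symmetric subspace of $L^{2}([0,1]^{p})$ (by polarization), it suffices to prove \eref{e6.2} for $f = e^{\otimes p}$ and $g = \tilde e^{\otimes q}$ with $e,\tilde e\in\mathcal{H}$.

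For such rank-one tensors I would use the stochastic exponential $\mathcal{E}(h) := \exp\big(I_{1}(h) - \frac{1}{2}|h|^{2}_{\mathcal{H}}\big) = \sum_{p\geq 0}\frac{1}{p!}I_{p}(h^{\otimes p})$, which satisfies the elementary identity $\mathcal{E}(h)\mathcal{E}(k) = e^{\langle h,k\rangle_{\mathcal{H}}}\,\mathcal{E}(h+k)$. Substituting $h = \lambda e$, $k = \mu\tilde e$ and expanding both sides as convergent power series in the real parameters $\lambda,\mu$ with coefficients in $L^{1}(\Omega)$, one identifies the coefficient of $\lambda^{p}\mu^{q}$ on each side. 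On the left this coefficient is $\frac{1}{p!\,q!}I_{p}(e^{\otimes p})I_{q}(\tilde e^{\otimes q})$. On the right, using $e^{\langle\lambda e,\mu\tilde e\rangle} = \sum_{r}\frac{(\lambda\mu)^{r}}{r!}\langle e,\tilde e\rangle^{r}$ together with $(\lambda e+\mu\tilde e)^{\otimes m} = \sum_{a+b=m}\binom{m}{a}\lambda^{a}\mu^{b}\,e^{\otimes a}\otimes\tilde e^{\otimes b}$ (up to an internal symmetrization, which is harmless under $I_{m}$), the constraints $r+a = p$ and $r+b=q$ force $m = p+q-2r$, and the coefficient equals
\[
\sum_{r=0}^{p\wedge q}\frac{\langle e,\tilde e\rangle^{r}}{r!\,(p-r)!\,(q-r)!}\,I_{p+q-2r}\big(e^{\otimes(p-r)}\otimes\tilde e^{\otimes(q-r)}\big).
\]
Equating the two expressions and multiplying by $p!\,q!$ gives \eref{e6.2} for rank-one tensors, once one notes that $\frac{p!\,q!}{r!(p-r)!(q-r)!} = r!\binom{p}{r}\binom{q}{r}$ and that $\langle e,\tilde e\rangle^{r}\,e^{\otimes(p-r)}\otimes\tilde e^{\otimes(q-r)}$ is precisely $e^{\otimes p}\otimes_{r}\tilde e^{\otimes q}$.

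The genuinely delicate points are the justification of the termwise identification of power-series coefficients — which follows from the fact that $(\lambda,\mu)\mapsto\mathcal{E}(\lambda e)\mathcal{E}(\mu\tilde e)$ is an entire $L^{1}(\Omega)$-valued function, so its Taylor coefficients are uniquely determined — and the density/continuity step, where one must keep in mind that the product $I_{p}(f)I_{q}(g)$ is only controlled in $L^{1}(\Omega)$ (not $L^{2}(\Omega)$), which is nonetheless enough to pass to the limit in both sides. An alternative route avoiding generating functions is an induction on $p+q$ based on the recursion $I_{1}(h)I_{q}(g) = I_{q+1}(h\,\widetilde{\otimes}\,g) + q\,I_{q-1}(g\otimes_{1}h)$ combined with the identity $I_{p}(e^{\otimes p}) = |e|^{p}_{\mathcal{H}}\,H_{p}\big(I_{1}(e)/|e|_{\mathcal{H}}\big)$ and the classical linearization formula for Hermite polynomials; the combinatorics there is essentially the same.
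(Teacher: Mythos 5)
Your proof is correct, but note that the paper does not prove this lemma at all: it is recalled as a classical fact, with the reader pointed to the references [H, NP, N]. So there is nothing to compare against; what you have supplied is one of the two standard textbook proofs (the exponential-vector/generating-function route, the other being the induction on $p+q$ via the Hermite recursion that you mention at the end). The computations check out: the coefficient of $\lambda^{p}\mu^{q}$ on the right is indeed $\sum_{r}\frac{\langle e,\tilde e\rangle^{r}}{r!(p-r)!(q-r)!}I_{p+q-2r}(e^{\otimes(p-r)}\otimes\tilde e^{\otimes(q-r)})$ after absorbing the multinomial factor $\frac{1}{(p+q-2r)!}\binom{p+q-2r}{p-r}=\frac{1}{(p-r)!(q-r)!}$, the identity $\frac{p!\,q!}{r!(p-r)!(q-r)!}=r!\binom{p}{r}\binom{q}{r}$ is right, and $e^{\otimes p}\otimes_{r}\tilde e^{\otimes q}=\langle e,\tilde e\rangle^{r}\,e^{\otimes(p-r)}\otimes\tilde e^{\otimes(q-r)}$ is exactly the definition of the contraction on rank-one tensors. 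You also correctly flag the only two points that need care: the termwise identification of Taylor coefficients of the entire $L^{1}(\Omega)$-valued map $(\lambda,\mu)\mapsto\mathcal{E}(\lambda e)\mathcal{E}(\mu\tilde e)$, and the fact that the density/continuity argument must be run in $L^{1}(\Omega)$ (where the product is controlled via Cauchy--Schwarz and the isometry), which is a common topological vector space containing the $L^{2}(\Omega)$ limits of the right-hand side. For the purposes of this paper you could equally well have left the lemma as a citation, as the authors do.
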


Let $\ch$ be the completion of the space of indicator functions with respect to the inner product $\langle \mathbf{1}_{[u,v]},  \mathbf{1}_{[s,t]}  \rangle_{\ch} =\mE(\delta x_{uv} \delta x_{st})$.   Let 
$0=t_{0}<\cdots<t_{n}=1$ be the uniform partition of $[0,1]$ alluded to the above and 
 $0\leq s_{0}<\cdots<s_{m }\leq 1$ be another partition of $[0,1]$ with $m\ll n$. 
 In the following, we take $\ell$ such that $\ell-1 \leq \frac{1}{2\nu } <\ell$ (or equivalently $\ell$ is the smallest integer such that $\nu\ell+\al>1$ with $\al=\frac12$).
  We set        
 \begin{eqnarray}\label{e4.3j}
 h^{n,q}_{st} = \sum_{s\leq t_{k}<t} H_{q}( n^{\nu} \delta x_{ t_{k}, t_{k+1}})
 \quad\quad\text{and}\quad\quad
\zeta^{i,q  }_{j} = \cj_{s_{j}}^{s_{j+1}} ( x^{i}; h^{n,q} )
\end{eqnarray}
 for $k=0,\dots, n-1$, $j=0,\dots, m-1$, $i=0,\dots, \ell-1$ and $q \in \NN$, where $\cj_{s_{j}}^{s_{j+1}} (x^{i}; h^{n,q})$ is given by \eref{e2.8}. We denote by $\vartheta ( q,q',i ) $    the following quantity
 \begin{eqnarray}\label{e4.3}
\vartheta ( q,q',i  ) &:=&
\mE\Big(
\sum_{j,j'=0}^{m -1}   \zeta^{i ,q }_{j}
    \zeta^{i  , q' }_{j'} \Big) .
\end{eqnarray}
  We will need the following auxiliary result.
 \begin{lemma} \label{lem8.3}
Let $x$ be a one-dimensional fBm on $[0,1]$ with Hurst parameter $\nu\leq \frac12$.  Take    ${i=1,\dots, \ell-1}$, where we recall that $\ell$ satisfies $\ell-1\leq\frac{1}{2\nu}<\ell$.
  Then for $q', q\geq \ell$  the following estimate holds true: 
  
  \noindent (i) When $ |q'-q| \leq 2i$, we have
  \begin{eqnarray}\label{e6.3}
\vartheta ( q,q',i   )  &\leq&  K   (  n^{1-2 \nu } +nm^{-2i\nu } +n^{1- \nu  } )   \sum\limits_{r=    \frac{1}{2} (q+q') - i}^{q   } r!  {q \choose r} {q' \choose r} ,
\end{eqnarray}
where $\vartheta (q,q',i)$ is defined by \eref{e4.3} and   $K$ is a positive universal constant.

\noindent (ii) When $ |q'-q| >2i$, we have
\begin{eqnarray}\label{e6.3w}
\vartheta ( q,q',i )  &=&0.
\end{eqnarray}

\noindent (iii) 
 When $|q-q'|\leq 2i$, the following inequality holds true for all $(s,t)\in \cs_{2}$:  
\begin{eqnarray}\label{e4.7}
\mE (  
\cj_{s}^{t} (x^{i}; h^{n,q})  \cj_{s}^{t} (x^{i}; h^{n,q'})     )
 &\leq& Kn (t-s)^{ 2i\nu+1 }   \sum_{r=  \frac{1}{2}(q+q')-i}^{q}  r! {q \choose r} {q' \choose r}    .
\end{eqnarray}

\noindent (iv) 
When $|q-q'|> 2i$, for all $(s,t)\in \cs_{2}$ we have:
\begin{eqnarray*}
\mE (  
\cj_{s}^{t} (x^{i}; h^{n,q})  \cj_{s}^{t} (x^{i}; h^{n,q'})     )
 &=& 0  .
\end{eqnarray*}
\end{lemma}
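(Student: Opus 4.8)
The plan is to compute the second moment $\vartheta(q,q',i)$ by expanding all the building blocks into multiple Wiener integrals and applying the product formula \eqref{e6.2}. First I would write $x^{i}_{st} = \frac{1}{i!}(\delta x_{st})^{i}$ as a sum of multiple integrals $I_{k}(\cdot)$ for $k \le i$ with the same parity as $i$, and write each $H_{q}(n^{\nu}\delta x_{t_{k}t_{k+1}}) = n^{\nu q} I_{q}(\mathbf{1}_{[t_{k},t_{k+1}]}^{\otimes q})$. Thus $\zeta^{i,q}_{j}$ is a finite linear combination of products $I_{k}(\text{monomial in }\delta x) \, I_{q}(\mathbf{1}^{\otimes q})$, which via \eqref{e6.2} decomposes into chaoses $I_{q+k-2r}$ for $0\le r\le k\wedge q$. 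The key orthogonality observation, which immediately gives parts (ii) and (iv), is that $\mE(\zeta^{i,q}_{j}\zeta^{i,q'}_{j'})$ vanishes unless the two families of chaoses overlap, i.e. unless there exist admissible $k,k' \le i$ and contraction orders $r,r'$ with $q+k-2r = q'+k'-2r'$; since $k,k' \le i$ this forces $|q-q'|\le 2i$. So the first real step is this parity/range bookkeeping.

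Next, assuming $|q-q'|\le 2i$, I would estimate the surviving terms. Each surviving contribution to $\mE(\zeta^{i,q}_{j}\zeta^{i,q'}_{j'})$ is, after using the isometry for multiple integrals, a combinatorial coefficient of the form $r!\binom{q}{r}\binom{q'}{r}$ (coming from the product formula) times an inner-product/contraction factor built out of the covariance $\rho$ of the fBm increments and the $x^{i}$ monomials evaluated at the endpoints $s_{j},\bar u_{j}$. The combinatorial factor is exactly the sum $\sum_{r = \frac12(q+q')-i}^{q} r!\binom{q}{r}\binom{q'}{r}$ appearing on the right-hand side of \eqref{e6.3}; the lower summation limit is precisely the constraint that the common chaos order $q+k-2r$ be attainable with $k\le i$. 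The remaining job is to bound the analytic prefactor by $K(n^{1-2\nu}+nm^{-2i\nu}+n^{1-\nu})$. This is where I expect the main obstacle: one must carefully organize the double sum $\sum_{j,j'}\sum_{t_{k}\in D_{j}}\sum_{t_{k'}\in D_{j'}}$ over pairs of grid points, split according to whether the two points lie in the same big block ($j=j'$) or different ones, and in each regime bound the product of (a) powers of $|\rho(k-k')|$ coming from the contractions between the two Hermite factors, (b) factors $n^{\nu}\cdot|s_{j}|^{\nu}$-type terms from the $x^{i}$ monomials, and (c) the diagonal terms where one of the $x^{i}$ factors is contracted against a Hermite factor. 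Using $\nu<\frac12$ (so $\sum_{k}|\rho(k)|^{q}<\infty$ for $q\ge 2$, in fact $\sum_k\rho(k)=0$) and the $\nu$-Hölder bounds on $x^{i}$, the three terms $n^{1-2\nu}$, $nm^{-2i\nu}$, $n^{1-\nu}$ should emerge respectively from: the "fully off-diagonal" contraction pattern that produces an extra cancellation, the boundary/block-mismatch corrections of size $m^{-2i\nu}$ per block summed over $\sim n$ points, and the terms with a single $x$-to-Hermite contraction.

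For parts (iii) and (iv) the argument is the same but simpler, since there is no block decomposition: $\mE(\cj_{s}^{t}(x^{i};h^{n,q})\cj_{s}^{t}(x^{i};h^{n,q'}))$ is a double sum over $t_{k},t_{k'}\in[s,t)$ of the analogous quantities, the parity constraint again forcing $|q-q'|\le 2i$ and giving (iv) for free, and the estimate \eqref{e4.7} follows by bounding $\sum_{t_{k},t_{k'}}|\rho(k-k')|^{\cdots}\cdot(t-s)^{2i\nu}$-type terms; the $(t-s)^{2i\nu}$ comes from the $x^{i}$ factors at the endpoint $s$ (using $|x^{i}_{s t_{k}}|\le |t_{k}-s|^{i\nu}\le (t-s)^{i\nu}$), the extra $(t-s)$ from one free summation over a block of length $\sim n(t-s)$, and the overall $n$ from the Breuer-Major normalization. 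I would present \eqref{e6.3} and \eqref{e4.7} as the two genuinely new estimates and note that \eqref{e6.3w} and (iv) are immediate consequences of chaos orthogonality. Throughout, the constant $K$ absorbs the finitely many admissible values of $k,k'\le i$ and the convergent series $\sum_{k\in\ZZ}|\rho(k)|^{2}$.
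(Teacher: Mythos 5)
Your proposal is correct and follows essentially the same route as the paper's proof: both rest on the multiplication formula \eref{e6.2} together with the observation that the product $x^{i}_{\ep(s_{j})t_{k}}x^{i}_{\ep(s_{j'})t_{k'}}$ lives in the first $2i$ chaoses, which kills every contraction with $q+q'-2r>2i$, yields (ii) and (iv) by orthogonality, and produces exactly the lower summation limit $\frac12(q+q')-i$ and the coefficients $r!\binom{q}{r}\binom{q'}{r}$. The only cosmetic difference is that the paper encodes this orthogonality as the vanishing of $D^{q+q'-2r}(x^{i}x^{i})$ via integration by parts rather than expanding $x^{i}$ into chaoses, and your attribution of the three terms $n^{1-2\nu}$, $nm^{-2i\nu}$, $n^{1-\nu}$ (at least two contractions of the weight against Hermite factors; the pure $r=q=q'$ term $\mE[x^{i}x^{i}]=O(m^{-2i\nu})$; a single such contraction) matches the paper's case analysis of the coefficients $b(r)$.
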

\begin{remark}
Notice that our assumption imply in particular that $q\wedge q'>\frac{1}{2\nu}$. This is also the condition on the Hermite rank of $f$ which will feature in Theorem \ref{thm8.5} below. 
\end{remark}
\begin{proof}[Proof of Lemma \ref{lem8.3}]
\noindent \textit{Step 1:}
Without   loss of generality let us assume that $q'\geq q$. By the definition of $\zeta^{i ,q }_{j}$  we can write
 \begin{eqnarray}\label{e6.4j}
 \vartheta ( q,q',i  )   &=&
  \sum_{j,j'=0}^{m -1}
\sum_{s_{j}\leq t_{k}<s_{j+1}} 
\sum_{s_{j'}\leq t_{k'}<s_{j'+1}} 
a (j,j',k,k') ,
\end{eqnarray}
where, recalling that $\ep (s_{j})$ is defined by \eref{e2.8}, we have 
\begin{eqnarray*}
a (j,j',k,k') &=& \mE\Big(  
x_{ \ep(s_{j}) t_{k} }^{i}  x_{ \ep(s_{j'}) t_{k'} }^{i}  H_{q}( n^{\nu} \delta x_{ t_{k}, t_{k+1}})
  H_{q'}( n^{\nu} \delta x_{ t_{k'}, t_{k'+1}})
  \Big).
\end{eqnarray*}
Now set 
    $\be_{k} = n^{\nu} \mathbf{1}_{[t_{k}, t_{k+1}]}$. 
    Recalling that $H_{q}(n^{\nu} \delta x_{t_{k}t_{k+1}}) = I_{q}(\be^{\otimes q})$ and invoking identity~\eref{e6.2}, we easily obtain:
\begin{eqnarray*}
 a (j,j',k,k') 
&=&
\sum_{r=0}^{q } r!  {q \choose r} {q' \choose r}
 \mE\Big(  
x_{ \ep(s_{j}) t_{k} }^{i}  x_{ \ep(s_{j'}) t_{k'} }^{i}   I_{q+q'-2r} (\be_{k}^{\otimes q-r} \otimes \be_{k'}^{\otimes q'-r} ) \Big) \langle \be_{k}, \be_{k'}\rangle_{\ch}^{r}.
\end{eqnarray*}
Now observe that $\langle\be_{k},\be_{k'}\rangle_{\ch} = \rho(k-k')$, where the covariance function $\rho$ is defined by \eref{e-rho}. 
Therefore, owing to an application of integration by parts, we end up with the following identity:
\begin{eqnarray} \label{e6.5}
 a (j,j',k,k')  &=& \sum_{r=   0}^{q   }  r! {q \choose r}  {q' \choose r}   b(r) \rho (k-k')^{r} ,
\end{eqnarray}
  where $b(r)$ is the coefficient defined by: 
\begin{eqnarray}\label{e4.8}
b(r) &=& \mE 
\Big\langle 
D^{q+q'-2r}( x_{ \ep(s_{j}) t_{k} }^{i}  x_{ \ep(s_{j'}) t_{k'} }^{i}    ),
 \be_{k}^{\otimes (q-r) } \otimes \be_{k'}^{\otimes (q'-r) } 
 \Big\rangle_{\ch^{\otimes (q+q'-2r)}}
   .
\end{eqnarray}

 \noindent \textit{Step 2:} Consider $q\geq \ell$.    
Due to the fact that $x^{i}$ belongs to the sum of the first $i$ chaos, 
  when  $q'-q>2i  $,    it is easy to see that 
\begin{eqnarray}\label{e8.6j}
D^{q+q'-2r}( x_{ \ep(s_{j}) t_{k} }^{i}  x_{ \ep(s_{j'}) t_{k'} }^{i}     ) =0
\end{eqnarray}
 for all $r=0,\dots, q$. Taking into account  \eref{e6.5}, this implies  that whenever $q'-q>2i$ we have 
\begin{eqnarray}\label{e6.6ii}
a(j,j',k,k') = 0,
\end{eqnarray}
and thus the    estimate in \eref{e6.3w} holds.

In the following, we assume that $0\leq q'-q\leq 2i$ and we focus on inequality \eref{e6.3}. 
Note first that since $q'\geq q$ and $q\geq \ell$, we have $\frac{1}{2} (q+q') - i \geq q- (\ell-1) >0$.

We now recall that $b(r)$ is defined by \eref{e4.8}, and we separate the estimates on $b(r)$ in several cases:

(i) Case $0\leq r<\frac12 (q+q')-i$. In this case, going back to the definition \eref{e4.8}, it is readily checked that we differentiate the product $x^{i}_{\ep(s_{j})t_{k}} x^{i}_{\ep(s_{j'})t_{k'}}$ more than $2i$ times, and hence $b(r)=0$. 

(ii) Case $\frac12 (q+q')-i\leq r\leq q-1$. In this case we still have $q+q'-2r>0$. Then we start from relation \eref{e4.8} again, we take into account the order of differentiation, and resort to the following relations, which are valid for all $k\leq n$ and $(a,b)\in \cs_{2}'$
\begin{eqnarray}\label{e4.13}
\langle \be_{k}, \mathbf{1}_{[a,b]} \rangle_{\ch}
= 
n^{-\nu} \langle \mathbf{1}_{[k,k+1]} , \mathbf{1}_{[na,nb]} \rangle_{\ch}, 
\quad\text{and} \quad |\langle \mathbf{1}_{[k,k+1]} , \mathbf{1}_{[na,nb]} \rangle_{\ch}|\leq 1
\end{eqnarray}
whenever $\nu\leq \frac12$. Then we let the patient reader check that this yields 
\begin{eqnarray}\label{e-b(r)}
|b(r)| \leq K n^{-(q+q'-2r)\nu} 
m^{-( 2i-(q+q'-2r) )\nu}
 \leq K n^{-(q+q'-2r)\nu}\leq Kn^{-2\nu}.
\end{eqnarray}

  (iii) Case   $r=q$. 
  If $q<q'$, similarly to case (ii), we can get $|b(r)|\leq K n^{-\nu}$. 
    If $r=q$ and $q=q'$, then
    $|b(r)|$ becomes $|b(r)| = | \mE[ x^{i}_{\ep(s_{j})t_{k}} x^{i}_{\ep(s_{j'})t_{k'}} ] |$, from which is easily seen that this term is bounded 
      by     $K    m^{- 2 i \nu} $. 
        
Now gathering the estimates obtained in (i)-(iii) and plugging them in \eref{e6.5}, we end up with: 
\begin{eqnarray}\label{e6.4i}
| a (j,j',k,k') |    &\leq&
 K  \Big(
 \sum_{r=  \frac{1}{2} (q+q') - i}^{q-1  }  r!  {q \choose r} {q' \choose r} n^{   -2 \nu}    | \rho (k-k')|^{r}
 \nonumber
\\
&& \quad\quad\quad\quad\quad  +   q!  {q' \choose q}   (   m^{- 2i \nu }+ n^{- \nu }) |  \rho (k-k')|^{q}
 \Big).
\end{eqnarray}
Furthermore, observe that 
\begin{eqnarray}\label{e4.15}
\cd_{n}\cap [s,t) = \bigcup_{j=0}^{m-1} \{ t_{k}; s_{j}\leq t_{k}<s_{j+1} \}.
\end{eqnarray}
Hence, 
substituting \eref{e6.4i} into \eref{e6.4j}  and using the fact that 
$\sum_{k\in \NN}|\rho(k)|<\infty$, 
  we obtain 
\begin{eqnarray}\label{e4.14}
| \vartheta ( q,q',i  )  |
&\leq& 
 K    \sum_{r=  \frac{1}{2} (q+q') - i}^{q-1  } r!  {q \choose r} {q' \choose r}
n^{-2 \nu }
 \sum_{k,k'=0}^{n-1}  |\rho (k-k')|^{r}
 \nonumber
 \\
 && + Kq!   {q' \choose q}   (m^{- 2i \nu }+n^{- \nu }) \sum_{k,k'=0}^{n-1}   |\rho (k-k')|^{q}
 \nonumber
 \\
 &\leq& K  (  n^{1-2 \nu } +nm^{-2i\nu } +n^{1- \nu  } )  \sum_{r=   \frac{1}{2} (q+q') - i}^{q  } r!  {q \choose r} {q' \choose r} 
   .
\end{eqnarray}
  This completes the proof of     inequality   \eref{e6.3}.

\noindent  \textit{Step 3:} 
In this step, we prove the estimates in (iii) and (iv). 
For $(s,t) \in \cs_{2} $ such that $t-s<\frac1n$ and with Remark \ref{remark2.8} in mind, we have $\cj_{s}^{t} (x^{i}; h^{n,q})=0$. Therefore,  in the following we assume that $ n(t-s)  \geq 1$. 
  Suppose that $q'\geq q$. Then
  similarly to \eref{e6.4j} and \eref{e6.5},
   we can derive the following expression: 
\begin{eqnarray}\label{e8.9q}
&&
\mE (  
\cj_{s}^{t} (x^{i}; h^{n,q})  \cj_{s}^{t} (x^{i}; h^{n,q'})     )
 \\
 &&
 =  \sum_{s\leq t_{k},t_{k'}<t}  \sum_{r=0}^{q} r!  {q \choose r} {q' \choose r}  \mE  \Big\langle D^{q'+q-2r}(  
x_{ \ep(s ) t_{k} }^{i}  x_{ \ep(s ) t_{k ' } }^{i}),   \beta_{k}^{\otimes (q-r)} \otimes \beta_{k'}^{\otimes (q'-r)} \Big\rangle_{\ch^{\otimes (q'+q-2r)}}    \rho (k-k')^{r} 
   .
\nonumber  
\end{eqnarray}
As in the previous step, we now separate the case $q'-q\leq 2i$ and $q'-q>2i$. 
Indeed, when  $q'-q\leq 2i$, we have seen that $\frac{1}{2}(q+q')-i>0$. Hence,  thanks to the assumption that $q'\geq q$, $q\geq \ell$ and $i\leq \ell-1$,  we obtain the following estimate along the same lines as in the previous step:
\begin{eqnarray} \label{e4.16}
\big| \mE (  
\cj_{s}^{t} (x^{i}; h^{n,q})  \cj_{s}^{t} (x^{i}; h^{n,q'})     )\big|
 & \leq&
 \sum_{r= \frac{1}{2}(q+q')-i}^{q}  r!  {q \choose r} {q' \choose r}   
 n^{-(q+q'-2r)\nu}
  (t-s )^{(2i-( q+q'-2r))\nu}
  \nonumber
  \\
  &&
\quad\quad\quad \times \sum_{s\leq t_{k},t_{k'}<t}   |\rho (k-k')|^{r}  
  .
\end{eqnarray}
Thus, resorting to the inequality $n(t-s)\geq 1$ and thanks to the fact that $\sum_{k,k'=0}^{n-1} |\rho(k-k')|^{r}$ is of order $n$, we get
\begin{eqnarray*}
\mE (  
\cj_{s}^{t} (x^{i}; h^{n,q})  \cj_{s}^{t} (x^{i}; h^{n,q'})     )
&\leq & 
Kn (t-s)^{ 2i\nu+1 }  \sum_{r=  \frac{1}{2}(q+q')-i}^{q}  r!{q \choose r} {q' \choose r},
\end{eqnarray*}
which proves (iii).

The proof of (iv) is left to the reader. Indeed, it is done exactly as for (ii), taking advantage of the fact that $D^{p} (x^{i}_{\ep(s)t_{k}} x^{i}_{\ep(s)t_{k'}}) = 0$ whenever $p>2i$. 
The proof is now complete.
\end{proof}
  
  We are ready to derive the  first main result of this section, which is a Breuer-Major type central limit theorem.
  \begin{thm}\label{thm8.5}
Let $x$ be a one-dimensional fBm with Hurst parameter $\nu \leq \frac12$. Let 
  $\ell  $ be an integer such that $\nu \ell +\frac12>1$.  
 Let $ (y ,y' , \dots, y^{(\ell-1)})$ be a   process controlled by $(x,\frac12)$ in $L_{2}$ or almost surely.
Let  $f\in L_{2}(\ga)$ with  Hermite rank   strictly bigger than $\frac{1}{2\nu }$. Suppose that one of the following conditions holds true, 

\noindent
(a) We have the expansion $f = \sum_{q=d}^{\infty} a_{q}H_{q} $, and 
\begin{eqnarray}\label{e8.10i}
\sum_{q=d}^{\infty}  a_{q}^{2}q! q^{2(\ell-1)}<\infty.
\end{eqnarray}

\noindent (b) 
The function $f$ sits in the Soblev space $W^{2(\ell-1), 2}( \RR, \ga)$, where recall that $\ga $ denotes the standard Gaussian measure on the real line.  
 
 \noindent (c) The function
  $f $ is an element of $ C^{2\ell-3}$ and $f^{(2\ell-3)}$ is Lipschitz.

\noindent We define a family of increments $\{h^{n}; n\geq 1 \}$ by:
 \begin{eqnarray}\label{e4.18}
h^{n}_{st} := \frac{1}{\sqrt{n}} \sum_{s\leq t_{k}<t} f(n^{\nu} \delta x_{t_{k}t_{k+1}}), \quad\quad (s,t)\in \cs_{2}. 
\end{eqnarray}
Then we have the   convergence:
\begin{eqnarray*}
(x, \cj(y; h^{n}))  
 \xrightarrow{f.d.d.}    \Big(x, \si\int  y_{t} dW_{t}\Big) \,,\quad\quad  \text{as \ }   n\to \infty,
\end{eqnarray*}
where $\si$ is given by \eref{e-si}.
  \end{thm}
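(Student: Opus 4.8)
The plan is to derive Theorem \ref{thm8.5} from the single limit Theorem \ref{thm5.9}, after a preliminary reduction of hypotheses (b) and (c) to the Hermite coefficient condition (a). Using $H_q' = qH_{q-1}$ one has $f^{(k)} = \sum_{q\geq k} a_q\frac{q!}{(q-k)!}H_{q-k}$, hence $\|f^{(k)}\|_{L_2(\ga)}^2 = \sum_{q\geq k} a_q^2\frac{(q!)^2}{(q-k)!}$ whenever the weak derivative $f^{(k)}$ belongs to $L_2(\ga)$; since $\frac{(q!)^2}{(q-k)!} = q!\,q(q-1)\cdots(q-k+1) \geq 2^{-k}q!\,q^k$ for $q\geq 2k$, the assumption $f\in W^{2(\ell-1),2}(\RR,\ga)$ in (b) forces $\sum_q a_q^2 q!\,q^{2(\ell-1)}<\infty$, that is \eref{e8.10i}. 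In case (c), that $f^{(2\ell-3)}$ is Lipschitz means $f$ possesses a weak derivative $f^{(2\ell-2)}\in L^\infty(\RR)\subset L_2(\ga)$, which again yields \eref{e8.10i}. So we may assume \eref{e8.10i}. Moreover the Hermite rank being $>\frac1{2\nu}$ means $d\geq\ell$, where $\ell$ is the smallest integer with $\nu\ell+\frac12>1$ (equivalently $\ell-1\leq\frac1{2\nu}<\ell$); thus every chaos order $q\geq d$ occurring in $f$ satisfies $q\geq\ell$, which is exactly the regime where Lemma \ref{lem8.3} applies.

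The second ingredient is the combinatorial observation that \eref{e8.10i} is precisely tailored to bound the weighted moments appearing below. For $i=1,\dots,\ell-1$ put
\[
S_i := \sum_{|q-q'|\leq 2i}|a_q|\,|a_{q'}|\sum_{r=\frac12(q+q')-i}^{q\wedge q'} r!\,{q\choose r}{q'\choose r}.
\]
On the displayed range of $r$ one has $q\wedge q'-r\leq i$ and $q\vee q'-r\leq 2i$, so that $r!\,{q\choose r}{q'\choose r} = \frac{q!\,q'!}{(q\wedge q'-r)!\,(q\vee q'-r)!\,r!}\leq K_i\,(q\wedge q')^{2i}\sqrt{q!\,q'!}$ (using $r\geq q\wedge q'-i$ and $q\vee q'\leq q\wedge q'+2i$), and the number of admissible $r$ is at most $K_i$; combined with $|a_q|\,|a_{q'}|\sqrt{q!\,q'!}\leq\frac12(a_q^2q!+a_{q'}^2q'!)$ and $q\asymp q'$ this gives $S_i\leq K_i\sum_q a_q^2 q!\,q^{2i}\leq K_i\sum_q a_q^2 q!\,q^{2(\ell-1)}<\infty$.

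Now decompose $h^n = \frac1{\sqrt n}\sum_q a_q h^{n,q}$ with $h^{n,q}$ as in \eref{e4.3j}, so $\cj_s^t(x^i;h^n) = \frac1{\sqrt n}\sum_q a_q\,\cj_s^t(x^i;h^{n,q})$, and verify the hypotheses of Theorem \ref{thm5.9}. For \eref{e3.8i} uniformly in $n$: for $i=0$, $\cj_s^t(x^0;h^n) = h^n_{st}$ and a direct covariance computation gives $|h^n_{st}|_{L_2}^2\leq K(t-s)$, with $K$ depending on $\|f\|_{L_2(\ga)}$ and on $\sum_{k\in\ZZ}|\rho(k)|^d$, which is finite because $d>\frac1{2\nu}$ and $x$ is an fBm with $\nu\leq\frac12$; for $1\leq i\leq\ell-1$, Lemma \ref{lem8.3}(iii)--(iv) (the cross terms vanish unless $|q-q'|\leq 2i$) yields $|\cj_s^t(x^i;h^n)|_{L_2}^2\leq\frac Kn\cdot n(t-s)^{2i\nu+1}S_i = KS_i(t-s)^{2\nu i+1}$. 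For condition (i) of Theorem \ref{thm5.9}, $(x,h^n)\xrightarrow{f.d.d.}(x,\sigma W)$ with $W$ standard Brownian independent of $x$: the marginal convergence $h^n\xrightarrow{f.d.d.}\sigma W$ is the classical Breuer--Major Theorem \ref{thm4.2}, and the joint statement follows by truncating $f_N=\sum_{q=d}^N a_q H_q$, applying the multivariate central limit theorem in a finite sum of chaos, and using $\mathrm{Cov}(x_s,h^n_{uv})\to0$ for the asymptotic independence from $x$. For condition (ii), fix $i\in\{1,\dots,\ell-1\}$ and a partition $0\leq s_0<\cdots<s_m\leq1$ with $m\ll n$ and $|s_{j+1}-s_j|\leq\frac1m$; writing $\cj_{s_j}^{s_{j+1}}(x^i;h^n)=\frac1{\sqrt n}\sum_q a_q\zeta^{i,q}_j$ and invoking Lemma \ref{lem8.3}(i)--(ii),
\begin{eqnarray*}
\mE\Big|\sum_{j=0}^{m-1}\cj_{s_j}^{s_{j+1}}(x^i;h^n)\Big|^2
&=& \frac1n\sum_{q,q'}a_qa_{q'}\,\vartheta(q,q',i)\\
&\leq& \frac Kn\big(n^{1-2\nu}+nm^{-2i\nu}+n^{1-\nu}\big)S_i,
\end{eqnarray*}
so $\limsup_{n\to\infty}\mE|\sum_j\cj_{s_j}^{s_{j+1}}(x^i;h^n)|^2\leq KS_i\,m^{-2i\nu}\to0$ as $m\to\infty$, which gives the required convergence in probability. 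Assuming $\sigma\neq0$ and applying Theorem \ref{thm5.9} to $\bar h^n:=\sigma^{-1}h^n$ (which satisfies the same hypotheses) gives $(x,\cj(y;\bar h^n))\xrightarrow{f.d.d.}(x,\int_0^1 y_t\,dW_t)$; multiplying by $\sigma$ yields the claim, and the degenerate case $\sigma=0$ follows from the same estimates (or by running the proof of Theorem \ref{thm5.9} with $W\equiv0$).

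The hard part is the uniform-in-$n$ control of $\cj(x^i;h^n)$ at all levels $i=0,\dots,\ell-1$ when $f$ lies in infinitely many chaoses: one must combine the chaos-by-chaos estimates of Lemma \ref{lem8.3} with the bound on $S_i$ to see that the single assumption \eref{e8.10i} --- equivalently $f\in W^{2(\ell-1),2}(\RR,\ga)$ --- is exactly what is needed for all the relevant weighted second moments to behave well. A secondary point is the joint limit in (i) for $f$ with infinitely many chaos components, which calls for the truncation argument above rather than a direct appeal to fixed-chaos central limit theorems.
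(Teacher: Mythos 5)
Your proposal is correct and follows essentially the same route as the paper: reduce (b) and (c) to the coefficient condition \eref{e8.10i}, decompose $h^{n}$ into the chaos components $h^{n,q}$ of \eref{e4.3j}, use Lemma \ref{lem8.3} together with the summability of what you call $S_{i}$ (the paper's $c_{i}$ in \eref{e4.20ii}--\eref{e4.21i}) to verify \eref{e3.8i} and \eref{e6.1}, and conclude via Theorem \ref{thm5.9}. The only differences are cosmetic refinements (a slightly different combinatorial bound for $S_{i}$, the explicit $i=0$ estimate, the truncation argument for joint convergence, and the $\sigma$-rescaling), all of which are consistent with what the paper does or leaves implicit.
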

  
    \begin{remark}
  As mentioned in the introduction, Theorem \ref{thm8.5} can be seen as a generalization as well as a simplification of  \cite{Nourdin, NNT}. 
  \end{remark}
  
  \begin{proof}[Proof of Theorem \ref{thm8.5}:] 
   We first assume   that     condition (a) is true. 
  We will prove the   theorem   thanks to our   central limit Theorem~\ref{thm5.9} applied to $h^{n}$. 
 
   To this aim, it suffices to verify   
   $(x,h^{n}) \xrightarrow{f.d.d.} (x,\omega)$, plus
    condition \eref{e3.8i}  in Proposition \ref{prop3.6} and the convergence in \eref{e6.1}. We now prove that those conditions are satisfied in separate steps. 
   
 \noindent  \textit{Step 1: Stable convergence of $h^{n}$. } The convergence in law of $h^{n}$ to $W$ is a direct consequence of Theorem \ref{thm4.2}. One can  get the stable convergence by applying a multi-dimensional version of Theorem 6.3.1 in \cite{NP}. 
 
   \noindent  \textit{Step 2: Proof of condition \eref{e3.8i}. } Recall that $f = \sum_{q=d}^{\infty} a_{q} H_{q}$, that $h^{n}$ is defined by \eref{e4.18}, and that $h^{n,q}$ has been introduced in \eref{e4.3j}. 
    Then $|  \cj_{s}^{t} ( x^{i}; h^{n}  )  |_{L_{2}}^{2} $ can be expressed as 
   \begin{eqnarray*}
|  \cj_{s}^{t} ( x^{i}; h^{n}  )  |_{L_{2}}^{2} &=& \frac{1}{n}\sum_{q,q'=d}^{\infty} a_{q}a_{q'} \mE( \cj_{s}^{t} ( x^{i}; h^{n,q}  )  \cj_{s}^{t} ( x^{i}; h^{n,q'}  ) ).
\end{eqnarray*}
We can now apply Lemma   \ref{lem8.3} (iii) and (iv) in order to get:  
\begin{eqnarray}\label{e8.11ii}
|  \cj_{s}^{t} ( x^{i}; h^{n}  )  |_{L_{2}}^{2} &\leq& Kc_{i}  (t-s)^{ 2i\nu+1 }    ,
\end{eqnarray}
where $K$ is defined by \eref{e4.7} and
\begin{eqnarray}\label{e4.20ii}
c_{i} &=&
\sum_{|q-q'|\leq 2i}  a_{q}a_{q'}      \sum_{r=  \frac{1}{2}(q+q')-i}^{q \wedge q'}  r!{q \choose r} {q' \choose r} .
\end{eqnarray}
In addition, we observe that $r\geq (q\wedge q')-\ell+1$ in the sum defining $c_{i}$. Hence, invoking the elementary bounds
\begin{eqnarray*}
{q' \choose r} \leq (q')^{q'-r} \quad \text{and} \quad r! {q \choose r} \leq q!, 
\end{eqnarray*}
plus an application of Cauchy-Schwarz's inequality for the sum over $r$, 
it is readily checked that 
\begin{eqnarray}\label{e4.21i}
c_{i}
&\leq&      \sum_{q=d}^{\infty} a_{q}^{2} q! q^{2(\ell-1)}.
\end{eqnarray}

  Taking square root in both sides of \eref{e8.11ii} and taking into account condition \eref{e8.10i} we obtain the condition \eref{e3.8i} in Proposition \ref{prop3.6}.

   \noindent  \textit{Step 3: Proof of condition \eref{e6.1}. }
   Recall that the increment $h^{n}$ is defined by \eref{e4.18}.  For $(s,t) \in \cs_{2}$, we set 
 \begin{eqnarray*}
\zeta^{i}_{j} = \cj_{s_{j}}^{s_{j+1}} (x^{i}; h^{n}) = 
\frac{1}{\sqrt{n}}
\sum_{q=d}^{\infty} a_{q} \cj_{s_{j}}^{s_{j+1}} (x^{i}; h^{n,q})
=
\frac{1}{\sqrt{n}}
\sum_{q=d}^{\infty} a_{q} \zeta^{i,q}_{j} ,
\end{eqnarray*}
where the last identity is due to our convention \eref{e4.3j}. 
  Then
  according to our notation~\eref{e4.3}, the following relation holds true for $i>0$: 
 \begin{eqnarray*}
 \Big| \sum_{j=0}^{m-1} \cj_{s_{j}}^{s_{j+1}} (x^{i}; h^{n})  \Big|^{2}_{L_{2}}
 =  
\frac{1}{n}
\sum_{q,q'=d}^{\infty} a_{q}a_{q'}   \vartheta ( q,q',i   ) .
\end{eqnarray*}
According to Lemma \ref{lem8.3}, we have $\vartheta ( q,q',i ) =0$ when $|q'-q|>2i$. Combining this with     inequality \eref{e6.3},  we obtain
 \begin{eqnarray*}
\Big|  \sum_{j=0}^{m-1}   \zeta^{i}_{j} \Big|_{L_{2}}^{2}
&=& \frac1n \sum_{|q-q'|\leq 2i} a_{q}a_{q'}   \vartheta ( q,q',i  )
\\
&\leq&   K  ( n^{-2 \nu } +m^{-2\nu } +n^{- \nu  } ) \sum_{|q-q'|\leq 2i} a_{q}a_{q'}  \sum_{r=    \frac{1}{2} (q+q') - i}^{q \wedge q'  } r! {q \choose r} {q' \choose r} 
.
\end{eqnarray*}
We now refer to our definition \eref{e4.20ii} of $c_{i}$, as well as inequality \eref{e4.21i}, which yields:
\begin{eqnarray*}
\Big|  \sum_{j=0}^{m-1}   \zeta^{i}_{j} \Big|_{L_{2}}^{2} 
&\leq& K  ( n^{-2 \nu  } +m^{-2\nu } +n^{- \nu  } ) \sum_{q=d}^{\infty}  a_{q}^{2}q! q^{2(\ell-1)} .
\end{eqnarray*}
 Taking into account the assumption \eref{e8.10i}, this implies  that
\begin{eqnarray}\label{e4.24}
\lim_{m\to \infty} \limsup_{n\to \infty} \Big|   \sum_{j=0}^{m-1}   \zeta^{i}_{j} \Big|_{L_{2}} &=&0.
\end{eqnarray}
This   complete the proof of the theorem under condition (a).

\noindent  \textit{Step 4: Proof under conditions (b) and (c). } 
It is well-known that condition (b) is equivalent to condition (a); see e.g. Page 28 in \cite{N}. On the other hand, one can show that condition (c) implies condition (b). Indeed, by Proposition 1.2.4 in \cite{N}, we obtain that $f^{(2\ell-3)}  \in W^{1,2}(\RR, \ga) $.  It is then easy to show that $f^{(2\ell-4)}\in L_{2}(\ga)$ and $ ( f^{(2\ell-4)})'   = f^{(2\ell-3)}  $, which implies that $f^{(2\ell-4)} \in W^{2,2}(\RR, \ga) $. Repeating this argument, we obtain that $f  \in W^{2\ell-2, 2}(\RR, \ga) $. Our proof is now finished. 
  \end{proof}

We now consider a central limit theorem for weights $y$ which satisfies the Young pairing condition with respect to a Brownian motion $W$ (i.e. $y$ is $\nu'$-H\"older continuous for $\nu'>\frac12$).

  \begin{prop}\label{prop8.6}
  Let   $y$ be a $\nu'$-H\"older continuous path  for some  $ \nu'>\frac12$ and let $x$ be a fBm with Hurst parameter $\nu\in (0,1)$.
   Suppose that $f \in L_{2}(\ga)$ has Hermite rank $d  $ such that $\nu<1-\frac{1}{2d}$. Then 
  the following convergence holds true
  \begin{eqnarray}\label{e8.12}
\frac{1}{\sqrt{n}} \sum_{k=0}^{n-1} y_{t_{k}} f( n^{\nu}x_{t_{k}t_{k+1}})
 \xrightarrow{ \ d \ }  \si  \int_{0}^{1} y_{t} dW_{t} \quad\quad  \text{as \ }   n\to \infty,
\end{eqnarray}
where $\si$ is defined by \eref{e-si}. 
  \end{prop}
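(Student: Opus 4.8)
The plan is to deduce Proposition~\ref{prop8.6} from the single limit Theorem~\ref{thm5.9} by observing that when $\nu' > \frac12$ the weight $y$ is automatically a controlled process of the simplest possible type, namely with $\ell = 1$. Indeed, since $y$ is $\nu'$-H\"older with $\nu' > \frac12$, we may take $\al = \nu'$ in Definition~\ref{def2.2}; because $\nu\cdot 1 + \nu' > 1$ whenever $\nu' > \frac12$, the integer $\ell$ attached to $(x,\al)$ is $\ell = 1$, and the controlled structure reduces to the single datum $y^{(0)} = y$ with remainder $r^{(0)}_{st} = \delta y_{st}$ satisfying $|r^{(0)}_{st}| \le K(t-s)^{\nu'} = K(t-s)^{\al}$. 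Thus $y$ is trivially controlled by $(x,\al)$ almost surely, and the index set $\ci$ in Theorem~\ref{thm5.9} collapses to $\ci = \{0\}$, so that condition (ii) of that theorem (the convergence \eqref{e6.1} for $i = 1,\dots,\ell-1$) is vacuous. This is precisely the situation described in Remark~\ref{remark4.12}.

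First I would set $h^{n}_{st} := n^{-1/2}\sum_{s\le t_k < t} f(n^{\nu}\delta x_{t_k t_{k+1}})$ as in \eqref{e4.18}, so that the left-hand side of \eqref{e8.12} is exactly $\cj_0^1(y; h^n)$ in the notation of \eqref{e2.12}. It then remains to verify the two hypotheses of Theorem~\ref{thm5.9} in the case $\ell = 1$: (a) the regularity bound \eqref{e3.8i}, which for $i = 0$ reads $|h^n_{st}|_{L_2} \le K(t-s)^{\al}$ uniformly in $n$; and (b) the finite-dimensional convergence $(x, h^n) \xrightarrow{f.d.d.} (x, W)$ with $W$ a standard Brownian motion independent of $x$, with variance constant $\si$ given by \eqref{e-si}. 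For (a), I would expand $f = \sum_{q\ge d} a_q H_q$ and use the same kind of second-moment computation as in Step~2 of the proof of Theorem~\ref{thm8.5} (via Lemma~\ref{lem8.3}, or more directly via the covariance identity for $H_q$ and the summability $\sum_k |\rho(k)|^d < \infty$, which here follows from $\nu < 1 - \frac{1}{2d}$, i.e.\ $\sum_k |\rho(k)|^{q} < \infty$ for all $q \ge d$ since $\rho(k) = O(k^{2\nu-2})$ and $(2-2\nu)q > 1$). Note that in the range $\nu < 1-\frac1{2d}$ we only need the weakest case $\ell=1$ of that lemma, so the estimate is $|h^n_{st}|_{L_2}^2 \le K(t-s)$, giving $\al = \frac12$, which is $\le \nu'$ and hence compatible with the controlled structure above. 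For (b), the one-dimensional central limit theorem $h^n/\sqrt n \to \si W$ in f.d.d.\ is the classical Breuer--Major Theorem~\ref{thm4.2}, and the joint convergence together with independence of the limit from $x$ follows by upgrading to stable convergence exactly as in Step~1 of the proof of Theorem~\ref{thm8.5}, i.e.\ by a multidimensional version of Theorem~6.3.1 in \cite{NP}.

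With (a) and (b) in hand, Theorem~\ref{thm5.9} applies verbatim and yields $(x, \cj(y; h^n)) \xrightarrow{f.d.d.} (x, v)$ with $v_{st} = \int_s^t y_u\, dW_u$ understood as a conditional Wiener integral; specializing to $(s,t) = (0,1)$ and tracking the variance constant through the identification $\omega^0 = \si W$ gives exactly \eqref{e8.12}. The main obstacle is not conceptual but bookkeeping: one must check that the summability hypothesis on $\rho$ implicitly required by Theorem~\ref{thm4.2} and by the bound \eqref{e3.8i} is genuinely implied by $\nu < 1 - \frac{1}{2d}$ rather than by the stronger $\nu < \frac12$; this is where the hypothesis on the Hermite rank enters, since it guarantees $\sum_k |\rho(k)|^q < \infty$ for every $q$ appearing in the chaos expansion of $f$, and hence both the finiteness of $\si^2$ in \eqref{e-si} and the uniform-in-$n$ bound on $|h^n_{st}|_{L_2}$. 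Everything else is a direct invocation of results already established in the paper.
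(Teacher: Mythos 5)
Your proposal is correct and follows essentially the same route as the paper: establish the uniform bound $|h^{n}_{st}|_{L_{2}}\leq K(t-s)^{1/2}$ from the chaos expansion and the summability $\sum_{k}|\rho(k)|^{d}<\infty$ (guaranteed by $\nu<1-\tfrac{1}{2d}$), note that $\nu'+\tfrac12>1$ places you in the $\ell=1$ situation of Remark \ref{remark4.12}, and then apply Theorem \ref{thm5.9} together with the classical Breuer--Major Theorem \ref{thm4.2} upgraded to stable convergence. Your added justification of why the Hermite-rank condition yields the needed summability of $\rho$ is exactly the point the paper leaves implicit, so nothing is missing.
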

 \begin{remark}
Proposition \ref{prop8.6} has been first proved in \cite{CNP} by means of fractional calculus techniques, and has been generalized in \cite{HLN1} to   $L_{2}$-convergence limit theorems. 
 \end{remark}
  \begin{proof}[Proof of Proposition \ref{prop8.6}:]
  As in equation \eref{e4.18}, for $(s,t)\in \cs_{2}$ we set 
    \begin{eqnarray*}
h^{n}_{st} = \frac{1}{\sqrt{n}}  \sum_{s\leq t_{k}<t}  f(n^{\nu}x_{t_{k}t_{k+1}}).
\end{eqnarray*}
  In a similar way as in \eref{e8.11ii},   Lemma \ref{lem8.3} (iii) and (iv) we can show that 
  \begin{eqnarray}\label{e8.13i}
 |  h^{n}_{st}|_{L_{2}}\leq K(t-s)^{\frac12}.
\end{eqnarray}
 Notice that we are working here under the assumption $\nu'+\frac12>1$. Therefore, an application of Theorem \ref{thm5.9} combined with Remark \ref{remark4.12} yield our claim \eref{e8.12}. 
    \end{proof}

  \subsection{Weighted Breuer-Major theorem II}
 In this subsection,  we continue our discussion on the Breuer-Major theorem, handling situations with low order Hermite ranks. We first derive some auxiliary results on    the discrete integral
$ \cj_{s}^{t}(y; h^{n,q} ) $, 
where we recall that $h^{n,q} $ is defined by \eref{e4.3j}.   
 \begin{lemma} \label{lem4.8}
 Let $x$ be a fBm with Hurst parameter $\nu$ considered as a $(L_{p}, \nu, \ell)$ rough path as in Notation \ref{notation4.3}. 
 Take    $i=1,\dots, \ell-1$ and    $q \in \NN$.  
 
 \noindent
(i)  
Let $\vartheta (q,q,i)$ be defined by \eref{e4.3}. Then
 for $q<\frac{1}{2\nu}$ and $q<i$, we have  
\begin{eqnarray}\label{e8.4k}
\vartheta ( q,q ,i  )   &\leq& K ( nm^{-4 \nu}   + n^{2-2q\nu}   m^{-2 \nu}    ).
\end{eqnarray}
  For $q=\frac{1}{2\nu}$ and $0< i<q$, we have 
\begin{eqnarray}\label{e4.23}
\vartheta ( q,q ,i  )   &\leq& K  (n^{1-2\nu} + nm^{-2i\nu}).
\end{eqnarray}

\noindent
(ii) 
Recall that $h^{n,q}_{st} = \sum_{s\leq t_{k}<t} H_{q}(n^{\nu} \delta x_{t_{k}t_{k+1}}) $ is defined by \eref{e4.3j}. Then
for $q<\frac{1}{2\nu}$  and $q>i$, we have
\begin{eqnarray}\label{e8.5u}
\mE ( |
\cj_{s}^{t} (x^{i}; h^{n,q})  |^{2}    ) &\leq & Kn(t-s)^{2i\nu+1}, \quad \text{ for  } (s,t) \in \cs_{2}'.
\end{eqnarray}
  For $  q\geq \frac{1}{2\nu}$, we have 
\begin{eqnarray}\label{e8.5k}
\mE ( |
\cj_{s}^{t} (x^{i}; h^{n,q})  |^{2}    )
 &\leq& Kn(t-s)^{2i\nu+1},  \quad \text{ for } (s,t) \in \cs_{2}' .
\end{eqnarray}
  For $q<\frac{1}{2\nu}$, and $q\leq i$, we have 
\begin{eqnarray}\label{e8.6k}
\mE ( |
\cj_{s}^{t} (x^{i}; h^{n,q})  |^{2}    ) &\leq & Kn^{2-2q\nu} (t-s)^{2+2i\nu-2q\nu}  \quad (s,t) \in \cs_{2}' .
\end{eqnarray}

 \end{lemma}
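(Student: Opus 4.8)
The plan is to prove all the estimates in Lemma \ref{lem4.8} by the same chaos-expansion and integration-by-parts machinery already developed in the proof of Lemma \ref{lem8.3}, simply tracking more carefully which power of $n$ and which power of $(t-s)$ (or $m^{-1}$) comes out when the Hermite rank $q$ is \emph{small} relative to $i$ or to $\frac{1}{2\nu}$. First I would recall the representation $H_q(n^\nu\delta x_{t_kt_{k+1}})=I_q(\beta_k^{\otimes q})$ with $\beta_k=n^\nu\mathbf 1_{[t_k,t_{k+1}]}$, write $x^i_{\ep(s)t_k}=\frac1{i!}(\delta x_{\ep(s)t_k})^i$ as an element of the first $i$ chaos, and then expand the products $\cj_s^t(x^i;h^{n,q})\cj_s^t(x^i;h^{n,q'})$ (or the double sum $\vartheta(q,q,i)$) using the multiplication formula \eqref{e6.2} exactly as in \eqref{e6.4j}--\eqref{e4.8}. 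This reduces every quantity to a finite sum over the contraction order $r$ of terms of the form $b(r)\,\rho(k-k')^r$ with $b(r)$ a Malliavin-derivative pairing as in \eqref{e4.8}.

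The key new point is the counting of the available derivatives: the product $x^i_{\ep(s)t_k}x^i_{\ep(s)t_{k'}}$ has total chaos order $\le 2i$, so $D^{p}$ of it vanishes for $p>2i$, but now we are in the regime where $q+q'-2r$ can be as large as $2q<2i$ (for parts (i) and (iii)) — so all the ``surviving'' $r$'s satisfy $q+q'-2r\le 2i$, and for the leading contraction $r=q$ (which is $r=q=q'$ in the $\vartheta(q,q,i)$ case) we are left with the bare expectation $\mE[x^i_{\ep(s)t_k}x^i_{\ep(s)t_{k'}}]$, bounded by $Km^{-2i\nu}$ when $t_k,t_{k'}$ lie in blocks of size $1/m$, or by $K(t-s)^{2i\nu}$ in the single-block estimate. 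For the non-top contractions I would reuse the two bounds in \eqref{e4.13}: each pairing $\langle\beta_k,\mathbf 1_{[a,b]}\rangle_\ch$ costs a factor $n^{-\nu}$ and is bounded by $n^{-\nu}$, so $b(r)$ picks up $n^{-(q+q'-2r)\nu}$ times (in the block case) a residual $(t-s)$-power $(s_{j+1}-s_j)^{(2i-(q+q'-2r))\nu}$ coming from the remaining undifferentiated increments. Summing $\rho(k-k')^r$ over the $n^2$ lattice pairs gives order $n$ when $r\ge 1$, and order $n^2$ when $r=0$; the case $r=0$, $q=0$ does not arise since $q\ge d\ge 1$, but $q+q'-2r$ can equal $2q$, forcing the $n^{2-2q\nu}$ factors that appear in \eqref{e8.4k} and \eqref{e8.6k}. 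Assembling these pieces with the constraint $m\ll n$ yields \eqref{e8.4k}, \eqref{e4.23}, \eqref{e8.5u}, \eqref{e8.5k}, \eqref{e8.6k} after bounding the (finitely many, since $|q-q'|\le 2i$ for non-vanishing terms) combinatorial coefficients $r!\binom qr\binom{q'}r$ by a constant depending only on $q,i$.

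Concretely I would organize the proof as: Step 1, set up the chaos expansion and reduce to estimating $b(r)\rho(k-k')^r$ exactly as in Lemma \ref{lem8.3} Step 1; Step 2, prove the $\vartheta(q,q,i)$ bounds \eqref{e8.4k} and \eqref{e4.23} by distinguishing $r=q$ (top contraction, giving the $m^{-2i\nu}$ or $m^{-4\nu}$ term after noting $2i\ge\ldots$) from $r<q$ (giving $n^{-(2q-2r)\nu}$ with residual $m^{-(2i-(2q-2r))\nu}$, the worst case being $r=q-1$, i.e. $n^{-2\nu}m^{-(2i-2)\nu}$, which for $q<i$ combined with the $n^2$ from the $r=0$-like leading term and the summation over $k,k'$ produces $n^{2-2q\nu}m^{-2\nu}$); Step 3, prove the single-block estimates \eqref{e8.5u}--\eqref{e8.6k} the same way but with $(t-s)$ in place of $1/m$ and with the elementary observation (already used below \eqref{e4.15}) that $\sum_{s\le t_k,t_{k'}<t}|\rho(k-k')|^r$ is of order $n(t-s)$ for $r\ge 1$, distinguishing $q>i$ (top contraction reaches $r$ with $q+q'-2r\le 2i$, giving the benign $n(t-s)^{2i\nu+1}$) from $q\le i$ (even the $r=0$ term survives the derivative-count, forcing $D^{2q}$, hence $n^{-2q\nu}$ with residual $(t-s)^{(2i-2q)\nu}$ times $(t-s)^{2q}$ from the two $x^i$-increments raised to reproduce the extra smoothness, and $n^2(t-s)^2$ from double summation, yielding $n^{2-2q\nu}(t-s)^{2+2i\nu-2q\nu}$).

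The main obstacle I expect is the bookkeeping in the low-rank regime $q\le i$: here, unlike in Lemma \ref{lem8.3}, the zeroth contraction $r=0$ does \emph{not} vanish, so one gets genuinely different scaling ($n^{2-2q\nu}$ rather than $n^{1-2q\nu}$ or $n$), and one must be careful to extract the correct residual power of $(t-s)$ — namely $(t-s)^{2}$ from the two factors $\delta x_{\ep(s)t_k}^i$, $\delta x_{\ep(s)t_{k'}}^i$ \emph{after} all derivatives have been taken from them, interacting with the $(t-s)^{(2i-2q)\nu}$ coming from the undifferentiated copies. Getting the exponent $2+2i\nu-2q\nu$ (and not, say, $2i\nu+1$) right requires carefully counting, for each $r$, how many of the $2i$ ``slots'' of $x^i_{\ep(s)t_k}x^i_{\ep(s)t_{k'}}$ are consumed by differentiation versus left to contribute a bare increment to the $L_2$-norm; everything else is a routine repetition of the arguments in the proof of Lemma \ref{lem8.3}, so I would simply write ``as in the proof of Lemma \ref{lem8.3}'' for those parts and present only the new counting in detail.
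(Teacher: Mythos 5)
Your proposal is correct and follows essentially the same route as the paper's proof: the same chaos expansion and integration by parts reducing everything to terms $b(r)\rho(k-k')^{r}$, the same derivative-count observation that only $r\geq q-i$ survives (so the $r=0$ contraction is present exactly when $q\leq i$, producing the $n^{2-2q\nu}$ scaling), and the same $O(n(t-s))$ versus $O(n^{2}(t-s)^{2})$ dichotomy for $\sum|\rho(k-k')|^{r}$. The only blemish is in your factor accounting for \eref{e8.6k}: the $r=0$ term decomposes as $n^{-2q\nu}$ (from the $2q$ pairings with $\beta_{k},\beta_{k'}$, each bounded by $n^{-\nu}$ as in \eref{e4.13}) times $(t-s)^{(2i-2q)\nu}$ (from the undifferentiated increments) times $n^{2}(t-s)^{2}$ (from the double sum), with no extra $(t-s)^{2q}$ factor, so the list you give does not multiply out to the exponent $2+2i\nu-2q\nu$ that you nonetheless state correctly.
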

 \begin{proof} The proof is divided into several steps.
 
 \noindent \textit{Step 1: General estimate for $\vartheta$. }  
 Recall that $\vartheta (q,q,i)$ is given by \eref{e6.4j}. Next we use expression   \eref{e6.5} for $a (j,j',k,k')    $. We bound all the combination numbers by a constant and invoke the fact  that $b(r)$ (defined by \eref{e4.8}) satisfies (similarly to \eref{e-b(r)}): 
 \begin{eqnarray*}
|b(r)|&\leq& K n^{-(2q-2r)\nu} m^{-(2i-(2q-2r))\nu}
\end{eqnarray*}
for all $r\geq q-i$. 
 Therefore, similarly to  \eref{e6.4i} we get
\begin{eqnarray} \label{e8.9}
|a (j,j',k,k') |    &\leq&
 K   \Big(
 \sum_{r=  0\vee( q - i)}^{q   }  n^{  -(2q -2r )\nu}   m^{-(2i-(2q-2r))\nu  } |\rho (k-k')|^{r}
  \Big).
\end{eqnarray}

 \noindent \textit{Step 2: Case $q<\frac{1}{2 \nu }$ and $q<i$.} 
In this situation,  similarly to \eref{e4.14}, 
substituting \eref{e8.9} into \eref{e6.4j}   we obtain 
\begin{eqnarray}\label{e4.31}
|\vartheta ( q,q ,i   )   |
&\leq& 
K   
 \sum_{r=   1 }^{q   } 
n^{ -(2q -2r) \nu } m^{-(2i-(2q-2r)) \nu  }
 \sum_{k,k'=0}^{n-1}  |\rho (k-k')|^{r} 
 \nonumber
 \\
 &&+ Kn^{-2q \nu }  m^{-(2i-2q) \nu }     \sum_{k,k'=0}^{n-1}  |\rho (k-k')|^{0} 
 .
\end{eqnarray}
Therefore, owing to the fact that $\sum_{k,k'=0}^{n-1} |\rho (k-k')|^{r} =O(n) $ whenever $r\geq 1$ and properly bounding the exponents in \eref{e4.31}, we end up with 
\begin{eqnarray*}
|\vartheta ( q,q ,i   )   | 
 &\leq& K ( nm^{-4  \nu }   + n^{2-2q \nu }   m^{-2  \nu }    ).
\end{eqnarray*}
 This completes the proof of \eref{e8.4k}. 
 
  \noindent \textit{Step 3: Case   $q=\frac{1}{2 \nu }$ and  $0< i<q$.} 
If $q=\frac{1}{2\nu}$ and $\ell$ is the smallest integer such that $\nu\ell >\frac12$,   we have $\ell=q+1$. Since   $0< i<q$, then substituting \eref{e8.9} into \eref{e6.4j}   we obtain
the same inequality as \eref{e4.31}, except for the fact that the term with $\rho(k-k')^{0}$ is missing. We get 
 \begin{eqnarray*}
|\vartheta ( q,q ,i  ) |
&\leq&  K  \sum_{r=q-i}^{q } n^{-(2q-2r) \nu } 
 m^{-(2i-(2q-2r)) \nu  } 
\sum_{k,k'=0}^{n-1} |\rho (k-k')|^{r}
\\
&\leq&  K   (n^{1-2 \nu } + nm^{-2i \nu }),
\end{eqnarray*}
where   we have followed the same lines as in the previous step for the second inequality.  
This completes the proof of \eref{e4.23}.

 %In the case $q<\frac{1}{2H}$ and $q>i$,
 
  \noindent \textit{Step 4: General estimate for $\cj_{s}^{t} (x^{i}; h^{n,q})$.} 
  By \eref{e8.9q}, we have the expression: 
\begin{eqnarray*}
&&
\mE ( |
\cj_{s}^{t} (x^{i}; h^{n,q})  |^{2}    )
 \\
 &&
 =   \sum_{s\leq t_{k},t_{k'}<t}  \sum_{r=0}^{q} r! {q \choose r}^{2} \mE \Big\langle D^{2q-2r}( 
x_{ \ep(s ) t_{k} }^{i}  x_{ \ep(s ) t_{k ' } }^{i})),   \beta_{k}^{\otimes (q-r)} \otimes \beta_{k'}^{\otimes (q-r)} \Big\rangle_{\ch^{\otimes (2q-2r)}}    \rho (k-k')^{r} 
   .
\end{eqnarray*}
Therefore,
proceeding similarly to Step 1 and \eref{e4.16} and bounding all the combination numbers by a constant $K$, 
 we obtain the estimate
\begin{eqnarray}\label{e8.11k}
\mE ( |
\cj_{s}^{t} (x^{i}; h^{n})  |^{2}    )
 &\leq& K n^{-2i \nu } \sum_{r=0\vee (q-i)}^{q}    
(n(t-s))^{(2i-(2q-2r)) \nu }
  \sum_{s\leq t_{k},t_{k'}<t}  | \rho (k-k') |^{r}  .
\end{eqnarray}

 \noindent \textit{Step 5: Proof of \eref{e8.5u}, \eref{e8.5k} and \eref{e8.6k}.} 
 In order to prove \eref{e8.5k}, 
note that when $ q>\frac{1}{2 \nu }$, since $i\leq \ell-1\leq \frac{1}{2 \nu }$, we have $q>i$, and so the estimate \eref{e8.11k} implies \eref{e8.5k} due to the fact that $\sum_{s\leq t_{k},t_{k'} <t } |\rho(k-k')|^{r}\leq Kn(t-s)$ when $r\geq 1$. 
Similarly, we can show that
    estimate \eref{e8.5k} still  holds   when  $q=\frac{1}{2 \nu }$.  

In a similar way, it stems from the inequality  \eref{e8.11k}  that in the case $q<\frac{1}{2 \nu }$  and $q>i$  we have
  \eref{e8.5u}, and that in the case $q<\frac{1}{2 \nu }$  and $q\leq i$, we have the estimate \eref{e8.6k}.
 \end{proof}

   In case of a low rank $q$, we now derive some deterministic limits for Riemann sums related to $x^{q}$ and $h^{n,q}$. 
\begin{lemma}\label{lem4.9}
Let $n\geq 1$ and let $0=t_{0}<\dots<t_{n}=1$ be the uniform partition of $[0,1]$ of order $n$. Let $x$ be a standard fBm with Hurst parameter $\nu\in (0,\frac12)$.  Recall that $h^{n,q}$ is defined by \eref{e4.3j}. Consider also $n\gg m$, and a partition $0\leq s_{0}<\cdots<s_{m}\leq 1$ 
of   $[0,1]$. We   assume that  
$  |s_{i+1} - s_{i}| \leq \frac{1}{m}$, and $s_{0}= s$, $s_{m}= t$.  
Then the following limits hold true:  

\noindent (i) For $ \nu =\frac{1}{2q}$,
 we have
\begin{eqnarray} \label{e4.20i}
\lim_{m\to \infty} \lim_{n\to \infty}   \frac{1}{\sqrt{n}}   \sum_{j=0}^{m-1} \cj_{s_{j}}^{s_{j+1}} (x^{q}; h^{n,q})  =   \Big(-\frac12\Big)^{ q}  (t-s)
\end{eqnarray}
in $L_{2}$,  for  all $(s,t) \in \cs_{2}$.

\noindent (ii) For $\nu <\frac{1}{2q}$, we have
\begin{eqnarray}\label{e4.36i}
\lim_{n\to \infty}   \frac{1}{ n^{1-q\nu}}     \cj_{s }^{t} (x^{q}; h^{n,q})  =    \Big(-\frac12 \Big)^{ q}(t-s)
\end{eqnarray}
in $L_{2}$, for all $(s,t) \in \cs_{2}$.
\end{lemma}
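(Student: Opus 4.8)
The plan is to deduce both limits from computations of the first two moments, exploiting that the announced limits are deterministic. Write $c=(-1/2)^{q}$ and let $\Xi_{n}$ denote $n^{-1/2}\sum_{j=0}^{m-1}\cj_{s_{j}}^{s_{j+1}}(x^{q};h^{n,q})$ in case (i) and $n^{-(1-q\nu)}\cj_{s}^{t}(x^{q};h^{n,q})$ in case (ii). Since the target is the constant $c(t-s)$, the bias--variance decomposition
\[
\mE\big[|\Xi_{n}-c(t-s)|^{2}\big]=\mathrm{Var}(\Xi_{n})+\big(\mE[\Xi_{n}]-c(t-s)\big)^{2}
\]
reduces the claim to proving that $\mE[\Xi_{n}]\to c(t-s)$ and $\mathrm{Var}(\Xi_{n})\to 0$, where in case (i) both limits are understood in the iterated sense $\lim_{m}\lim_{n}$.

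For the first moment, recall from Notation \ref{notation4.3} that $x^{q}_{\ep(s)t_{k}}=\frac1{q!}(\delta x_{\ep(s)t_{k}})^{q}$ and that $H_{q}(n^{\nu}\delta x_{t_{k}t_{k+1}})=I_{q}(\beta_{k}^{\otimes q})$ with $\beta_{k}=n^{\nu}\mathbf 1_{[t_{k},t_{k+1}]}$. The duality formula $\mE[F\,I_{q}(g^{\otimes q})]=\mE\langle D^{q}F,g^{\otimes q}\rangle_{\ch^{\otimes q}}$ together with $D^{q}\big((\delta x_{ab})^{q}\big)=q!\,\mathbf 1_{[a,b]}^{\otimes q}$ gives
\[
\mE\big[x^{q}_{\ep(s)t_{k}}H_{q}(n^{\nu}\delta x_{t_{k}t_{k+1}})\big]=\big(n^{\nu}\langle\mathbf 1_{[\ep(s),t_{k}]},\mathbf 1_{[t_{k},t_{k+1}]}\rangle_{\ch}\big)^{q}.
\]
Now $\langle\mathbf 1_{[\ep(s),t_{k}]},\mathbf 1_{[t_{k},t_{k+1}]}\rangle_{\ch}=\frac12\big(|t_{k+1}-\ep(s)|^{2\nu}-|t_{k}-\ep(s)|^{2\nu}-n^{-2\nu}\big)$, and the first difference is $O(n^{-1})$ for $t_{k}$ bounded away from the basepoint; since $\nu<\frac12$ the term $-\frac12 n^{-2\nu}$ dominates, so each such summand equals $(-1/2)^{q}n^{-q\nu}(1+o(1))$. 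Summing over the $\approx n(t-s)$ grid points (resp. blockwise over the $D_{j}$ in case (i)), and verifying that the $O(1)$ exceptional indices near each basepoint --- and the $O(m)$ boundary terms from the blocks in case (i) --- contribute a negligible proportion, one gets $\mE[\cj_{s}^{t}(x^{q};h^{n,q})]=(-1/2)^{q}(t-s)\,n^{1-q\nu}+o(n^{1-q\nu})$, together with the analogous identity for the blocked sum. Dividing by the normalization proves the convergence of the means.

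For the variance one follows the scheme of the proof of Lemma \ref{lem4.8}. Setting $X_{k}=x^{q}_{\ep(s)t_{k}}H_{q}(n^{\nu}\delta x_{t_{k}t_{k+1}})$ and expanding $\mE[(\cj_{s}^{t}(x^{q};h^{n,q}))^{2}]=\sum_{k,k'}\mE[X_{k}X_{k'}]$, the product formula \eqref{e6.2} followed by integration by parts produces exactly the decomposition \eqref{e8.9q}--\eqref{e8.11k}, valid also when the rough-path level equals $q$, as a sum over $r=0,\dots,q$. Every term with $r\geq 1$ carries the factor $\sum_{k,k'}|\rho(k-k')|^{r}\leq Kn\sum_{j\in\ZZ}|\rho(j)|^{r}$, finite because $\nu<\frac12$ forces $r(2-2\nu)>1$ for all $r\geq 1$; hence these terms are $O(n^{1-2(q-r)\nu})$ --- with an additional factor $m^{-2r\nu}$ in the blocked case, stemming from the length $m^{-1}$ of the blocks $D_{j}$ --- and so are $o(n^{2-2q\nu})$, respectively $o(n)$ after letting $m\to\infty$ in case (i) (the borderline term $r=q$ requires the extra decay $m^{-2q\nu}$). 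In the $r=0$ term, the operator $D^{2q}$ acting on the ``degree $q$'' product $x^{q}_{\ep(s)t_{k}}x^{q}_{\ep(s)t_{k'}}$ reduces by Leibniz to ${2q \choose q}$ times the symmetrization of $\mathbf 1_{[\ep(s),t_{k}]}^{\otimes q}\otimes\mathbf 1_{[\ep(s),t_{k'}]}^{\otimes q}$; the piece in which each $\beta$ is contracted against the matching indicator has combinatorial weight exactly $1$ and equals $\mE[X_{k}]\mE[X_{k'}]$, whereas the remaining pieces contain at least one crossed inner product $\langle\mathbf 1_{[\ep(s),t_{k}]},\mathbf 1_{[t_{k'},t_{k'+1}]}\rangle$ with $k\neq k'$, which is $O(n^{-1})$ away from the diagonal and hence sums to $o(n^{2-2q\nu})$ (respectively $o(n)$ in case (i), again using the block length). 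Altogether $\mE[(\cj_{s}^{t}(x^{q};h^{n,q}))^{2}]=\big(\mE[\cj_{s}^{t}(x^{q};h^{n,q})]\big)^{2}+o(n^{2-2q\nu})$, and likewise for the blocked sum, which gives the vanishing of the variance.

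The main obstacle I anticipate is the bookkeeping in this last step: one has to separate the exactly factorized part of the $r=0$ contribution from the off-diagonal ``leakage'', estimate the leakage through the decay of the covariances $\langle\mathbf 1_{[\ep(s),t_{k}]},\mathbf 1_{[t_{k'},t_{k'+1}]}\rangle$ (of order $n^{-2\nu}$ near the diagonal and of order $n^{-1}$ far from it), and combine this with the decay of $\rho$. In case (i) this estimate is genuinely tight at the exponent $n$, and it is precisely the additional decay $m^{-2r\nu}$ (respectively $m^{-2q\nu}$ for the diagonal-type contribution) supplied by the block size $m^{-1}$ that forces $\mathrm{Var}\big(n^{-1/2}\sum_{j}\cj_{s_{j}}^{s_{j+1}}(x^{q};h^{n,q})\big)$ to vanish in the iterated limit $\lim_{m}\lim_{n}$.
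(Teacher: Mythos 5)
Your proposal is correct and follows essentially the same route as the paper's proof: both reduce the $L_2$ limit to the convergence of the first moment (integration by parts plus a Ces\`aro argument exploiting $\sum_{k\in\ZZ}\rho(k)=0$) and of the second moment (the chaos/contraction expansion over $r=0,\dots,q$, with the uncrossed $r=0$ piece factoring exactly as the squared mean, the crossed ``leakage'' and the $r\geq 1$ terms vanishing, and the block length $m^{-1}$ supplying the extra decay needed for the borderline $r=q$ term in case (i)). The only difference is organizational: you phrase the conclusion as a bias--variance decomposition, whereas the paper computes $\lim\mE[\Xi_{n}^{2}]=c^{2}(t-s)^{2}$ and $\lim\mE[\Xi_{n}]=c(t-s)$ separately and concludes from there.
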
 
 \begin{proof}
 We shall only prove item (i), since item (ii) can be treated along the same lines. Our global strategy is based on identity \eref{e6.4j} and \eref{e6.5}, as in the proof of Lemma \ref{lem4.8}, with a more in-depth analysis of the terms appearing in our decomposition. 
 
 Indeed, formula \eref{e6.4j} together with \eref{e6.5}
 assert that 
\begin{eqnarray*}
\mE\Big[ \Big(\frac{1}{\sqrt{n}} \sum_{j=0}^{m-1} \cj_{s_{j}}^{s_{j+1}} (x^{q}; h^{n,q}) \Big)^{2} \Big]
=  {n^{-1 }} \vartheta (q,q,q)   
= \sum_{r=0}^{q} a(r),
\end{eqnarray*}
where
 \begin{eqnarray} \label{e4.36}
      a(r) &=&
n^{ -1 }   \sum_{j,j'=0}^{m -1}
\sum_{s_{j}\leq t_{k}<s_{j+1}} 
\sum_{s_{j'}\leq t_{k'}<s_{j'+1}} 
r! {q \choose r}^{2} \rho (k-k')^{r} b(r) ,
\end{eqnarray}
 and
where
\begin{eqnarray}\label{e4.37}
b(r) &=& \mE 
\Big\langle 
D^{2q -2r}(  x_{ \ep(s_{j}) t_{k} }^{q}  x_{ \ep(s_{j'}) t_{k'} }^{q}   ),
 \be_{k}^{\otimes (q-r) }  {\otimes} \be_{k'}^{\otimes (q -r) } 
 \Big\rangle_{\ch^{\otimes ( 2q -2r)}}.
\end{eqnarray}
We now split the analysis of the terms $a(r)$ and $b(r)$. 

 \noindent \textit{Step 1: Case $r>0$.} 
 The term $a(r)$ for $r>0$ can be bounded as follows, along the same lines as in the proof of Lemma \ref{lem8.3} and Lemma \ref{lem4.8}. Namely, we bound all the combination numbers by a constant, we use identity \eref{e4.15} and the fact that $\sum_{k,k'=0}^{n-1} |\rho(k-k')|^{r}=O(n)$ in order to get
 \begin{eqnarray*}
|a(r)|&\leq & K |b(r)|. 
\end{eqnarray*}
In order to bound $b(r)$, we resort to identity \eref{e4.37}. Then we observe that each term $D^{2q -2r}(  x_{ \ep(s_{j}) t_{k} }^{q}  x_{ \ep(s_{j'}) t_{k'} }^{q}   )$ is of order $m^{-r\nu}$, while each contribution of the form $\langle \mathbf{1}_{[a,b]}^{\otimes (q-r)}, \be_{k}^{\otimes (q-r)} \rangle_{\ch^{\otimes (q-r)}}$ can be bounded by a constant (similarly to \eref{e4.13}). This yields 
\begin{eqnarray*}
 a(r)     \leq K m^{- 2  \nu } .
\end{eqnarray*}
Therefore, it is readily checked that 
 $\lim_{m\to \infty}\lim_{n\to \infty} a(r)=0 $.

 \noindent \textit{Step 2: Decomposition of $a(0)$ and $b(0)$.} When   $r =0$, formula \eref{e4.36} can be read as:
\begin{eqnarray*}
a(0) &=& n^{-1}  \sum_{j,j'=0}^{m -1}
\sum_{s_{j}\leq t_{k}<s_{j+1}} 
\sum_{s_{j'}\leq t_{k'}<s_{j'+1}}   b(0)  .
\end{eqnarray*}
Notice that $D^{2q}(x^{q}_{\ep(s_{j})t_{k}}x^{q}_{\ep(s_{j'})t_{k'}})$ is a deterministic function of the form $h_{2q} =g_{1,q}\otimes g_{2,q}$, where $h_{2q}$ is a function of $2q$ variables, and each $g_{1,q}$, $g_{2,q}$ is a function of $q$ variables. In addition, the reader can check that $g_{1,q}$ contains $q'$ (resp. $q-q'$) tensor products of indicator functions $\mathbf{1}_{[\ep(s_{j}), t_{k}]}$ (resp. $\mathbf{1}_{[\ep(s_{j'}), t_{k'}]}$), and $g_{2,q}$ contains $q-q'$ tensor products of functions $\mathbf{1}_{[\ep(s_{j}), t_{k}]}$, for some $0\leq q'\leq q$. 
Pairing those functions with $\be_{k}$ and $\be_{k'}$, we get the following identity:
\begin{eqnarray*}
b(0) &=& \sum_{q'=0}^{q} b(0,q'),
\end{eqnarray*}
 where  
\begin{eqnarray*}
  b(0,q') &=&    {q  \choose q'}^{2}
\langle  \mathbf{1}_{[\ep(s_{j}), t_{k}] }, \be_{k}  \rangle_{\ch}^{q-q'}
\langle  \mathbf{1}_{[ \ep(s_{j'}) , t_{k'} ]}, \be_{k'}  \rangle_{\ch}^{q-q'}
  \langle  \mathbf{1}_{[ \ep(s_{j'}), t_{k'} ]}, \be_{k}  \rangle_{\ch}^{q'}
   \langle  \mathbf{1}_{[\ep(s_{j}), t_{k} ]}, \be_{k'}  \rangle_{\ch}^{q'}
 . 
\end{eqnarray*}

 \noindent \textit{Step 3: Study of $b(0,q')$ for $q'>0$.}
 Let us observe that, thanks to the fact that $2q\nu =1$, we have 
 \begin{eqnarray}\label{eqn.c}
b(0,q') &=& {q \choose q'}^{2} n^{-1} \hat{b}(0,q') \tilde{b}(0,q'),
\end{eqnarray}
 where 
 \begin{eqnarray*}
\hat{b} (0,q') &=& 
\langle n^{\nu}  \mathbf{1}_{[\ep(s_{j}), t_{k} ]}, \be_{k}  \rangle_{\ch}^{q-q'}
\langle  n^{\nu}\mathbf{1}_{[ \ep(s_{j'}), t_{k'} ]}, \be_{k'}  \rangle_{\ch}^{q-q'}
\end{eqnarray*}
and
\begin{eqnarray*}
\tilde{b} (0,q') &=& 
  \langle  n^{\nu} \mathbf{1}_{[ \ep(s_{j'}), t_{k'} ]}, \be_{k}  \rangle_{\ch}^{q'}
   \langle  n^{\nu}  \mathbf{1}_{[\ep(s_{j}), t_{k} ]}, \be_{k'}  \rangle_{\ch}^{q'}.
\end{eqnarray*}
In order to bound $\hat{b} (0,q')$ we can proceed as in \eref{e4.13} and we just get 
\begin{eqnarray}\label{eqn.b}
|\hat{b}(0,q')| &\leq & K.
\end{eqnarray}
We now turn to a bound on $\tilde{b} (0,q')$. Some scaling arguments similar to \eref{e4.13} reveal that 
\begin{eqnarray}\label{eqn.a}
|\tilde{b} (0,q')| &\leq& 
\langle \mathbf{1}_{[\lceil ns_{j'} \rceil, k']} , \mathbf{1}_{[k,k+1]}\rangle_{\ch}^{q'}
 \langle \mathbf{1}_{[\lceil ns_{j} \rceil, k']} , \mathbf{1}_{[k',k'+1]}\rangle_{\ch}^{q'}.
\end{eqnarray}
We now obtain uniform bounds on $\tilde{b}(0,q')$ according to the values of $j$, $j'$.

(i) If $|j-j'|\geq 2$, then 
we also have $|k-k'|\geq \frac{n}{m}$ in \eref{eqn.a}. Hence
it is readily checked that
\begin{eqnarray*}
 |\langle \mathbf{1}_{[\lceil ns_{j'} \rceil, k']} , \mathbf{1}_{[k,k+1]}\rangle_{\ch} |
&\leq& K_{\nu} \int_{[\lceil ns_{j'} \rceil, k'] \times [k,k+1]} \frac{dudv}{ |u-v|^{2-2\nu}}
\\
&  \leq&  K_{\nu}(\frac{n}{m})^{2\nu-2} (\frac{n}{m}) = (\frac{m}{n})^{1-2\nu}, 
\end{eqnarray*}
and the same bound holds true for $ \langle \mathbf{1}_{[\lceil ns_{j} \rceil, k']} , \mathbf{1}_{[k',k'+1]}\rangle_{\ch}$. Hence we have
\begin{eqnarray*}
|\tilde{b} (0,q')| &\leq& K_{\nu}  (\frac{m}{n})^{1-2\nu }. 
\end{eqnarray*}

(ii) If $|j-j'|\leq 2$, then we simply bound $\tilde{b} (0,q')$ by a constant, just as in \eref{e4.13} and \eref{eqn.b}. 
Plugging those estimates into \eref{eqn.c},
 it is now readily checked that  
\begin{eqnarray*}
\lim_{n\to \infty} n^{-1}  \sum_{j,j'=0}^{m -1}
\sum_{s_{j}\leq t_{k}<s_{j+1}} 
\sum_{s_{j'}\leq t_{k'}<s_{j'+1}}   b(0,q') 
\leq K_{\nu } \lim_{n\to \infty} n^{-2} \sum_{k,k'=0}^{n-1}  (\frac{m}{n})^{1-2\nu }
 = 0.
\end{eqnarray*}
  Therefore, the limit  of $a(0)$ is equal to the limit of 
\begin{eqnarray*}
\tilde{a}(0) &:=&
  n^{-1}  \sum_{j,j'=0}^{m -1}
\sum_{s_{j}\leq t_{k}<s_{j+1}} 
\sum_{s_{j'}\leq t_{k'}<s_{j'+1}}   b(0,0). 
\end{eqnarray*}

 \noindent \textit{Step 4: Convergence of $a(0)$.}
We use some notation of the previous step: we have $b(0,0) = \hat{b} (0,0) $, and we apply the same scaling arguments as before. We end up with  
\begin{eqnarray}\label{e4.21}
\tilde{a}(0) &=& \frac{1}{n^{2}} 
 \sum_{j,j'=0}^{m -1}
\sum_{s_{j}\leq t_{k}<s_{j+1}} 
\sum_{s_{j'}\leq t_{k'}<s_{j'+1}}  
\langle  \mathbf{1}_{ [\lceil ns_{j}\rceil, {k} ]},  \mathbf{1}_{[k,k+1]}  \rangle_{\ch}^{q}
\langle  \mathbf{1}_{ [\lceil ns_{j'}\rceil, {k'} ]},  \mathbf{1}_{[k',k'+1]}  \rangle_{\ch}^{q}
\nonumber
\\
&=&  \Big( n^{ -1} \sum_{j=0}^{m-1} \sum_{s_{j}\leq t_{k}<s_{j+1}}  \langle 
\mathbf{1}_{[ \lceil ns_{j}\rceil, {k} ]},  \mathbf{1}_{[k,k+1]}
  \rangle_{\ch}^{q}  \Big)^{2}
  \nonumber
  \\
  &=& 
   \Big(  \sum_{j=0}^{m-1} \frac{m_{j}}{n} \frac{1}{m_{j}} \sum_{s_{j}\leq t_{k}<s_{j+1}}  \langle  \mathbf{1}_{[ \lceil ns_{j}\rceil, {k} ]},  \mathbf{1}_{[k,k+1]}  \rangle_{\ch}^{q}  \Big)^{2},
\end{eqnarray}
where $m_{j} = \#\{t_{k}: s_{j}\leq t_{k}<s_{j+1}\}$. 
Note that by the stationarity of increments of $x$ 
and recalling that $\rho (i) =  \mE[ \delta x_{01} \delta x_{i,i+1}]$  
 we have
\begin{eqnarray*} 
\frac{1}{m_{j}} \sum_{s_{j}\leq t_{k}<s_{j+1}}\langle   \mathbf{1}_{[ \lceil ns_{j}\rceil, {k} ]},  \mathbf{1}_{[k,k+1]}  \rangle_{\ch}^{q}  
&=&
 \frac{1}{m_{j}}    \sum_{s_{j}< t_{k}<s_{j+1}} \Big(
 \sum_{i=1}^{k - n\ep(s_{j})   } \rho (i)\Big)^{q} 
 \nonumber
\\
&=& 
 \frac{1}{m_{j}}   \sum_{ 0< k < m_{j}  }  \Big( \sum_{i=1}^{ k } \rho (i)\Big)^{q}  .
\end{eqnarray*}
Taking into account the fact that $\lim_{n\to \infty} \sum_{i=1}^{k}\rho(i) =-\frac12 \rho(0)$ (remember that $\sum_{k\in \ZZ} \rho (k)=0$), a Cesaro mean argument shows that
\begin{eqnarray*}
\lim_{n\to \infty} 
 \frac{1}{m_{j}}   \sum_{ 0< k < m_{j}  }  \Big( \sum_{i=1}^{ k } \rho (i)\Big)^{q} 
 =
(-\frac{\rho (0)}{2})^{q} = (-\frac12)^{q}.
\end{eqnarray*}
Plugging this information back into \eref{e4.21} and taking into account the fact that   $ \lim_{n\to \infty} \frac{m_{j}}{n} = (s_{j+1} - s_{j} ) $, we obtain that 
 \begin{eqnarray*}
\lim_{n\to \infty} \tilde{a}(0) = (\sum_{j=0}^{m-1} (s_{j+1} - s_{j})   (-\frac12)^{q} )^{2}= ((t-s) (-\frac12)^{q})^{2}.
\end{eqnarray*}
We can thus conclude that 
\begin{eqnarray}\label{e4.39}
\lim_{m\to \infty} \lim_{n\to \infty} 
\mE\Big[ \Big(\frac{1}{\sqrt{n}} \sum_{j=0}^{m-1} \cj_{s_{j}}^{s_{j+1}} (x^{q}; h^{n,q}) \Big)^{2} \Big]
=
\lim_{m\to \infty} \lim_{n\to \infty} a(0) = (-\frac12)^{2q} (t-s)^{2}.
\end{eqnarray}

 \noindent \textit{Step 5: Conclusion.} 
 With relation \eref{e4.39} in hand, the convergence \eref{e4.20i} is reduced   to show the convergence of the first moment of $  \cj_{s_{j}}^{s_{j+1}} (x^{q}; h^{n,q})  $. Furthermore, we have 
 \begin{eqnarray*}
\frac{1}{\sqrt{n}}\mE \big[ \cj_{s_{j}}^{s_{j+1}} (x^{q}; h^{n,q}) \big] &=& n^{-\frac12 }   \sum_{s_{j}\leq t_{k}<s_{j+1}} \mE \big[  x_{ \ep(s_{j}) t_{k} }^{q} H_{q}(n^{\nu}x_{t_{k},t_{k+1}}) \big].
\end{eqnarray*}
Rescaling and integrating by parts we get:
\begin{eqnarray*}
\frac{1}{\sqrt{n}}\mE \big[ \cj_{s_{j}}^{s_{j+1}} (x^{q}; h^{n,q}) \big]
&=&  \frac{1}{n}  \sum_{s_{j}\leq t_{k}<s_{j+1}} \langle  \mathbf{1}_{[ n \ep(s_{j}),  {k} ]}, \mathbf{1}_{[k,k+1]} \rangle_{\ch}^{q}.
\end{eqnarray*}
With the same arguments as for \eref{e4.39}, we end up with: \begin{eqnarray*}
\lim_{m\to \infty}\lim_{n\to \infty}
\frac{1}{\sqrt{n}}\mE \sum_{j=0}^{m-1} \cj_{s_{j}}^{s_{j+1}} (x^{q}; h^{n,q})  = (t-s)(-\frac12)^{q}. 
\end{eqnarray*}
The proof of \eref{e4.20i} is now complete.  
\end{proof}

   We are now ready to state a weighted type Breuer-Major theorem which generalizes \cite[Theorem 5.3]{NN} and \cite[Theorem 1.1]{NR} to weights given by a controlled process.
   
      \begin{prop} \label{prop6.4}
      Let $x$ be a fBm with Hurst parameter $\nu\in (0,\frac12)$ considered as a $(L_{p}, \nu, \ell)$ rough path as in Notation \ref{notation4.3}, and consider $q>0$. 
      
      We define $h^{n,q} $ by relation \eref{e4.3j}. Let $y$ be a discrete process controlled by $(x, 1-q\nu)$ in $L_{2}$ or almost surely. 
  Then the following convergences hold  true:

(i) When $ \nu =\frac{1}{2q}$, we have 
\begin{eqnarray*}
(x,  n^{-\frac12} \cj_{0}^{1} (y ; h^{n,q}) )\xrightarrow{f.d.d.} 
\Big(x, 
\si    \int  y_{u} dW_{u}
+
\Big(-\frac{1}{2}\Big)^{q} \int  y^{(q)}_{u}du 
\Big),
\end{eqnarray*}
where $\si = q! \sum_{k\in \ZZ} \rho (k)^{q}$.

(ii) When $ \nu <\frac{1}{2q}$, we get
\begin{eqnarray*}
n^{-(1-q \nu) } \cj_{0}^{1}(y ; h^{n,q}) \xrightarrow{ \   \ } \Big(-\frac12\Big)^{q} \int_{0}^{1} y^{(q)}_{u} du,
\end{eqnarray*}
 where the convergence holds in probability.
  \end{prop}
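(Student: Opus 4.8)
\textbf{Proof proposal for Proposition \ref{prop6.4}.}

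The plan is to apply the general double limit theorem (Theorem \ref{thm7.1}) in case (i) and the single limit theorem II (Theorem \ref{cor6.3}) in case (ii), using the auxiliary estimates in Lemma \ref{lem4.8} and the deterministic limits in Lemma \ref{lem4.9} to verify the hypotheses. Throughout we take $\al = 1-q\nu$, so that the relevant integer $\ell$ is the smallest one with $\nu\ell + (1-q\nu) > 1$, i.e. $\ell - q$ is the smallest positive integer such that $\nu(\ell-q) > 0$; concretely $\ell = q+1$ in case (i) and, more generally, the controlled expansion of $y$ is of length $\ell$ as in Definition \ref{def2.2}. The normalization of $h^{n,q}$ will be $n^{-1/2} h^{n,q}$ in case (i) and $n^{-(1-q\nu)} h^{n,q}$ in case (ii); in each case the rescaled increment, which we still call $h^{n}$, must be shown to satisfy the uniform bound \eref{e3.8i} with exponent $\al + \nu i$.

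First I would check the regularity bound \eref{e3.8i}. In case (i), $n^{-1/2}\cj_s^t(x^i; h^{n,q})$ has $L_2$-norm controlled by Lemma \ref{lem4.8}: for $q \geq \frac{1}{2\nu}$ one uses \eref{e8.5k}, giving $|\cj_s^t(x^i; n^{-1/2}h^{n,q})|_{L_2}^2 \leq K(t-s)^{2i\nu+1}$, which is exactly \eref{e3.8i} with $\al = \frac12$; note that here $\al = \frac12 = q\nu$ actually coincides with $1-q\nu$ only because $2q\nu = 1$. In case (ii), $q < \frac{1}{2\nu}$, so for $q > i$ estimate \eref{e8.5u} applies and for $q \leq i$ estimate \eref{e8.6k} applies; after dividing by $n^{2(1-q\nu)}$ one checks both give $\leq K(t-s)^{2i\nu + 2(1-q\nu)}$ when $q \le i$, which is $\ge (t-s)^{\al + \nu i}$ with $\al = 1-q\nu$, and for $q > i$ one gets a better exponent — so the uniform condition \eref{e3.8i} holds in both regimes. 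Next, the key convergence inputs: for case (i) I would invoke $(x, n^{-1/2}h^{n,q}) \xrightarrow{f.d.d.} (x, \si W)$ with $\si^2 = q!\sum_k \rho(k)^q$ from the classical Breuer-Major theorem (Theorem \ref{thm4.2}) and its stable version, which is hypothesis (i) of Theorem \ref{thm7.1}; the coarse-graining convergence \eref{e6.1i} at level $\tau = q$ follows directly from Lemma \ref{lem4.9}(i), which gives $\lim_m\lim_n \sum_j \cj_{s_j}^{s_{j+1}}(x^q; n^{-1/2}h^{n,q}) = (-\frac12)^q(t-s)$, so $\varrho = (-\frac12)^q$; and the vanishing condition \eref{e3.40} for $i \in \{1,\dots,\ell-1\}\setminus\{q\}$ follows from Lemma \ref{lem4.8}(i), relation \eref{e4.23}, which forces $|\sum_j \zeta_j^{i,q}|_{L_2}^2 \leq Kn^{-1}(n^{1-2\nu} + nm^{-2i\nu}) \to 0$ as $n\to\infty$ then $m\to\infty$. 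For case (ii) I would similarly use assumption (i) of Theorem \ref{cor6.3}: by Lemma \ref{lem4.9}(ii), $n^{-(1-q\nu)}\cj_s^t(x^q; h^{n,q}) \to (-\frac12)^q(t-s)$ in $L_2$, and for $i < q$ the estimate \eref{e8.6k} (divided by $n^{2(1-q\nu)}$) shows $\cj_s^t(x^i; n^{-(1-q\nu)}h^{n,q}) \to 0$; the coarse-graining condition \eref{e5.4i} for $i = q+1,\dots,\ell-1$ again comes from the $L_2$-estimates of Lemma \ref{lem4.8}.

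With all hypotheses verified, Theorem \ref{thm7.1} yields in case (i) the limit $\int y_t\,dW_t \cdot \si + (\int \delta y^{(q)}_{0t}\,dt)\cdot(-\frac12)^q$; since $y^{(q)}_0 = 0$ in Definition \ref{def2.2}, the second term is $(-\frac12)^q \int y^{(q)}_u\,du$, which is the stated formula. In case (ii), Theorem \ref{cor6.3} gives directly $\lim_n n^{-(1-q\nu)}\cj(y; h^{n,q}) = (\int y^{(q)}_t\,dt)\cdot(-\frac12)^q$ in probability. I expect the main obstacle to be the bookkeeping in the verification of \eref{e3.8i}: one must carefully match the exponents coming from the three regimes of Lemma \ref{lem4.8} against $\al + \nu i$ with the correct value of $\al$, paying attention to the boundary case $q = i$ where \eref{e8.6k} is sharpest, and confirming that the controlled-process hypothesis on $y$ (controlled by $(x, 1-q\nu)$) is compatible with the $\al$ appearing in the abstract theorems — in particular that the length $\ell$ of the controlled expansion matches in both the case $2q\nu = 1$ and $2q\nu < 1$. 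A secondary point requiring care is that Theorem \ref{thm7.1} is stated with a general $\tau$, and one must confirm that $\tau = q$ indeed lies strictly between $0$ and $\ell$, which holds since $q \geq 1$ and $\ell \geq q+1$; and that in case (i) the level $i=0$ plays the role of the Brownian part while $i = q$ is the deterministic drift level, with no other levels surviving.
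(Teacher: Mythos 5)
Your proposal is correct and follows essentially the same route as the paper's proof: case (i) via Theorem \ref{thm7.1} with \eref{e3.8i} supplied by \eref{e8.5k}, hypothesis (i) by Breuer--Major, \eref{e6.1i} by Lemma \ref{lem4.9}(i) and \eref{e3.40} (only needed for $i<q$ since $\ell=q+1$) by \eref{e4.23}; and case (ii) via Theorem \ref{cor6.3} using Lemma \ref{lem4.9}(ii) together with \eref{e8.5u}, \eref{e8.6k} and \eref{e8.4k}. The only slip is that for the vanishing of the levels $i<q$ in case (ii) the relevant bound is \eref{e8.5u} (valid for $q>i$) rather than \eref{e8.6k} (valid for $q\le i$), as you in fact stated correctly earlier in your own exponent bookkeeping.
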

 \begin{proof}
   In the case $ \nu =\frac{1}{2q}$, 
   the condition $\nu \ell +(1-q\nu) = \nu\ell+\frac12 >1$ can be read as   $\ell =q+1$. 
   We now invoke Theorem \ref{thm7.1}. 
   Indeed, condition \eref{e3.8i} is ensured by \eref{e8.5k}, condition (i) in Theorem \ref{thm7.1} is just Breuer-Major's Theorem \ref{thm4.2}, and condition \eref{e6.1i} has been proved in~\eref{e4.20i}. Moreover, in our situation, condition \eref{e3.40} has to be checked for $i<q$, and is easily shown thanks to inequality \eref{e4.23}. Therefore, a direct application of Theorem \ref{thm7.1} yields our claim (i). 
   
   In order to get item (ii), we apply Theorem \ref{cor6.3}. In this case, condition \eref{e3.8i} is a consequence of \eref{e8.5u} and \eref{e8.6k}. Item (i) in Theorem \ref{cor6.3} is a consequence of \eref{e4.36i}  and~\eref{e8.5u}. Eventually, \eref{e5.4i} is obtained through \eref{e8.4k}. This concludes the proof. 
 \end{proof}
 
 We now go one step further in the generalization, and handle the case of a weighted sum in an infinite number of chaos. 
 \begin{thm}\label{thm4.14}
Let $x$  be 
       a fBm with Hurst parameter $\nu\in (0,\frac12)$ considered as a $(L_{p}, \nu, \ell)$ rough path as in Notation \ref{notation4.3}. 
    Let   $ (y ,y' , \dots, y^{(\ell-1)})$ be a discrete process   controlled by $(x, 1- \nu d)$ as in Proposition \ref{prop6.4} and take $\ell=d+1$. Let   $f = \sum_{q=d}^{\infty} a_{q}H_{q} \in L_{2}(\ga)$ be a function with  Hermite rank    $d>0$ satisfying one of the conditions (a), (b) and (c) of Theorem \ref{thm8.5}. 
   Set
  \begin{eqnarray*}
h^{n}_{st} = n^{d\nu-1}\sum_{s\leq t_{k}<t} f(n^{\nu} \delta x_{t_{k}t_{k+1}}), \quad\quad (s,t)\in \cs_{2}.
\end{eqnarray*}   
 Then the following limits hold true. 
   
\noindent (i) When $d=\frac{1}{2\nu}$   we have the   convergence:  
\begin{eqnarray}\label{e4.42}
(x, \cj(y; h^{n}) )
 \xrightarrow{f.d.d.}  \Big(x,  \si   \int  y_{t} dW_{t} +\Big(-\frac12\Big)^{d}a_{d} \int y^{(d)}_{u} du \Big), \quad\quad  \text{as \ }   n\to \infty,
\end{eqnarray}
where $\si$ is given by \eref{e-si}.

\noindent (ii) When $d<\frac{1}{2\nu}$ we get the following  convergence in probability:
\begin{eqnarray*}
  \cj_{s}^{t}(y; h^{n}) 
 \xrightarrow{ \   \ }   \Big(-\frac12\Big)^{d} a_{d}\int_{s}^{t}y^{(d)}_{u} du, \quad\quad  \text{as \ }   n\to \infty.
\end{eqnarray*}
 \end{thm}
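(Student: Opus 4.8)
The plan is to deduce both statements from the abstract transfer principles of Section \ref{sectionLT1}, following the scheme of Proposition \ref{prop6.4} but now allowing an infinite chaos expansion. Write $h^{n}=n^{d\nu-1}\sum_{q\geq d}a_{q}h^{n,q}$, set $\ell=d+1$ and $\alpha=1-d\nu$ (so $\alpha=\tfrac12$ in case (i) and $\alpha>\tfrac12$ in case (ii)), and recall $\zeta^{i,q}_{j}=\cj_{s_{j}}^{s_{j+1}}(x^{i};h^{n,q})$. We will verify the hypotheses of Theorem \ref{thm7.1} in case (i) and of Theorem \ref{cor6.3} in case (ii), each time with $\tau=d$ and $\varrho=(-\tfrac12)^{d}a_{d}$.

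The first task is to establish the a priori bound \eqref{e3.8i} for $\cj_{s}^{t}(x^{i};h^{n})$, $i=0,\dots,\ell-1$. Expanding into chaos and using Lemma \ref{lem8.3}(iii)--(iv) for $q\geq\ell$ together with Lemma \ref{lem4.8}(ii) for the boundary rank $q=d$, then summing over $q$ by Cauchy--Schwarz exactly as in the passage from \eqref{e8.11ii} to \eqref{e4.21i}, the summability assumption \eqref{e8.10i} yields $|\cj_{s}^{t}(x^{i};h^{n})|_{L_{2}}\leq K(t-s)^{\alpha+\nu i}$; in case (ii) the same computation in fact produces the sharper bound with a vanishing power of $n$, which already gives $\cj_{s}^{t}(x^{i};h^{n})\to 0$ in probability for $1\leq i<d$. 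The point throughout is that the $q$-dependence appearing in Lemmas \ref{lem8.3} and \ref{lem4.8} is precisely of the type controlled by $\sum_{q}a_{q}^{2}q!\,q^{2(\ell-1)}$.

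The core step is the convergence at level $\tau=d$. Split $\cj_{s}^{t}(x^{d};h^{n})$ (and its coarse-grained sum $\sum_{j}\zeta^{d,q}_{j}$) into the rank-$d$ contribution $n^{d\nu-1}a_{d}\cj_{s}^{t}(x^{d};h^{n,d})$ and the higher-rank remainder $n^{d\nu-1}\sum_{q>d}a_{q}\cj_{s}^{t}(x^{d};h^{n,q})$. For the rank-$d$ term, Lemma \ref{lem4.9} gives exactly the deterministic limit $(t-s)\varrho$ (relation \eqref{e4.36i} in case (ii), and \eqref{e4.20i} in the coarse-grained double-limit form needed in case (i)). For the remainder, an $L_{2}$ estimate based on \eqref{e8.5u} and \eqref{e8.5k} (both of order $n(t-s)^{2d\nu+1}$) together with $\sum_{q}|a_{q}|<\infty$ shows it tends to $0$, since $d\nu<\tfrac12$ makes the prefactor $n^{d\nu-1/2}$ vanish; in case (i) the coarse-grained version of this remainder is controlled instead through $\vartheta(q,q',d)$ via Lemma \ref{lem8.3}(i)--(ii) and the summability \eqref{e8.10i}. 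This delivers assumption (i) of Theorem \ref{cor6.3} in case (ii) (the vanishing of $\cj_{s}^{t}(x^{i};h^{n})$ for $i<d$ having been noted already, and the range $i=\tau+1,\dots,\ell-1$ in assumption (ii) of Theorem \ref{cor6.3} being empty since $\ell-1=d$), and assumption (ii), i.e. \eqref{e6.1i}, of Theorem \ref{thm7.1} in case (i).

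For case (i) it then remains to check assumption (i) of Theorem \ref{thm7.1}, namely $(x,h^{n})\xrightarrow{f.d.d.}(x,\sigma W)$ — this is the Breuer--Major Theorem \ref{thm4.2} together with the stable convergence argument used in Step 1 of the proof of Theorem \ref{thm8.5}, and one applies Theorem \ref{thm7.1} to $\sigma^{-1}h^{n}$ to normalize $W$ — and assumption (iii), i.e. \eqref{e3.40}, the coarse-grained vanishing of $\cj(x^{i};h^{n})$ for $i\in\{1,\dots,d-1\}$; this follows from the $L_{2}$ bound $n^{-1}\sum_{q,q'\geq d}a_{q}a_{q'}\vartheta(q,q',i)$, handled by \eqref{e6.3}--\eqref{e6.3w} when $q,q'\geq\ell$, by \eqref{e4.23} when $q=q'=d$, and by Cauchy--Schwarz for the cross terms, everything summed using \eqref{e8.10i}. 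Theorem \ref{thm7.1} (resp. Theorem \ref{cor6.3}) then gives \eqref{e4.42} (resp. item (ii)), with the Wiener term carrying the full variance $\sigma$ from \eqref{e-si} because level $0$ sees all chaos, whereas the drift $(-\tfrac12)^{d}a_{d}$ comes solely from the rank-$d$ chaos via Lemma \ref{lem4.9}. The main obstacle is precisely this chaos-tail bookkeeping: making the estimates of Lemmas \ref{lem8.3}, \ref{lem4.8} and \ref{lem4.9}, which were proved for a fixed rank $q$, summable over $q\geq d$ under the single hypothesis \eqref{e8.10i}, and cleanly isolating the rank-$d$ chaos as the unique source of the deterministic drift.
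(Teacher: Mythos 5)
Your proposal is correct and follows essentially the same route as the paper: isolate the rank-$d$ chaos (whose deterministic limit $(-\tfrac12)^{d}a_{d}(t-s)$ comes from Lemma \ref{lem4.9}) from the higher-rank tail, verify conditions \eref{e3.8i}, \eref{e6.1i} and \eref{e3.40} of Theorem \ref{thm7.1} (resp.\ the hypotheses of Theorem \ref{cor6.3}) with $\tau=d$, and control the tail through the $\vartheta(q,q',i)$ estimates of Lemma \ref{lem8.3} summed under \eref{e8.10i}. The only cosmetic difference is that the paper packages the tail as $\tilde f=f-a_{d}H_{d}$ and reuses the proof of Theorem \ref{thm8.5} wholesale via the decomposition \eref{e4.43}, whereas you carry out the same splitting chaos by chaos.
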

 \begin{proof} 
  \noindent \textit{Step 1: A decomposition of $f$.} 
  In order to prove the convergence \eref{e4.42} we 
  invoke  Theorem \ref{thm7.1}. It remains to verify that conditions in Theorem \ref{thm7.1} are satisfied. To this aim, we define a new function
\begin{eqnarray}\label{e4.43i}
\tilde{f} := f - a_{d} H_{d}= \sum_{q=d+1}^{\infty} a_{q} H_{q},
\end{eqnarray}
and denote
\begin{eqnarray*}
\tilde{h}^{n}_{st} = \frac{1}{\sqrt{n}} \sum_{s\leq t_{k}<t} \tilde{f}(n^{\nu} \delta x_{t_{k}t_{k+1}}), \quad\quad (s,t)\in \cs_{2}.
\end{eqnarray*}

Now 
recalling that $h^{n,d}$ is defined by \eref{e4.3j}, 
we   write
\begin{eqnarray}\label{e4.43}
\cj_{s}^{t} (x^{i}; h^{n}) &=& a_{d} \,n^{-\frac12}  \cj_{s}^{t} (x^{i}; h^{n,d})+ \cj_{s}^{t} (x^{i}; \tilde{h}^{n}).
\end{eqnarray}
This decomposition will be used in order to verify the assumptions of Theorem \ref{thm7.1}. 

 \noindent \textit{Step 2: Proof of condition \eref{e3.8i}.} 
 We first note that relation \eref{e8.5k} implies that the quantity $\cj_{s}^{t} (x^{i}; h^{n,d})$ on the right-hand side of \eref{e4.43} satisfies condition \eref{e3.8i}. On the other hand, 
since $\tilde{f}$ satisfies the conditions of Theorem \ref{thm8.5}, it follows from the proof of Theorem \ref{thm8.5} that $\cj_{s}^{t} (x^{i}; \tilde{h}^{n})$ also satisfies \eref{e3.8i}. Combining these two  observations and applying the 
triangle inequality for the $L_{2}$-norm
   to \eref{e4.43}, we obtain \eref{e3.8i} for $\cj_{s}^{t} (x^{i}; h^{n}) $.

 \noindent \textit{Step 3: Stable convergence of $h^{n}$.}
  The proof of the stable convergence of $h^{n}$ follows the same lines as in Theorem \ref{thm8.5}. It  is omitted for sake of conciseness.

\noindent \textit{Step 4: Proof of \eref{e6.1i}.}  
We have already noticed that 
 $\tilde{f}$   defined in \eref{e4.43i} satisfies the conditions of Theorem \ref{thm8.5}. So it follows from the proof of Theorem \ref{thm8.5} that $\tilde{h}^{n}$ satisfies the relation  \eref{e6.1}. More precisely,   the following convergence for $i=1,\dots, d$ is obtained similarly to \eref{e4.24}:
\begin{eqnarray}\label{e4.45}
\lim_{m\to \infty} \lim_{n\to \infty}
\Big| \sum_{j=0}^{m-1}     \cj_{s_{j}}^{s_{j+1}} ( x^{i}; \tilde{h}^{n} ) \Big|_{L_{2}} =0.
\end{eqnarray}
On the other hand, it follows from \eref{e4.20i} that $n^{-\frac12}  h^{n,d}$    satisfies \eref{e6.1i}.
Putting together \eref{e4.45} and the convergence of   $ \cj_{s_{j}}^{s_{j+1}} ( x^{i};  {h}^{n,d} )$ and  
 taking into account \eref{e4.43} we obtain the convergence \eref{e6.1i} for  $h^{n}$. 

\noindent \textit{Step 5: Proof of \eref{e3.40}.}   
As in the previous step, invoking relation \eref{e4.43}, it suffices to    consider the relation \eref{e3.40} for   $\tilde{h}^{n}$ and $h^{n,d}$ separately. Notice that relation \eref{e3.40} for $\tilde{h}^{n}$ follows directly from \eref{e4.45}. 
On the other hand, relation \eref{e3.40} for $h^{n}=h^{n,q}$ is obtained exactly as in the proof of Proposition \ref{prop6.4}, thanks to \eref{e4.23}.   This completes the proof of \eref{e3.40} for $h^{n}$.

  \noindent \textit{Step 6: Proof of item (ii).} 
  Item (ii) is obtained by applying Theorem \ref{cor6.3}. We have to verify the same kind of conditions as in the previous steps. Resorting to our decomposition \eref{e4.43}, this is done similarly to Step 2-5, applying Proposition   \ref{prop6.4} and Theorem \ref{thm8.5}.  Details are left to the reader. 
   The proof is now complete.
 \end{proof}

  \subsection{Realized power variations and parameter estimations}
  The convergence of realized power variations is closely related to 
   the parameter estimation problem of the volatility process (see e.g. \cite{BCP} and \cite{LL} in a fBm context). Here we shall consider generalizations of realized power variations to rougher situations, and then discuss briefly the parameter estimation problem.

  Let us start by introducing some additional notation. For $p>-1$, we denote   
  \begin{eqnarray}\label{e4.46i}
 c_{p} = \mE(|N|^{p}) = \frac{2^{p/2} }{\sqrt{\pi}} \Gamma \left(\frac{p+1}{2}\right).
\end{eqnarray}
Notice that when $p$ is an even integer we can also write $c_{p}=\mE(N^{p}) = (p-1)(p-3)\cdots 1$. 
  We also consider the function $H :\RR\to\RR$ defined by   $H (x) = {|x|^{p}}  -c_{p}$.

 It is easy to see  that $H \in L_{2}(\ga)$ when $p>-\frac12$ and $H$ has Hermite rank $d= 2$.  
 One can also  verify  that $H$ has   the decomposition $H(x) = \sum_{q=1}^{\infty}  {a}_{2q}H_{2q} (x) $, where the constants $a_{2q}$ are 
 obtained by expanding the function $|x|^{p}-c_{p}$ on the Hermite basis, and are expressed
  in terms of the $c_{p}$'s:
  \begin{eqnarray}\label{e4.46}
a_{2q} &=& \sum_{r=0}^{  q } \frac{(-1)^{r}}{2^{r} r! (2q-2r)!}  (c_{2q-2r+p} - c_{p}c_{2q-2r}  ).
\end{eqnarray} 
For example, we will use the fact that $a_{2} =     p c_{p} /2  $. 

Our first result in this subsection concerns the weighted power variations of $x$ by a controlled process $y$. We focus on the rough situation $\nu\leq \frac12$, since more regular situations are handled in e.g. \cite{BCP, LL} and implied by our Proposition \ref{prop8.6}. 
\begin{thm}\label{thm8.8}
Let $x$  be 
       a fBm with Hurst parameter $\nu\in (0,\frac12)$, considered as a $(L_{p}, \nu, \ell)$ rough path as in Notation \ref{notation4.3}. 
       Let   $(y^{(0)},\dots, y^{(\ell-1)})$ be a discrete process controlled by $(x, \al)$, in $L_{2}$ or almost surely, with $\nu\ell+\al>1$.
Then the following limits for weighted power variations hold true:

(i)  Suppose  that $\frac12\geq \nu >\frac14$ and $\al=\frac12$. Then for $p\geq 2$ we have the convergence:
\begin{eqnarray}\label{e8.14i}
\frac{1}{\sqrt{n}} \sum_{k=0}^{n-1} y_{t_{k}} ({| n^{\nu}\delta x_{t_{k}t_{k+1}}|^{p}}  - c_{p}) \xrightarrow{ \ d \ }  {\sigma} \int_{0}^{1} y_{t} dW_{t},
\end{eqnarray}
where $W$ is a Wiener process independent of $x$ and $\si^{2}$ is defined by (recall that $a_{2q}$ is defined by \eref{e4.46}): 
\begin{eqnarray}\label{e4.48}
 {\sigma}^{2} = \sum_{q=1}^{\infty} (2q)!  {a}^{2}_{2q} \sum_{k\in \ZZ}\rho(k)^{2q}   .
\end{eqnarray}

(ii) Suppose that $\nu = \frac14$ and $\al=\frac12$. Then for $p \geq 4$ we have the convergence:
\begin{eqnarray*}
\frac{1}{\sqrt{n}} \sum_{k=0}^{n-1} y_{t_{k}} ({| n^{\nu}\delta x_{t_{k}t_{k+1}}|^{p}}  - c_{p}) \xrightarrow{ \ d \ }  {\sigma} \int_{0}^{1} y_{t} dW_{t}+ \frac{ {a}_{2}}{4}   \int_{0}^{1} y_{t}''dt  ,
\end{eqnarray*}
where $\si$ is defined by \eref{e4.48} and where we recall that, according to \eref{e4.46}, we have $a_{2} = \frac{pc_{p}}{2}$. 

(iii) Suppose that $\nu < \frac14$ and $\al=1-2\nu $. Then for $p\geq 4$ we have the following  convergence in probability:
\begin{eqnarray*}
n^{2\nu -1} \sum_{k=0}^{n-1} y_{t_{k}} ({| n^{\nu}\delta x_{t_{k}t_{k+1}}|^{p}}  - c_{p}) \xrightarrow{ \   \ }   \frac{ {a}_{2}}{4}  \int_{0}^{1} y_{t}''dt  .
\end{eqnarray*}
%(iv) Suppose that $\nu>\frac12$. Then  the convergence \eref{e8.14i} holds true for $p>-\frac12$. 
\end{thm}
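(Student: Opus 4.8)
The plan is to derive Theorem~\ref{thm8.8} directly from the general transfer principles, exactly in the spirit of the proof of Theorem~\ref{thm4.14}. The function $H(x)=|x|^p-c_p$ has the Hermite expansion $H=\sum_{q=1}^\infty a_{2q}H_{2q}$ from \eref{e4.46}, so its Hermite rank is $d=2$, with leading coefficient $a_2=pc_p/2$. The three regimes $\nu>\tfrac14$, $\nu=\tfrac14$, $\nu<\tfrac14$ correspond precisely to $d>\tfrac1{2\nu}$, $d=\tfrac1{2\nu}$, $d<\tfrac1{2\nu}$ for $d=2$, i.e. to cases (i), (ii), (iii) of the weighted Breuer--Major Theorem~\ref{thm:BM-with-weight}. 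So the first step is to recognize that Theorem~\ref{thm8.8} is simply Theorem~\ref{thm:BM-with-weight} (equivalently, Theorem~\ref{thm8.5} in case (i), Theorem~\ref{thm4.14} in cases (ii)--(iii)) specialized to $f=H$ and $d=2$, once we check that $H$ satisfies the required integrability/regularity condition.

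First I would verify Hypothesis~\ref{hyp2}, i.e. condition \eref{e8.10i}: $\sum_{q} a_{2q}^2 (2q)! (2q)^{2(\ell-1)}<\infty$, where $\ell-1\le \tfrac1{2\nu}<\ell$ in case (i) and $\ell=d+1=3$ in cases (ii)--(iii). For $p$ an even integer this is automatic since $H$ is a polynomial. For general $p\ge 2$ (resp. $p\ge 4$) one uses that $x\mapsto |x|^p$ is smooth away from the origin and has controlled growth, so that $H$ lies in the Sobolev space $W^{2\ell-2,2}(\RR,\ga)$: indeed $(|x|^p)^{(k)}$ behaves like $|x|^{p-k}$, which is in $L_2(\ga)$ as long as $p-k>-\tfrac12$, and the constraint $p\ge 2\ell-2$ (which reads $p\ge2$ when $\ell=2$, $p\ge4$ when $\ell=3$) guarantees $k\le 2\ell-2$ derivatives are all square-integrable against $\ga$ near $0$; this places $H$ in condition (b) of Theorem~\ref{thm8.5}, hence in condition (a). This step is where the precise numerical thresholds $p\ge2$ and $p\ge4$ in the statement get used.

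Then the three parts follow by invoking the corresponding parts of Theorem~\ref{thm:BM-with-weight} (or directly Theorems~\ref{thm8.5}, \ref{thm4.14}, \ref{cor6.3}, \ref{thm7.1}) with the controlled weight $(y^{(0)},\dots,y^{(\ell-1)})$:
\begin{itemize}
\item[(i)] With $\al=\tfrac12$ and $\ell$ the smallest integer exceeding $\tfrac1{2\nu}$ (so $\ell=2$ when $\tfrac14<\nu\le\tfrac12$), Theorem~\ref{thm8.5} gives $\cj(y;h^n)\xrightarrow{f.d.d.}\si\int y_t\,dW_t$ with $\si^2=\sum_{q\ge1}(2q)!\,a_{2q}^2\sum_{k\in\ZZ}\rho(k)^{2q}$, matching \eref{e4.48}. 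Here one recalls from \eref{e-si} that only even Hermite ranks contribute since $H$ is even.
\item[(ii)] When $\nu=\tfrac14$, $d=2=\tfrac1{2\nu}$ and $\ell=d+1=3$: Theorem~\ref{thm4.14}(i) (built on the double limit Theorem~\ref{thm7.1}) yields the limit $\si\int y_t\,dW_t+(-\tfrac12)^2 a_2\int y^{(2)}_u\,du$; since $(-\tfrac12)^2 a_2=\tfrac{a_2}{4}$ and $y^{(2)}=y''$, this is exactly the claimed limit with the $\tfrac{a_2}{4}\int_0^1 y_t''\,dt$ correction term.
\item[(iii)] When $\nu<\tfrac14$, $d=2>\tfrac1{2\nu}$ reversed, i.e. $d<\tfrac1{2\nu}$; with $\al=1-2\nu=1-d\nu$ and $\ell=d+1$, Theorem~\ref{thm4.14}(ii) (built on the single limit Theorem~\ref{cor6.3}) gives $n^{2\nu-1}\cj_s^t(y;h^n)\to(-\tfrac12)^2 a_2\int_s^t y_u''\,du=\tfrac{a_2}{4}\int_s^t y_u''\,du$ in probability, where the normalization $n^{2\nu-1}=n^{-(1-d\nu)}$ matches Theorem~\ref{thm4.14}.
\end{itemize}

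The main obstacle is the regularity verification in the second paragraph: showing that $H(x)=|x|^p-c_p$ genuinely satisfies condition (a)/(b) of Theorem~\ref{thm8.5} for the relevant $\ell$, i.e. that the singularity of $|x|^p$ at the origin does not destroy $2\ell-2$ square-integrable (weak) derivatives against the Gaussian weight — this is precisely where $p\ge 2$ (case $\ell=2$) and $p\ge 4$ (case $\ell=3$) enter, and it requires a careful but elementary estimate of $\int |x|^{2(p-k)}\,d\ga(x)$ near $x=0$ together with the identification of the weak derivatives of $|x|^p$. Everything else is a bookkeeping reduction to the already-proven general theorems and the explicit computation $a_2=pc_p/2$ from \eref{e4.46}.
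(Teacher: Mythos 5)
Your proposal matches the paper's own proof: the paper likewise reduces part (i) to Theorem \ref{thm8.5} and parts (ii)--(iii) to Theorem \ref{thm4.14}, after noting that $H(x)=|x|^{p}-c_{p}$ has Hermite rank $d=2$ with $a_{2}=pc_{p}/2$ and that the three regimes of $\nu$ correspond to $d>\frac{1}{2\nu}$, $d=\frac{1}{2\nu}$, $d<\frac{1}{2\nu}$. The only cosmetic difference is that you verify the Sobolev condition (b) of Theorem \ref{thm8.5} directly, whereas the paper invokes condition (c) (namely $H\in C^{2\ell-2}$ for the relevant $\ell$), which it has already shown implies (b); both routes use $p\geq 2$, resp.\ $p\geq 4$, in exactly the same way.
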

  \begin{proof}
  Recall that we have set $H(x) = |x|^{p}-c_{p}$ and that the Hermite rank of $H$ is $d=2$. Then
   item (i) follows immediately from  Theorem \ref{thm8.5}. Indeed,   since $\nu>\frac14$, we have $\frac{1}{2\nu}<2$, and so the Hermite rank of $|x|^{p}-c_{p}$ is larger than $\frac{1}{2\nu}$. On the other hand, it is easy to see that $\ell =2 $ for the definition of our controlled process $y$ under the condition that $\nu \ell +\al>1$ and $\al=\frac12$. 
    So for $p\geq 2$ we have $H\in C^{2\ell-2}$ and thus $H$ satisfies condition (c) in Theorem \ref{thm8.5}.  
    Therefore, a direct application of Theorem   \ref{thm8.5} 
    yields the convergence   \eref{e8.14i}. Item (ii) and item (iii) follows from  Theorem \ref{thm4.14}. The proof is similar and is omitted. 
  \end{proof}
    
  We now consider a controlled process of order $2$ with respect to $x$, called 
  $(z,z')$. Recall that $(z,z')$ satisfies:
\begin{eqnarray}\label{e4.51i}
  |r^{z}_{st}|\leq G (t-s)^{2\nu}, \quad\quad  \text{with} \quad\quad 
 r^{z}_{st} := \delta  z_{st}-z'_{s} \delta x_{st}  , 
 \end{eqnarray}
where $G$ is some almost surely finite random variable. In the following 
we  prove the convergence 
of   the $p$-variation of $z$ with the help of Theorem \ref{thm8.8}. We will see that,  with a proper normalization,  the $p$-variation of the (first-order) increments   $ \sum_{k=0}^{n-1} |\delta z_{t_{k}t_{k+1}} |^{p} $ converges almost surely to the quantity $c_{p}\int_{0}^{1} |z'_{s}|^{p}ds$  (one can also use ``longer filters'', i.e. replacing the increments $\delta z_{t_{k}t_{k+1}}   $ by  the second-order increments $\delta z_{t_{k}t_{k+1}} - \delta z_{t_{k-1}t_{k}} $ or higher-order increments for instance; see   e.g. \cite{Tu}). 
Observe that our motivation for  this limit result     is the parameter estimation of
  the diffusion coefficient for SDEs; see e.g. \cite{BCP, LL}. 
 Indeed, consider the following equation governed by a fBm with Hurst parameter $\nu \in (0,\frac12]$: 
  \begin{eqnarray}\label{e4.28}
z_{t} &=& \int_{0}^{t} b(z_{s}) ds + \int_{0}^{t} v (z_{s})dx_{s}.
\end{eqnarray}
In equation \eref{e4.28}, the coefficient $b$ and $v$ are assumed to be $C^{2}_{b}$ and $C^{3}_{b}$, respectively. The stochastic integral in \eref{e4.28} is understood thanks to the abstract rough paths theory (see e.g. \cite{FH, FV, G}), by considering the rough path $\{ x^{i}, \, 1\leq i \leq \lfloor \frac{1}{\nu} \rfloor \}$, where $x^{i}$ is given in Notation \ref{notation4.3}. Taking $z'= v(z)$, it is well-known that the pair $(z,z')$ is a process controlled by $x$. Then our limit result for $(z,z')$ implies that the $p$-variation of the solution of \eref{e4.28} converges almost surely to the average of the volatility $c_{p}\int_{0}^{1} |v(z_{s})|^{p} ds$.

\begin{cor}\label{thm4.15}
 Let $x$ be a one-dimensional fBm with Hurst parameter $0<\nu\leq\frac12$, and let $(z,z')$ be a controlled process of $x$ satisfying \eref{e4.51i}. 
 Let $n\geq 1$ and consider the uniform partition $0=t_{0}<\cdots<t_{n}=1$ of $[0,1]$. Then for $p>\frac{1}{2\nu}$,   we have almost surely the convergence of $p$-variations of $z$:
\begin{eqnarray}\label{e4.52i}
\frac{1}{n}\sum_{k=0}^{n-1}| n^{\nu} \delta z_{t_{k}t_{k+1}}|^{p}  \to c_{p}\int_{0}^{1} |z_{t}'|^{p}dt.
\end{eqnarray}
 
\end{cor}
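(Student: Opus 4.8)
The plan is to write $|n^\nu \delta z_{t_kt_{k+1}}|^p$ in terms of the underlying fBm increments plus a controlled-path remainder, and then reduce to a power-variation statement for the Gaussian process $x$ itself. First I would use the controlled-path structure \eref{e4.51i}: for each $k$ we have $\delta z_{t_kt_{k+1}} = z'_{t_k}\,\delta x_{t_kt_{k+1}} + r^z_{t_kt_{k+1}}$ with $|r^z_{t_kt_{k+1}}| \le G\,n^{-2\nu}$. Multiplying by $n^\nu$ gives $n^\nu \delta z_{t_kt_{k+1}} = z'_{t_k}\,(n^\nu\delta x_{t_kt_{k+1}}) + n^\nu r^z_{t_kt_{k+1}}$, where the last term is $O(G\,n^{-\nu})$ uniformly in $k$. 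Using the elementary inequality $\big||a+b|^p - |a|^p\big| \le C_p(|a|^{p-1}|b| + |b|^p)$ and summing, the error contributed by the remainder is bounded by
\begin{eqnarray*}
\frac{C_p}{n}\sum_{k=0}^{n-1}\Big(|z'_{t_k}|^{p-1}|n^\nu\delta x_{t_kt_{k+1}}|^{p-1}\,G\,n^{-\nu} + G^p n^{-p\nu}\Big).
\end{eqnarray*}
Since $z'$ is a.s.\ bounded on $[0,1]$ and $\tfrac1n\sum_k |n^\nu\delta x_{t_kt_{k+1}}|^{p-1}$ is a.s.\ bounded (by the ergodic-type convergence $\tfrac1n\sum_k|n^\nu\delta x_{t_kt_{k+1}}|^{p-1}\to c_{p-1}$, which is the $y\equiv 1$ case of the power-variation results or a classical Breuer--Major consequence), the whole error term is $O(n^{-\nu})\to 0$ almost surely. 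Hence \eref{e4.52i} is reduced to showing
\begin{eqnarray*}
\frac{1}{n}\sum_{k=0}^{n-1} |z'_{t_k}|^p\,|n^\nu\delta x_{t_kt_{k+1}}|^p \;\longrightarrow\; c_p\int_0^1 |z'_t|^p\,dt \qquad \text{a.s.}
\end{eqnarray*}

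Next I would treat the weighted sum $\tfrac1n\sum_k |z'_{t_k}|^p |n^\nu\delta x_{t_kt_{k+1}}|^p$ by centering: write $|n^\nu\delta x_{t_kt_{k+1}}|^p = c_p + H(n^\nu\delta x_{t_kt_{k+1}})$ with $H(x)=|x|^p - c_p$, so the sum splits as
\begin{eqnarray*}
\frac{c_p}{n}\sum_{k=0}^{n-1}|z'_{t_k}|^p \;+\; \frac{1}{n}\sum_{k=0}^{n-1}|z'_{t_k}|^p\,H(n^\nu\delta x_{t_kt_{k+1}}).
\end{eqnarray*}
The first term is a Riemann sum converging (a.s., by continuity of $z'$) to $c_p\int_0^1|z'_t|^p\,dt$, which is exactly the claimed limit. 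So it remains to show the second term tends to $0$ a.s. Here the weight process is $|z'|^p$. Since $(z,z')$ is controlled by $(x,\nu)$ in the sense of \eref{e4.51i} and $p>\tfrac{1}{2\nu}$ allows the required smoothness, I would argue that $|z'|^p$ (or an appropriate regularization thereof) is itself a controlled process of the right order, and then invoke the machinery of Section \ref{sectionBM}: with $h^n_{st} := n^{-1/2}\sum_{s\le t_k<t} H(n^\nu\delta x_{t_kt_{k+1}})$, the quantity $n^{-1/2}\cj_0^1(|z'|^p; h^n)$ converges in distribution (Theorem \ref{thm8.5}, valid because $H$ has Hermite rank $2 > \tfrac{1}{2\nu}$ when $\nu>\tfrac14$, with the other regimes covered by Theorem \ref{thm4.14}), whence $\tfrac1n\sum_k |z'_{t_k}|^p H(n^\nu\delta x_{t_kt_{k+1}}) = n^{-1/2}\cdot\big(n^{-1/2}\cj_0^1(|z'|^p;h^n)\big)$ is $O_P(n^{-1/2})\to 0$ in probability.

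The remaining—and main—obstacle is upgrading this convergence in probability to \emph{almost sure} convergence, as claimed in \eref{e4.52i}. The standard route is a Borel--Cantelli argument: one shows $\big|\tfrac1n\sum_k |z'_{t_k}|^p H(n^\nu\delta x_{t_kt_{k+1}})\big|$ has, say, a fourth (or $2m$-th) moment bounded by $K n^{-2}$ (or $n^{-1-\varepsilon}$), so that the series $\sum_n P(|\cdot| > \varepsilon)$ converges and the limit holds a.s.\ along the full sequence $n$. Bounding such moments requires hypercontractivity on the Wiener chaos together with the correlation decay from Hypothesis \ref{hyp8.1i}, applied to the centered products $|z'_{t_k}|^p H(n^\nu\delta x_{t_kt_{k+1}})$; after a truncation reducing to a deterministically bounded weight (as in Step~2 of the proof of Theorem \ref{thm4.1i}), this is a fourth-moment estimate of Breuer--Major type. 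One subtlety is that $|z'|^p$ is not smooth at zeros of $z'$; this can be circumvented by a localization argument or by noting that $z'$, being a solution component of a rough differential equation with smooth coefficients, has the requisite regularity away from a negligible set. Once the moment bound is in place, the almost sure convergence—and thus Corollary \ref{thm4.15}—follows.
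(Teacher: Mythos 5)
Your proposal follows essentially the same route as the paper's proof: linearize $\delta z_{t_kt_{k+1}}$ via the controlled-path relation \eref{e4.51i}, absorb the remainder using the elementary inequality for $\big||a|^p-|b|^p\big|$ together with the H\"older regularity of $z$ and $x$, and then identify the main term $n^{p\nu-1}\sum_k|z'_{t_k}|^p|\delta x_{t_kt_{k+1}}|^p$ with $c_p\int_0^1|z'_t|^p\,dt$ by centering with $H(\xi)=|\xi|^p-c_p$ and invoking the weighted power-variation result (Theorem \ref{thm8.8}). The two issues you flag beyond this — that Theorem \ref{thm8.8} only makes the centered term $o_P(1)$, so one gets convergence in probability rather than the almost sure convergence asserted in \eref{e4.52i} without an additional Borel--Cantelli/moment argument, and that $|z'|^p$ must be checked to be a controlled weight despite the non-smoothness of $u\mapsto|u|^p$ at the origin — are legitimate, but the paper's own proof does not address either of them: it simply cites Theorem \ref{thm8.8} for the convergence of the main term and stops there.
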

 
\begin{proof} 
Set 
$
\varphi:= n^{p\nu-1} \sum_{k=0}^{n-1} |z_{t_{k}}'|^{p} |\delta x_{t_{k}t_{k+1}}|^{p}$. 
We write 
\begin{eqnarray}\label{e4.52}
\frac{1}{n} \sum_{k=0}^{n-1}  | n^{\nu} \delta z_{t_{k}t_{k+1}}|^{p} &=&\varphi+R_{n}, 
\end{eqnarray}
where $R_{n}$ is simply  $\frac{1}{n} \sum_{k=0}^{n-1}  | n^{\nu} \delta z_{t_{k}t_{k+1}}|^{p}-\varphi$. Using the inequality $||a|^{p}-|b|^{p}|\leq p (|a|^{p-1}+|b|^{p-1}) |a-b|$ for $p>1$ and the regularity of $z$ and $x$, we obtain
\begin{eqnarray*}
|R_{n}| &\leq& p n^{p\nu-1} \sum_{k=0}^{n-1} ( |\delta z_{t_{k}t_{k+1}}|^{p-1} + |z_{t_{k}}'\delta x_{t_{k}t_{k+1}}|^{p-1} )| \delta z_{t_{k}t_{k+1}} - z_{t_{k}}' \delta x_{t_{k}t_{k+1}} |
\\
&\leq& G n^{p\nu-1} \sum_{k=0}^{n-1} n^{-(p-1)\nu} n^{-2\nu}= Gn^{-\nu} . 
\end{eqnarray*}
 In particular, we have  $\lim_{n\to \infty} R_{n} =0$  almost surely as $n\to \infty$. 
 On the other hand, a  direct application of Theorem \ref{thm8.8}
 shows that  $\varphi \to  c_{p} \int_{0}^{1} |z_{t}'|^{p}dt$. Putting together the convergence of $R_{n} $ and $\varphi$ and taking into account \eref{e4.52}, we obtain the desired limit \eref{e4.52i}.  
\end{proof}
 \subsection{Stratonovich integrals}
 In this subsection we are shedding a new light on another problem which has drawn a lot of attention in the recent stochastic analysis literature. Namely, we   are interested in the convergence of the following trapezoidal-rule Riemann sum:
 \begin{eqnarray}\label{e6.7}
 \text{tr-}\cj_{0}^{1} (y ; x):= \sum_{k=0}^{n-1} \frac{ y_{t_{k}} + y_{t_{k+1}} }{2} \delta x_{t_{k}t_{k+1}}.
\end{eqnarray}
This quantity has been considered by many authors (see e.g. \cite{BS, CN, GNRV, HN, HN2, HN3, NNT, NRS}) in the case $y_{s} = f(x_{s})$. 
Thanks to the rough paths technique developed in this paper, we will be able to get shorter proofs than in the aforementioned articles, and obtain results which are valid for a wider class of weight processes $y$.  
We will also see that the limit of \eref{e6.7} can be identified with the rough integral $\int_{0}^{1} y_{s} dx_{s} $   for $\nu>\frac16$ and that it is equal to the same rough integral plus  a ``correction'' term when $\nu=\frac16$.

Let us start by some preliminary   results.

  \begin{lemma}\label{prop6.1}
  Let $q$ be an integer such that $q>1$, and 
  assume that
    $  \nu \in (\frac{1}{2q}, \frac12)$.  Let $x$ and $y$ be as in Theorem \ref{thm8.5}. 
 Then the following   convergence holds as $n\to \infty$:
  \begin{eqnarray}\label{e6.3ii}
n^{-\frac12} \cj_{s}^{t} ( y ; h^{n,q} ) \xrightarrow{\ d \ }
\si  \int_{s}^{t} y_{u} dW_{u},
\end{eqnarray}
where 
$
\si^{2} =    {q!}  \sum_{k\in \ZZ}\rho(k)^{q} 
$.
  \end{lemma}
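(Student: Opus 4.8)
\textbf{Proof proposal for Lemma \ref{prop6.1}.}

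The plan is to apply the single limit Theorem \ref{thm5.9} directly to the family of increments $h^{n,q}$, since Lemma \ref{prop6.1} is precisely a statement about the convergence of $\cj(y;h^{n,q})$ with a Brownian limit of the right variance. First I would verify the standing assumptions of Theorem \ref{thm5.9}: the underlying fBm $x$ is regarded as a $(L_p,\nu,\ell)$-rough path as in Notation \ref{notation4.3}, with $\ell$ the smallest integer such that $\nu\ell+\frac12>1$; the weight $(y,y',\dots,y^{(\ell-1)})$ is controlled by $(x,\frac12)$ by hypothesis (as in Theorem \ref{thm8.5}). It remains to establish (a) the regularity bound \eqref{e3.8i} for $n^{-1/2}h^{n,q}$ uniformly in $n$, (b) the convergence $(x, n^{-1/2}h^{n,q}) \xrightarrow{f.d.d.} (x, \si W)$, and (c) the cancellation condition \eqref{e6.1} for $i=1,\dots,\ell-1$.

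For (a), the bound $|\cj_s^t(x^i; n^{-1/2}h^{n,q})|_{L_2}\le K(t-s)^{i\nu+1/2}$ follows immediately from Lemma \ref{lem4.8}(ii): since $\nu>\frac{1}{2q}$ means $q>\frac{1}{2\nu}$, we are in the case $q\geq\frac{1}{2\nu}$, and inequality \eqref{e8.5k} gives $\mE(|\cj_s^t(x^i;h^{n,q})|^2)\le Kn(t-s)^{2i\nu+1}$; dividing by $n$ yields exactly \eqref{e3.8i} with $\al=\frac12$. For (b), the one-dimensional convergence $n^{-1/2}h^{n,q}\xrightarrow{d}\si W$ with $\si^2=q!\sum_{k\in\ZZ}\rho(k)^q$ is the classical Breuer-Major theorem (Theorem \ref{thm4.2}) applied to the single Hermite polynomial $f=H_q$, which has Hermite rank $q>\frac{1}{2\nu}\geq d$ so that Hypothesis \ref{hyp8.1i} is satisfied; the joint convergence with $x$ and the independence of the limit $W$ from $x$ is obtained by the stable convergence argument invoked in Step 1 of the proof of Theorem \ref{thm8.5} (a multidimensional version of Theorem 6.3.1 in \cite{NP}). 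For (c), the cancellation \eqref{e6.1} for $i\geq 1$ follows from Lemma \ref{lem4.8}(i): we have $\big|\sum_{j=0}^{m-1}\cj_{s_j}^{s_{j+1}}(x^i;n^{-1/2}h^{n,q})\big|_{L_2}^2 = n^{-1}\vartheta(q,q,i)$, and since $q>\frac{1}{2\nu}$ the relevant estimate shows this is bounded by $K(n^{-2\nu}+m^{-2i\nu}+n^{-\nu})$ (the same type of bound as \eqref{e4.23}, specialized to a single chaos), which tends to $0$ after sending $n\to\infty$ and then $m\to\infty$.

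With (a), (b), (c) in hand, Theorem \ref{thm5.9} yields $(x,\cj(y;n^{-1/2}h^{n,q}))\xrightarrow{f.d.d.}(x,\si\int_s^t y_u\,dW_u)$, which is exactly \eqref{e6.3ii}. The main obstacle, and the only point requiring genuine care, is the precise bookkeeping in step (c): one must check that $\vartheta(q,q,i)/n$ really is $o(1)$ in the iterated limit for \emph{all} $i=1,\dots,\ell-1$, using that $i\le\ell-1\le\frac{1}{2\nu}<q$, so the ``diagonal'' term $\rho(k-k')^q$ is suppressed by $n^{-\nu}$ (or $m^{-2i\nu}$) rather than contributing at order $n$. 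Everything else is a direct citation of Theorem \ref{thm5.9}, Theorem \ref{thm4.2}, and Lemma \ref{lem4.8}.
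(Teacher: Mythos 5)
Your proof is correct and takes essentially the same route as the paper: the paper's own proof is the one-line observation that the statement is the special case $f=H_{q}$ of Theorem \ref{thm8.5} (whose proof is precisely the three-step verification of the hypotheses of Theorem \ref{thm5.9} that you carry out, with the rank condition $q>\frac{1}{2\nu}$ coming from $\nu>\frac{1}{2q}$). One small citation slip in your step (c): for $q>\frac{1}{2\nu}$ the applicable estimate on $\vartheta(q,q,i)$ is \eref{e6.3} of Lemma \ref{lem8.3}\,(i) (valid here since $q\geq \ell$), not Lemma \ref{lem4.8}\,(i), whose two cases both require $q\leq\frac{1}{2\nu}$ --- but the resulting bound and the conclusion are exactly as you state.
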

 \begin{proof}  The lemma follows immediately from Theorem \ref{thm8.5} with $f=H_{q}$. Notice that $\nu>\frac{1}{2q}$ by assumption, thus we also have $q>\frac{1}{2\nu}$, which is one of the assumption in Theorem \ref{thm8.5}. 
 \end{proof}

Our second preliminary result concerns weighted power variations of the fBm $x$.     
 \begin{lemma}\label{cor6.5}
 Let $x$ be a fBm with Hurst parameter $\nu \in (0,\frac12)$ considered as a $(L_{p},\nu, \ell)$ rough path as in Notation \ref{notation4.3}. 
 
 (i) Let $(y,y')$ be a discrete process controlled by $(x, 1-\nu)$. 
 When $q\geq 3$   is odd,
 we have the following convergence in probability:
 \begin{eqnarray}\label{e6.6j}
n^{(q+1) \nu -1} \sum_{k=0}^{n-1} y_{t_{k}}  (\delta x_{t_{k}t_{k+1}})^{q} \xrightarrow{ \ \ } 
-\frac{c_{q+1}}{2} \int_{0}^{1} y_{s}' ds,
\end{eqnarray}
where the constants $c_{p}$ are defined by \eref{e4.46i}. 

(ii) Let $(y, \dots, y^{(\ell-1)})$ be a discrete process controlled by $(x,\al)$, in $L_{2}$ or almost surely, with $\nu\ell+\al>1$, where $\al=\frac12$ for $\nu\in [\frac14, \frac12)$ and $\al=1-2\nu$ for $\nu \in (0,\frac14)$. 
When $q$ is even, we have the   convergence in probability:
\begin{eqnarray}\label{e6.6jj}
n^{q\nu-1}\sum_{k=0}^{n-1} y_{t_{k}} (\delta x_{t_{k}t_{k+1}})^{q} \xrightarrow{ \ \ } c_{q} \int_{0}^{1} y_{s}ds.
\end{eqnarray}

 \end{lemma}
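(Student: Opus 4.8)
The plan is to derive both items from our single limit Theorem \ref{cor6.3}, applied to a suitably renormalized version of the ``raw'' $1$-increment $\tilde{h}^{n}_{st}:=\sum_{s\leq t_{k}<t}(\delta x_{t_{k}t_{k+1}})^{q}$. Note first that, since $y_{0}=0$, one has $\cj_{0}^{1}(y;\tilde{h}^{n})=\sum_{k=0}^{n-1}y_{t_{k}}(\delta x_{t_{k}t_{k+1}})^{q}$. For item (i) I would take $h^{n}:=n^{(q+1)\nu-1}\tilde{h}^{n}$ and invoke Theorem \ref{cor6.3} with $\tau=1$ and $\varrho=-c_{q+1}/2$; for item (ii) I would take $h^{n}:=n^{q\nu-1}\tilde{h}^{n}$ and invoke Theorem \ref{cor6.3} with $\tau=0$ and $\varrho=c_{q}$. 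The first step is to expand $(\delta x_{t_{k}t_{k+1}})^{q}=n^{-q\nu}(n^{\nu}\delta x_{t_{k}t_{k+1}})^{q}$ in Hermite polynomials: writing the monomial as $t^{q}=\sum_{j}\lambda_{q,j}H_{q-2j}(t)$ one gets $\tilde{h}^{n}=n^{-q\nu}\sum_{r}\lambda_{q,(q-r)/2}\,h^{n,r}$, the index $r$ running over $q,q-2,\dots$ down to $1$ when $q$ is odd and down to $0$ when $q$ is even, with $h^{n,r}$ as in \eref{e4.3j}. The two relevant coefficients are $\lambda_{q,(q-1)/2}=q!!=c_{q+1}$ (the coefficient of $H_{1}$, $q$ odd) and $\lambda_{q,q/2}=(q-1)!!=c_{q}$ (the coefficient of $H_{0}$, $q$ even). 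By linearity of $\cj$ in its second argument, each $\cj_{s}^{t}(x^{i};h^{n})$ then becomes a finite combination of renormalized $\cj_{s}^{t}(x^{i};h^{n,r})$, plus (when $q$ is even and $i\geq1$) a term $c_{q}n^{-1}\sum_{s\leq t_{k}<t}x^{i}_{\ep(s)t_{k}}$ stemming from the $r=0$ contribution; all quantitative inputs then come from Lemmas \ref{lem8.3}, \ref{lem4.8} and \ref{lem4.9}. Throughout I use that $\sum_{k\in\ZZ}|\rho(k)|^{r}<\infty$ for every integer $r\geq1$, because $\nu<\frac12$.

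I would next verify condition (i) of Theorem \ref{cor6.3}, i.e.\ identify the deterministic limits of $\cj_{s}^{t}(x^{i};h^{n})$. When $q$ is even, the $r=0$ contribution to $\cj_{s}^{t}(x^{0};h^{n})=h^{n}_{st}$ is $c_{q}n^{-1}\#\{k:s\leq t_{k}<t\}\to c_{q}(t-s)$, while the $r\geq2$ contributions have $L_{2}$-norm of order $n^{-1/2}$ since $|h^{n,r}_{st}|_{L_{2}}^{2}=r!\sum\rho(k-k')^{r}=O(n(t-s))$; hence $h^{n}_{st}\to(t-s)c_{q}$ in $L_{2}$. When $q$ is odd, $\cj_{s}^{t}(x^{0};h^{n})=h^{n}_{st}$ has zero mean and variance $O(n^{2\nu-1})\to0$, so it tends to $0$ (the ``$i<\tau$'' part of condition (i) with $\tau=1$); and for the top level $i=1$, the $r\geq3$ components are of order $n^{\nu-1}(n(t-s)^{2\nu+1})^{1/2}=O(n^{\nu-1/2})\to0$ by Lemma \ref{lem4.8}(ii) (i.e.\ \eref{e8.5u}--\eref{e8.5k}), whereas the $r=1$ component, using $2\,\delta x_{ab}\,\delta x_{bc}=(\delta x_{ac})^{2}-(\delta x_{ab})^{2}-(\delta x_{bc})^{2}$ (that is, relation \eref{e2.3} for $x^{2}$) and telescoping, equals
\[
c_{q+1}n^{2\nu-1}\sum_{s\leq t_{k}<t}\delta x_{\ep(s)t_{k}}\,\delta x_{t_{k}t_{k+1}}=\frac{c_{q+1}}{2}\,n^{2\nu-1}\Big[(\delta x_{\ep(s)t'})^{2}-\sum_{s\leq t_{k}<t}(\delta x_{t_{k}t_{k+1}})^{2}\Big],
\]
where $t'$ is the appropriate grid endpoint tending to $t$. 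Here $n^{2\nu-1}(\delta x_{\ep(s)t'})^{2}\to0$ in $L_{1}$, while $n^{2\nu-1}\sum_{s\leq t_{k}<t}(\delta x_{t_{k}t_{k+1}})^{2}\to t-s$ in $L_{2}$ (the classical quadratic variation limit for fBm, valid for $\nu<\frac34$; this is also Lemma \ref{lem4.9}(ii) with $q=1$), so that $\cj_{s}^{t}(x^{1};h^{n})\to-\frac{c_{q+1}}{2}(t-s)$ in probability. This is exactly condition (i) of Theorem \ref{cor6.3} with the announced $\tau,\varrho$, the discrepancy $\ep(s)\neq s$ being absorbed by continuity of $x$ as in the proofs of Theorems \ref{thm5.9} and \ref{cor6.3}.

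It then remains to check the hypotheses \eref{e3.8i} and \eref{e5.4i}. For \eref{e3.8i} the mean-and-variance estimates above give $|h^{n}_{st}|_{L_{2}}\leq K(t-s)\leq K(t-s)^{\al}$ (recall $\al\leq1$); for $i\geq1$ one bounds the weight-one term by $n^{-1}\#\{k\}\cdot K(t-s)^{i\nu}\leq K(t-s)^{1+i\nu}$ and the Hermite-$r$ terms using \eref{e8.5u}, \eref{e8.5k}, \eref{e8.6k}, absorbing all powers of $n$ thanks to the restriction $t-s\geq\frac1n$ (otherwise $\cj_{s}^{t}=0$, see Remark \ref{remark2.8}) together with $\al\leq1$, everything reducing to inequalities of the form $n^{\al-1}\leq1$ and $n^{\nu-1/2}n^{1/2-\nu}=1$. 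For \eref{e5.4i} with $i=\tau+1,\dots,\ell-1$: in the odd case $\ell=2$, so the index set is empty; in the even case ($\ell\in\{2,3\}$) one splits $\sum_{j}\cj_{s_{j}}^{s_{j+1}}(x^{i};h^{n})$ into the weight-one part, bounded almost surely and uniformly in $n$ by $c_{q}\,G\,K\,m^{-i\nu}\to0$ as $m\to\infty$ (since $|x^{i}_{\ep(s_{j})t_{k}}|\leq G(t_{k}-\ep(s_{j}))^{i\nu}\leq G\,K\,m^{-i\nu}$ inside each block), plus $n^{-1}\sum_{r\geq2}\lambda_{q,(q-r)/2}\sum_{j}\cj_{s_{j}}^{s_{j+1}}(x^{i};h^{n,r})$, whose $L_{2}$-norm is at most $\big(m\sum_{j}\mE|\cj_{s_{j}}^{s_{j+1}}(x^{i};h^{n,r})|^{2}\big)^{1/2}$; using Lemma \ref{lem4.8}(ii) with $t-s=\frac1m$ this is $O(n^{-1/2})$ or $O(n^{-r\nu})$ for each fixed $m$, hence $\limsup_{n}=0$ and a fortiori $\lim_{m}\limsup_{n}=0$. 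Once these are in place, Theorem \ref{cor6.3} gives $\cj_{0}^{1}(y;h^{n})\to\big(\int_{0}^{1}y^{(\tau)}_{t}\,dt\big)\varrho$ in probability, which is \eref{e6.6j} for item (i) and \eref{e6.6jj} for item (ii) after dividing out the renormalization. The main obstacle in this program is the limit identification in the odd case: one must check that precisely the linear ($H_{1}$) Hermite component survives the $n^{(q+1)\nu-1}$ renormalization, and that its contribution to $\cj(x^{1};\cdot)$ telescopes into the fractional Brownian quadratic variation — this is what produces the universal factor $-\frac12$ and the constant $c_{q+1}$; the rest is a bookkeeping of exponents across the regimes of $\nu$ that fix $\ell\in\{2,3\}$ and select the applicable estimate among \eref{e8.5u}, \eref{e8.5k}, \eref{e8.6k} and \eref{e4.23}.
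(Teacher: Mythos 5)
Your proof is correct, but it takes a genuinely different route from the paper's. The paper disposes of both items as quick corollaries of machinery already in place: item (i) is read off directly from Theorem \ref{thm4.14} (ii) applied to $f(x)=x^{q}$, which has Hermite rank $d=1<\frac{1}{2\nu}$ and first Hermite coefficient $a_{1}=c_{q+1}$, so that the constant $(-\frac12)^{1}a_{1}=-\frac{c_{q+1}}{2}$ comes for free from the chaos computation in Lemma \ref{lem4.9}; item (ii) is obtained by centering, i.e.\ writing $(n^{\nu}\delta x)^{q}=\big(|n^{\nu}\delta x|^{q}-c_{q}\big)+c_{q}$, noting that \eref{e8.14i} (resp.\ Theorem \ref{thm8.8} (ii), (iii) in the critical and rough regimes) kills the centered part after division by the extra $\sqrt{n}$, and using continuity of $y$ for the Riemann sum $\frac1n\sum_{k}y_{t_{k}}\to\int_{0}^{1}y_{s}\,ds$. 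You instead go back to the general single-limit Theorem \ref{cor6.3} and verify its hypotheses \eref{e3.8i}, (i) and \eref{e5.4i} from scratch, after expanding the monomial over the Hermite basis and feeding the resulting $h^{n,r}$ into Lemmas \ref{lem8.3} and \ref{lem4.8}. The most interesting divergence is your identification of the constant in the odd case: where the paper's Lemma \ref{lem4.9} extracts the factor $-\frac12$ from a second-moment chaos computation and the Ces\`aro limit $\sum_{i\geq1}\rho(i)=-\frac12\rho(0)$, you obtain it from the elementary telescoping identity $2\,\delta x_{\ep(s)t_{k}}\delta x_{t_{k}t_{k+1}}=(\delta x_{\ep(s)t_{k+1}})^{2}-(\delta x_{\ep(s)t_{k}})^{2}-(\delta x_{t_{k}t_{k+1}})^{2}$ together with the classical $L_{2}$ quadratic-variation limit of fBm — a more transparent explanation of where the $-\frac12$ comes from, at the cost of redoing the exponent bookkeeping (\eref{e8.5u}, \eref{e8.5k}, \eref{e8.6k}) that the paper's higher-level theorems already encapsulate. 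Both arguments are sound; yours is longer but self-contained and arguably more illuminating about the mechanism, while the paper's is a two-line reduction to results it has already paid for.
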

 \begin{proof}
 We first show the convergence \eref{e6.6j} with the help of Theorem \ref{thm4.14}. Note that whenever $q$ is odd the function $f(x)=x^{q}$ has rank $d=1$, so we have $d<\frac{1}{2\nu}$. The convergence \eref{e6.6j} then follows from Theorem \ref{thm4.14} (ii). 
 In order to prove \eref{e6.6jj} we start by observing that an easy consequence of \eref{e8.14i} is that the following limit in probability holds true:
 \begin{eqnarray}\label{e4.58}
\lim_{n\to \infty} \frac{1}{n} \sum_{k=0}^{n-1} y_{t_{k}} ( (n^{\nu} \delta x_{t_{k}t_{k+1}})^{q} -c_{q} )&=&0 . 
\end{eqnarray}
Then observe that $y$ is a continuous process. Therefore, we trivially have the following limit in probability:
\begin{eqnarray}\label{e4.59}
\lim_{n\to \infty} \frac{1}{n} \sum_{k=0}^{n-1} y_{t_{k}} &=& \int_{0}^{1} y_{s} ds. 
\end{eqnarray}
Combining \eref{e4.58} and \eref{e4.59}, the convergence \eref{e6.6jj} is established for $\nu\in (\frac14, \frac12)$. The cases $\nu=\frac14$ and $\nu<\frac14$ are treated in the same way, thanks to (respectively) Theorem \ref{thm8.8} (ii) and~(iii). 
  \end{proof}

We can now state a convergence result for trapezoidal Riemann sums. 
\begin{thm}
Let $x$ be a one-dimensional fBm with Hurst parameter $\nu\in (0, \frac12)$. Let $y$ be an almost sure controlled process of order $\ell=8$ (see Definition \ref{def2.2}). Recall that the trapezoidal sums of $y$ with respect to $x$ are defined by \eref{e6.7}, and we set  
\begin{eqnarray}\label{e6.8ii}
\int_{0}^{1} y_{s} d^{\text{tr}} x_{s} = \lim_{n\to \infty}  \text{tr-}\cj_{0}^{1} (y ; x),
\end{eqnarray}
whenever the limit in the right-hand side is properly defined. 
Then the following assertions hold true:

(i)  When $\nu>\frac16$, the convergence \eref{e6.8ii} holds almost surely and we have the identity:
\begin{eqnarray}\label{e6.8j}
\int_{0}^{1} y_{s} d^{\emph{tr}} x_{s} = \int_{0}^{1} y_{s} dx_{s} ,
\end{eqnarray}
where $\int_{0}^{1} y_{s}dx_{s}$ stands for the rough path integral of $y$ with respect to $x$. 

(ii) When $\nu=\frac16$, the convergence \eref{e6.8ii} holds in distribution and the following relation holds true:
\begin{eqnarray}\label{e4.62}
\int_{0}^{1} y_{s} d^{\emph{tr}} x_{s} = \int_{0}^{1} y_{s} dx_{s} + \frac{\si}{12}    \int_{0}^{1} y_{s}''dW_{s},
\end{eqnarray}
where $\int_{0}^{1}y_{s}dx_{s}$ is understood in the rough path sense and 
 $\si=6\sum_{k\in \ZZ}\rho(k)^{3}$. 
\end{thm}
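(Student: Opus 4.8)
The strategy is to compare the trapezoidal sum \eqref{e6.7} with a Riemann sum for the rough integral $\int_0^1 y_s\,dx_s$ and to reduce the discrepancy to a finite family of weighted power variations that are already controlled by the results of Section~\ref{sectionBM}. I use Notation~\ref{notation4.3}, so that $x^i_{st}=\frac1{i!}(\delta x_{st})^i$ and, in particular, $x^i_{st}\,\delta x_{st}=(i+1)\,x^{i+1}_{st}$. Writing $y_{t_{k+1}}=y_{t_k}+\delta y_{t_k t_{k+1}}$ and expanding $\delta y_{t_k t_{k+1}}=\sum_{i=1}^{\ell-1}y^{(i)}_{t_k}x^i_{t_k t_{k+1}}+r^{(0)}_{t_k t_{k+1}}$ as in Definition~\ref{def2.2}, one obtains for every $k$
\begin{eqnarray*}
\frac{y_{t_k}+y_{t_{k+1}}}{2}\,\delta x_{t_k t_{k+1}}
=\sum_{i=0}^{\ell-1}y^{(i)}_{t_k}x^{i+1}_{t_k t_{k+1}}
+\sum_{i=2}^{\ell-1}\frac{i-1}{2}\,y^{(i)}_{t_k}x^{i+1}_{t_k t_{k+1}}
+\frac12\,r^{(0)}_{t_k t_{k+1}}\,\delta x_{t_k t_{k+1}},
\end{eqnarray*}
the $i=1$ term dropping out because its coefficient $\tfrac{i-1}2$ vanishes. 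Summing over $k$, the first sum is the Riemann sum $\sum_k\Xi_{t_k t_{k+1}}$ of the rough-integral germ $\Xi_{st}:=\sum_{i=0}^{\ell-1}y^{(i)}_s x^{i+1}_{st}$; a computation in the spirit of Lemma~\ref{lem2.3} shows that $\delta\Xi_{sut}$ is a linear combination of the remainders $r^{(i)}_{su}x^{i+1}_{ut}$ (plus boundary terms of the same order), hence $|\delta\Xi_{sut}|\le G\,(t-s)^{(\ell+1)\nu}$ with $(\ell+1)\nu>1$ for $\ell=8$, $\nu\ge\tfrac16$, so by the discrete sewing Lemma~\ref{lem2.4} one gets $\sum_k\Xi_{t_k t_{k+1}}\to\int_0^1 y_s\,dx_s$ almost surely. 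Since also $|r^{(0)}_{t_k t_{k+1}}\delta x_{t_k t_{k+1}}|\le G\,n^{-9\nu}$, the last sum is $O(n^{1-9\nu})\to0$ a.s., and the whole matter reduces to the behaviour of
\begin{eqnarray*}
\Phi_n:=\sum_{i=2}^{\ell-1}\frac{i-1}{2\,(i+1)!}\sum_{k=0}^{n-1}y^{(i)}_{t_k}\bigl(\delta x_{t_k t_{k+1}}\bigr)^{i+1}.
\end{eqnarray*}

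Next I would analyse $\Phi_n$ in the regular regime $\nu>\tfrac16$. Each inner sum $\sum_k y^{(i)}_{t_k}(\delta x_{t_k t_{k+1}})^{i+1}$ is a weighted power variation whose weight $(y^{(i)},\dots,y^{(\ell-1)})$ is a controlled process (a tail of $y$); expanding $(\delta x_{t_k t_{k+1}})^{i+1}$ on the Hermite basis of $n^\nu\delta x_{t_k t_{k+1}}$, the top component is handled by the Breuer--Major machinery (Lemma~\ref{prop6.1}, Theorem~\ref{thm8.5}) and the lower, ``deterministic'', components by Lemma~\ref{cor6.5}; the outcome is that this sum is of order $n^{1-(i+2)\nu}$ for $i$ even and $n^{1-(i+1)\nu}$ for $i$ odd, with explicit leading constants $-\tfrac{c_{i+2}}2\int_0^1 y^{(i+1)}_u\,du$, resp.\ $c_{i+1}\int_0^1 y^{(i)}_u\,du$. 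For $\nu>\tfrac16$ every exponent $1-(i+2)\nu$ or $1-(i+1)\nu$ occurring in $\Phi_n$ is negative, except those equal to $1-4\nu$ (carried by $i=2$ and $i=3$, relevant when $\tfrac16<\nu<\tfrac14$); matching the coefficients $\tfrac{i-1}{2(i+1)!}$ against the constants $-\tfrac{c_4}{2}$ and $c_4$ furnished by Lemma~\ref{cor6.5} one checks that these leading terms cancel identically. Hence $\Phi_n\to0$ in probability, and since all remaining bounds are either deterministic $o(1)$'s or $L^2$-estimates of size $Kn^{-\kappa}$, $\kappa>0$ (sharpened to a.s.\ bounds for fixed-chaos variables by hypercontractivity, as in Corollary~\ref{thm4.15}), the convergence is almost sure; this is assertion (i) and \eqref{e6.8j}.

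For the critical value $\nu=\tfrac16$ one has $\tfrac1{2\nu}=3$, so among all the Hermite components of the $(\delta x_{t_k t_{k+1}})^{i+1}$ the only one producing a non-degenerate \emph{fluctuating} limit is $H_3$, which occurs only for $i=2$. Writing $(\delta x_{t_k t_{k+1}})^{3}=n^{-1/2}H_3(n^\nu\delta x_{t_k t_{k+1}})+3n^{-2\nu}\delta x_{t_k t_{k+1}}$, the $H_3$-part of the $i=2$ summand equals $\tfrac1{12}\,n^{-1/2}\cj_{0}^{1}(y'';h^{n,3})$, to which Theorem~\ref{thm4.14}(i) applies (with $d=3=\tfrac1{2\nu}$ and weight $y''$, which is order-$4$ controlled by $(x,\tfrac12)$ — here $\ell=8$ leaves ample room): it converges in distribution to $\tfrac{\sigma}{12}\int_0^1 y''_s\,dW_s$ plus a deterministic multiple of $\int_0^1 y^{(5)}_u\,du$, with $\sigma$ as in \eqref{e-si} for $f=H_3$. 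The remaining pieces of $\Phi_n$ are purely deterministic, and here the analysis requires \emph{refined} (multi-term) asymptotics of the relevant power variations, since at $\nu=\tfrac16$ several a priori distinct scales $n^{1-p\nu}$ ($p=2,4,6$) collapse; these residual contributions are organised so that all the $\int_0^1 y^{(j)}_u\,du$-terms cancel, leaving $\Phi_n\xrightarrow{\ d\ }\tfrac{\sigma}{12}\int_0^1 y''_s\,dW_s$. Adding the almost sure limit $\int_0^1 y_s\,dx_s$ of $\sum_k\Xi_{t_k t_{k+1}}$ (and noting that a deterministic a.s.\ limit plus a distributional limit is distributional) yields \eqref{e4.62}.

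The main obstacle is precisely the last two steps, and above all the $\nu=\tfrac16$ case: one must produce refined, multi-term expansions of the weighted power variations $\sum_k y^{(i)}_{t_k}(\delta x_{t_k t_{k+1}})^{i+1}$ — going beyond the single-term statements of Lemma~\ref{cor6.5} — and then verify that the combinatorial factors $\tfrac{i-1}2$, the factorials $\tfrac1{(i+1)!}$, the moment constants $c_p$ and the Breuer--Major constant conspire so that only the lone conditional Wiener integral $\tfrac{\sigma}{12}\int_0^1 y''_s\,dW_s$ survives. The threshold $\nu=\tfrac16$ is exactly the value at which this balance is possible; for $\nu<\tfrac16$ the cancellation breaks down and further correction terms would appear.
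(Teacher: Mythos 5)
Your decomposition is exactly the one the paper uses: your coefficients $\frac{i-1}{2(i+1)!}$ reproduce the terms $a_{1}$, $a_{2}$ of \eref{e4.65}--\eref{e4.66i}, your germ $\Xi$ is the paper's $a_{0}$, and your treatment of $a_{0}$ (sewing/rough-path convergence) and of the remainder $a_{3}$ matches Steps 1--2 of the paper's proof. The gap is in the treatment of $\Phi_{n}$, and it is already present in part (i) for $\frac16<\nu<\frac14$. You argue that the two contributions at scale $n^{1-4\nu}$ (from $i=2$ and $i=3$) cancel because their leading constants $-\frac{c_{4}}{2}$ and $c_{4}$, weighted by $\frac1{12}$ and $\frac1{24}$, sum to zero. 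But Lemma \ref{cor6.5} only gives convergence in probability of the \emph{normalized} sums; writing each term as $n^{1-4\nu}(L_{i}+o_{P}(1))$ and adding, the cancellation removes the $L_{i}$'s but leaves $n^{1-4\nu}\cdot o_{P}(1)$, which is uncontrolled since $n^{1-4\nu}\to\infty$. Cancellation of leading constants of individually divergent quantities is not a proof; one needs a quantitative bound on the \emph{combined} quantity. This is precisely what the paper supplies and what your argument lacks: after splitting $(\delta x)^{3}=H_{3}+3H_{1}$, the weight $y''$ of the linear part must itself be re-expanded as a controlled process (a second application of \eref{e2.6j}), which converts $\frac1{4n^{2\nu}}\sum_{k} y''_{t_{k}}\delta x_{t_{k}t_{k+1}}$ into $\frac1{4n^{2\nu}}\int y''dx$ plus the terms $b_{3}^{(i)}$ of \eref{e4.70}; the term $b_{3}^{(1)}$ then recombines \emph{algebraically} with $a_{12}$ into the single weighted sum $\frac{1}{24n^{4\nu}}\sum_{k} y^{(3)}_{t_{k}}f(n^{\nu}\delta x_{t_{k}t_{k+1}})$ with $f(\xi)=\xi^{4}-3\xi^{2}$ of Hermite rank $2$ (relation \eref{e4.71}), to which the quantitative $L_{2}$ bounds behind Theorems \ref{thm8.5} and \ref{thm4.14} apply and give an explicit decay rate. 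The divergence is thus killed before any limit is taken, not after.

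The same missing mechanism is what you yourself flag as ``the main obstacle'' at $\nu=\frac16$, so part (ii) is not established either. There the surviving deterministic contributions --- $a_{2}$, the terms $b_{3}^{(2)}$, $b_{3}^{(3)}$ (entering with a minus sign through \eref{e4.67}), the recombined quantity $a_{12}-b_{3}^{(1)}$, and the drift part of the $H_{3}$ limit furnished by Proposition \ref{prop6.4}(i) --- are each nonzero multiples of $\int_{0}^{1}y^{(5)}_{u}du$, and one must verify that they sum to zero; this verification again requires the recombined, quantitatively controlled form of the power variations rather than the single-term limits of Lemma \ref{cor6.5}. You have correctly located the difficulty, but the device that resolves it (the secondary controlled-process expansion of $y''$ followed by the Hermite-rank recombination) is absent, so the proof is incomplete in both cases. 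A minor further point: your upgrade from convergence in probability to almost sure convergence in part (i) via hypercontractivity is only sketched, and would need to be carried out for the non-fixed-chaos pieces as well.
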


\begin{proof} 
\noindent \textit{Step 1: Decomposition of tr-$\cj_{0}^{1}(y; x)$.} 
Owing to the Definition \ref{def2.2} of a controlled process, we have
\begin{eqnarray}\label{e6.6}
\delta y_{st} &=&
\sum_{i=1}^{5} \frac{1}{i!} y^{(i)}_{s} (\delta x_{st})^{i} +r^{y}_{st} ,
\end{eqnarray}
where the remainder $r$ satisfies   $|r^{y}_{st}|_{L_{2}}\leq K(t-s)^{6\nu}$. 
 Plugging \eref{e6.6} into \eref{e6.7} we obtain
\begin{eqnarray*}
   \text{tr-}\cj_{0}^{1} (y ; x)
   &=&
     \sum_{k=0}^{n-1}   y_{t_{k}}   \delta x_{t_{k}t_{k+1}}
     +
      \frac12  \sum_{k=0}^{n-1}  
      \delta y_{t_{k}t_{k+1}}   \delta x_{t_{k}t_{k+1}}
   \\
   &=& 
     \sum_{k=0}^{n-1}  
     \Big(
      y_{t_{k}}   \delta x_{t_{k}t_{k+1}}
     +
      \frac12   
      \sum_{i=1}^{5} \frac{1}{i!} y^{(i)}_{t_{k}} (\delta x_{t_{k}t_{k+1}})^{i+1} +  \frac12 r_{t_{k}t_{k+1}}    \delta x_{t_{k}t_{k+1}}  \Big)  , 
\end{eqnarray*}
where
we notice that our rough path type expansion is a natural generalization of the Taylor type expansions of $f(x)$ performed in e.g. \cite{HN, NRS}. We now split the expansion as 
\begin{eqnarray}\label{e4.63}
  \text{tr-} \cj_{0}^{1} (y ; x) = a_{0}+a_{1}+a_{2}+a_{3},
\end{eqnarray}
 where 
\begin{eqnarray}\label{e4.65}
a_{0} &=&  \sum_{k=0}^{n-1} \sum_{i=0}^{5} 
\frac{1}{(i+1)!}
y^{(i)}_{t_{k}} (\delta x_{t_{k}t_{k+1}})^{i+1};
\nonumber
\\
 {a_{1}} &=&   \frac1{12}  \sum_{k=0}^{n-1}      y^{(2)}_{t_{k}} (\delta  x_{t_{k}t_{k+1}})^{3}  +  \frac1 {24}  \sum_{k=0}^{n-1}        y^{(3)}_{t_{k}}  (\delta  x_{t_{k}t_{k+1}})^{4}  
:= a_{11}+a_{12}  ,
\end{eqnarray}
and
\begin{eqnarray}\label{e4.66i}
 {a_{2}} = 
 \frac{1}{80} \sum_{k=0}^{n-1} y^{(4)}_{t_{k}} (\delta x_{t_{k}t_{k+1}})^{5} 
 +
 \frac{1}{360} \sum_{k=0}^{n-1} y^{(5)}_{t_{k}} (\delta x_{t_{k}t_{k+1}})^{6} ;
\quad   a_{3} =  \frac12  \sum_{k=0}^{n-1}   r^{y}_{t_{k}t_{k+1}}    \delta x_{t_{k}t_{k+1}}.
\end{eqnarray}
We now consider these terms separately.

\noindent \textit{Step 2: Terms $a_{0}$ and $a_{3}$.}
The remainder $r^{y}$ has a H\"older regularity of order $6\nu\geq 1$. Therefore, it is readily checked that
$a_{3} \to 0$   \text{ almost surely for } $\nu\geq \frac16$.
In addition, we have the convergence  
$a_{0} \to \int_{0}^{1} y_{u} dx_{u}$   \text{ almost surely whenever } $\nu\geq \frac16$,
  ensured by the abstract rough paths theory  (see e.g. \cite{FH, G}). It is worth noticing   at this point that 
  the convergence of $a_{0}$  is obtained in a much easier way in a rough path context  than by means of integrations by parts as performed in e.g.  \cite{HN, HN2, NNT}.

\noindent \textit{Step 3: Decomposition of $a_{1}$ for $\nu>\frac16$.} 
Among the terms defining $a_{1}$ \eref{e4.65}, we  focus on the lower order term $a_{11}$ (which potentially brings most difficulties). 
Thus we expand $a_{11}$ by writing
$\xi^{3} = H_{3}(\xi)-3H_{1}(\xi)$, where we recall that  $H_{k}$ stands for the Hermite polynomial of order $k$. This yields
$
a_{11} = b_{1}+ {b}_{2}$,
where 
\begin{eqnarray}\label{e4.67i}
b_{1} =  \frac1{12n^{3\nu}}    \sum_{k=0}^{n-1}      y^{(2)}_{t_{k}}   H_{3}(n^{\nu}\delta  x_{t_{k}t_{k+1}}), \quad \text{and} \quad 
 {b}_{2} = \frac{1}{4n^{2\nu}} \sum_{k=0}^{n-1} y^{(2)}_{t_{k}} \delta x_{t_{k}t_{k+1}}. 
\end{eqnarray}
Moreover, thanks to Lemma \ref{prop6.1}, it is readily checked that 
$b_{1}\to 0 $ \text{in probability when } $\nu> \frac16$. 
We now focus on the term $ {b}_{2}$. Since $y^{(2)}$ is itself a controlled process of order $6$, a slight elaboration of \cite[Corollary 10.15]{FV} shows that 
\begin{eqnarray*}
\int_{t_{k}}^{t_{k+1}} y^{(2)}_{s}dx_{s}   - \sum_{i=0}^{3} \frac{1}{(i+1)!}  y^{(i+2)}_{t_{k}} (\delta x_{t_{k}t_{k+1}})^{i+1} 
&:=&  r^{y^{(2)}}_{t_{k}t_{k+1}},
\end{eqnarray*}
  where $    r^{y^{(2)}}_{t_{k}t_{k+1}}$ is a remainder of order $5\nu$: 
\begin{eqnarray}\label{e4.66}
|  r^{y^{(2)}}_{t_{k}t_{k+1}} |_{L_{2} } &\leq& K n^{-5\nu}. 
\end{eqnarray}
Summing this identity over $k$, we thus get
\begin{eqnarray}\label{e4.67}
 {b}_{2} &=& \frac{1}{4n^{2\nu}}   \int_{0}^{1} y^{(2)}_{s}dx_{s} -   \sum_{i=1}^{3} b_{3}^{(i)}
  - \frac{1}{4n^{2\nu}}\sum_{k=0}^{n-1}  r^{y^{(2)}}_{t_{k}t_{k+1}}   ,
\end{eqnarray}
where each $b_{3}^{(i)}$ is defined by 
\begin{eqnarray}\label{e4.70}
b_{3}^{(i)}=\frac{1}{4n^{2\nu}}
\sum_{k=0}^{n-1}   \frac{1}{(i+1)!} y^{(i+2)}_{t_{k}} (\delta x_{t_{k}t_{k+1}})^{i+1}.
\end{eqnarray}
In expression \eref{e4.67}, it is easily seen that, thanks to \eref{e4.66}, we have 
\begin{eqnarray*}
\lim_{n\to \infty} \frac{1}{n^{2\nu}}  \sum_{k=0}^{n-1}  r^{y^{(2)}}_{t_{k}t_{k+1}} =0 \quad \text{and} \quad \lim_{n\to \infty} \frac{1}{n^{2\nu}} \int_{0}^{1} y^{(2)}_{s}dx_{s} =0,
\end{eqnarray*}
where the limits stand for limits in probability. Owing to \eref{e6.6j} and \eref{e6.6jj}, the reader can also check that 
$
\lim_{n\to \infty} b_{3}^{(i)} = 0$
for $i=2,3$. 
In order to analyze the right-hand side of \eref{e4.67} 
we are thus left with the term $ {b}_{3}^{(1)}$ defined by \eref{e4.70}.

\noindent \textit{Step 4: Terms $ {b}_{3}^{(1)} $ and $a_{12}$.}
Comparing $b^{(1)}_{3}$  with the expression \eref{e4.65} for $a_{12}$, we see that  
\begin{eqnarray}\label{e4.71}
a_{12} -  {b}_{3}^{(1)} &=& \frac{1}{24 n^{4\nu}} \sum_{k=0}^{n-1} y^{(3)}_{t_{k}} f(n^{\nu}\delta x_{t_{k}t_{k+1}}), 
\end{eqnarray}
where the function $f$ is given by $f(\xi) = \xi^{4}-3\xi^{2}$. In addition, 
invoking elementary properties of Hermite polynomials, it is easily seen that $f$ has a Hermite rank of $d=2$. Hence, according to the values of $\nu$, we can either apply Theorem \ref{thm8.5} (for $\frac14<\nu<\frac12$), Theorem \ref{thm4.14} (i) (for $\nu=\frac14$) or Theorem \ref{thm4.14} (ii) (for $\frac16<\nu<\frac14$). As an example, when $\frac16<\nu<\frac14$, we get
\begin{eqnarray*}
| a_{12} -  {b}^{(1)}_{3} |_{L_{2}} &\leq & \frac{K}{n^{6\nu-1}},
\end{eqnarray*}
which obviously goes to $0$ as $n$ goes to $\infty$. In summary of the convergence of $a_{12}- {b}_{3}^{(1)}$ and the analysis in Step 3, we obtain the convergence $a_{1}\to 0$ in probability as $n\to \infty$. 

\noindent \textit{Step 5: Terms   $a_{2}$ and conclusion for $\nu>\frac16$.}
The convergence in the case $\nu>\frac16$ is now easily obtained. Indeed, due to Lemma \ref{cor6.5}, it is readily checked that $\lim_{n\to \infty} a_{2}=0$ in probability. Therefore, combining  the convergence of $a_{0}$, $a_{1}$, $a_{2}$, $a_{3}$ and taking into account \eref{e4.63} we  obtain the convergence $  \text{tr-}\cj_{0}^{1} (y ; x)  \to \int_{0}^{1} y^{(0)}_{u}dx_{u}$ in probability, which identifies the two sides of equation \eref{e6.8j}. 

\noindent \textit{Step 6: Case $\nu=\frac16$.}
The proof for the case $\nu=\frac16$ follows the same arguments as for $\nu>\frac16$. However, in the current situation more terms are contributing to the limit. Specifically, the terms $a_{2}$, $b_{1}$, $b_{3}^{(2)}$, $b_{3}^{(2)}$ and $a_{12} - b_{3}^{(1)}$, respectively defined by \eref{e4.66i}, \eref{e4.67i}, \eref{e4.70} and \eref{e4.71}, are now converging to non-zero limits. In order to handle the term $b_{1}$, 
 we apply   Proposition \ref{prop6.4} (i)  with $q=3$. This yields    the   convergence: 
\begin{eqnarray*}
(x, b_{1} )  \xrightarrow{f.d.d.} \Big(x, 
 \frac{1}{12} \si   \int_{0}^{1} y^{(2)}_{s}dW_{s}-   \frac{1}{96} \int_{0}^{1}y^{(5)}_{s}ds 
\Big),
\end{eqnarray*}
where $\si=3!\sum_{k\in \ZZ}\rho(k)^{3}$. 
On the other hand, 
applying \eref{e6.6j}, \eref{e6.6jj} respectively to the two terms of $a_{2}$ in \eref{e4.66i}, we obtain the convergence in probability:
$
\lim_{n\to \infty}a_{2} =  - \frac{5}{96}   \int_{0}^{1} y^{(5)}_{s}ds  $.  
Moreover, owing  respectively to  \eref{e6.6j},  \eref{e6.6jj}  and Theorem \ref{thm4.14} (ii) (with $d=2$ and $\nu=3$)  we obtain the convergence: 
\begin{eqnarray*}
 \lim_{n\to \infty} b_{3}^{(2)} = -\frac{1}{16}   \int_{0}^{1} y^{(5)}_{t}dt ,
 \quad\quad
  \lim_{n\to \infty} b_{3}^{(3)} = \frac{1}{32}   \int_{0}^{1} y^{(5)}_{t}dt ,
 \quad\quad
  \lim_{n\to \infty} (a_{12}-\tilde{b}_{1}) = \frac{1}{32}  \int_{0}^{1} y^{(5)}_{t}dt. 
\end{eqnarray*}
 Putting together those additional convergences, and noticing that the terms involving $y^{(5)}$   cancels,   we end up with relation \eref{e4.62}.  
The proof is now complete. 
\end{proof}

\section{Multi-dimensional Gaussian processes}\label{sectionGuansian}
Our method of analysis for limit theorems has potentially many applications in multidimensional settings. For sake of conciseness, we will restrict ourselves to an application concerning multidimensional quadratic variations. In this way we recover (in a more elementary way) a central limit theorem contained in \cite{NTU} and used in \cite{LT}. We are also able to generalize this central limit theorem to a wide class of Gaussian processes (Section \ref{section5.1}), and obtain a weighted version in Section \ref{section5.2}.  
 
\subsection{Preliminaries on Gaussian rough paths}  
\label{section5.1}

Throughout the section we 
assume that $X =(x^{ 1}, \dots, x^{ d})$ is a centered continuous Gaussian process with i.i.d. \!\!\!components. 
We shall write $X^{1}_{uv} $ for the increments $\delta X_{uv} = X_{v} - X_{u}$ of the process $X$. Then we define the covariance of the increments of $X$ as: 
 \begin{eqnarray*}
\mE(X^{1,i}_{uv}X^{1,i}_{st}) &=& R\Big( \begin{matrix}  u&v\\s&t   \end{matrix} \Big),
\end{eqnarray*}
 where $i$ stands for any of the components of $X$. 
 We now recall some basic facts about the constructions of a rough path lift above $X$, borrowed from \cite{FV}. 
 
 The basic assumption in order to be able to lift $X$ as a rough path is that $R$ admits a two-dimensional $\rho$-variation for $\rho\in [1,2)$. Denote $\nu=\frac{1}{2\rho}$. 
 For sake of simplicity, we will moreover assume that the $\rho$-variation of $R$ satisfies:
 \begin{eqnarray}\label{e5.1}
|R|_{\rho\text{-var}, [s,t]}&\leq& K(t-s)^{2\nu},
\end{eqnarray}
where $|R|_{\rho\text{-var}, [s,t] }$ stands for the $2$-dimensional $\rho$-variation of $R$ in the interval $[s,t]^{2}$:
\begin{eqnarray*}
|R|_{\rho \text{-var}, [s,t] } &=& \sup_{(t_{i}), (t_{j}') \in \cd([s,t])} \left( \sum_{i,j} 
\left|R\Big( \begin{matrix}  t_{i}&t_{i+1}\\t_{j}'&t_{j+1}'   \end{matrix} \Big) \right|^{\rho} \right)^{1/\rho},
\end{eqnarray*}
where $\cd([s,t])$ denotes the set of partitions on  the interval $[s,t]$.
As mentioned in \cite[Remark 2.4]{CHLT}, we can assume \eref{e5.1} holds true without loss of generality, up to a deterministic time change.  Then it is shown in \cite{FV} that there exists
   a unique continuous $G^{3}(\RR^{d})$-valued process $\mathbf{X} =(1,X^{1},X^{2},X^{3})$, where $G^{3}(\RR^{d})$ stands for the free nilpotent Lie group of order $3$,  such that:

\noindent (i) $\mathbf{X} $ ``lifts'' the Gaussian process $X$ in the sense $\pi_{1} (\mathbf{X} ) = X^{1}_{t}-X^{1}_{0}$;

\noindent (ii) There exists $C=C(\nu)$ such that for all $s<t$ in $[0,1]$ and $q \in [1,\infty)$,
\begin{eqnarray*}
|\mathbf{X} _{st}|_{L_{q}} &\leq & C \sqrt{q}|s-t|^{\nu},
\end{eqnarray*}
where $d$ stands for \emph{the Carnot-Caratheodory  distance on $G^{3}(\RR^{d})$}. 

\noindent (iii) For all $\ga<\nu$ there exists $\epsilon =\epsilon(p,\nu,C)>0$ such that
\begin{eqnarray*}
\mE \big( \exp \big(\epsilon |\mathbf{X} |_{\ga}^{2} \big) \big) &<& \infty, 
\end{eqnarray*}
 where $|\mathbf{X} |_{\ga}$ designates the $\ga$-H\"older semi-norm of $\mathbf{X} $.

\subsection{Unweighted   limit theorem}\label{section5.2}
With the construction of Section \ref{section5.1} in hand, let us consider the second level $X^{2}$ of the rough path above $X$ considered as a $\RR^{d\times d}$-valued increment. 
In this subsection, we are interested in the convergence of the following random sum:
\begin{eqnarray}\label{e7.1}
  \sum_{k=0}^{n-1} \Big( n^{2\nu}X^{2}_{t_{k}t_{k+1}} -\frac12 \id \Big) .
\end{eqnarray}
Here $\id$ stands for the identity matrix. 
It is clear that \eref{e7.1} is the generalization of the quadratic variation $  \sum_{k=0}^{n-1} (   |n^{\nu} \delta X_{t_{k}t_{k+1}}|^{2} - 1 ) $ to a multi-dimensional setting. Moreover, the quantity \eref{e7.1} is related to the analysis of numerical schemes for rough SDEs (see e.g. \cite{LT}). 

The following assumption will be used heavily in our future computations.  
\begin{hyp}\label{hyp7.1}
Consider a Gaussian process $X$ whose
 covariance $R$ satisfies \eref{e5.1}. Suppose that $X$ has stationary increments
 in the sense that the variance of its increments is given by 
 \begin{eqnarray*}
\mE(|X^{1}_{st}|^{2}) &=& F(|t-s|) \geq 0,
\end{eqnarray*}
with $F$ continuous, nonnegative and with $F(0)=0$. 
   In addition, the following properties hold true: 

 \noindent (i) Either $F'' \geq 0$ or $F'' \leq 0$, in distributional sense on $(0,T)$. In other word, either $F''$ or $-F''$ is a nonnegative Radon measure on $(0,T)$.

\noindent(ii) There exists a constant $\theta:2-2\nu\geq\theta> \frac12$   such that 
\begin{eqnarray*}
|F''| \leq C    /{ t^{\theta}} 
\end{eqnarray*}
holds true for $t$ large, in distributional sense on $(0,T)$ for some $C>0$. 
\end{hyp}
 \begin{remark}
 Condition (i) in Hypothesis \ref{hyp7.1} says that 
 the Gaussian process $X^{1}$ has either negative or positive correlation, that is, the covariance $R \Big( \begin{matrix} u&v\\s&t \end{matrix} \Big) $ has the same sign for all disjoint intervals $[u,v]$ and $[s,t]$.  
 Condition (ii) implies that the correlation of two disjoint increments $\mE(X^{1}_{s, s+h}X^{1}_{t,t+h})$ decays at a rate of $|t-s|^{-\theta}$, where
      $h,s, t$ are such that $t > s+h$. In terms of the covariance function, Condition (ii)  implies the relation
\begin{eqnarray}\label{e9.1}
   R  \Big( 
\begin{matrix} s&  s+h\\ t&t+h \end{matrix}
\Big) \leq  C   {u^{2}}/{ |t-s|^{\theta}}    
\end{eqnarray}
 for $|t-s|$ large. Examples of Gaussian processes satisfying Hypothesis \ref{hyp7.1} include (sums of)
 multi-dimensional fBms with Hurst parameters $\nu \in ( \frac14 , \frac34) $. The readers are referred to \cite{FGGR} for a discussion on the properties of this type of Gaussian processes. 
 \end{remark} 
 
 Let us now define some parameters  that will appear in the limit   of \eref{e7.1}. Namely,
 denote by $X^{2,kl}_{st}$ the $(k,l)$-th element  of the matrix $X^{2}$  
  for $k,l \in \NN$, and set:
 \begin{eqnarray}
\lambda^{n}_{kl}  = n^{4\nu}  \mE(X^{2,12}_{t_{k}t_{k+1}}X^{2,12}_{t_{l}t_{l+1}} )
 \,, 
\quad\quad
\rho^{n}_{kl}  =n^{4\nu} \mE(X^{2,12}_{t_{k}t_{k+1}}X^{2,21}_{t_{l}t_{l+1}} ).
\label{eq8.4}
\end{eqnarray}
 We will need the following hypothesis: 
\begin{hyp}\label{hyp5.3}
Let $\rho_{kl}$, $\lambda_{kl}$ be the sequences defined by  \eref{eq8.4}. We assume that the following limit holds true: 
\begin{eqnarray}\label{e8.6i}
 \lambda   = \lim_{n \to \infty} \frac{1}{n} \sum_{k,l=0}^{n-1} \lambda^{n}_{kl}\,, 
\quad\quad 
 \rho   = \lim_{n\to \infty} \frac{1}{n} \sum_{k,l=0}^{n-1} \rho^{n}_{kl},
 \end{eqnarray}
where $\rho $ and $\lambda $ are finite constant (This type of assumption also appears in \cite{A}, for instance). 
  \end{hyp}
  
 \begin{remark}
 It is readily checked that Hypothesis \ref{hyp5.3} is satisfied for a 2-dimensional enhanced fractional Brownian motion with Hurst parameter $\nu\in(\frac14,\frac12)$.
 \end{remark}

  With those preliminaries in hand, let us state the main result of this subsection.      
\begin{prop}\label{prop8.3}
 Let $X = (1, X^{1}, X^{2}, X^{3})$ be the enhanced Gaussian process above the $d$-dimensional Gaussian process $X^{1}$. Suppose that Hypothesis \ref{hyp7.1} and Hypothesis \ref{hyp5.3} holds.  Set 
 \begin{eqnarray}\label{e7.6}
h^{n}_{st} &=& \sum_{k=\lceil ns\rceil}^{\lfloor nt\rfloor-1} ( n^{2\nu}X^{2}_{t_{k}t_{k+1}} -\frac12 \id ) 
\end{eqnarray}
   for $t \geq \frac1n $ and $h^{n}_{t} =0$ for $t<\frac1n$. 
  Then the finite dimensional distributions of $(n^{ -\frac12} h^{n} , X )  $ converge  weakly to those of $(W, X)$, where $W= (W^{ij} )$ is an $m\times m$-dimensional Brownian  motion, independent of $X$, such that 
 \begin{eqnarray}\label{e8.6}
\mE[W^{ij}_{t}W^{i'j'}_{s}] &=&   (\lambda\delta_{ii'}\delta_{jj'} + \rho \delta_{ij'}\delta_{ji'} ) (t\wedge s). 
\end{eqnarray}
In formula \eref{e8.6}, we have set $\delta_{ij}=1$ if $i=j$ and $\delta_{ij}=0$ if $i\neq j$. Furthermore, the quantities    $\rho$ and  $\lambda$ are defined by relation \eref{e8.6i}.
 \end{prop}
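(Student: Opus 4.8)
\textbf{Proof plan for Proposition \ref{prop8.3}.}
The strategy is to reduce the convergence of the normalized sum $n^{-1/2}h^{n}$ to a classical multidimensional central limit theorem for functionals of the Gaussian vector $X^{1}$, and then to upgrade this to joint (stable) convergence with $X$. First I would express the increment $X^{2}_{t_{k}t_{k+1}}$ in terms of the underlying Gaussian increments. Since $X$ has i.i.d. components, the off-diagonal entries $X^{2,ij}_{t_{k}t_{k+1}}$ for $i\neq j$ are, after subtraction of their mean, elements of the second chaos generated by the pair $(x^{i},x^{j})$, while the symmetrized diagonal part $X^{2,ii}_{t_{k}t_{k+1}}-\tfrac12 F(1/n)$ equals $\tfrac12 H_{2}(n^{\nu}\delta x^{i}_{t_{k}t_{k+1}})$ up to scaling. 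Thus every component of $n^{2\nu}X^{2}_{t_{k}t_{k+1}}-\tfrac12\id$ is a fixed second-chaos functional of the stationary Gaussian family $\{n^{\nu}\delta x^{r}_{t_{k}t_{k+1}}\}$, and the vector-valued sum $h^{n}$ falls squarely within the Breuer--Major framework recalled in Theorem \ref{thm4.2} (in its multidimensional form, cf.\ \cite{A}).

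Next I would verify the hypotheses needed for the Breuer--Major CLT: Hypothesis \ref{hyp7.1}(ii) with $\theta>\tfrac12$ guarantees that the relevant correlation sequences are summable once raised to the power $2$ (the Hermite rank of $H_{2}$ and of the off-diagonal chaos functionals is exactly $2$), which is the analogue of Hypothesis \ref{hyp8.1i} in this setting. Hypothesis \ref{hyp7.1}(i) (definite sign of $F''$) is the standard device used to control the covariance structure and to ensure that the variance computations converge; it is what makes the limiting covariance well defined. The limiting variance-covariance matrix is then precisely the one encoded in Hypothesis \ref{hyp5.3}: the quantities $\lambda$ and $\rho$ in \eref{e8.6i} are the Ces\`aro limits of $\lambda^{n}_{kl}$ and $\rho^{n}_{kl}$, and a direct second-chaos computation (using the product formula \eref{e6.2} and the relation $\langle\be_{k},\be_{k'}\rangle_{\ch}=\rho(k-k')$-type identities) shows that $\tfrac1n\mE[h^{n,ij}_{t}h^{n,i'j'}_{s}]$ converges to $(\lambda\delta_{ii'}\delta_{jj'}+\rho\delta_{ij'}\delta_{ji'})(t\wedge s)$, which identifies the covariance \eref{e8.6} of the limiting Brownian motion $W$. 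The fact that cross-covariances between entries with disjoint index pairs vanish follows from the independence of the components $x^{r}$.

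Finally I would promote the CLT to the joint statement $(n^{-1/2}h^{n},X)\xrightarrow{f.d.d.}(W,X)$ with $W$ independent of $X$. Since $h^{n}$ lives in the second chaos while $X^{1}$ lives in the first chaos, the two are orthogonal, and one obtains the asymptotic independence and joint convergence by the fourth-moment / stable convergence theorem for chaotic sequences (a multidimensional version of Theorem 6.3.1 in \cite{NP}), exactly as in Step 1 of the proof of Theorem \ref{thm8.5}. The main obstacle I anticipate is the bookkeeping in the second-chaos covariance computation: one must carefully track the $d\times d$ tensor structure of $X^{2}$, separate the diagonal contributions (governed by $H_{2}$) from the off-diagonal ones, and show that the double sums $\tfrac1n\sum_{k,l}\lambda^{n}_{kl}$ and $\tfrac1n\sum_{k,l}\rho^{n}_{kl}$ actually converge under Hypothesis \ref{hyp7.1}, rather than merely being assumed to in Hypothesis \ref{hyp5.3}; the decay rate $\theta>\tfrac12$ together with the sign condition on $F''$ is exactly what is required here, and verifying this cleanly is the technical heart of the argument.
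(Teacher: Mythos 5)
There is a genuine gap in your reduction. You assert that every component of $n^{2\nu}X^{2}_{t_{k}t_{k+1}}-\tfrac12\id$ is ``a fixed second-chaos functional of the stationary Gaussian family $\{n^{\nu}\delta x^{r}_{t_{k}t_{k+1}}\}$'' and then invoke the multidimensional Breuer--Major theorem of Arcones. This is true only for the diagonal entries (where $n^{2\nu}X^{2,ii}_{t_{k}t_{k+1}}-\tfrac12=\tfrac12 H_{2}(n^{\nu}\delta x^{i}_{t_{k}t_{k+1}})$) and for the symmetric part $X^{2,ij}+X^{2,ji}=\delta x^{i}\,\delta x^{j}$. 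The individual off-diagonal entries are L\'evy-area--type iterated integrals $\int_{t_{k}<u<s<t_{k+1}}dx^{i}_{u}\,dx^{j}_{s}$; they depend on the whole path of $X$ on $[t_{k},t_{k+1}]$ and are \emph{not} functions of the increment vector $(\delta x^{1}_{t_{k}t_{k+1}},\dots,\delta x^{d}_{t_{k}t_{k+1}})$. Consequently the $\sum_{k}f(Y_{k})$ framework of \cite{A}/Theorem \ref{thm4.2} does not apply to them, and your appeal to it leaves the hardest part of the statement unproved. This is exactly why the paper does not cite a ready-made CLT: it represents $z^{n}_{t}=I_{2}(\phi^{n})$ with the simplex kernel \eref{eq8.10} (Example \ref{ex5.7}) and verifies the fourth-moment condition of Proposition \ref{prop8.4} by hand in Lemma \ref{lem8.2}, through a lengthy estimate of the contraction $n^{-2}\|\tilde\phi^{n}\otimes_{1}\tilde\phi^{n}\|^{2}_{\mathfrak{H}^{\otimes2}}$ over index configurations $M_{0},\dots,M_{4}$.

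Relatedly, your description of the role of Hypothesis \ref{hyp7.1}(i) as ``the standard device used to control the covariance structure'' misses where it actually enters: the sign-definiteness of $F''$ is used inside the contraction estimate to approximate the simplex indicators by sums of rectangle indicators (relations \eref{e8.20}--\eref{e8.4}) and to sum terms of a common sign, so that inner products of simplex kernels can be dominated by products of interval covariances and then bounded via \eref{e9.1} with the decay $\theta>\tfrac12$. Without an argument of this kind the off-diagonal fourth-moment condition is not established. Finally, you place ``the technical heart'' in proving convergence of $\tfrac1n\sum_{k,l}\lambda^{n}_{kl}$ and $\tfrac1n\sum_{k,l}\rho^{n}_{kl}$; the paper simply assumes this in Hypothesis \ref{hyp5.3} and uses it through Lemma \ref{lem8.1} (plus the polarization identity \eref{e8.33} for $r_{l}\neq r_{l'}$), so that effort is misallocated while the genuinely hard step (the L\'evy-area fourth-moment bound) is skipped.
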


We will state and prove several intermediate results, and then prove Proposition   \ref{prop8.3} 
at the end of the subsection. The first of these lemmas concerns  covariances of $X^{2}$, for which we introduce some additional notation. Namely, we consider the specific   case when $d=2$, and analyze the weak convergence of the  two processes $z^{n}$ and $\tilde{z}^{n}$ defined  by: 
 \begin{eqnarray}\label{e8.1} 
 z^{n}_{t}  =
  n^{2\nu}   \sum_{k=0}^{ \lfloor  {nt}{ } \rfloor  }   
   X^{2,12}_{t_{k}t_{k+1}},
\quad \text{and} \quad 
\tilde{z}^{n}_{t}  =
   n^{2\nu}  \sum_{k=0}^{ \lfloor  {nt}{ } \rfloor  }   
  X^{2,21}_{t_{k}t_{k+1}}  
   \,.
\end{eqnarray} 
  We first prove a lemma on the moment convergence of   $z^{n}$, $\tilde{z}^{n}$. 
  
 \begin{lemma}\label{lem8.1}
 Let $X^{1,1} $ and $X^{1,2}$ be two independent real-valued (incremental) Gaussian processes. We set the $2$-dimensional process
  $X^{1} =(X^{1,1},X^{1,2})$ and   consider the rough path $\mathbf{X}$ above  $X^{1}$, as in Proposition \ref{prop8.3}.  Let $z^{n}$ and $\tilde{z}^{n}$ be defined in \eref{e8.1}.  Suppose that Hypothesis~\ref{hyp7.1} and Hypothesis \ref{hyp5.3} hold for $X^{1}$.  Then the following limits hold true:
 \begin{eqnarray}\label{eq8.6}
\lim_{n\to\infty} n^{ -1} \mE[|z^{n}_{t}|^{2 }]  
 =     \lambda t  
\quad\quad \text{and} \quad\quad 
\lim_{n\to\infty}n^{ -1} \mE[z^{n}_{t} \tilde{z}^{n}_{t} ]  
 =   \rho t,
\end{eqnarray}
where $\lambda$ and $\rho$ are defined in \eref{e8.6i}. 
 \end{lemma}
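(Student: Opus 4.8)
The plan is to compute the two limits directly by expanding the squares of the Riemann sums and identifying the sum with the defining quantities $\lambda^n_{kl}$ and $\rho^n_{kl}$ from \eref{eq8.4}, then passing to the limit via Hypothesis \ref{hyp5.3}. First I would note that, by the very definition \eref{e8.1} of $z^n$, for a fixed $t$ we have
\begin{eqnarray*}
n^{-1}\mE[|z^n_t|^2] = n^{-1}\sum_{k,l=0}^{\lfloor nt\rfloor} n^{4\nu}\mE\big(X^{2,12}_{t_kt_{k+1}}X^{2,12}_{t_lt_{l+1}}\big) = n^{-1}\sum_{k,l=0}^{\lfloor nt\rfloor}\lambda^n_{kl},
\end{eqnarray*}
and similarly $n^{-1}\mE[z^n_t\tilde z^n_t] = n^{-1}\sum_{k,l=0}^{\lfloor nt\rfloor}\rho^n_{kl}$. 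So the statement reduces to showing that the partial sums $n^{-1}\sum_{k,l=0}^{\lfloor nt\rfloor}\lambda^n_{kl}$ converge to $\lambda t$ (and the analogous statement for $\rho$), given only that the full sums $n^{-1}\sum_{k,l=0}^{n-1}\lambda^n_{kl}$ converge to $\lambda$.

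The second step is therefore a self-similarity/scaling argument together with a uniform summability bound. Because $X^1$ has stationary increments and $R$ satisfies the $\rho$-variation bound \eref{e5.1} with $\nu = \frac{1}{2\rho}$, the quantities $\lambda^n_{kl}$ and $\rho^n_{kl}$ depend on $k,l$ only through $k-l$ in the appropriate scaled sense: more precisely, $n^{4\nu}\mE(X^{2,12}_{t_kt_{k+1}}X^{2,12}_{t_lt_{l+1}})$ can be written as a function $\mu(k-l)$ (up to negligible corrections coming from the time-change in \eref{e5.1}), where $\sum_{j\in\ZZ}|\mu(j)|<\infty$ — this absolute summability is exactly what makes the Cesàro-type limits in Hypothesis \ref{hyp5.3} meaningful and finite. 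Granting this, $n^{-1}\sum_{k,l=0}^{N}\mu(k-l) \to \lambda$ whenever $N/n\to 1$, and the same Cesàro argument applied to the truncated range $0\le k,l\le \lfloor nt\rfloor$ (which has $N = \lfloor nt\rfloor$, so $N/n\to t$) gives the factor $t$. I would make this rigorous by writing $n^{-1}\sum_{k,l=0}^{\lfloor nt\rfloor}\mu(k-l) = \frac{\lfloor nt\rfloor+1}{n}\cdot\frac{1}{\lfloor nt\rfloor+1}\sum_{k,l=0}^{\lfloor nt\rfloor}\mu(k-l)$ and recognizing the second factor as the same Cesàro average appearing in \eref{e8.6i} but with $n$ replaced by $\lfloor nt\rfloor+1$, hence converging to $\lambda$.

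The estimates needed for the covariance structure of $X^2$ are precisely where Hypothesis \ref{hyp7.1} enters: the sign condition (i) on $F''$ controls the sign of the off-diagonal covariances $R\left(\begin{matrix}s&s+h\\t&t+h\end{matrix}\right)$, and the decay bound (ii), via \eref{e9.1}, forces $|\mu(j)|\lesssim |j|^{-\theta}$ with $\theta>\frac12$; since the second-level iterated integral $X^{2,12}$ is a double integral against independent components, its covariance involves a product of two such correlations, yielding a bound of order $|j|^{-2\theta}$ with $2\theta>1$, which is summable. I expect this estimate on $\mE(X^{2,12}_{t_kt_{k+1}}X^{2,12}_{t_lt_{l+1}})$ — expanding the Lévy-area-type term as a (second-order) multiple Wiener integral and bounding its $L^2$-pairing using \eref{e9.1} — to be the main obstacle, since it requires careful bookkeeping of the four-point covariance of $X^1$ and the interplay between the ``near-diagonal'' terms (contributing the constant $\lambda$, $\rho$) and the ``far'' terms (which must be shown to be uniformly summable and hence harmless in the Cesàro limit). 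Once that summability bound is in hand, the passage to the limit is the elementary Cesàro argument sketched above, and I would conclude by assembling the two displays in \eref{eq8.6}.
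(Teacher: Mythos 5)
Your proposal is correct and follows essentially the same route as the paper: the proof there consists precisely of the identity $n^{-1}\mE[|z^n_t|^2]=n^{-1}\sum_{k,l=0}^{\lfloor nt\rfloor}\lambda^n_{kl}$, the factorization $t\cdot\frac{\lfloor nt\rfloor}{nt}\cdot\frac{1}{\lfloor nt\rfloor}\sum_{k,l=0}^{\lfloor nt\rfloor}\lambda^n_{kl}$, and an appeal to Hypothesis \ref{hyp5.3} (and the same for $\rho$). The only difference is that the paper invokes \eref{e8.6i} for the truncated sum without further comment, whereas you supply the stationarity/uniform-summability justification for that Ces\`aro step; the covariance-decay estimates from Hypothesis \ref{hyp7.1} that you anticipate as the main obstacle are in fact not needed for this lemma (they enter only in the contraction estimate of Lemma \ref{lem8.2}).
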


 \begin{proof}First,  by the definition of $z^{n}$ and $\lambda^{n}_{kl}$ it is readily checked  
   that:
\begin{eqnarray}\label{eq8.5i}
\mE
(| {z}^{n}_{t} |^2 )
&=&    \sum_{k,l=0}^{ \lfloor  {nt}{ } \rfloor  }  \lambda^{n}_{kl} 
.
\end{eqnarray}
Therefore, we can write
\begin{eqnarray*}
\frac{1}{n} \mE(|z^{n}_{t}|^{2}) &=&t \frac{\lfloor  {nt}{ } \rfloor }{nt} \frac{1}{\lfloor  {nt}{ } \rfloor } \sum_{k,l=0}^{ \lfloor  {nt}{ } \rfloor  }  \lambda^{n}_{kl} .
\end{eqnarray*}
Sending $n\to \infty$ we obtain the first point  in \eref{eq8.6} thanks to relation \eref{e8.6i}.  
  In the same way, we can show the convergence of $
 n^{ -1} \mE[ {z}^{n}_{t}  \tilde{z}^{n}_{t}  ]   
 $. 
The proof is complete.
 \end{proof}

   In order to get our central limit theorem for the process $z^{n}$, we will apply a corollary of  the fourth moment theorem. This relies on Malliavin calculus tools, for which we first introduce some basic notations. 
   
  \begin{notation}\label{notation8.4}
We define  the Hilbert space $\ch$   as the completion of indicator functions with respect to the inner product $\langle \mathbf{1}_{[s,t]}, \mathbf{1}_{[u,v]} \rangle_{\ch} = \mE(X^{1,1}_{st}X^{1,1}_{uv})$, where $s,t,u,v \in [0,1]$. 
Denote by    $\mathfrak{H}  := \{ h: h(\cdot, i) \in \ch,\,\, i=1,2  \}$   the Hilbert space defined by the following inner product:    
\begin{eqnarray}\label{eq8.12}
\langle h, \bar{h} \rangle_{\mathfrak{H}} = \langle h(\cdot, 1), \bar{h}(\cdot, 1) \rangle_{\ch} + \langle h(\cdot, 2), \bar{h}(\cdot, 2) \rangle_{\ch} .
\end{eqnarray}
 Then, it is readily checked that the Gaussian family $\{ W(h)  = \int h(\cdot, 1) \delta X^{1,1} +\int h(\cdot, 2) \delta  X^{1,2}: h   \in \mathfrak{H}\}$  is an isonormal Gaussian process, where we recall that $(X^{1,1}, X^{1,2})$ is our couple of independent Gaussian process and where $\int f \delta X^{1,1}$ stands for the Wiener integral.  
The random variable $W(h)$ is   called the (first-order) Wiener integral of $h$ with respect to $(X^{1,1}, X^{1,2})$ and is also denoted by $I_{1}(h)  $. 

The operator $I_{1}$ can   be generalized to $\mathfrak{H}^{\otimes k}$. Indeed, for $h= \sum_{j=1}^{n} f_{j} \otimes g_{j}$, where $f_{j} \in \mathfrak{H}$ and $g_{j} \in \mathfrak{H}^{\otimes (k-1)}  $, we set
$I_{1}  (h) = \sum_{j=1}^{n} I_{1} (f_{j})   g_{j} $. Since vectors in the form of $h$ are dense in $\mathfrak{H}^{\otimes k}$, we see that $I_{1}$ can be extended to a bounded operator from $ \mathfrak{H}^{\otimes k}$   into $L^{2}(\Omega , \mathfrak{H}^{\otimes (k-1)})$.  The reader is referred to Page 35 in \cite{NP} for details on this construction. 
 
 Denote by $I_{k}$ the $k$th iteration of the integration operator $I_{1}$, namely, $I_{k} = I_{1}\circ\cdots \circ I_{1}$. 
  For $h \in \mathfrak{H}^{\otimes q}$, $I_{q}(h)$ is called the $q$th-order Wiener integral of $h$. 
   \end{notation}
   
   \begin{example}\label{ex5.7}
   Since $X^{1,1}$ and $X^{1,2}$ are independent, for $t\in [0,1]$ the random variable $z^{n}_{t}$ can be represented as a 2nd-order Wiener integral. Indeed, define   $\phi^{n}  \in \mathfrak{H}^{\otimes 2}$ as follows:
   \begin{eqnarray}\label{eq8.10}
\begin{cases}
\phi^{n}  ( (u,2), (s , 1) )  =&  n^{2\nu} \sum\limits_{k=0}^{ \lfloor  {nt}{ } \rfloor  } \mathbf{1}_{t_{k} \leq u \leq s \leq t_{k+1}}
\\
\phi^{n}  ( (u,i), (s , j) )  =&0 \quad \text{for} \quad   (i,j) \neq (2,1)
\end{cases}.
\end{eqnarray}
We also 
      denote 
   by   $\tilde{\phi}^{n}$ the symmetrization of $\phi^{n}$, that is, 
   \begin{eqnarray}\label{eq8.11}
\tilde{\phi}^{n} ( (u,i), (s , j) ) &=& \frac12 \left( \phi^{n} ( (u,i), (s , j) ) + \phi^{n} ( (s , j), (u,i)  ) \right).
\end{eqnarray}
    Then it is easily checked (see e.g. \cite{FU}  and Page 23 in \cite{N}) that 
   \begin{eqnarray}\label{eq8.14}
 {z}^{n}_{t} = I_{2}(\phi^{n} ) = I_{2}(\tilde{\phi}^{n}).
\end{eqnarray}
   \end{example}
%      The assumption \eref{e9.1} also implies that $X^{2,12} $ as a  Skorohod integral is well-defined.

%Mention that under the assumption \eref{e9.1} the conditions in Beurier-Major theorem holds.  

Now that we have expressed $ {z}^{n}_{t}$ as a multiple Wiener integral, we can use the $4$th moment theorem in order to study its limiting law. We thus    recall  the following result borrowed from Theorem 5.2.7 in \cite{NP}:
   \begin{prop}\label{prop8.4}
Fix $q\ge 1$.
Let $\{z^{n} = I_{q}(f_{n})=I_{q}(\tilde{f}_{n}); \,n\geq 1\}$  be a sequence of centered random variables belonging to the $q $th  chaos of $X^{1}=(X^{1,1}, X^{1,2})$, where $\tilde{f}_{n}$ denotes the symmetrization of $f_{n} $ in $ \mathfrak{H}^{\otimes q} $. Assume that 
\begin{equation*} 
\lim_{n\to\infty}\mE[|z^{n}|^2]=1.
\end{equation*}
Then $z^{n} $ converges in distribution to a centered Gaussian random variable if and only if  the following condition is met:
\begin{equation*}  
\lim_{n\to\infty} \| \tilde{f}_{n} \otimes_{r} \tilde{f}_{n}  \|_{\mathfrak{H}^{\otimes (2q - 2r)}} = 0, \quad \text{ for all } r=1,\dots, q-1.
\end{equation*}
The reader is referred to \cite[Appendix B.4]{NP} for the definition of the contraction $ \tilde{f}_{n} \otimes_{r} \tilde{f}_{n} $. 
\end{prop}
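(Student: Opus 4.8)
The plan is to prove the two implications of the equivalence separately, using the Malliavin calculus apparatus attached to the isonormal process $\{W(h):h\in\mathfrak{H}\}$ of Notation~\ref{notation8.4} together with the multiplication formula \eqref{e6.2}. Throughout I write $D$ for the Malliavin derivative and $L$ for the Ornstein--Uhlenbeck generator, and I normalise so that $f_{n}=\tilde f_{n}$ is symmetric, with $\mE[(z^{n})^{2}]=q!\,\|\tilde f_{n}\|^{2}_{\mathfrak{H}^{\otimes q}}\to 1$. The single object driving both directions is the random variable $\tfrac1q\|Dz^{n}\|^{2}_{\mathfrak{H}}$, whose expectation equals $\mE[(z^{n})^{2}]$ and whose deviation from a constant quantifies the distance to Gaussianity.

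For the direction \emph{contractions vanish $\Rightarrow$ limit is Gaussian}, I would invoke the Malliavin--Stein inequality (see \cite{NP}): since $z^{n}=I_{q}(\tilde f_{n})$ lies in a fixed chaos one has $-DL^{-1}z^{n}=\tfrac1q Dz^{n}$, whence
\[
d_{\mathrm{TV}}\bigl(z^{n},N(0,1)\bigr)\;\leq\;2\sqrt{\mathrm{Var}\!\left(\tfrac1q\|Dz^{n}\|^{2}_{\mathfrak{H}}\right)}.
\]
Next I would compute this variance explicitly. Because $Dz^{n}=q\,I_{q-1}(\tilde f_{n}(\cdot))$, applying \eqref{e6.2} to the product $I_{q-1}\cdot I_{q-1}$ and integrating out the free $\mathfrak{H}$-variable turns $\tfrac1q\|Dz^{n}\|^{2}$ into a finite sum of multiple integrals $I_{2q-2s}(\tilde f_{n}\otimes_{s}\tilde f_{n})$, $s=1,\dots,q$, the top term $s=q$ reproducing the mean. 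Since distinct chaoses are orthogonal, the variance collapses to a finite nonnegative combination of $\|\widetilde{\tilde f_{n}\otimes_{s}\tilde f_{n}}\|^{2}\leq\|\tilde f_{n}\otimes_{s}\tilde f_{n}\|^{2}$ over $s=1,\dots,q-1$. Thus the contraction hypothesis drives the variance to zero, so $z^{n}\to N(0,1)$ in total variation, a fortiori in distribution.

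For the converse \emph{Gaussian limit $\Rightarrow$ contractions vanish}, I would argue in two moves. First, hypercontractivity on Wiener chaos (equivalence of all $L^{p}$-norms within the fixed $q$-th chaos, see \cite{NP}) upgrades convergence in law together with $\sup_{n}\mE[(z^{n})^{2}]<\infty$ into convergence of all moments, and in particular $\mE[(z^{n})^{4}]\to\mE[N(0,1)^{4}]=3$; equivalently the fourth cumulant $\kappa_{4}(z^{n})\to 0$. Second, expanding $\mE[I_{q}(\tilde f_{n})^{4}]$ once more through \eqref{e6.2} yields the Nualart--Peccati identity
\[
\kappa_{4}(z^{n})=\sum_{r=1}^{q-1}c_{q,r}\left(\|\tilde f_{n}\otimes_{r}\tilde f_{n}\|^{2}+d_{q,r}\,\|\tilde f_{n}\,\widetilde{\otimes}_{r}\,\tilde f_{n}\|^{2}\right),
\]
with strictly positive constants $c_{q,r},d_{q,r}$. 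Every summand being nonnegative, $\kappa_{4}(z^{n})\to 0$ forces $\|\tilde f_{n}\otimes_{r}\tilde f_{n}\|\to 0$ for each $r=1,\dots,q-1$, which is the claim; the standard relations $\|f\otimes_{r}f\|=\|f\otimes_{q-r}f\|$ and $\|f\,\widetilde{\otimes}_{r}\,f\|\leq\|f\otimes_{r}f\|$ make the two presentations of the contraction condition interchangeable.

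The hard part will be the converse direction, and specifically the derivation of the nonnegative cumulant identity: extracting $\kappa_{4}$ from the product formula produces a large number of cross terms, and the genuinely delicate point---the original content of the fourth moment theorem---is the bookkeeping that recombines them into a sum of squared contraction norms with manifestly positive coefficients, so that no cancellation can hide a non-vanishing contraction behind a vanishing fourth cumulant. The auxiliary hypercontractivity step, though standard, is also essential: it is what licenses passing from convergence in distribution to convergence of the fourth moment, a step that would fail for sequences not confined to a single fixed chaos.
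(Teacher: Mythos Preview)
Your sketch is correct and follows the standard Nualart--Peccati/Nourdin--Peccati argument: the Malliavin--Stein bound together with the chaos expansion of $\tfrac{1}{q}\|Dz^{n}\|_{\mathfrak H}^{2}$ for the forward implication, and hypercontractivity plus the nonnegative fourth-cumulant identity for the converse. There is no genuine gap.

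However, note that the paper does \emph{not} prove this proposition at all. It is stated explicitly as ``the following result borrowed from Theorem~5.2.7 in~\cite{NP}'', i.e.\ quoted from the Nourdin--Peccati monograph as a black box to be applied in the subsequent Lemma~\ref{lem8.2}. So there is nothing to compare your argument against within the paper itself; what you have written is essentially a condensed outline of the proof one finds in~\cite{NP}, which is exactly the source the authors defer to.
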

  In view of Proposition  \ref{prop8.4},  Lemma \ref{lem8.1} and Example \ref{ex5.7}, we are reduced to the analysis of  the contraction $\| \tilde{\phi}   \otimes_{1} \tilde{\phi}    \|_{\mathcal{H}^{\otimes 2}}$ in order to get our central limit theorem for $z^{n}$, where $\tilde{\phi}^{n}$ is defined by \eref{eq8.11}. This is what is done in the next lemma.

   \begin{lemma}\label{lem8.2}
   Let the assumptions of Lemma \ref{lem8.1} prevail, and consider $ {z}^{n}_{t}= I_{2}(\tilde{\phi}^{n})$ defined by \eref{eq8.14}.  Then we have the convergence
   \begin{eqnarray}\label{e3.11}
\lim_{n\rightarrow \infty} n^{ -2}  \| \tilde{\phi}^{n}   \otimes_{1} \tilde{\phi}^{n}    \|_{\mathfrak{H}^{\otimes 2}}^{2} =0 .
\end{eqnarray} 
   \end{lemma}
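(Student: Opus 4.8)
The strategy is to make the symmetrized kernel $\tilde\phi^n$ explicit, compute the contraction $\tilde\phi^n\otimes_1\tilde\phi^n$ as an element of $\mathfrak{H}^{\otimes 2}$, and then bound its $\mathfrak{H}^{\otimes 2}$-norm by a sum over four indices $k,l,k',l'$ of products of inner products of the form $\langle\mathbf 1_{[t_k,t_{k+1}]},\mathbf 1_{[t_\bullet,t_{\bullet+1}]}\rangle_{\ch}$. First I would use the block structure of $\phi^n$ in \eqref{eq8.10} — it only has a nonzero $(2,1)$-component — so that $\tilde\phi^n$ has a $(2,1)$- and a $(1,2)$-component of equal size; contracting in the first slot then ties together a factor of the $X^{1,1}$-covariance with a factor of the $X^{1,2}$-covariance. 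Writing $r(k,l):=\langle\mathbf 1_{[t_k,t_{k+1}]},\mathbf 1_{[t_l,t_{l+1}]}\rangle_{\ch}=\mathbb E(X^{1,1}_{t_kt_{k+1}}X^{1,1}_{t_lt_{l+1}})$, one obtains a bound of the schematic form
\begin{eqnarray*}
n^{-2}\|\tilde\phi^n\otimes_1\tilde\phi^n\|_{\mathfrak{H}^{\otimes2}}^2
&\leq& K\, n^{8\nu-2}\sum_{k,l,k',l'} |r(k,k')|\,|r(l,l')|\,|r(k,l)|\,|r(k',l')|,
\end{eqnarray*}
after accounting for the normalization $n^{2\nu}$ appearing twice in $\phi^n$ and the four increments produced by the double contraction (the exact exponent must be tracked carefully, but it will come out matching the scaling of $z^n$, whose variance is of order $n$).

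Next I would convert this index sum into a manageable expression using the stationarity and decay assumptions of Hypothesis \ref{hyp7.1}. By \eqref{e9.1}, $|r(k,l)|\leq K n^{-2\nu}\wedge K n^{-2\nu}\cdot(|k-l|/1)^{-\theta}$ after rescaling, i.e. $|r(k,l)|\leq K n^{-2\nu}g(|k-l|)$ where $g(j)=1\wedge j^{-\theta}$ and $\theta>\tfrac12$; in particular $\sum_{j\in\ZZ}g(j)^2<\infty$ since $2\theta>1$. Substituting and freeing one index by translation invariance, the quadruple sum is controlled by $n$ times a convolution-type sum $\sum_{a,b,c}g(a)g(b)g(c)g(a+b-c)$ or similar; a Young/Cauchy–Schwarz argument using $g\in\ell^2$ bounds this by a constant, and one checks the power of $n$ collapses to $n^{-2}\cdot n^{-1}\cdot(\text{bounded})\to 0$. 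The cleanest route is probably to apply Cauchy–Schwarz in the contracted pair to reduce \eqref{e3.11} to showing $n^{-2}\|\tilde\phi^n\|_{\mathfrak{H}^{\otimes 2}}\cdot\|\tilde\phi^n\otimes_1\tilde\phi^n\|$ is small via an interpolation between the second and fourth moments, but the direct index-sum estimate is more transparent.

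I would organize the write-up as: (1) recall $\tilde\phi^n$ and compute $\tilde\phi^n\otimes_1\tilde\phi^n$ explicitly, identifying which component contractions survive given the block form; (2) express $\|\tilde\phi^n\otimes_1\tilde\phi^n\|_{\mathfrak H^{\otimes 2}}^2$ as the quadruple index sum above; (3) insert the bound $|r(k,l)|\leq Kn^{-2\nu}g(|k-l|)$ coming from Hypothesis \ref{hyp7.1}(ii) and \eqref{e9.1}, together with the crude bound $|r(k,l)|\le Kn^{-2\nu}$ for the diagonal-ish terms; (4) sum geometrically/by Young's inequality using $\theta>\tfrac12$, and conclude that the whole expression is $O(n^{-1})$, hence tends to $0$.

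The main obstacle I anticipate is step (2)–(3): correctly bookkeeping the rescaling factors ($n^{2\nu}$ per copy of $\phi^n$, the passage from continuous indicators on $[0,1]$ to lattice covariances on $\{0,\dots,n-1\}$ via \eqref{e4.13}-type identities) so that the powers of $n$ genuinely cancel, and handling the "near-diagonal" contributions where $|k-l|$ is small and the decay bound $g(|k-l|)$ is replaced by the constant bound — these must be shown not to accumulate, which is exactly where $2\theta>1$ (equivalently $\theta>\tfrac12$) is used. The algebraic identity for the contraction itself is routine once the block structure is exploited, so the real work is the combinatorial/analytic estimate of the resulting sum and the verification that the exponent of $n$ is negative.
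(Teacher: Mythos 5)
Your overall strategy (expand the contraction into a quadruple index sum, insert the covariance decay from Hypothesis \ref{hyp7.1}(ii)/\eref{e9.1}, and sum using $\theta>\tfrac12$) is the same as the paper's, but there is a genuine gap at the very step you flag as "routine": the passage from $\|\tilde\phi^{n}\otimes_{1}\tilde\phi^{n}\|_{\mathfrak{H}^{\otimes 2}}^{2}$ to a sum of products $|r(k,k')|\,|r(l,l')|\,|r(k,l)|\,|r(k',l')|$ of \emph{full-increment} covariances. The kernel $\phi^{n}$ in \eref{eq8.10} is built from simplex indicators $\mathbf{1}_{t_{k}\le u\le s\le t_{k+1}}$, not product indicators $\mathbf{1}_{[t_{k},t_{k+1}]}\otimes\mathbf{1}_{[t_{k},t_{k+1}]}$, and the pairing $\langle\cdot,\cdot\rangle_{\ch}$ is the Gaussian covariance pairing, not an $L^{2}$ pairing. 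Consequently the contraction $\phi_{k_{1}}^{n}\otimes_{1}\phi_{k_{4}}^{n}$ is a genuine two-variable function built from covariances of variable-length subintervals, and the quantity $c(k_{1},k_{2},k_{3},k_{4})$ does not factor as a product of $r$'s. The paper's proof spends Steps 3--5 precisely on this point: it approximates each simplex by a union of thin rectangles ($\phi_{k}^{n,\ell}$), and then uses Hypothesis \ref{hyp7.1}(i) (one-signedness of $F''$, i.e.\ all correlations of disjoint increments have a fixed sign) to move the absolute value outside the sum over the rectangle decomposition, so that the small-rectangle terms resum to the covariance of the full increments and one lands on the bound \eref{eq8.26}. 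Without invoking that sign hypothesis, the triangle inequality over the rectangle decomposition gives $\sum_{i,i'}|\langle\mathbf{1}_{I_{i}},\mathbf{1}_{J_{i'}}\rangle_{\ch}|$, which in general is not bounded by $|\langle\mathbf{1}_{\cup I_{i}},\mathbf{1}_{\cup J_{i'}}\rangle_{\ch}|$, and your schematic bound is not established. Your proposal never mentions Hypothesis \ref{hyp7.1}(i), so this essential ingredient is missing.

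Two smaller points. First, your claim that the convolution sum $\sum_{a,b,c}g(a)g(b)g(c)g(a+b-c)$ with $g(j)=1\wedge j^{-\theta}$ is bounded by a constant is false for $\theta\in(\tfrac12,1)$, since $g\notin\ell^{1}$; restricted to $|a|,|b|,|c|\le n$ it grows like $n^{2(1-\theta)}$, so the final bound is $O(n^{1-2\theta})$ rather than $O(n^{-1})$ --- still vanishing because $\theta>\tfrac12$, consistent with the paper's estimate for $e(M_{0})$, but the bookkeeping should be corrected. Second, the alternative "Cauchy--Schwarz/interpolation between second and fourth moments" route you sketch is circular as stated and should be dropped; the direct estimate, once repaired as above and supplemented by a crude count of the $O(n)$ near-diagonal configurations (as in the paper's treatment of $M_{3}\cup M_{4}$), is the way to go.
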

  \begin{proof} 
 We  will divide the proof in several steps. We   denote $e = \| \tilde{\phi}^{n}   \otimes_{1} \tilde{\phi}^{n}    \|_{\mathfrak{H}^{\otimes 2}}^{2} $.    

  \noindent
{\it Step 1: An expression for   $ e $.}
Owing to   relation~\eref{eq8.12} for the inner product in $\mathfrak{H}$, we have  
    \begin{eqnarray}\label{e8.10}
\tilde{\phi}^{n}   \otimes_{1} \tilde{\phi}^{n}  &=& \varphi_{1}^{n}  + \varphi_{2}^{n} ,
\end{eqnarray}
where
\begin{eqnarray*}
 \varphi_{1}^{n}((c,2),(d, 2)) =
     \tilde{\phi}^{n} ((c,2), (a,1)) \otimes_{1}  \tilde{\phi}^{n} ((d,2), (a,1))   
\end{eqnarray*}
and
    \begin{eqnarray*}
  \varphi_{2}^{n}((c,1),(d,1))
  =     \tilde{\phi}^{n} ((c,1), (a,2)) \otimes_{1}   \tilde{\phi} ((d,1), (a,2))   
.
\end{eqnarray*}
Here the letter  $a$  designates the pairing for our inner product  in $\ch$. 
Moreover, owing to  the definition \eref{eq8.10} of $\phi^{n}$ and \eref{eq8.11} of $\tilde{\phi}^{n}$ one can check that:
\begin{eqnarray}\label{e8.14}
 \varphi_{1}^{n}( (c,2),(d, 2)) 
=\frac14        {\phi}^{n} ((c,2), (a,1)) \otimes_{1}    {\phi}^{n} ((d, 2), (a,1))    
\end{eqnarray}
\begin{eqnarray*} 
  \varphi_{2}((c,1),(d,1) ) = 
   \frac14    {\phi}^{n} ((a,2), (c,1)) \otimes_{1}    {\phi}^{n} ((a,2), (d,1))   .
\end{eqnarray*}
Taking the operation $ \|    \cdot  \|_{\mathcal{H}^{\otimes 2}}^{2}  $ on both sides of \eref{e8.10} and taking  into account the expressions of $\varphi_{1}$ and $\varphi_{2}$ we obtain
\begin{eqnarray}\label{e8.11i}
e &=& \frac{1}{16} \| \varphi_{1}^{n} ( (c, 2), (d, 2) ) \|^{2}_{\ch^{\otimes 2}} + \frac{1}{16} \| \varphi_{2}^{n} ( (c, 1), (d, 1) ) \|^{2}_{\ch^{\otimes 2}}
\nonumber
\\
&=& 
\frac{1}{8} \| \varphi_{1}^{n} ( (c, 2), (d, 2) ) \|^{2}_{\ch^{\otimes 2}}.
\end{eqnarray}
Here the letters  $c$, $d$ designate the pairing for our inner products in   $\ch^{\otimes2}$. 

    We now decompose the term $\varphi_{1}^{n}$ in \eref{e8.11i}. To this aim, denote
     $\phi_{k}^{n}(u,s) = n^{2\nu} \mathbf{1}_{  t_{k}\leq u \leq s \leq t_{k+1} }   $. Then by the definition \eref{eq8.10} of $\phi^{n}$ we have
    \begin{eqnarray*}%\label{eq8.11}
\phi^{n}  ( (u,2), (s , 1) ) =     \sum_{k=0}^{ \lfloor  {nt}{ } \rfloor  } \phi_{k}^{n}(u,s).
\end{eqnarray*}
Plugging this formula into the expression \eref{e8.14} of   $\varphi_{1}^{n}$ we obtain:
   \begin{eqnarray}\label{e8.13}
\varphi_{1}^{n} ( (c,2),(d, 2) ) =   
\frac14   \sum_{k , k'=0}^{ \lfloor  {nt}{ }  \rfloor  }
         \phi_{k }^{n}(c, a) \otimes_{1}    \phi_{k' }^{n}(d, a)   ,
\end{eqnarray}
where we recall that $a$ is the letter used for the pairing in $\ch$. 
Next we compute the $\ch^{\otimes 2}$-norm of $\varphi_{1}^{n}$ thanks to relation \eref{e8.13}. Taking into account formula~\eref{e8.11i}, this yields:
   \begin{eqnarray} \label{e8.18}
 e &=&   \frac{1}{128}       \sum_{ ( k_{1},k_{2},k_{3},k_{4}) \in M } c(k_{1},k_{2},k_{3},k_{4})
,
   \end{eqnarray}
   where we denote
   \begin{eqnarray}\label{e8.19}
c(k_{1},k_{2},k_{3},k_{4}) &=& \left\langle
   \phi_{k_{1}}^{n} ( d, a ) \otimes_{1}  \phi_{k_{4}}^{n} (c,a)     , 
     \phi_{k_{3}}^{n} (d,b) \otimes_{1}  \phi_{k_{2}}^{n} (c,b)     
 \right\rangle_{\mathcal{H}^{\otimes 2}} ,
\end{eqnarray}
and where $M $ is the set of indices $M  = \{ 0,1,\dots,    \left\lfloor  {nt}{ } \right\rfloor \}^{4}$.
   
  \noindent
{\it Step 2: Decomposition of $e $.}
We will now split the summation in \eref{e8.18}  according to convenient subsets of $M $. We thus introduce an additional notation, valid for all subsets   $M' \subset M  $:
\begin{eqnarray} \label{e8.8i}
 e (M')&=&   \frac{1}{128}    \sum_{(k_{1},k_{2},k_{3},k_{4}) \in M'} c(k_{1},k_{2},k_{3},k_{4})  .
   \end{eqnarray}
   Next for $i=0,\dots, 4$ we define the following subsets of indices: 
   \begin{eqnarray*}
M_{i} = \left\{ (k_{1},k_{2},k_{3},k_{4}  ) \in M : \text{exactly }  i  \text{ of the pairs }   ( {j},  {j'}) \in \cp \text{ satisfy } |k_{j} - k_{j'}|\leq 2  \right\},
\end{eqnarray*}
where we denote
$
\cp = \{ (1,3), (1,4), (2,3), (2,4) \}.
$
Then we can decompose $e$ as:
\begin{eqnarray}\label{e8.21}
e = \sum_{i=0}^{4} e (M_{i}). 
\end{eqnarray}
 So to prove \eref{e3.11}, we are now reduced  to show that $n^{ -2} e  (M_{i}) $ tends to $ 0$ for $i=0,\dots,4$.

 \noindent
{\it Step 3: Computations for $e(M_{1})$.}
Let us approximate the   functions $n^{2\nu}\mathbf{1}_{t_{k}\leq u<s\leq t_{k+1}}$ in the definition of $\phi_{k}^{n}$ by sums of indicators of rectangles. Namely, for $k \leq \left\lfloor {nt}{ } \right\rfloor$  we set
\begin{eqnarray}
\phi^{n,\ell}_{k} (u,s) &=& n^{2\nu}\sum_{i=0}^{\ell-1} \mathbf{1}_{[ t_{k}   , t_{k}+ \frac{i }{n\ell}]} (u) 
\times   \mathbf{1}_{[t_{k}+\frac{i}{n\ell}, t_{k}+\frac{i+1}{n\ell}]} (s)     
\label{e8.20}
\\
 &=& n^{2\nu}\sum_{i=0}^{\ell-1}  \mathbf{1}_{[t_{k}+\frac{i}{n\ell}, t_{k}+\frac{i+1}{n\ell}]} (u) \times   \mathbf{1}_{[   t_{k}+ \frac{i+1 }{n\ell}, t_{k+1}]} (s) .
 \label{e8.8}
\end{eqnarray}
Note that we have $\phi_{k}^{n} \in \mathcal{H}^{\otimes 2}$ and the following approximation result holds true: 
\begin{eqnarray}\label{e8.4}
\lim_{\ell\to \infty} \| \phi_{k}^{n}-\phi^{n,\ell}_{k} \|_{\mathcal{H}^{\otimes 2}} = 0. 
\end{eqnarray}

We now compute $e(M_{11})$ for a given subset $M_{11} \subset M_{1}$. Namely, 
 denote 
 \begin{eqnarray}\label{e5.27}
I_{ij} = \{ (k_{1}, k_{2}, k_{3},k_{4}) \in M : |k_{i}-k_{j}|>2\}
\end{eqnarray}
  and set $M_{11} = I_{13} \cap I_{14} \cap  I_{23} \cap I_{24}^{c}$. It is clear that $M_{11}  \subset M_{1}$. 
 Now consider $( k_{1} , k_{2}, k_{3},k_{4}) \in M_{11}$. By \eref{e8.4}, the expression \eref{e8.20} of $\phi^{n,\ell}_{k}$ and \eref{e8.19} we have
\begin{eqnarray*}
 \left|
 c(k_{1},k_{2},k_{3},k_{4}) 
 \right|
  &=&  \lim_{\ell\rightarrow \infty} \lim_{\ell'\rightarrow \infty}    
 \left|
 \left\langle
  \phi_{k_{1}}^{n} ( d, a ) \otimes_{1}
  \phi^{n,\ell}_{k_{4}} (c,a)
,
 \phi_{k_{3}}^{n} (d,b) \otimes_{1} \phi^{n,\ell'}_{k_{2}} (c,b) 
     \right\rangle_{\mathcal{H}^{\otimes 2}} 
 \right|
 \\
 &\leq & n^{2\nu}  \lim_{\ell\rightarrow \infty}  \lim_{\ell'\rightarrow \infty} \sum_{i =1}^{\ell-1}\sum_{i' =1}^{\ell'-1}
| \tilde{c}(i,i')  |  
\left|
\left\langle
 \mathbf{1}_{[  t_{ k_{4}} , t_{k_{4}}+\frac{i'}{n\ell}]} (c) , \mathbf{1}_{[ t_{k_{2}} , t_{k_{2}}+  \frac{i }{n\ell'}]} (c)   \right\rangle_{\mathcal{H}}
\right|
 \,,
\end{eqnarray*}
where 
\begin{eqnarray*}
\tilde{c}(i,i') &=&   
n^{2\nu}  \left\langle
     \phi_{k_{1}}^{n} ( d, a ) \otimes_{1}  \mathbf{1}_{[t_{k_{4}}+\frac{i'}{n\ell}, t_{k_{4}}+\frac{i'+1}{n\ell}]} (a)  ,
   \mathbf{1}_{[t_{k_{2}}+ \frac{i}{n\ell'}, t_{k_{2}}+  \frac{i+1}{n\ell'}]} (b)     \otimes_{1} 
    \phi_{k_{3}}^{n} (d,b)      
     \right\rangle_{\mathcal{H} } 
  .
\end{eqnarray*}
 Notice that, thanks to Cauchy-Schwarz inequality, for all $i,i' \leq l-1$ we have 
 \begin{eqnarray}\label{eq8.24}
\left|
\left\langle
n^{2\nu}
 \mathbf{1}_{[   t_{k_{4} }, t_{k_{4}}+\frac{i'}{n\ell}]}   , \mathbf{1}_{[ t_{k_{2}} , t_{k_{2}}+  \frac{i }{n\ell'}]}     \right\rangle_{\mathcal{H}}
\right| \leq 1.
\end{eqnarray}
  We thus get
\begin{eqnarray*}
 \left|
 c(k_{1},k_{2},k_{3},k_{4}) 
 \right| &\leq&    \lim_{\ell\rightarrow \infty}   \lim_{\ell'\rightarrow \infty} \sum_{i=1}^{\ell-1}\sum_{i'=1}^{\ell'-1}
\left|
 \tilde{c}(i,i') 
 \right|.
\end{eqnarray*}

In order to evaluate $\tilde{c} (i,i')$, observe that, thanks to    Hypothesis \ref{hyp7.1} (i) and the fact that $(k_{1},k_{2},k_{3},k_{4}) \in M_{11}$,  the quantities   $  \tilde{c}(i,i')   $ have the same sign for all $i,i'=1,\dots, \ell-1$. Denoting
$\psi_{k} = \mathbf{1}_{[k,k+1]}$
and $\phi_{k} = \mathbf{1}_{k\leq u\leq s\leq k+1}$,
  we thus get
\begin{eqnarray}  
\left|
 c(k_{1},k_{2},k_{3},k_{4}) 
 \right|
 &\leq&
    \lim_{\ell\rightarrow \infty}   \lim_{\ell'\rightarrow \infty} \left|\sum_{i=1}^{\ell-1}\sum_{i'=1}^{\ell'-1}
 \tilde{c}(i,i') 
 \right|
 \nonumber
  \\
  &= &      
\left|
  \left\langle
   \phi_{k_{1}} (d  ,  a )\otimes_{1} \psi_{k_{4}} (a) 
    ,
   \psi_{k_{2}} (b)     \otimes_{1} 
     \phi_{k_{3}}  (d  , b)      
      \right\rangle_{\mathcal{H} } 
 \right| 
 \nonumber
 \\
  &\leq &      
\left|
 \left\langle
  \left\langle 
   \psi_{k_{1}} (d    )   \psi_{k_{1}} (a), \psi_{k_{4}} (a) 
    \right\rangle_{\mathcal{H} },
 \left\langle 
  \psi_{k_{2}} (b)     , 
     \psi_{k_{3}}  (d  )    \psi_{k_{3}}(b)   
      \right\rangle_{\mathcal{H} }
 \right\rangle_{\mathcal{H} } 
 \right| 
 \label{e8.24}
 \\
 &=&       
\left| 
  \left\langle
   \psi_{k_{4}}    ,
   \psi_{k_{1}}      
 \right\rangle_{\mathcal{H} } \right|
  \cdot 
   \left|   \left\langle
    \psi_{k_{3}}       ,   \psi_{k_{1}}     
     \right\rangle_{\mathcal{H} } \right| \cdot
 \left| 
 \left\langle 
  \psi_{k_{2}}        , 
    \psi_{k_{3}}     
      \right\rangle_{\mathcal{H} }
  \right| .
  \nonumber
  \end{eqnarray}
In \eref{e8.24}, notice that we can replace the simplex indicator $\phi_{k}(u,s)   $ by $\psi_{k}^{n}  \otimes \psi_{k}^{n} (s,u) =  \mathbf{1}_{[ {k}, {k+1}]^{2}} (s,u)$ due to the fact that  each of the three  pairs $ (k_{1},k_{4}) $, $(k_{2}, k_{3})$, and $(k_{1}, k_{3})$ are disjoint and also Hypothesis \ref{hyp7.1} (i).  
Furthermore, applying \eref{e9.1}    to relation \eref{e8.24} with $u=1$ we obtain
 \begin{eqnarray}\label{eq8.26}
\left|
 c(k_{1},k_{2},k_{3},k_{4}) 
 \right| &\leq & K   |k_{1} -k_{4}|^{-\theta}  |k_{1} -k_{3}|^{-\theta}  |k_{2} -k_{3}|^{-\theta},
\end{eqnarray}
where $\frac12<\theta\leq 2-2\nu$. 
Applying this estimate to \eref{e8.8i} with $M'=M_{11}$ we obtain 
\begin{eqnarray*}
e (M_{11}) &\leq&  K    \sum_{( k_{1} , k_{2}, k_{3},k_{4}) \in M_{11}}  |k_{1} -k_{4}|^{-\theta}  |k_{1} -k_{3}|^{-\theta}  |k_{2} -k_{3}|^{-\theta}.
\end{eqnarray*}
It is now easy to show from this estimate that
\begin{eqnarray}\label{e8.11}
n^{ -2} e  (M_{11})    \leq Kn^{-\theta} \rightarrow 0 \quad \text{as} \quad n\rightarrow \infty,
\end{eqnarray}
which is our desired estimate for $e(M_{11})$. 

In order to conclude for the term $e(M_{1})$, 
set 
\begin{eqnarray*}
M_{12}=  I_{13} \cap I_{14} \cap  I_{23}^{c} \cap I_{24}   , \quad M_{13} 
=    I_{13} \cap I_{14}^{c} \cap  I_{23} \cap I_{24} 
, \quad M_{14} =  I_{13}^{c} \cap I_{14} \cap  I_{23} \cap I_{24} .
\end{eqnarray*}
 Similarly to what we have done above,   we can show that the convergence \eref{e8.11} still  holds when $M_{11}$ is replaced by $M_{1i}$, for $i=2,3,4$.   Noticing that $M_{1} = \bigcup_{i=1}^{4} M_{1i}$, we conclude that 
\begin{eqnarray*}
\lim_{n\to \infty} n^{ -2} e  (M_{1})  &=&0.
\end{eqnarray*}

\noindent
{\it Step 4: Computations for $e(M_{2})$-Part 1.} 
As in the case of $M_{1}$, we will decompose $e(M_{2}) $ in several terms and analyze them individually. To start with, set
  $M_{21} = I_{13}^{c} \cap I_{14} \cap  I_{23} \cap I_{24}^{c}$, where we recall that $I_{ij}$ is defined by \eref{e5.27}. Along the same lines as  the proof of Step 3,  for $( k_{1} , k_{2}, k_{3},k_{4}) \in M_{21}$ we can show that
\begin{eqnarray}
 \left|
 c(k_{1},k_{2},k_{3},k_{4}) 
 \right|
 \leq K |k_{2} - k_{3}|^{-\theta} |k_{1} - k_{4}|^{-\theta}
  \leq  K |k_{2} - k_{3}|^{-2\theta},
 \label{e8.26}
\end{eqnarray}
where the last relation stems from the fact that $|k_{1}-k_{3}|\leq 2$ and $|k_{2}-k_{4}|\leq 2$.
Let us   highlight the following difference between the term $e(M_{21})$ and $e(M_{11})$: in order to handle $e(M_{21})$, since now both $|k_{1} - k_{3}|$ and $|k_{2} - k_{4}|$ are smaller than $3$, we need  to apply the approximation \eref{e8.20} for each of  $\phi_{k_{1}}^{n} $, $\phi_{k_{2}}^{n} $, $\phi_{k_{3}}^{n} $ and $\phi_{k_{4}}^{n}$.  
Then applying relation \eref{e8.26} to~\eref{e8.8i} with $M=M_{21}$  and invoking the fact that $\# M_{21} = O(n^{2})$ and $\sum_{j\geq 1} |j|^{-2\theta}<\infty$,  we obtain 
\begin{eqnarray*}
n^{ -2} e (M_{21}) \leq K n^{-1}  \rightarrow 0 \quad \text{as} \quad n \rightarrow \infty.
\end{eqnarray*}
In a similar way we can show that this  convergence still holds for $M_{22}:=I_{13} \cap I_{14}^{c} \cap  I_{23}^{c} \cap I_{24} $.

\noindent
{\it Step 5: Computations for $e(M_{2})$-Part 2.} We now deal with a slightly different kind of term involved in $e(M_{2})$. Namely,  set $M_{23} =  I_{13}^{c} \cap I_{14}  \cap  I_{23}^{c} \cap I_{24}$ and  take  $( k_{1} , k_{2}, k_{3},k_{4}) \in M_{23}$. Owing to relation \eref{e8.4} we have
\begin{eqnarray*}
 \left|
 c(k_{1},k_{2},k_{3},k_{4}) 
 \right|
 &=&  \lim_{\ell\rightarrow \infty} \lim_{\ell'\rightarrow \infty}    
\left|
 \left\langle
  \phi^{n,\ell}_{k_{1}} ( d, a )\otimes_{1}
  \phi_{k_{4}}^{n} (c,a)
 ,
  \phi^{n,\ell'}_{k_{2}} (c,b) \otimes_{1}    \phi_{k_{3}}^{n} (d,b)    
 \right\rangle_{\mathcal{H}^{\otimes 2}} 
 \right|.
 \end{eqnarray*}
 We now use expression \eref{e8.20} for $\phi_{k_{1}}^{n,\ell}$ and expression \eref{e8.8} for $\phi_{k_{2}}^{n,\ell'}$. This yields:
 \begin{equation}\label{e8.27}
\left|
 c(k_{1},k_{2},k_{3},k_{4}) 
 \right|  \leq  n^{2\nu} \lim_{\ell\rightarrow \infty}  \lim_{\ell'\rightarrow \infty}    
 \sum_{i=0}^{\ell-1}  \sum_{i' =0}^{\ell'-1}  
   |
 \hat{c}(i,i')
  |
      \cdot 
\left|
  \left\langle 
     \mathbf{1}_{[ t_{k_{1}}   , t_{k_{1}}+ \frac{i '}{n\ell}]} (d)  \mathbf{1}_{[   t_{k_{2}}+ \frac{i+1 }{n\ell'}, t_{k_{2}+1}]} (b)  ,  
       \phi_{k_{3}}^{n} (d,b) 
        \right\rangle_{\mathcal{H}^{\otimes 2}}
        \right|
 ,
\end{equation}
where
\begin{eqnarray*}
\hat{c}(i,i') &=& n^{2\nu} \Big\langle
 \Big\langle 
 \mathbf{1}_{[t_{k_{1}}+\frac{i'}{n\ell}, t_{k_{1}}+\frac{i'+1}{n\ell}]} (a)     
     ,  
        \phi_{k_{4}}^{n} (c,a)  \Big\rangle_{\mathcal{H} }, 
         \mathbf{1}_{[t_{k_{2}}+\frac{i}{n\ell'}, t_{k_{2}} +\frac{i+1}{n\ell'}]} (c)
\Big\rangle_{\mathcal{H} } .
\end{eqnarray*}
We now observe two facts: 

(i) Since $|k_{1} - k_{4}| >2$ and $|k_{2} - k_{4}|>2$, and resorting to Hypothesis \ref{hyp7.1} (i), we have  $\hat{c}(i,i') \geq 0$ for all $i=1,\dots, \ell-1$, $i'=1,\dots, \ell'-1$.

(ii) Similarly to \eref{eq8.24}, we can apply 
  Cauchy-Schwarz inequality in order to get
  \begin{eqnarray*}
\Big|
  \Big\langle 
   n^{2\nu}  \mathbf{1}_{[ t_{k_{1}}   , t_{k_{1}}+ \frac{i '}{n\ell}]} (d)  \mathbf{1}_{[   t_{k_{2}}+ \frac{i+1 }{n\ell}, t_{k_{2}+1}]} (b)  ,  
       \phi_{k_{3}}^{n} (d,b) 
        \Big\rangle_{\mathcal{H}^{\otimes 2}}
        \Big| &\leq& 1.
\end{eqnarray*}

Plugging this information into \eref{e8.27} we obtain  
\begin{eqnarray*}
  \left|
 c(k_{1},k_{2},k_{3},k_{4}) 
 \right|
      \leq  \lim_{\ell\rightarrow \infty} \lim_{\ell'\rightarrow \infty}    
  \Big| \sum_{i =0}^{\ell-1}\sum_{i' =0}^{\ell'-1} 
 \hat{c}(i,i')
  \Big|
 =     \Big|
 \Big\langle
 \left\langle 
   \phi_{k_{1}} (a) 
     ,  
        \phi_{k_{4}} (c,a)  \right\rangle_{\mathcal{H} },
  \phi_{k_{2}} (c)     
 \Big\rangle_{\mathcal{H} } 
 \Big|.
\end{eqnarray*}
We can now proceed
by enlarging the simplex $\{ t_{k}\leq u\leq s \leq t_{k+1} \}$ to a rectangle $[t_{k},t_{k+1}]^{2}$ as in   \eref{e8.24},  and using the bound \eref{e9.1} as in  \eref{eq8.26}. We end up with: 
 \begin{eqnarray*}
  \left|
 c(k_{1},k_{2},k_{3},k_{4}) 
 \right| & \leq& K |k_{2} - k_{4}|^{-\theta}   |k_{1} - k_{4}|^{-\theta} .
\end{eqnarray*}
It is now easy to show by this estimate,   expression \eref{e8.8i}, and the fact that $|k_{i} - k_{j}| \leq 4 $ for $i,j \in \{1,2,3\}$ that 
\begin{eqnarray*}
n^{ -2} e  (M_{23}) \leq Kn^{-1} \rightarrow 0  \quad \text{as} \quad n \rightarrow \infty.
\end{eqnarray*}

We can easily extend the considerations above in order to get a similar convergence for 
$e(M_{2i})$, $i=4,5,6$, where
 \begin{eqnarray*}
M_{24} = I_{13}^{c} \cap I_{14}^{c} \cap  I_{23} \cap I_{24} 
, \quad M_{25} =   I_{13} \cap I_{14}^{c}  \cap  I_{23} \cap I_{24}^{c} , \quad M_{26} =  I_{13}  \cap I_{14}  \cap  I_{23}^{c} \cap I_{24}^{c} .
\end{eqnarray*}

In summary of Step 4 and 5 and noticing that   $M_{2} = \bigcup_{i=1}^{6} M_{2i}$,     we obtain the convergence:
 \begin{eqnarray*}
\lim_{n\to\infty} n^{ -2} e  (  M_{2 })= \sum_{i=1}^{6} \lim_{n\to\infty} n^{ -2} e  ( M_{2i}) =0.
\end{eqnarray*}

\noindent
{\it Step 6: Computations for $e(M_{0})$.} 
Take now $( k_{1} , k_{2}, k_{3},k_{4}) \in  M_{0}$. Then as before, by assumption \eref{e9.1} we obtain
\begin{eqnarray*}
\left|
 c(k_{1},k_{2},k_{3},k_{4}) 
 \right| &\leq & K     |k_{1} -k_{4}|^{-\theta}  |k_{1} -k_{3}|^{-\theta}  |k_{2} -k_{3}|^{-\theta} |k_{2} -k_{4}|^{-\theta}.
\end{eqnarray*}
It is   easy to show from this estimate and expression \eref{e8.8i} that
\begin{eqnarray*}
n^{ -2} e  (M_{0})    \leq Kn^{1-2\theta} \rightarrow 0 \quad \text{as} \quad n\rightarrow \infty.
\end{eqnarray*}

\noindent
{\it Step 7: Computations for $e(M_{3}\cup M_{4})$.} Finally, we consider the case when $( k_{1} , k_{2}, k_{3},k_{4}) \in  M_{3} \cup M_{4}$.  In order to get an estimate for $e(M_{3} \cup M_{4})$, we first note that 
$
\# ( M_{3} \cup M_{4})    \leq 19n$. 
On the other hand, a simple application of Cauchy-Schwarz inequality yields the relation 
$
|  c(k_{1}, k_{2}, k_{3}, k_{4}) |  \leq 1$ 
  for all $(k_{1}, k_{2}, k_{3}, k_{4}) \in M_{3} \cup M_{4}$.
Therefore, we obtain 
\begin{eqnarray*}
n^{ -2} e  (M_{3}\cup M_{4}) \leq Kn^{-1} \rightarrow 0  \quad \text{as} \quad n \rightarrow \infty.
\end{eqnarray*}

Gathering the estimates we have obtained in Steps 3 to 7 and recalling the decomposition~\eref{e8.21}, the proof of our claim \eref{e3.11} is now complete. 
 \end{proof}

\

   \noindent \textit{Proof of Proposition \ref{prop8.3}.}\quad 
   According to the fourth moment method applied to the second chaos $\mathfrak{H}^{\otimes 2}$ introduced in Notation \ref{notation8.4}, we are reduced to show the following facts:
   
   (i) For any $L\geq 1$, the covariance matrix of 
   \begin{eqnarray*}
(n^{ -\frac12} ( h^{n}_{0r_{1}}, \dots, h^{n}_{0r_{L}}), X^{1}_{r_{1}}, \dots, X^{1}_{r_{L}} )
\end{eqnarray*}
 converges to that of 
    \begin{eqnarray*}
( (W_{r_{1}},\dots, W_{r_{L}}), X^{1}_{r_{1}}, \dots, X^{1}_{r_{L}}).
\end{eqnarray*}

   (ii) The following weak convergence holds true for all $i,j=1,\dots, m$, $l=1,\dots, L$:
   \begin{eqnarray*}
 n^{ -\frac12} h^{n,ij}_{0 r_{l}} \Rightarrow  W^{ij}_{r_{l}}.
   \end{eqnarray*}

\noindent Note that we have recalled the fourth moment method for $1$-d sequences of random variables in Proposition \ref{prop8.4}.
We refer to \cite{NP} for more details about the fourth moment method for random vectors in a fixed chaos, and we now focus on the proof of item (i) and (ii). 
   
  The weak convergence (ii) of $h^{n,ij}_{0r_{l}}$ for $i\neq j$ follows immediately from Lemma \ref{lem8.2} and Proposition~\ref{prop8.4}. In the case when $i=j$,   (ii) follows from the classical results in \cite{BM}, see also Section 7.4 in \cite{NP}. In the following, we show the convergence of the covariance $\mE(h^{n,ij}_{0r_{l}} h^{n,i'j'}_{0r_{l'}})$. 

We start by studying $\mE ( h^{n,ij}_{0r_{l}}h^{n,i'j'}_{0r_{l'}} )$ when $r_{l}=r_{l'}$. In this case, whenever
 $(i,j) = (i',j')$ or $(i,j) = (j',i')$, the convergence of   $\mE(h^{n,ij}_{0r_{l}} h^{n,i'j'}_{0r_{l}})$ follows from Lemma \ref{lem8.1}. In the case $(i,j) \neq (i',j')$ and $(i,j) \neq (j',i')$,   the covariance $\mE(h^{n,ij}_{0r_{l}} h^{n,i'j'}_{0r_{l}}) $ is simply equal to $0$. 
 
 Let us now assume that   $r_{l} > r_{l'}$.  Since $\mE(h^{n,ij}_{0r_{l}} h^{n,i'j'}_{0r_{l'}})  =
 \frac12 (\mE(h^{n,ij}_{0r_{l}} h^{n,i'j'}_{0r_{l'}}) +\mE(h^{n,ij}_{0r_{l'}} h^{n,i'j'}_{0r_{l}}) )$,   we   can reduce this case to the previous study by invoking the following identity:
\begin{eqnarray}\label{e8.33}
 \mE(h^{n,ij}_{0r_{l}} h^{n,i'j'}_{0r_{l'}})  =
 \frac12 \left(  \mE[ h^{n,ij}_{0r_{l}}h^{n,i'j'}_{0r_{l}}] +  \mE[  h^{n,ij}_{0r_{l'}}h^{n,i'j'}_{0r_{l'}} ] -  \mE[\delta h^{n,ij}_{r_{l'}r_{l}}  \delta h^{n,i'j'}_{r_{l'}r_{l}}   ]  \right).
\end{eqnarray}
Then thanks to  Lemma \ref{lem8.1}, the first two terms on the right-hand side of \eref{e8.33} converge  to $   (\lambda\delta_{ii'}\delta_{jj'} + \rho \delta_{ij'}\delta_{ji'} ) r_{l}$ and $   (\lambda\delta_{ii'}\delta_{jj'} + \rho \delta_{ij'}\delta_{ji'} ) r_{l'}$. 
In order to treat the term $\mE[\delta h^{n,ij}_{r_{l'}r_{l}}  \delta h^{n,i'j'}_{r_{l'}r_{l}}   ] $, 
note that 
$  \delta h^{n,ij}_{r_{l'}r_{l}}  \delta h^{n,i'j'}_{r_{l'}r_{l}}     $ is equal to $h^{n,ij}_{\lfloor r_{l} \rfloor - \lfloor r_{l'} \rfloor  }h^{n,i'j'}_{\lfloor r_{l} \rfloor - \lfloor r_{l'} \rfloor  }$ in distribution, where recall that $ \lfloor r_{l} \rfloor $ and $\lfloor r_{l'} \rfloor $ denote respectively the integer part of $r_{l}$ and $r_{l'}$. So by Lemma \ref{lem8.1} the third term converges to $ (\lambda\delta_{ii'}\delta_{jj'} + \rho \delta_{ij'}\delta_{ji'} )(r_{l} - r_{l'})$. 
Summarizing our last considerations, we easily get:
 \begin{eqnarray*}
\lim_{n\to \infty}\frac{1}{n} \mE(h^{ij}_{r_{l}} h^{i'j'}_{r_{l'}}) = (\lambda\delta_{ii'}\delta_{jj'} + \rho \delta_{ij'}\delta_{ji'} ) r_{l'} .
\end{eqnarray*}
  The proof is complete. 
\hfill $\square$

\subsection{Weighted limit theorem}
Let $X$ be the enhanced Gaussian process defined as in Section \ref{section5.1}. With the preparation in the previous subsection, we now
  consider the convergence of the discrete integral $n^{-\frac12}\cj_{0}^{1}(y ;  h^{n})$ with
$h^{n}$ defined in \eref{e7.6}, where $(y,y',\cdots,y^{(\ell-1)})$ is a discrete process controlled by $(X,\al)$.

Let us recall some basic facts about the range of our parameters. First, the covariance $R$ satisfies \eref{e5.1}, and we consider a parameter $\nu=\frac{1}{2\rho}$. Since we assume that $\rho = [1,2)$, we also have $\nu\in (\frac14, \frac12]$. Then the coefficient $\al$ is dictated by the regularity type estimate \eref{eq8.6}, namely $\al=\frac12$. Eventually the order $\ell$ of the controlled process $y$ is such that $\nu\ell+\frac12>1$, which yields $\ell=2$ in our setting.   

We start by giving some uniform bounds on $\cj(X^{1}; h^{n})$. 
\begin{lemma}
The following relations holds true:
\begin{eqnarray}\label{e5.34}
n^{-\frac12} \Big|   \cj_{s}^{t} (X^{1}; h^{n} ) \Big|_{L_{2}} \leq  K (t-s)^{\nu+\frac12},
\quad
\quad
n^{-\frac12} \Big| \sum_{j=0}^{m-1} \cj_{s_{j}}^{s_{j+1}} (X^{1}; h^{n} ) \Big|_{L_{2}}\leq  K m^{-\nu}. 
\end{eqnarray}

\end{lemma}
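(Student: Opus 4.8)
\textbf{Proof plan for the Lemma on uniform bounds for $\cj(X^{1};h^{n})$.}

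The plan is to reduce both estimates to second-moment computations for $X^{2}$, exactly in the spirit of Lemma~\ref{lem8.1} and the fourth-moment analysis of Section~\ref{section5.2}. First I would spell out what $\cj_{s}^{t}(X^{1};h^{n})$ is: by definition \eref{e2.12},
\begin{eqnarray*}
\cj_{s}^{t}(X^{1};h^{n}) = \sum_{s\leq t_{k}<t} \delta X^{1}_{\ep(s)t_{k}}\otimes h^{n}_{t_{k}t_{k+1}},
\end{eqnarray*}
where $h^{n}_{t_{k}t_{k+1}} = n^{2\nu}X^{2}_{t_{k}t_{k+1}}-\tfrac12\id$. Using the Chen-type relation \eref{e2.3} for $X^{2}$ one can rewrite $\delta X^{1}_{\ep(s)t_{k}}\otimes X^{2}_{t_{k}t_{k+1}}$ in terms of increments of the enhanced process, so that, up to telescoping, $\cj_{s}^{t}(X^{1};h^{n})$ is controlled by increments of $X^{3}$ together with the ``constant part'' coming from $-\tfrac12\id$. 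The key point is that all the relevant quantities sit in a fixed (third) chaos of the Gaussian process $X$, so $L_{2}$-norms are equivalent to $L_{p}$-norms and everything is reduced to covariance estimates.

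\textbf{First estimate.} For $n^{-1/2}|\cj_{s}^{t}(X^{1};h^{n})|_{L_{2}}\leq K(t-s)^{\nu+1/2}$, I would compute $\mE|\cj_{s}^{t}(X^{1};h^{n})|^{2}$ directly by expanding the double sum over $k,k'\in\{k:s\leq t_{k}<t\}$. Each term factors, via Wick's formula in the fixed chaos, into products of the covariance $R$ evaluated on the relevant blocks. One uses (a) the regularity estimate \eref{e5.1}, i.e.\ $|R|_{\rho\text{-var},[s,t]}\leq K(t-s)^{2\nu}$, to bound the ``diagonal'' and near-diagonal contributions of order $n(t-s)$ blocks each of size $n^{-1}$, giving a factor $n^{2\nu}\cdot n^{-2\nu}=1$ per block after the $n^{2\nu}$ renormalization; and (b) Hypothesis~\ref{hyp7.1}(ii), i.e.\ the decay $|R(\begin{smallmatrix}s&s+h\\t&t+h\end{smallmatrix})|\leq C h^{2}/|t-s|^{\theta}$ with $\theta>\tfrac12$, to show the off-diagonal sum converges. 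Counting: there are of order $(n(t-s))^{2}$ pairs, the $X^{1}$-increment $\delta X^{1}_{\ep(s)t_{k}}$ contributes a factor of order $(t-s)^{2\nu}$, and summing the $h^{n}$-correlations over the $n(t-s)$ indices gives an extra factor of order $n(t-s)$ (this is where $\sum_{k}|\rho(k)|^{r}=O(n)$ type bounds, or rather their $X^{2}$-analogues, enter). Altogether $\mE|\cj_{s}^{t}(X^{1};h^{n})|^{2}$ comes out of order $n\,(t-s)^{2\nu+1}$, which after dividing by $n$ and taking square roots gives exactly $(t-s)^{\nu+1/2}$.

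\textbf{Second estimate.} For $n^{-1/2}|\sum_{j=0}^{m-1}\cj_{s_{j}}^{s_{j+1}}(X^{1};h^{n})|_{L_{2}}\leq Km^{-\nu}$, I would again expand the square, now as a quadruple sum over $j,j'$ and, within each block, over $t_{k}\in D_{j}$, $t_{k'}\in D_{j'}$. The decisive cancellation is that the ``constant part'' $-\tfrac12\id$ and the leading part of $n^{2\nu}X^{2}$ partially cancel over each block $D_{j}$ — this is precisely the mechanism behind Lemma~\ref{lem8.1} and the appearance of $-\tfrac12$ as the limit. Writing $\delta X^{1}_{\ep(s_{j})t_{k}}$ as a sum of block increments and grouping terms, one separates the contribution where $j=j'$ (or $|j-j'|\leq 1$), which by the first estimate applied on an interval of length $m^{-1}$ is of order $m\cdot m^{-(2\nu+1)}=m^{-2\nu}$ after renormalization, from the contribution where $|j-j'|\geq 2$, which is handled by the long-range decay \eref{e9.1}/Hypothesis~\ref{hyp7.1}(ii) and the sign condition Hypothesis~\ref{hyp7.1}(i): the sum over well-separated blocks is summable because $\theta>\tfrac12$, and yields something of order $m^{-2\nu}$ as well (the $\sum_{|j-j'|\geq 2}|j-j'|^{-\theta}$-type bounds with the block-length bookkeeping reproduce the $(m/n)^{1-2\nu}$ factors seen in Lemma~\ref{lem4.9}, Step~3). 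Taking square roots gives $m^{-\nu}$.

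\textbf{Main obstacle.} The routine part is the chaos expansion and Wick's formula; the delicate step is the bookkeeping in the second estimate — correctly exploiting the cancellation between the $-\tfrac12\id$ term and $X^{2}$ over each block $D_{j}$ so that one genuinely gains the factor $m^{-\nu}$ rather than just $O(1)$, and simultaneously controlling the cross-block terms via the decay hypothesis. This is essentially a transcription of the arguments already developed for Lemma~\ref{lem8.1}, Lemma~\ref{lem8.2} and Lemma~\ref{lem4.9} (Steps 3–4), so I would organize the proof by first reducing to a covariance computation, then citing the block estimates of those lemmas, and finally assembling the diagonal and off-diagonal pieces. I would flag that the constants depend only on $\nu$, $\theta$ and the constant $C$ in Hypothesis~\ref{hyp7.1}, uniformly in $n$ and $m$ with $m\ll n$.
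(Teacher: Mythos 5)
Your proposal is correct and follows essentially the same route as the paper, which itself only states that the bounds are obtained as in Lemma \ref{lem8.3}, with the simplex indicators appearing in the off-diagonal entries of $X^{2}$ approximated by indicators of rectangles as in Lemma \ref{lem8.2}. One small correction to your narrative: the gain of $m^{-\nu}$ in the second bound does not come from a cancellation between $-\frac12 \id$ and $n^{2\nu}X^{2}$ over each block (the off-diagonal entries carry no such constant at all); it comes, exactly as in the $nm^{-2i\nu}$ term of \eref{e6.3}, from the $L_{2}$-smallness of order $m^{-\nu}$ of the weight $X^{1}_{\ep(s_{j})t_{k}}$ on each block $D_{j}$ combined with the summability of the correlations of the centered increments $h^{n}_{t_{k}t_{k+1}}$ --- though your block-by-block arithmetic nevertheless lands on the right exponent.
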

\begin{proof} 
The estimate \eref{e5.34} is obtained in a similar way as in those in Lemma \ref{lem8.3}.
We just observe that the non diagonal terms of the matrix $h^{n}$ will be handled by approximating the indicator function of the simplex by indicator functions of rectangles, similarly to what we did in the proof of   Lemma \ref{lem8.2}. The details are omitted. 
\end{proof}

\begin{theorem}\label{thm5.11}
Let $X$ and $h^{n}$ be as in Proposition \ref{prop8.3}, with $\nu\in (\frac14, \frac12]$. Let $y$ be a controlled process of order $\ell=2$. 
Then the following convergence holds true:
\begin{eqnarray}\label{e5.34i}
(X, n^{-\frac12}\cj(y; h^{n})) \xrightarrow{f.d.d.} (X, \int y_{r}\otimes dW_{r}) ,
\end{eqnarray}
where $W$ is the Wiener process introduced in Proposition \ref{prop8.3}, and where the integral $\int_{s}^{t} y_{r} \otimes dW_{r}$ has to be understood in the Wiener sense. 

\end{theorem}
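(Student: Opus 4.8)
The plan is to apply the general single limit Theorem \ref{thm5.9} with the Brownian motion $W$ replaced by the multidimensional process $W=(W^{ij})$ introduced in Proposition \ref{prop8.3}, exactly in the spirit of Remark \ref{remark5.10}. The parameters are already fixed by the discussion preceding the statement: $\nu\in(\frac14,\frac12]$, $\al=\frac12$, and $\ell=2$, so the controlled process $(y,y')$ has only one derivative and $\nu\ell+\frac12>1$ holds. Thus the whole argument reduces to checking the three hypotheses feeding into Theorem \ref{thm5.9} (or its generalization in Remark \ref{remark5.10}): the regularity estimate \eref{e3.8i} for $h^n$ uniformly in $n$, the joint convergence $(X,h^n)\xrightarrow{f.d.d.}(X,W)$, and the coarse-graining cancellation \eref{e6.1} for the single intermediate level $i=1$.

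First I would record the regularity input. Condition \eref{e3.8i} for $i=0$, i.e. $n^{-1/2}|h^n_{st}|_{L_2}\le K(t-s)^{1/2}$, follows from the moment bound \eref{eq8.6} in Lemma \ref{lem8.1} together with the stationarity of increments of $X$ (so that $h^n_{st}$ has the same law as $h^n_{0,t-s}$ up to boundary corrections), combined with the same contraction analysis as in Lemma \ref{lem8.2}; for $i=1$ the bound $n^{-1/2}|\cj_s^t(X^1;h^n)|_{L_2}\le K(t-s)^{\nu+1/2}$ is precisely the first inequality in \eref{e5.34}. Next, the joint f.d.d. convergence $(X,n^{-1/2}h^n)\xrightarrow{f.d.d.}(X,W)$ with $W$ independent of $X$ and covariance \eref{e8.6} is exactly the content of Proposition \ref{prop8.3}. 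Finally, condition \eref{e6.1} for $i=1$, namely $\lim_{m\to\infty}\limsup_{n\to\infty}|\sum_{j=0}^{m-1}\cj_{s_j}^{s_{j+1}}(X^1;h^n)|=0$ in probability, is the second inequality in \eref{e5.34}: that estimate gives $n^{-1/2}|\sum_j\cj_{s_j}^{s_{j+1}}(X^1;h^n)|_{L_2}\le Km^{-\nu}\to 0$ as $m\to\infty$, uniformly in $n$, which yields the claimed limit in probability.

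With these three ingredients in hand, I would invoke Theorem \ref{thm5.9}, in the form adapted to a Gaussian limit $W$ with a Hilbert space structure as in Remark \ref{remark5.10}: one takes $\ci'=\{0\}$, $\ci''=\{1\}$, and checks \eref{eq9.6}, \eref{e5.4}, \eref{e3.13} exactly as in the proof of Theorem \ref{thm5.9}. The induction of Step 1 there produces $\cj_s^t(X^1;h^n)\xrightarrow{d}\int_s^t X^1_{su}\otimes dW_u$ and the Itô-isometry estimate of Step 2 shows that $\cj_0^1(y';\omega^1)\to 0$ in probability (using $|y'|$ bounded after the usual truncation, and $|X^1_{su}|_{L_4}\le K|u-s|^\nu$ from \eref{eq:def-holder-seminorms}), so that only the level $i=0$ survives, giving $v^0=\int_0^1 y_u\otimes dW_u$ as a conditional Wiener integral. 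This produces \eref{e5.34i}.

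The only genuinely nontrivial point — and the one I would write out with some care — is the uniform-in-$n$ regularity bound \eref{e3.8i} for the full increment $h^n$ on arbitrary $(s,t)\in\cs_2$ (as opposed to $t-s$ of order $1/n$), since $h^n$ here is built from the second-level term $X^2$ whose off-diagonal entries require the simplex-to-rectangle approximation of Lemma \ref{lem8.2}. However, the lemma preceding the statement already asserts \eref{e5.34}, and its proof (\emph{``obtained in a similar way as in those in Lemma \ref{lem8.3}''}) reduces exactly this difficulty to the fourth-moment/contraction estimates of Section \ref{section5.2}; so in fact all hard analytic work has been isolated in the earlier lemmas, and the proof of Theorem \ref{thm5.11} is a short verification of hypotheses followed by an application of Theorem \ref{thm5.9}. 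I would therefore keep the proof brief, citing Proposition \ref{prop8.3}, the bound \eref{e5.34}, and Theorem \ref{thm5.9} (with Remark \ref{remark5.10}).
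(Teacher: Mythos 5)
Your proposal matches the paper's own proof: the paper likewise verifies \eref{e3.8i} from the first bound in \eref{e5.34}, obtains the joint convergence $(X,n^{-1/2}h^n)\xrightarrow{f.d.d.}(X,W)$ from Proposition \ref{prop8.3}, deduces \eref{e6.1} from the second bound in \eref{e5.34}, and then applies Theorem \ref{thm5.9}. Your additional remarks (invoking Remark \ref{remark5.10} for the non-standard covariance of $W$, and spelling out Steps 1--2 of Theorem \ref{thm5.9}) are just elaborations of the same argument.
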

\begin{proof}
In order to show the convergence \eref{e5.34i} we invoke   Theorem \ref{thm5.9}. We first note that  
inequality \eref{e3.8i}    holds true thanks to   the first relation in \eref{e5.34}.   Furthermore, the convergence of $(X, n^{-\frac12 } h^{n})$ follows from Proposition \ref{prop8.3}. Finally, relation \eref{e6.1} is a consequence of the second relation in \eref{e5.34}. Therefore, applying Theorem \ref{thm5.9} we obtain the desired convergence \eref{e5.34i}.  
\end{proof}

%\begin{remark}
%Theorem \ref{thm5.11} gives a systematic way to get central limit theorem in the context of numerical schemes. As an example, we recover \cite[Proposition 9.5]{LT}, which is at the heart of the limiting behavior of the modified Euler scheme for fractional SDEs. We also believe that Theorem \ref{thm5.11} can be exploited in order to analyze  different schemes, such as 
% the Crank-Nicolson scheme or  the implementable schemes, as well as   Wong-Zakai's approximation.  
%\end{remark}

%\section{Appendix}

%\hfill { \footnotesize Date: \today}

\end{document}